\newtheorem{lemma}{Lemma}
\newtheorem{theorem}{Theorem}
\newtheorem{corollary}{Corollary}
\newtheorem{definition}{Definition}
\newtheorem{proposition}{Proposition}
\newtheorem{problem}{Problem}
\title{\Huge  An approximate solution to the decentralized two-controller infinite-horizon scalar {LQG} problem: Part II- slow dynamics}
\author{Se Yong Park (separk@eecs.berkeley.edu), Anant Sahai (sahai@eecs.berkeley.edu)
\thanks{A part of the results in this paper was presented in Conference on Decision and Control, 2012. The authors are with the department of Electrical Engineering and Computer Sciences at the University of California at Berkeley.}
}
\begin{document}
\maketitle
\begin{abstract}
Continuing the first part of the paper, we consider scalar decentralized average-cost infinite-horizon LQG problems with two controllers. This paper focuses on the slow dynamics case when the eigenvalue of the system is small and prove that the single-controller optimal strategies ---linear strategies--- are constant ratio optimal among all distributed control strategies.
\end{abstract}

\section{Introduction}

In the first part of this paper~\cite{Park_Approximation_Journal_Parti}, we consider the simplest decentralized LQG (linear quadratic Gaussian) problem, the scalar infinite-horizon LQG problem with two controllers. In particular, we focused on the fast dynamics case when the eigenvalue of the system is large. The most interesting fact in this case is that a nonlinear control strategy can infinitely outperform any linear strategy especially when the two controllers are asymmetric. When the first controller has a better observation with high control cost and the second controller has a worse observation with small control cost, there is a huge incentive for the first controller to communicate its observation to the second controller. Moreover, this communication is implicitly through the plant and for such implicit communication, nonlinear strategies are more efficient than linear strategies. The Signal-to-Noise Ratio (SNR) for this implicit communication is upper bounded by the eigenvalue of the system. Thus, as the eigenvalue of the system goes to infinity, the performance gap between nonlinear and linear strategy can unboundedly diverge.

In this paper, we focus on slow dynamics where the eigenvalue of the system is bounded by a constant. The SNR for implicit communication in this case is bounded and the performance gap between the best nonlinear and linear strategies is bounded by a constant. In the scalar system considered in this paper, the system is observable and controllable by both controllers. It turns out that control by a single controller is good enough to achieve a constant-ratio of the optimal cost. 

The rest of the paper consists as follows: In Section~\ref{sec:statement}, we will state the problem and main results. In Section~\ref{sec:central},  we will revisit the centralized control results and intuitively understand them. In Section~\ref{sec:lower}, we will derive a fundamental lower bound on the control performance, and prove that the centralized control performance and the derived lower bound are within a constant ratio.

\section{Problem Statement and Main Result}
\label{sec:statement}

Throughout this paper, we will consider the same problem considered in \cite{Park_Approximation_Journal_Parti}, the scalar infinite-horizon decentralized LQG problems with two controllers. However, while the focus of \cite{Park_Approximation_Journal_Parti} was the fast-dynamics case (when $|a| \geq 2.5$), the focus of this paper is the slow-dynamics case (when $|a| < 2.5$).

\begin{problem}[scalar infinite-horizon decentralized LQG problems with two controllers]
Consider the system dynamics given as
\begin{align}
& x[n+1]=ax[n]+u_1[n]+u_2[n]+w[n]  \\
& y_1[n]=x[n]+ v_1[n]  \\
& y_2[n]=x[n]+ v_2[n]
\end{align}
where $x[0] \sim \mathcal{N}(0,\sigma_0^2)$, $w[n] \sim \mathcal{N}(0,1)$, $v_1[n] \sim \mathcal{N}(0,\sigma_{v1}^2)$, $v_2[n] \sim \mathcal{N}(0,\sigma_{v2}^2)$ are independent Gaussian random variables. The control inputs, $u_1[n]$ and $u_2[n]$, must be causal functions of $y_1[n]$ and $y_2[n]$ respectively, i.e. $u_1[n]=f_{1,n}(y_1[0],\cdots,y_1[n])$ and $u_2[n]=f_{2,n}(y_2[0],\cdots,y_2[n])$. 

For $q, r_1, r_2 \geq 0$, the control objective is to minimize a long-term average quadratic cost:
\begin{align}
\limsup_{N \rightarrow \infty} \frac{1}{N}
\sum_{0 \leq n < N} q \mathbb{E}[x^2[n]] + r_1 \mathbb{E}[u_1^2[n]] + r_2 \mathbb{E}[u_2^2[n]]. \label{eqn:part11}
\end{align}
\label{prob:a}
\end{problem}
Even though we normalized the problem parameters (the variance of $w[n]$, the gains for $u_1[n], u_2[n], y_1[n], y_2[n]$), this problem includes all scalar two-controller decentralized LQG problems by a proper scaling. We refer \cite{Park_Approximation_Journal_Parti} for the justification.


In \cite{Park_Approximation_Journal_Parti}, we saw that in fast-dynamics cases, implicit communication communication between two controllers is crucial to achieve the optimal performance within a constant ratio. Moreover, essentially memoryless controllers, which only exploit the information at the current time step, were constant-ratio optimal.

Therefore, a natural question for slow-dynamics cases is that whether the same type of controllers are enough to achieve constant ratio optimality. In other words, is implicit communication crucial in performance? Are memoryless controllers can achieve constant ratio optimality? In this paper, we will see that the answers for both questions are negative.

To understand why the answer for the first question is negative, let's revisit fast-dynamics cases. Even though the mathematical definition of implicit communication is still unclear, we can roughly measure the SNR (signal-to-noise ratio) of implicit communication. The blurry controller (the controller with higher observation noise) utilize the transmitted signal from the other controller as soon as the transmitted signal's power exceed its observation level. Therefore, the maximum SNR for implicit communication cannot exceed $a^2$, which is the ratio at which the system dynamics amplify the signals in one time step. From this, we can conjecture that for slow dynamics cases $(|a| \leq 2.5)$, the SNR is bounded and implicit communication may not be crucial for constant-ratio optimality.

However, justification is not that simple since the time-horizon is infinite. In other words, even though we could justify that the SNR at each time step is bounded, accumulation of such information may result in unbounded gain. Furthermore, a precise definition of implicit communication and the corresponding SNR requires further study.

For the second question, we will see that all observations from the past have to be utilized to achieve constant-ratio optimality. For this, Kalman filtering must be used.


In other words, we will prove that in the slow-dynamics case, single-controller optimal strategies --- Kalman filtering linear strategies --- are approximately optimal within a constant ratio. For this, let's first define the single-controller strategies which involve only one parameter $k$.

\begin{definition}[Single Controller Optimal Strategy $L_{lin,kal}$]
$L_{lin,kal}$ is the set of all controllers which can be written in either one of two following forms for some $k \in \mathbb{R}$\\
(i) $u_1[n]=-k\mathbb{E}[x[n]|y_1^n,u_1^{n-1}]$, $u_2[n]=0$\\
(ii) $u_1[n]=0$, $u_2[n]=-k\mathbb{E}[x[n]|y_2^n,u_2^{n-1}]$
\end{definition}
Here, we can notice that since the system is linear and underlying random variables are Gaussian, the conditional expectations are linear in the observations~\cite{Bertsekas}.

Now, we can state the main theorem of this paper, which states that when $|a| \leq 2.5$ the optimization only over $L_{lin,kal}$ is enough to achieve approximate optimality within a constant ratio among all possible strategies.
\begin{theorem}
Consider the decentralized LQG problem shown in Problem~\ref{prob:a}. Let $L$ be the set of all measurable causal strategies. Then, there exists a constant $c \leq 2 \cdot 10^6$ such that for all $|a| \leq 2.5$, $q$, $r_1$, $r_2$, $\sigma_0$, $\sigma_{v1}$ and $\sigma_{v2}$,
\begin{align}
\frac{
\underset{u_1,u_2 \in L_{lin,kal}}{\inf}
\underset{N \rightarrow \infty}{\limsup}
\frac{1}{N}
\underset{0 \leq n < N}{\sum}
\mathbb{E}[qx^2[n]+r_1 u_1^2[n]+r_2 u_2^2[n]]
}
{
\underset{u_1,u_2 \in L}{\inf}
\underset{N \rightarrow \infty}{\limsup}
\frac{1}{N}
\underset{0 \leq n < N}{\sum}
\mathbb{E}[qx^2[n]+r_1 u_1^2[n]+r_2 u_2^2[n]]
}
\leq c \nonumber
\end{align}
\label{thm:2}
\end{theorem}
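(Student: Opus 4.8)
The plan is to sandwich both the numerator and the denominator between a single clean lower bound and an explicit closed-form upper bound, so that the ratio collapses to a constant. The first observation is that the denominator---the infimum over \emph{all} causal strategies---can only decrease if we hand both observations to a single omniscient controller and let it split the control effort optimally. Writing $J^*_{cen}$ for this centralized average cost, $J^*_{dec}$ for the denominator, and $J_{single}$ for the numerator, feasibility of the single-controller strategies gives $J^*_{cen} \leq J^*_{dec} \leq J_{single}$. Hence the ratio in Theorem~\ref{thm:2} is at most $J_{single}/J^*_{cen}$, and it suffices to show that the \emph{best} single-controller Kalman strategy costs at most a constant times the centralized optimum.

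Two elementary reductions make the centralized problem scalar-LQG-standard. First, the pair $(y_1,y_2)$ is, for estimation purposes, equivalent to a single observation with effective noise variance $\sigma_{v,cen}^2 = (\sigma_{v1}^{-2}+\sigma_{v2}^{-2})^{-1}$. Second, since the controls enter only through $u_1+u_2$ while being charged $r_1u_1^2+r_2u_2^2$, minimizing over the split shows the centralized actuator behaves like a single input with effective weight $r_{cen} = (r_1^{-1}+r_2^{-1})^{-1}$. I would then invoke the separation principle and the steady-state control- and estimation-Riccati equations (Section~\ref{sec:central}) to write $J^*_{cen}$ in closed form in terms of $a$, $q$, $r_{cen}$, $\sigma_{v,cen}^2$, and from this extract the fundamental lower bound $B \leq J^*_{cen}$ of Section~\ref{sec:lower}. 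The bound $B$ decomposes the cost into an unavoidable estimation-error term and an unavoidable control-energy term: against the injected noise $w[n]\sim\mathcal N(0,1)$ the conditional state variance is bounded below by the combined Kalman limit, and---because $|a|$ may exceed $1$---stabilization forces a minimum control effort. The companion result that $J^*_{cen}\le c_1 B$ confirms $B$ is tight for centralization up to a constant.

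Next I would compute the single-controller cost $J_{single,i}$ for each choice $i\in\{1,2\}$ by the same machinery, now with parameters $(\sigma_{vi}^2, r_i)$ rather than the combined ones, and bound the numerator by $J_{single} \leq \min\{J_{single,1}, J_{single,2}\}\le U$ in closed form. The symmetric-looking inequalities
\begin{align}
\min\{\sigma_{v1}^2,\sigma_{v2}^2\} \leq 2\,\sigma_{v,cen}^2, \qquad \min\{r_1,r_2\} \leq 2\,r_{cen} \nonumber
\end{align}
show that the best single observation and the cheapest single actuator are each within a factor of two of their centralized counterparts. If one controller simultaneously owned the better observation and the cheaper control, monotonicity of the LQG cost in $(\sigma_v^2, r)$ would immediately give $U \leq c\,B$ with a small $c$, and the chain $B\le J^*_{cen}\le J^*_{dec}\le J_{single}\le U\le cB$ would finish the argument.

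The main obstacle is exactly the \emph{asymmetric} regime flagged in the introduction, where the good observer and the cheap actuator are different controllers: centralized control can combine the sharp estimate of one with the inexpensive actuation of the other, whereas every single-controller strategy must forgo one of these advantages. Quantifying this lost advantage is precisely quantifying the value of implicit communication, and here the hypothesis $|a|\leq 2.5$ is essential: the per-step signal-to-noise ratio of communication through the plant is at most $a^2\leq 6.25$, so the coordination gain cannot be arbitrarily large. The delicate point---and the reason the argument is not a one-line per-step estimate---is that the horizon is infinite, so I must rule out the accumulation of a bounded per-step gain into an unbounded steady-state gain. I would do this at the level of the steady-state Riccati fixed points: showing that replacing the combined parameters by those of a single, judiciously chosen controller perturbs the fixed point by at most a bounded factor when $|a|$ is bounded. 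Carrying this out requires a case analysis over the relative sizes of $\sigma_{v1}^2,\sigma_{v2}^2,r_1,r_2$ and over the stable and unstable regimes of $a$, and it is the crude union of the worst-case constants across these cases that produces the large admissible value $c\le 2\cdot10^6$.
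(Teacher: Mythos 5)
Your reduction hinges on the chain $B \le J^*_{cen} \le J^*_{dec} \le J_{single} \le U$ together with the target inequality $U \le c\,B$, which would force $J_{single} \le c\, J^*_{cen}$. That inequality is \emph{false} over the parameter range of the theorem, so the first link of your chain loses an unbounded factor and the argument cannot be closed. Concretely, take $a=1$, $q=1$, $\sigma_{v1}^2=1$, $r_1=R$, $\sigma_{v2}^2=V$, $r_2=1$ with $R,V\to\infty$. Your centralized relaxation has effective observation noise $\sigma_{v,cen}^2=(1+1/V)^{-1}\le 1$ and effective control weight $r_{cen}=(1/R+1)^{-1}\le 1$, so $J^*_{cen}=O(1)$. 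But a single-controller strategy must either use controller 1, paying $R$ per unit control power while stabilization of the random walk at distortion $D$ needs power of order $1/D$, giving cost of order $\min_P(1/P+RP)=\Theta(\sqrt{R})$; or use controller 2 alone, whose Kalman error is $\Sigma_2\approx\sqrt{V}$, giving cost $\Theta(\sqrt{V})$. Hence $J_{single}=\Theta(\min(\sqrt{R},\sqrt{V}))\to\infty$ while $J^*_{cen}$ stays bounded: no constant $c$ exists, and no Riccati fixed-point perturbation analysis can repair this, because the statement it would prove is simply not true. (The theorem survives because $J^*_{dec}$ itself grows like $\min(\sqrt{R},\sqrt{V})$: the decentralized system cannot marry controller 1's good observation to controller 2's cheap actuation, and implicit communication does not help since controller 1's signaling is charged at $R$ and must punch through observation noise of variance $V$.) Your appeal to the bounded per-step SNR $a^2\le 6.25$ conflates two different comparisons: bounded SNR limits how much the decentralized optimum can gain over single-controller strategies, but says nothing about the gap between the decentralized and the centralized optimum, which is exactly what your reduction needs to control. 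This is precisely why the paper warns that the naive lower bound obtained by merging the two controllers into one centralized controller is too loose.

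What the paper does instead, and what your argument is missing, is a lower bound on $J^*_{dec}$ itself that keeps the two controllers distinct: the geometric-slicing (cutset-style) bounds of Lemma~\ref{lem:aless22}, Lemma~\ref{lem:eq11} and Lemma~\ref{lem:less11}, packaged as Corollaries~\ref{cor:2}, \ref{cor:4}, \ref{cor:6}, which have the form $\min_{\widetilde{P_1},\widetilde{P_2}} q\,D_L(\widetilde{P_1},\widetilde{P_2})+r_1\widetilde{P_1}+r_2\widetilde{P_2}$ and hence charge each controller's power at its own price $r_i$ while limiting each controller's information by its own $\sigma_{vi}$. A centralized merging/MRC argument appears only in the saturation statements (e.g.\ statement (j) of Corollary~\ref{cor:2}), where it happens to be tight. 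These lower bounds are then matched region by region against the single-controller upper bounds of Corollaries~\ref{cor:1}, \ref{cor:3}, \ref{cor:5}, and the power-distortion characterization is converted into the average-cost ratio via \cite[Lemma~14]{Park_Approximation_Journal_Parti}. To salvage your proposal you would have to supply a decentralized lower bound of this type; refining the centralized computation cannot substitute for it.
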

\begin{proof}
See Section~\ref{subsec:mainthm} for the proof.
\end{proof}

The basic proof strategy is following. Rather than directly considering the average cost problem of Problem~\ref{prob:a}, we consider the power-distortion tradeoff problem of Problem~\ref{prob:c}. Then, since we have an explicit constraints on the controller power, we can divide the tradeoff curve to multiple regions based on the control power. For these finite number of regions, we derive different upper and lower bounds on the performance. By comparing them, we characterize the tradeoff curve within a constant ratio.Then, we finally convert the constant ratio characterization of the tradeoff curve into the constant ratio result on the average cost.

\begin{problem}[Decentralized LQG problem with average power constraints]
Consider the same dynamics as Problem~\ref{prob:a}. But, now the control objective is minimizing the state distortion for given input power constraints $P_1, P_2 \in \mathbb{R}^+$. We will say the power-distortion tradeoff, $D(P_1, P_2)$ is achievable if and only if there exist causal control strategies $u_1[n], u_2[n]$ such that
\begin{align}
&\limsup_{N \rightarrow \infty} \frac{1}{N} \sum^N_{n=1} \mathbb{E}[x^2[n]] \leq D(P_1, P_2), \\
&\limsup_{N \rightarrow \infty} \frac{1}{N} \sum^N_{n=1} \mathbb{E}[u_1^2[n]] \leq P_1, \\
&\limsup_{N \rightarrow \infty} \frac{1}{N} \sum^N_{n=1} \mathbb{E}[u_2^2[n]] \leq P_2.
\end{align}
\label{prob:c}
\end{problem}

\section{Qualitative Understanding of Centralized LQG Problems}
\label{sec:central}
Before we present the technical details, we first explain the insight behind the results.
Theorem~\ref{thm:2} states that control by a single controller is enough to achieve an approximately optimal performance.
The optimal control by a single controller is a well-studied LQG control problem. The optimal average cost, the weighted sum of the input power and the state distortion, is the solution of a Riccati equation.

However, even though Riccati equations gives exact optimal costs for centralized control problems, their quantitative results are hard to interpret. Therefore, in this section, we will approximate the optimal costs to simple functions, so that we can gain intuitive and qualitative understanding about the centralized control problems. Furthermore, we will take take a distortion-power-tradeoff perspective to the problems rather than a minimum-cost point-of-view.


Let's first formally state the scalar centralized LQG problems.
\begin{problem}[Centralized LQG with average power constraints]
Consider the following dynamic system with a single controller.
\begin{align}
&x[n+1]=ax[n]+u[n]+w[n] \label{eqn:centralized}\\
&y[n]=x[n]+v[n]
\end{align}
where $x[0] \sim \mathcal{N}(0,\sigma_0^2)$, $w[n] \sim \mathcal{N}(0,1)$, $v[n] \sim \mathcal{N}(0,\sigma_v^2)$ are independent Gaussian random variables. The control input $u[n]$ must be a causal function of $y[n]$, i.e. $u[n]=f_n(y_1[0], \cdots, y_1[n])$. 

The control objective is minimizing the state distortion for a given input power constraint $P \in \mathbb{R}^+$.  We say the power-distortion tradeoff $D_{\sigma_v}(P)$ is achievable if and only if there exists a causal control strategy $u[n]$ such that
\begin{align}
&\limsup_{N \rightarrow \infty} \frac{1}{N} \sum^N_{n=1} \mathbb{E}[x^2[n]] \leq D_{\sigma_v}(P),\\
&\limsup_{N \rightarrow \infty} \frac{1}{N} \sum^N_{n=1} \mathbb{E}[u^2[n]] \leq P.
\end{align}
\label{prob:b}
\end{problem}

\begin{definition}[Optimal Linear Strategy $L_{lin,cen}$ for Centralized LQG problems] 
Consider the centralized LQG problem of Problem~\ref{prob:b}. Let $L_{lin,cen}$ be the set of all controllers which can be written in the following form. For some $k \in \mathbb{R}$, $u[n]= -k \mathbb{E}[x[n]|y^n, u^n]$.
\label{def:cen}
\end{definition}

\begin{lemma}
Consider the centralized LQG problem of Problem~\ref{prob:b}. Define $\Sigma_E$ as
\begin{align}
\Sigma_E := \frac{(a^2-1)\sigma_v^2 - 1 + \sqrt{((a^2-1)\sigma_v^2-1)^2 + 4a^2 \sigma_v^2}}{2a^2}.
\label{eqn:kalmanperf}
\end{align}
Then, for all $k$ such that $|a-k| < 1$, the linear strategy of Definition~\ref{def:cen} can achieve the following Power-distortion tradeoff:
\begin{align}
(D_{\sigma_v}(P),P) = ( \frac{(2ak-k^2)\Sigma_E + 1}{1-(a-k)^2}, k^2 ( \frac{(2ak-k^2)\Sigma_E + 1}{1-(a-k)^2} -\Sigma_E)). \label{eqn:powerdis}
\end{align}
Furthermore, this power-distortion tradeoff is optimal in the sense that for a given $P$, there is no control strategy which can achieve an expected squared state smaller than $D_{\sigma_v}(P)$.
\label{lem:aless21}
\end{lemma}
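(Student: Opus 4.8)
The statement splits naturally into an achievability claim (the displayed tradeoff is attained by the linear strategy) and a converse (no causal strategy beats it), and the plan is to treat these separately.

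For achievability I would first confirm that $\Sigma_E$ in \eqref{eqn:kalmanperf} is precisely the steady-state Kalman filtering error variance for Problem~\ref{prob:b}. Writing the prediction/update recursions $\Sigma_P = a^2\Sigma_E + 1$ and $\Sigma_E = \Sigma_P\sigma_v^2/(\Sigma_P+\sigma_v^2)$ and eliminating $\Sigma_P$ yields the quadratic $a^2\Sigma_E^2 + (1-(a^2-1)\sigma_v^2)\Sigma_E - \sigma_v^2 = 0$, whose positive root is exactly \eqref{eqn:kalmanperf}. Setting $\hat{x}[n] = \mathbb{E}[x[n]\mid y^n, u^n]$ and $e[n] = x[n]-\hat{x}[n]$, substituting $u[n]=-k\hat{x}[n]$ into the dynamics gives the closed loop $x[n+1] = (a-k)x[n] + ke[n] + w[n]$, which is stationary precisely because $|a-k|<1$. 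I would then square and take expectations in steady state, using the Kalman orthogonality facts $\mathbb{E}[e[n]w[n]]=0$, $\mathbb{E}[x[n]w[n]]=0$, and $\mathbb{E}[x[n]e[n]]=\mathbb{E}[e^2[n]]=\Sigma_E$, to obtain $D(1-(a-k)^2) = (2ak-k^2)\Sigma_E + 1$, the first coordinate of \eqref{eqn:powerdis}. Since $\hat{x}[n]\perp e[n]$ gives $\mathbb{E}[x^2]=\mathbb{E}[\hat{x}^2]+\Sigma_E$, the power is $\mathbb{E}[u^2]=k^2\mathbb{E}[\hat{x}^2]=k^2(D-\Sigma_E)$, the second coordinate.

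For the converse I would pass to the Lagrangian relaxation: for each multiplier $\lambda>0$ consider the unconstrained average-cost problem $\min_u \mathbb{E}[x^2]+\lambda\mathbb{E}[u^2]$, a standard scalar LQG problem with $q=1$, $r=\lambda$. By the separation principle and certainty equivalence for infinite-horizon average-cost LQG (see~\cite{Bertsekas}), the optimal controller is linear feedback on the Kalman estimate, $u[n]=-k^\ast\hat{x}[n]$, with $k^\ast$ the stabilizing solution of the control Riccati equation; in particular it lies on the one-parameter family \eqref{eqn:powerdis}. Thus $\min_u(\mathbb{E}[x^2]+\lambda\mathbb{E}[u^2])=V(\lambda)$ is attained on this family, certifying that each point of the curve is a supporting-line optimum. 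Since the achievable region is convex by time-sharing, duality gives $D_{\sigma_v}(P)=\sup_{\lambda>0}(V(\lambda)-\lambda P)$, and the linear strategy meets this bound with equality, establishing optimality for every fixed $P$.

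The main obstacle is the converse rather than the achievability computation. Concretely, one must verify that as $\lambda$ ranges over $(0,\infty)$ the Riccati gain $k^\ast$ sweeps out exactly the stabilizing interval $\{k : |a-k|<1\}$ underlying \eqref{eqn:powerdis}, so that every target power $P$ is matched by some supporting line with no gap. One must also ensure the average-cost LQG separation result applies uniformly across the regime $|a|\le 2.5$, including the unstable range $|a|>1$ where only gains with $|a-k|<1$ stabilize the loop and open-loop ($k=0$) operation yields infinite distortion.
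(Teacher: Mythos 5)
Your achievability argument is, up to regrouping, the paper's own proof: the paper sets $\hat{x}[n]=\mathbb{E}[x[n]\mid y^n,u^{n-1}]$, writes the closed loop as $x[n+1]=a(x[n]-\hat{x}[n])+(a-k)\hat{x}[n]+w[n]$ (identical to your $(a-k)x[n]+ke[n]+w[n]$), uses orthogonality of the estimate and the error, and passes to the steady state under $|a-k|<1$ to read off both coordinates of \eqref{eqn:powerdis}; the only difference is that you re-derive the steady-state Riccati quadratic for $\Sigma_E$, which the paper simply imports from \cite{Bertsekas}. Where you genuinely diverge is the converse: the paper does not prove it at all, stating only that tightness is ``well-known'' with a reference to \cite{Bertsekas}, while you sketch a Lagrangian supporting-line argument (every achievable $(D,P)$ obeys $D+\lambda P\ge V(\lambda)$, and $V(\lambda)$ is attained on the linear family by separation and certainty equivalence). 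That route is sound, and the obstacle you flag --- whether the Riccati gains sweep the whole family --- resolves concretely: writing $s=a-k$ and $C=(a^2-1)\Sigma_E+1$, the curve is $D=\Sigma_E+C/(1-s^2)$ and $P=C(a-s)^2/(1-s^2)$, so points with $\operatorname{sign}(s)\neq\operatorname{sign}(a)$, and for $|a|>1$ points with $|s|>1/|a|$, are strictly dominated, while the Lagrangian optima $k^{\ast}(\lambda)$, $\lambda\in(0,\infty)$, sweep exactly the remaining Pareto branch ($s\in(0,1/a)$ when $a>1$, $s\in(0,a)$ when $0<a<1$). In particular, the lemma's optimality sentence cannot hold literally for \emph{every} $k$ with $|a-k|<1$ --- on the dominated branch the same power admits strictly smaller distortion --- so $D_{\sigma_v}(P)$ must be read as the lower envelope of the parametrized family, and that lower envelope is precisely what your duality argument certifies. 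Net comparison: your proof supplies an explicit converse where the paper has only a citation, at the price of verifying the gain sweep; the paper's treatment is shorter because it delegates the entire optimality claim to the standard dynamic-programming LQG literature.
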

\begin{proof}
Let $\hat{x}[n]:=\mathbb{E}[x[n]|y^n, u^{n-1}]$. Since $u[n]=-k\hat{x}[n]$,
\begin{align}
x[n+1]&=ax[n]-k\hat{x}[n]+w[n] \\
&=a(x[n]-\hat{x}[n])+(a-k)\hat{x}[n]+w[n]. \label{eqn:central1}
\end{align}
Define $\Sigma_{X,n}:= \mathbb{E}[x^2[n]]$, $\Sigma_{\hat{X},n}:=\mathbb{E}[\hat{x}^2[n]]$, $\Sigma_{E,n}:=\mathbb{E}[(x[n]-\hat{x}[n])^2]$. Then, we have
\begin{align}
\Sigma_{X,n}&= \mathbb{E}[(x[n]-\hat{x}[n]+\hat{x}[n])^2]\\
&=\mathbb{E}[(x[n]-\hat{x}[n])^2] + \mathbb{E}[\hat{x}^2[n]]\\
&=\Sigma_{E,n} + \Sigma_{\hat{X},n} \label{eqn:central2}
\end{align}
where the second equality comes from the orthogonality of $x[n]-\hat{x}[n]$ and $\hat{x}[n]$.
Likewise, by \eqref{eqn:central1} we also have
\begin{align}
\Sigma_{X,n+1} &= a^2 \Sigma_{E,n} + (a-k)^2 \Sigma_{\hat{X},n}+1 \\
&= a^2 \Sigma_{E,n} + (a-k)^2 (\Sigma_{X,n} - \Sigma_{E,n}) + 1 \label{eqn:aless21}
\end{align}
where the last inequality comes from \eqref{eqn:central2}.

Moreover, it is well-known that Kalman filtering performance converges to a steady state.
In other words, by \cite{Bertsekas} we have
\begin{align}
\Sigma_E:=\lim_{n \rightarrow \infty}\Sigma_{E,n}&= \frac{(a^2-1)\sigma_v^2 - 1 + \sqrt{((a^2-1)\sigma_v^2-1)^2 + 4a^2 \sigma_v^2}}{2a^2}
\end{align}

Thus, by \eqref{eqn:aless21}, $\Sigma_{X,n}$ converges as long as $|a - k | < 1$. Let $\lim_{n \rightarrow \infty}\Sigma_{X,n}=\Sigma_X$. Then, by \eqref{eqn:aless21} we have
\begin{align}
\Sigma_X &= \frac{ (a^2 - (a-k)^2)\Sigma_E + 1 }{1-(a-k)^2} \\
&=\frac{(2ak-k^2)\Sigma_E + 1}{1-(a-k)^2}.
\end{align}
Since $u[n]=-k \hat{x}[n]$, using \eqref{eqn:central2} the input power converges as follows.
\begin{align}
\lim_{n \rightarrow \infty}\mathbb{E}[u^2[n]]&= k^2(\Sigma_X -\Sigma_E) \\
&=k^2 ( \frac{(2ak-k^2)\Sigma_E + 1}{1-(a-k)^2} -\Sigma_E).
\end{align}

This finishes the achievability proof of the tradeoff. The tightness of the tradeoff and the optimality of centralized linear controllers are well-known in the community, and we refer to \cite{Bertsekas} for a rigorous proof based on dynamic programming.
%
%
%
%
%
\end{proof}

As mentioned in the proof, $\Sigma_E$ represents the Kalman filtering performance (mean square estimation error). 

In the following discussions, we will qualitatively understand the tradeoff between the state distortion and control power by dividing into cases based on the eigenvalue of the system.

\subsection{When $|a|=1$}
\label{subsec:a=1}

\begin{figure*}[htbp]
\begin{center}
\includegraphics[width=0.4\textwidth]{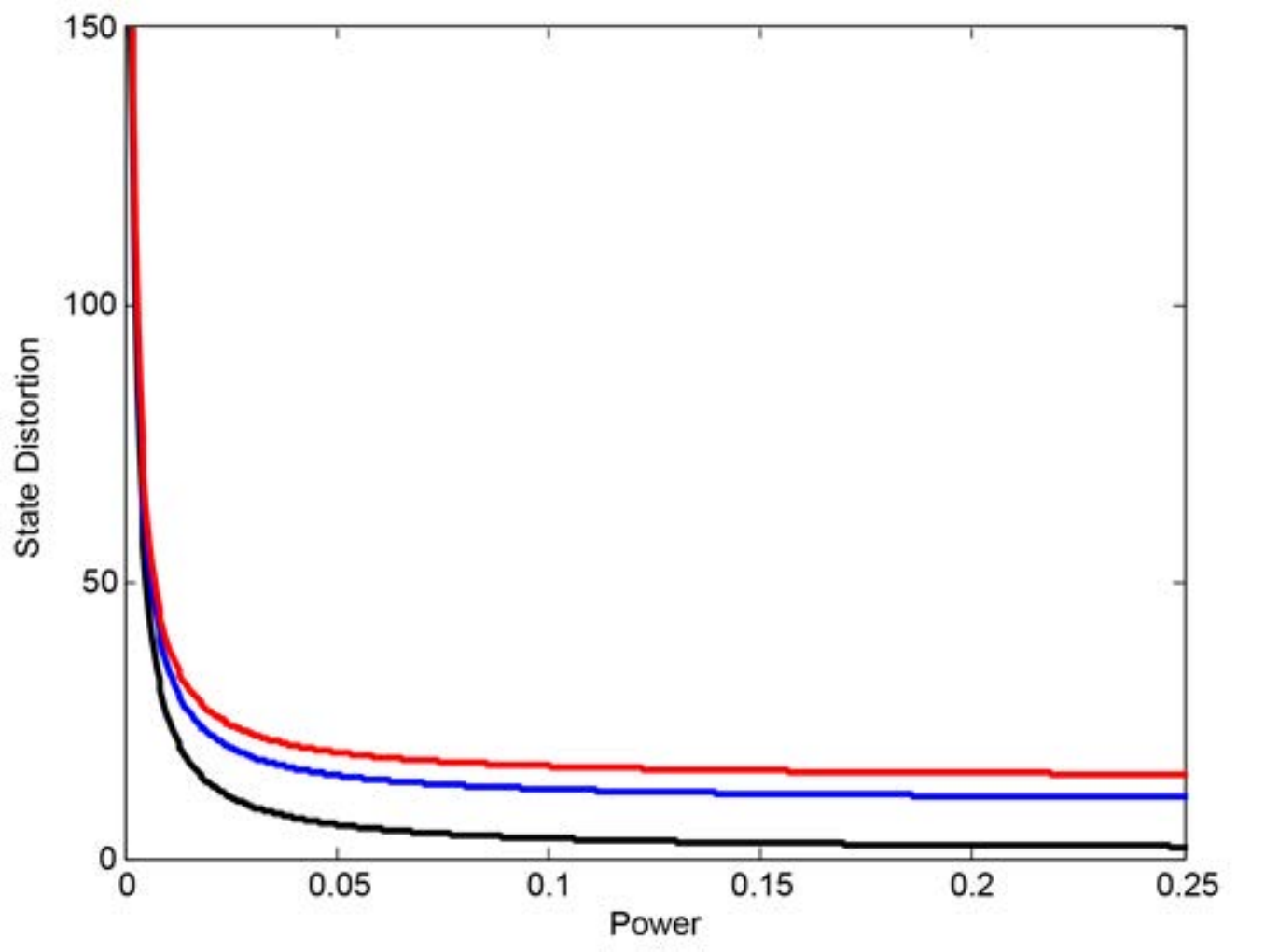}
\caption{The Optimal State Distortion-Input Power Tradeoff: When $a =1$ with different values of $\sigma_v^2$ ( $\sigma_v^2 = 1$ (Black line), $\sigma_v^2 = 100$ (Blue line), $\sigma_v^2 = 200$ (Red line))}
\label{fig:simulaeq11}
\end{center}
\end{figure*}

\begin{figure*}[htbp]
\begin{center}
\includegraphics[width=0.4\textwidth]{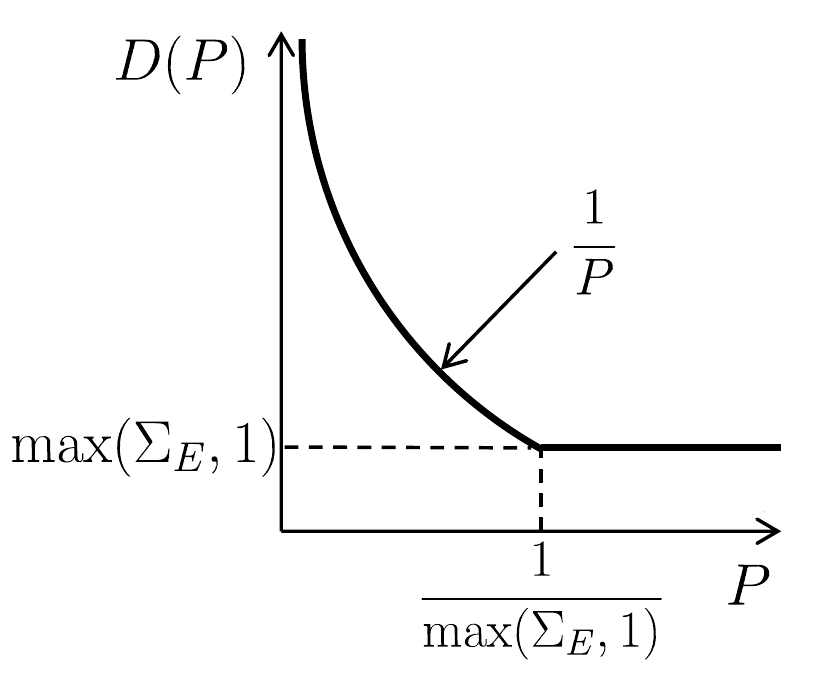}
\caption{Conceptual Plot of State Distortion-Input Power Tradeoff: When $|a| =1$}
\label{fig:aeq11}
\end{center}
\end{figure*}
First, let's consider the case when the magnitude of the eigenvalue is $1$, i.e. $|a|=1$. 

Since the Kalman filtering performance $\Sigma_E$ is the minimum squared error for estimating the states, we can see $D_{\sigma_v}(P) \geq a^2 \Sigma_E +1$ for all $P$. For notational convenience, let's approximate $a^2 \Sigma_E +1$ by $\max(\Sigma_E, 1)$.

To achieve $D_{\sigma_v}(P) \approx \max(\Sigma_E, 1)$, the control power $P$ has to be large enough. As we can see in Figure~\ref{fig:simulaeq11}, the state distortion $D_{\sigma_v}(P)$ inversely proportionally increases as the control power $P$ decreases. 

Therefore, the power-state distortion tradeoff $D_{\sigma_v}(P)$ can be conceptualized as Figure~\ref{fig:aeq11}. When the power $P$ is smaller than $\frac{1}{\max(\Sigma_E, 1)}$, the state distortion behaves like $\frac{1}{P}$. When the power becomes greater than $\frac{1}{\max(\Sigma_E, 1)}$, the state distortion saturates at $\max(\Sigma_E, 1)$. 

Let's write $(a_1, \cdots, a_n) \geq (b_1, \cdots, b_n)$ if and only if $a_1 \geq b_1$, $\cdots$, $a_n \geq b_n$. Then, the power-distortion tradeoff for the centralized LQG problem is characterized as follows. Then, Corollary~\ref{cor:3}  shows the formal statement of this tradeoff.


\begin{corollary}
Consider the centralized LQG problem shown in Problem~\ref{prob:b}. When $|a|=1$, the achievable power-distortion tradeoff $(D_{\sigma_v}(P), P)$ by the strategies of Definition~\ref{def:cen} is upper bounded as follows:
\begin{align}
(D_{\sigma_v}(P),P) \leq (\frac{2}{t},t) \mbox{ for all }  0 < t \leq \frac{1}{\max(1,\Sigma_E)} \label{eqn:tradeoff1}
\end{align}
where the definition of $\Sigma_E$ is given as \eqref{eqn:kalmanperf}.

Especially, when $\sigma_v \geq 16$, we have
\begin{align}
(D_{\sigma_v}(P),P) \leq (\frac{2}{t},t) \mbox{ for all }  0 < t \leq \frac{1}{1.0005 \sigma_v}. \label{eqn:cor:31}
\end{align}
When $\sigma_v \leq 16$, we have
\begin{align}
(D_{\sigma_v}(P),P) \leq (\frac{2}{t},t) \mbox{ for all }  0 < t \leq \frac{1}{15.008}. \label{eqn:cor:32}
\end{align}
\label{cor:3}
\end{corollary}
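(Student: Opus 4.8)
The plan is to specialize the closed-form tradeoff of Lemma~\ref{lem:aless21} to $|a|=1$, trade the free gain $k$ for the achieved power level, and reduce the target bound $D_{\sigma_v}(P)\le \frac{2}{t}$ to a one-line estimate on a distortion--power product. I would work with $a=1$ throughout; the case $a=-1$ is identical after the substitution $k\mapsto -k$, since $a^2=1$ and the quantities $1-(a-k)^2$ and $2ak-k^2$ are invariant under this reflection. Using $1-(1-k)^2=k(2-k)=2k-k^2$, the tradeoff \eqref{eqn:powerdis} collapses to
\begin{align}
D_{\sigma_v}(P) = \Sigma_E + \frac{1}{k(2-k)}, \qquad P = \frac{k}{2-k},
\end{align}
valid for $0<k<2$ (the range forced by $|a-k|<1$). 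Setting $P=t$ gives $k=\frac{2t}{1+t}$, hence $2-k=\frac{2}{1+t}$ and $k(2-k)=\frac{4t}{(1+t)^2}$, so that
\begin{align}
D_{\sigma_v}(t) = \Sigma_E + \frac{(1+t)^2}{4t}.
\end{align}
On the branch $0<t\le 1$ the distortion decreases as the power grows, so exhausting the budget ($P=t$) is the right choice, and it remains only to verify the product bound $t\,D_{\sigma_v}(t)=t\Sigma_E+\frac{(1+t)^2}{4}\le 2$.

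For the general statement \eqref{eqn:tradeoff1} I would bound the two summands separately on $t\le \frac{1}{\max(1,\Sigma_E)}$. The estimation term satisfies $t\Sigma_E\le 1$: when $\Sigma_E\ge 1$ this is just $t\le \frac{1}{\Sigma_E}$, and when $\Sigma_E<1$ it follows from $t\le 1$. The control term satisfies $\frac{(1+t)^2}{4}\le 1$ because $t\le \frac{1}{\max(1,\Sigma_E)}\le 1$. Adding the two gives $t\,D_{\sigma_v}(t)\le 2$, which is exactly \eqref{eqn:tradeoff1}; the factor $2$ is transparently the sum of one unit from the Kalman error and one unit from the control cost.

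For the explicit ranges I would insert the $a^2=1$ form of \eqref{eqn:kalmanperf}, namely $\Sigma_E=\frac{-1+\sqrt{1+4\sigma_v^2}}{2}$. For $\sigma_v\ge 16$ one checks $\Sigma_E>1$, and expanding $\sqrt{1+4\sigma_v^2}=2\sigma_v\sqrt{1+\frac{1}{4\sigma_v^2}}$ with $\frac{1}{4\sigma_v^2}\le \frac{1}{1024}$ yields $\Sigma_E\le 1.0005\,\sigma_v$; hence $\frac{1}{\max(1,\Sigma_E)}\ge \frac{1}{1.0005\,\sigma_v}$ and \eqref{eqn:cor:31} follows from the general bound. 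The case $\sigma_v\le 16$ is more delicate: $\Sigma_E$ is maximized at $\sigma_v=16$, where $\Sigma_E=\frac{-1+\sqrt{1025}}{2}\approx 15.51$, which already exceeds $15.008$, so the crude range $\frac{1}{\max(1,\Sigma_E)}$ does not by itself reach the claimed $t\le \frac{1}{15.008}$. Instead I would return to the sharper inequality $t\,D_{\sigma_v}(t)\le 2$, rewritten as the quadratic $t^2+(4\Sigma_E+2)t-7\le 0$, and verify it at the worst case $\sigma_v=16$, $t=\frac{1}{15.008}$; monotonicity of this quadratic in both $t$ and $\Sigma_E$ then extends the verification to all $\sigma_v\le 16$ and all smaller $t$.

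The main obstacle is precisely this last regime. The clean ``$1+1$'' decomposition is too lossy to reach the constant $15.008$, so one must exploit the fact that the control term $\frac{(1+t)^2}{4}$ is strictly below $1$ for small $t$, which creates exactly the slack that lets the estimation term $t\Sigma_E$ exceed $1$ while keeping the product under $2$; carrying the explicit numerical value of $\Sigma_E$ at $\sigma_v=16$ through the quadratic is the part that needs care. The $\sigma_v\ge 16$ constant $1.0005$ likewise demands the careful square-root expansion above rather than a crude bound. Everything else reduces to routine algebra once the reparametrization $P=t$, $k=\frac{2t}{1+t}$ is in hand.
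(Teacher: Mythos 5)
Your proposal is correct, and in one place it is actually stronger than the paper's own proof. Both you and the paper specialize Lemma~\ref{lem:aless21} to $|a|=1$; the difference is the parametrization. The paper sets $t=2k-k^2$, so that $P=\frac{k^2}{2k-k^2}\leq t$ and $D_{\sigma_v}(P)=\frac{1}{2k-k^2}+\Sigma_E\leq\frac{2}{t}$ after bounding $\Sigma_E\leq\max(1,\Sigma_E)\leq\frac{1}{t}$; you instead spend the whole budget, $P=t$ with $k=\frac{2t}{1+t}$, and reduce the claim to the product inequality $t\Sigma_E+\frac{(1+t)^2}{4}\leq 2$. For \eqref{eqn:tradeoff1} and \eqref{eqn:cor:31} the two routes are equally good. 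For \eqref{eqn:cor:32}, however, your sharper form is genuinely needed: you are right that at $\sigma_v=16$,
\begin{align}
\Sigma_E=\frac{-1+\sqrt{1025}}{2}=15.5078\ldots, \nonumber
\end{align}
whereas the paper's proof asserts this value is $15.0078\ldots\leq 15.008$ --- an arithmetic slip in which the leading $.5$ was dropped. With the correct value, deducing \eqref{eqn:cor:32} from \eqref{eqn:tradeoff1}, as the paper does, only yields the range $t\leq\frac{1}{15.508}$, which does not cover the claimed range $t\leq\frac{1}{15.008}$; so the paper's own proof of \eqref{eqn:cor:32} is broken as written. Your product bound has slack precisely because $\frac{(1+t)^2}{4}<1$ for small $t$: at the worst case $\sigma_v=16$, $t=\frac{1}{15.008}$ one gets $t\Sigma_E+\frac{(1+t)^2}{4}\approx 1.033+0.284<2$, and monotonicity of $t^2+(4\Sigma_E+2)t-7$ in both $t$ and $\Sigma_E$ extends this to all $\sigma_v\leq 16$ and all smaller $t$. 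Hence \eqref{eqn:cor:32} is true, but it requires your argument (or, alternatively, restating the corollary with $15.508$ in place of $15.008$ so that the paper's simpler route goes through).
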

\begin{proof}
See Appendix~\ref{app:cor1} for the proof.
\end{proof}

As we can see from \eqref{eqn:tradeoff1}, for the power $0 < P \leq \frac{1}{\max(1, \Sigma_E)}$, the tradeoff is inversely proportional. When the power becomes $P=\frac{1}{\max(1, \Sigma_e)}$, the state distortion saturates at the Kalman filtering performance.

In fact, careful inspection of Figure~\ref{fig:simulaeq11} shows that the transition between the interval $P \in [0, \frac{1}{\max(\Sigma_E, 1)}]$ and $P \in [ \frac{1}{\max(\Sigma_E, 1)}, \infty]$ is much smoother than the one suggested in the conceptual plot of Figure~\ref{fig:aeq11}. Therefore, a better approximation of the tradeoff can be $D_{\sigma_v}(P) \approx \frac{1}{P}+\max(\Sigma_E, 1)$ rather than $D_{\sigma_v}(P) \approx \max(\frac{1}{P}, \Sigma_E, 1)$ suggested in Figure~\ref{fig:aeq11}. In fact, since $\max(\frac{1}{P}, \Sigma_E, 1) \leq \frac{1}{P} + \max(\Sigma_E, 1) \leq 2 \max(\frac{1}{P}, \Sigma_E, 1)$, the two approximations are within a constant ratio. Thus, both approximations are enough to prove constant ratio optimality. In this paper, we choose the approximation shown in Figure~\ref{fig:aeq11}, since it is more discrete and thereby easier to compare with the lower bound in Section~\ref{sec:subaeq1} by dividing cases.

Furthermore, we can prove that the optimal tradeoff $(D_{\sigma_v}(P), P)$ can be upper and lower bounded by the approximation of Figure~\ref{fig:aeq11} within a constant ratio. Consider the case~\footnote{Remind that when $|a|=1$, $\Sigma_E \approx \sigma_v$.} when $\sigma_v \geq 16$, then \eqref{eqn:cor:31} of Lemma~\ref{cor:3} gives an achievable upper bound on the tradeoff, $D_{\sigma_v}(P) \leq \frac{2}{P}$ for all $0 < P \leq \frac{1}{1.0005 \sigma_v} \leq \frac{1}{16}$. Corollary~\ref{cor:4} of Section~\ref{sec:subaeq1} gives a lower bound on the tradeoff. By putting the second controller's noise $\sigma_{v2}=\infty$ and considering the first controller as the centralized controller, (b) of Corollary~\ref{cor:4} gives that $D_{\sigma_v}(P) \geq \frac{0.02417}{P}+1$ for all $P \leq \frac{1}{64}$. Therefore, we can notice that the upper and lower bound matches within a constant ratio. Moreover, (d) of Corollary~\ref{cor:4} gives that $D_{\sigma_v}(P) \geq \max(\frac{\sqrt{2}}{2} \sigma_{v1}, 1)$ for all $P$, which justifies the flat part of Figure~\ref{fig:aeq11}. Therefore, increasing input power $P$ more than $\frac{1}{1.0005 \sigma_v}$ will not be greatly helpful, and we can use an achievable upper bound $D_{\sigma_{v}}(P) = {2.001 \sigma_v}$ for all $P \geq \frac{1}{1.0005 \sigma_v}$ to prove a constant ratio optimality. This constant ratio characterization of the tradeoff curve can be easily converted to a constant ratio optimality of average cost problems by applying \cite[Lemma~14]{Park_Approximation_Journal_Parti}.

\subsection{When $1 < |a| \leq 2.5$}
\label{subsec:ageq1}
\begin{figure*}[htbp]
\begin{center}
        \begin{subfigure}[b]{0.4\textwidth}
                \centering
                \includegraphics[width=\textwidth]{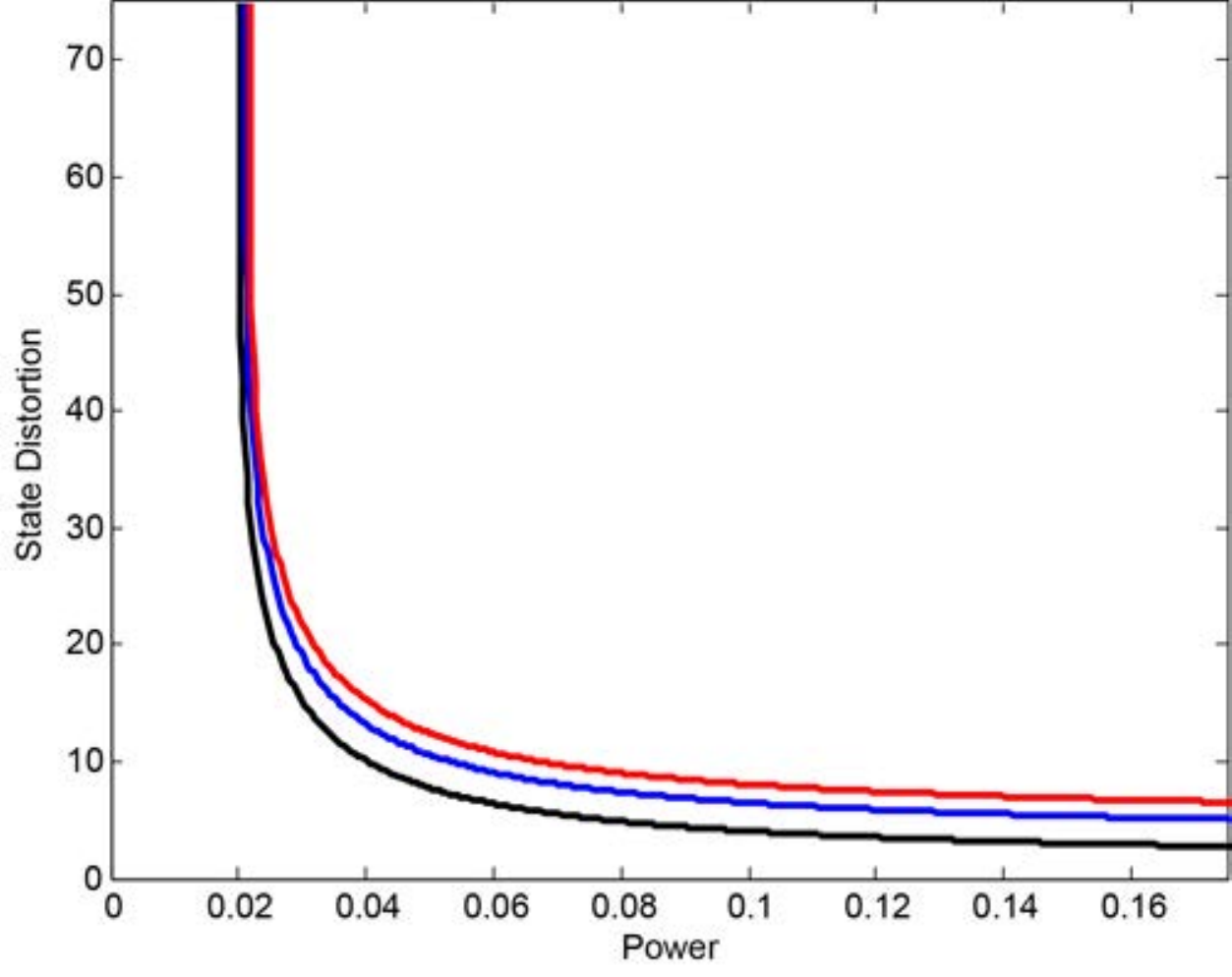}
                \caption{When $\sigma_v^2 = 1$ (Black line), $\sigma_v^2 = 10$ (Blue line), $\sigma_v^2 = 20$ (Red line)}
                \label{fig:simulaless21}
        \end{subfigure}
        \begin{subfigure}[b]{0.4\textwidth}
                \centering
                \includegraphics[width=\textwidth]{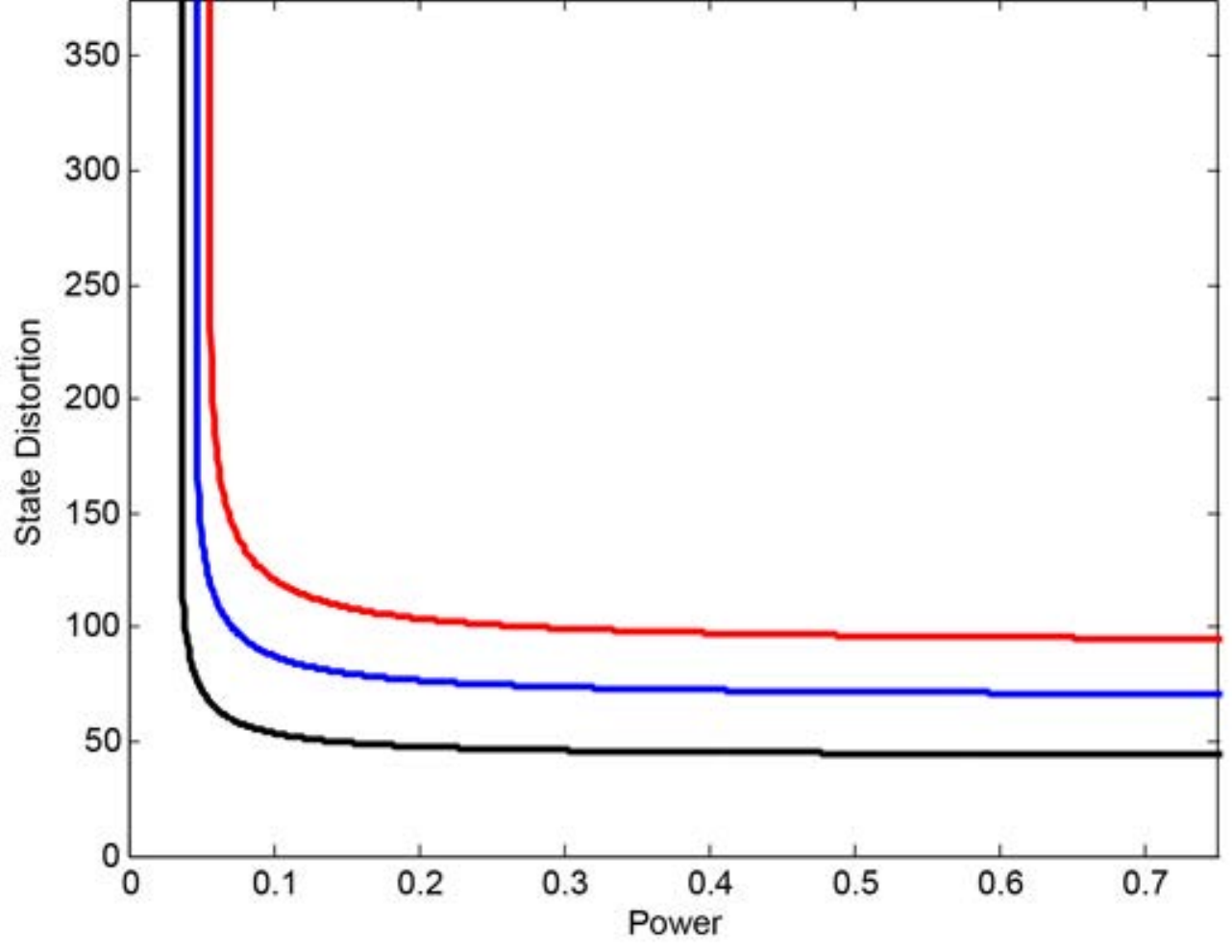}
                \caption{When $\sigma_v^2 = 1000$ (Black line), $\sigma_v^2 = 2000$ (Blue line), $\sigma_v^2 = 3000$ (Red line)}
                \label{fig:simulaless22}
        \end{subfigure}
\caption{The Optimal State Distortion-Input Power Tradeoff: When $a =1.01$ with different values of $\sigma_v^2$}
\label{fig:simulaless2}
\end{center}
\end{figure*}

\begin{figure*}[htbp]
\begin{center}
        \begin{subfigure}[b]{0.4\textwidth}
                \centering
                \includegraphics[width=\textwidth]{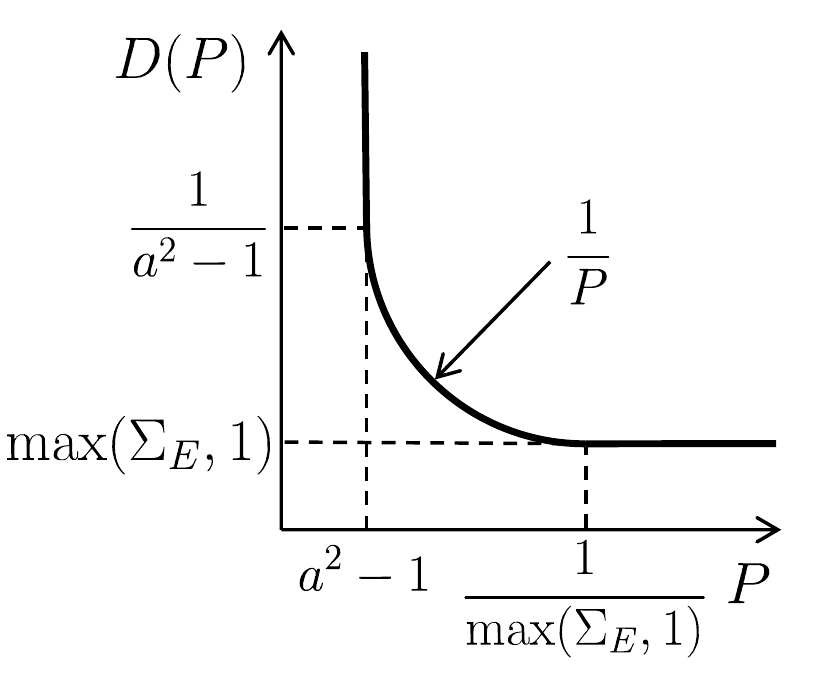}
                \caption{When $\max(\Sigma_E,1) \leq \frac{1}{a^2-1}$}
                \label{fig:aless21}
        \end{subfigure}
        \begin{subfigure}[b]{0.4\textwidth}
                \centering
                \includegraphics[width=\textwidth]{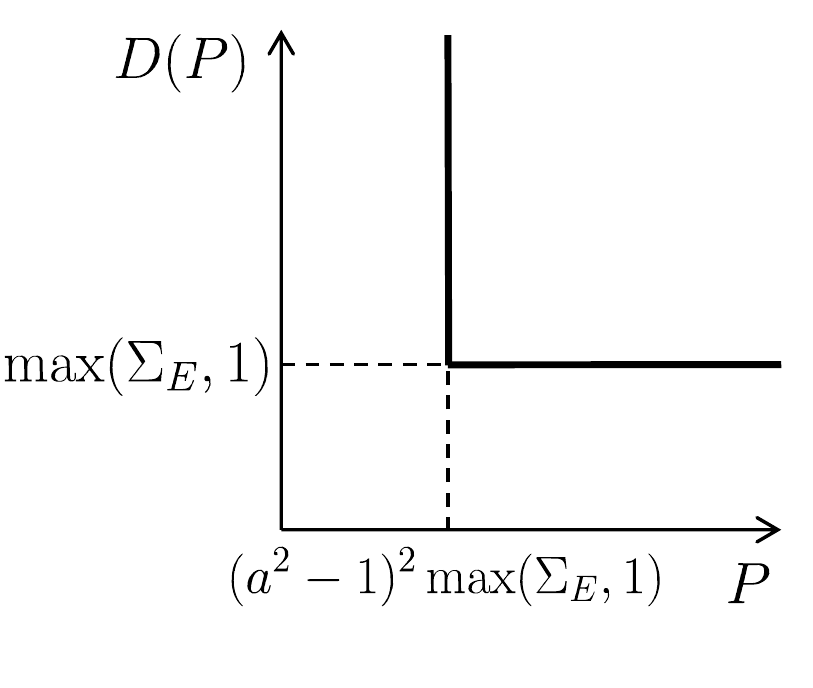}
                \caption{When $\max(\Sigma_E,1) \geq \frac{1}{a^2-1}$}
                \label{fig:aless22}
        \end{subfigure}
\caption{Conceptual Plot of State Distortion-Input Power Tradeoff: When $1 < |a| \leq 2.5$}
\label{fig:aless2}
\end{center}
\end{figure*}
Let's consider the case\footnote{Here, the explicit number $2.5$ does not have to be $2.5$. In fact, we can choose any fixed number like $2,3,5,6,\cdots$.} when $1 < |a| \leq 2.5$. Just like the case of $|a|=1$, the state distortion saturates at $a^2 \Sigma_E +1 \approx \max(\Sigma_E, 1)$ for all $P$, and the state distortion inversely proportionally increases as the power decreases.

However, there is a significant difference from the previous case of $|a|=1$. Since the system is unstable by itself, when the power is too small the state distortion diverges to infinity. Figure~\ref{fig:simulaless21} shows this behavior. Furthermore, it is well known that the minimum capacity to stabilize unstable plants is $\log |a|$. Since the variance of $w[n]$ is $1$, the capacity from the controller to the plant can be thought as of $\frac{1}{2} \log (1+P)$. Therefore, the stabilizability condition $\frac{1}{2} \log(1+P) > \log |a|$ gives $P \geq a^2-1$ to stabilize the system.

Based on the above discussion, we can draw a conceptual power-distortion tradeoff curve as shown in Figure~\ref{fig:aless21}. Like Figure~\ref{fig:aeq11}, when the power is larger than $\frac{1}{\max(\Sigma_E, 1)}$, the state distortion is saturated at $\max(\Sigma_E, 1)$. When the power is between $a^2-1$ and $\frac{1}{\max(\Sigma_E, 1)}$, the state distortion is inversely proportional to the power. However unlike Figure~\ref{fig:aeq11} when the power is smaller than $(a^2-1)$, the controller cannot stabilize the system, so the state distortion diverges to infinity.

Furthermore, Figure~\ref{fig:simulaless22} shows that as $\Sigma_E$ increases, the gap between $(a^2-1)$ and $\frac{1}{\max(\Sigma_E, 1)}$ (the interval where the distortion is inversely proportional to the power) decreases, i.e. the boundary of the optimal tradeoff region shrinks. Eventually, the whole boundary will converge to one point. Figure~\ref{fig:aless22} conceptualize this situation. When $\Sigma_E$ is large enough so that $\max(\Sigma_E, 1) \geq \frac{1}{a^2-1}$, we need at least $(a^2-1)^2 \max(\Sigma_E,1)$ controller power to stabilize the plant, and the corresponding state distortion saturates at the Kalman filtering performance $\max(\Sigma_E, 1)$.

The following corollary shows a formal statement of these conceptual tradeoff curves shown in Figure~\ref{fig:aless2}.

\begin{corollary}
Consider the centralized LQG problem shown in Problem~\ref{prob:b}. When $|a|>1$, the achievable power-distortion tradeoff $(D_{\sigma_v}(P),P)$ by the strategies of Definition~\ref{def:cen} is upper bounded as follows:\\
(i) $(D_{\sigma_v}(P),P) \leq ((a^2+1) \Sigma_E + \frac{a^2}{a^2-1}, (a^2-1)^2 \Sigma_E + (a^2-1))$\\
(ii) $(D_{\sigma_v}(P),P) \leq (\frac{4(|a|+1)^2}{t},t)$ for all $2(|a|+1)^2(1-(\frac{1}{a})^2) \leq t \leq \frac{2(|a|+1)^2}{\max(1,(a^2+1)\Sigma_E)}$\\
where the definition of $\Sigma_E$ is given in \eqref{eqn:kalmanperf}.

Especially, when $1 < |a| \leq 2.5$, $D_{\sigma_v}(P)$ satisfies the following conditions:\\
(i') $(D_{\sigma_v}(P),P) \leq (7.25 \Sigma_E + \frac{6.25}{a^2-1}, (a^2-1)^2 \Sigma_E + (a^2-1))$\\
(ii') $(D_{\sigma_v}(P),P) \leq (\frac{49}{t},t)$ for all $8(a^2-1)\leq t \leq \frac{8}{\max(1,7.25\Sigma_E)} $
\label{cor:1}
\end{corollary}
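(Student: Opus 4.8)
The statement is purely an achievability claim, so my plan is to read off every bound from the single achievable family of Lemma~\ref{lem:aless21} by choosing the free gain $k$ well. Assuming $a>0$ without loss of generality (the dynamics are invariant under $a\mapsto -a$, $k\mapsto -k$), I would reparametrize by $b:=a-k$, so that the stability hypothesis $|a-k|<1$ of the lemma becomes simply $|b|<1$. Substituting into \eqref{eqn:powerdis} and writing $C:=(a^2-1)\Sigma_E+1>0$, the two coordinates collapse to the transparent forms
\begin{align}
D_{\sigma_v}(P)=\Sigma_E+\frac{C}{1-b^2},\qquad P=(a-b)^2\,\frac{C}{1-b^2}. \nonumber
\end{align}
These make the qualitative picture of Figure~\ref{fig:aless2} explicit: $D$ is minimized at $b=0$ (the saturation value $a^2\Sigma_E+1$, deadbeat control on the estimate) and a one-line derivative check shows $P$ is minimized at $b=1/a$, which is exactly the stabilizability corner.

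For part (i) I would just evaluate this family at the corner $b=1/a$, i.e. $k=a-\tfrac1a$, which is admissible since $|b|=1/a<1$ for $a>1$. Using $1-b^2=(a^2-1)/a^2$ and $(a-b)^2=(a^2-1)^2/a^2$, the two coordinates simplify to exactly $(a^2+1)\Sigma_E+\frac{a^2}{a^2-1}$ and $(a^2-1)^2\Sigma_E+(a^2-1)$, which is the claimed point and the left corner of Figures~\ref{fig:aless21}--\ref{fig:aless22}.

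Part (ii) is the substantive step. As $b$ decreases from $1/a$ to $0$, the pair $(D,P)$ sweeps out the inversely proportional branch: $P$ grows from the corner value $(a^2-1)C$ to the saturation value $a^2C$ while $D$ falls to $a^2\Sigma_E+1$. For a target power $t$ in the stated interval I would argue in two regimes. When $t\le a^2C$ I would pick the $b\in[0,1/a]$ with $P(b)=t$ and prove the product bound $D_{\sigma_v}(P)\cdot P\le 4(|a|+1)^2$, which immediately yields $D_{\sigma_v}(P)\le 4(|a|+1)^2/t$; when $t> a^2C$ I would instead take the saturation point $b=0$, where $P=a^2C\le t$ holds automatically and $D=a^2\Sigma_E+1$ is checked against $4(|a|+1)^2/t$ using $t\le\frac{2(|a|+1)^2}{\max(1,(a^2+1)\Sigma_E)}$ (a short computation gives $(a^2\Sigma_E+1)\,U\le 4(|a|+1)^2$ in both branches of the $\max$). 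The two endpoints of the interval are exactly what make these estimates close: the lower endpoint $2(|a|+1)^2(1-1/a^2)$ keeps $t$ strictly above the corner power, so the chosen $b$ stays bounded away from $1/a$ and $1-b^2$ is controlled, while the upper endpoint caps the influence of $\Sigma_E$, which is what prevents $D\cdot P$ from growing like $\Sigma_E^2$ near the corner. I expect the main obstacle to be precisely this uniform product estimate --- juggling $(a-b)^2\le(|a|+1)^2$, $1-b^2\ge 1-1/a^2$, and the two $\Sigma_E$-dependent factors simultaneously so that the generous constant $4(|a|+1)^2$ actually closes --- rather than anything conceptual; when $\Sigma_E$ is so large that the interval is empty (the degenerate regime of Figure~\ref{fig:aless22}), the statement is vacuous and only part (i) is needed, and one checks that nonemptiness of the interval already forces the corner power to lie below $t$, so the $P(b)=t$ step is always feasible.

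Finally, parts (i$'$) and (ii$'$) are the specialization to $1<|a|\le 2.5$. For (i$'$) I would bound $a^2+1\le 7.25$ and $\frac{a^2}{a^2-1}\le\frac{6.25}{a^2-1}$ (from $a^2\le 6.25$) in the corner point of (i). For (ii$'$) I would coarsen $4(|a|+1)^2\le 49$ and restrict to $8(a^2-1)\le t\le\frac{8}{\max(1,7.25\Sigma_E)}$, which is a subinterval of the one in (ii): the lower endpoint is larger because $2(|a|+1)^2(1-1/a^2)\le 8(a^2-1)$ reduces to $(|a|+1)^2\le 4a^2$, i.e. $|a|\ge 1$, and the upper endpoint is smaller because $2(|a|+1)^2\ge 8$ and $a^2+1\le 7.25$. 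These are routine once (i) and (ii) are established.
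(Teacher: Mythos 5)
Your part (i) and the specializations (i$'$), (ii$'$) are correct and essentially identical to the paper's own proof: the paper likewise evaluates Lemma~\ref{lem:aless21} at $k=a-\frac{1}{a}$ (your $b=1/a$) and coarsens constants using $1<|a|\le 2.5$. Your reparametrization $D=\Sigma_E+\frac{C}{1-b^2}$, $P=(a-b)^2\frac{C}{1-b^2}$ with $C=(a^2-1)\Sigma_E+1$ is a clean and correct way to organize the algebra, and your side observations (monotonicity of $P$ in $b$, vacuousness when the $t$-interval is empty, feasibility of $P(b)=t$ given nonemptiness) all check out.

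The gap is in the core step of (ii), which you flag as ``the main obstacle'' but do not resolve, and the tools you name are genuinely insufficient to close it. In the regime $t\le a^2C$ you propose to solve $P(b)=t$ exactly and prove $D\cdot P\le 4(|a|+1)^2$. That product bound is in fact true under the interval constraints, but it does not follow from juggling $(a-b)^2\le(|a|+1)^2$ and $1-b^2\ge 1-1/a^2$ separately: the dominant term of the product is $\frac{(a-b)^2C^2}{(1-b^2)^2}$, and the separate worst cases give $\frac{(|a|+1)^2a^4C^2}{(a^2-1)^2}$, which blows up as $|a|\to 1^+$ (the two worst cases occur at opposite endpoints $b=0$ and $b=1/a$, so multiplying them wildly overestimates). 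What actually closes the argument is the \emph{joint} estimate $\sup_{0\le b\le 1/a}\frac{(a-b)^2}{(1-b^2)^2}=a^2$ (the ratio is maximized at \emph{both} endpoints; the interior critical point $b=a-\sqrt{a^2-1}$ is a minimum), combined with the nonemptiness consequence $C\le\frac{2a^2+1}{a^2+1}<2$ and the cross-term bound $\Sigma_E t\le \Sigma_E\cdot\frac{2(|a|+1)^2}{\max(1,(a^2+1)\Sigma_E)}\le\frac{2(|a|+1)^2}{a^2+1}$; with these, $D\cdot P\le \frac{2(|a|+1)^2}{a^2+1}+a^2C^2\le 4a^2+4\le 4(|a|+1)^2$. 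None of this appears in your sketch. Note that the paper sidesteps the difficulty entirely by a decoupling trick: it writes $k=a-\frac1a+\Delta$ with $\Delta\le\Delta^\star$, where $\Delta^\star$ is defined by $\frac{1}{1-(1/a-\Delta^\star)^2}=\max(1,(a^2+1)\Sigma_E)$, so that $(a^2+1)\Sigma_E\le\frac{1}{1-(a-k)^2}$ and hence $D\le\frac{2}{1-(a-k)^2}=:\frac{2}{s}$; it separately shows $P\le 2(|a|+1)^2s$, and then \emph{defines} $t:=2(|a|+1)^2 s$ (the upper bound on $P$, not $P$ itself), after which $(D,P)\le(\frac{4(|a|+1)^2}{t},t)$ reads off with no product estimate at all. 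To repair your proof, either supply the joint estimate above or abandon exact power matching in favor of the paper's choice of $t$.
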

\begin{proof}
See Appendix~\ref{app:cor1} for the proof.
\end{proof}

When $\max(\Sigma_E, 1) \leq \frac{1}{a^2-1}$, (ii') of the corollary shows that we can achieve the tradeoff curve shown in Figure~\ref{fig:aless21}. More precisely, when $P=8(a^2-1)$, the statement (ii') reduces to $D_{\sigma_v} (P) \leq \frac{49}{8(a^2-1)}$. Therefore, $(D_{\sigma_v}(P), P) \approx (\frac{1}{a^2-1}, a^2-1)$ is achievable.

When $P=\frac{8}{\max(1,7.25 \Sigma_E)}$, the statement (ii') reduces to $D_{\sigma_v} (P) \leq \frac{49}{8} \max(1,7.25 \Sigma_E)$. Thus, $(D_{\sigma_v}(P), P) \approx (\max(\Sigma_E, 1), \frac{1}{\max(\Sigma_E, 1)})$ is also achievable. Between these two values, the tradeoff is inversely proportional.

When $\max(\Sigma_E, 1) \geq \frac{1}{a^2-1}$, (i') of the corollary shows the tradeoff curve in Figure~\ref{fig:aless22} is achievable. More precisely, with the condition $\max(\Sigma_E, 1) \geq \frac{1}{a^2-1}$, the statement (i') implies
\begin{align}
(D_{\sigma_v}(P), P) & \leq  (7.25 \Sigma_E + \frac{6.25}{a^2-1}, (a^2-1)^2 \Sigma_E + (a^2-1)) \\
&\leq (13.5 \max(\Sigma_E,1), 2(a^2-1)^2 \max(\Sigma_E,1)).
\end{align}
Therefore, the corner point of Figure~\ref{fig:aless22} is achievable up to scaling. The whole tradeoff region is also achievable since we can always achieve the points with more state distortion and input power.

Just like Section~\ref{subsec:a=1}, a careful inspection of Figure~\ref{fig:simulaless21} suggests that $D_{\sigma_v}(P) \approx \frac{1}{P-(a^2-1)} + \max(\Sigma_E, 1)$ may be a better approximation than the one shown in Figure~\ref{fig:aless21}. However, just like the discussion in Section~\ref{subsec:a=1}, the approximation of Figure~\ref{fig:simulaless21} is good enough to prove a constant ratio optimality, and easier to compare with a lower bound on the performance since the approximation is divided into multiple regions. 

In fact, by putting $\Sigma_2 = \infty$ and considering the first controller as the centralized controller, (g), (f), (j) of Corollary~\ref{cor:2} in Section~\ref{sec:aless2} respectively reduce to
\begin{align}
&D_{\sigma_v}(P) = \infty \mbox{ for all } P \leq \frac{1}{20}(a^2-1) \nonumber \\
&D_{\sigma_v}(P) \geq \frac{0.0006976}{P_1} +1 \mbox{ for all } P \leq \frac{1}{150} \nonumber \\
&D_{\sigma_v}(P) \geq \max(0.1035 \Sigma_1, 1). \nonumber
\end{align}
By taking the maximum over these three bounds, we can easily check that the resulting lower bound coincide with the approximation of Figure~\ref{fig:simulaless21} up to a constant, and thereby the average cost can also be characterized within a constant by \cite[Lemma~14]{Park_Approximation_Journal_Parti}.

\subsection{When $ 0.9 \leq |a| < 1$}

\begin{figure*}[htbp]
\begin{center}
        \begin{subfigure}[b]{0.4\textwidth}
                \centering
                \includegraphics[width=\textwidth]{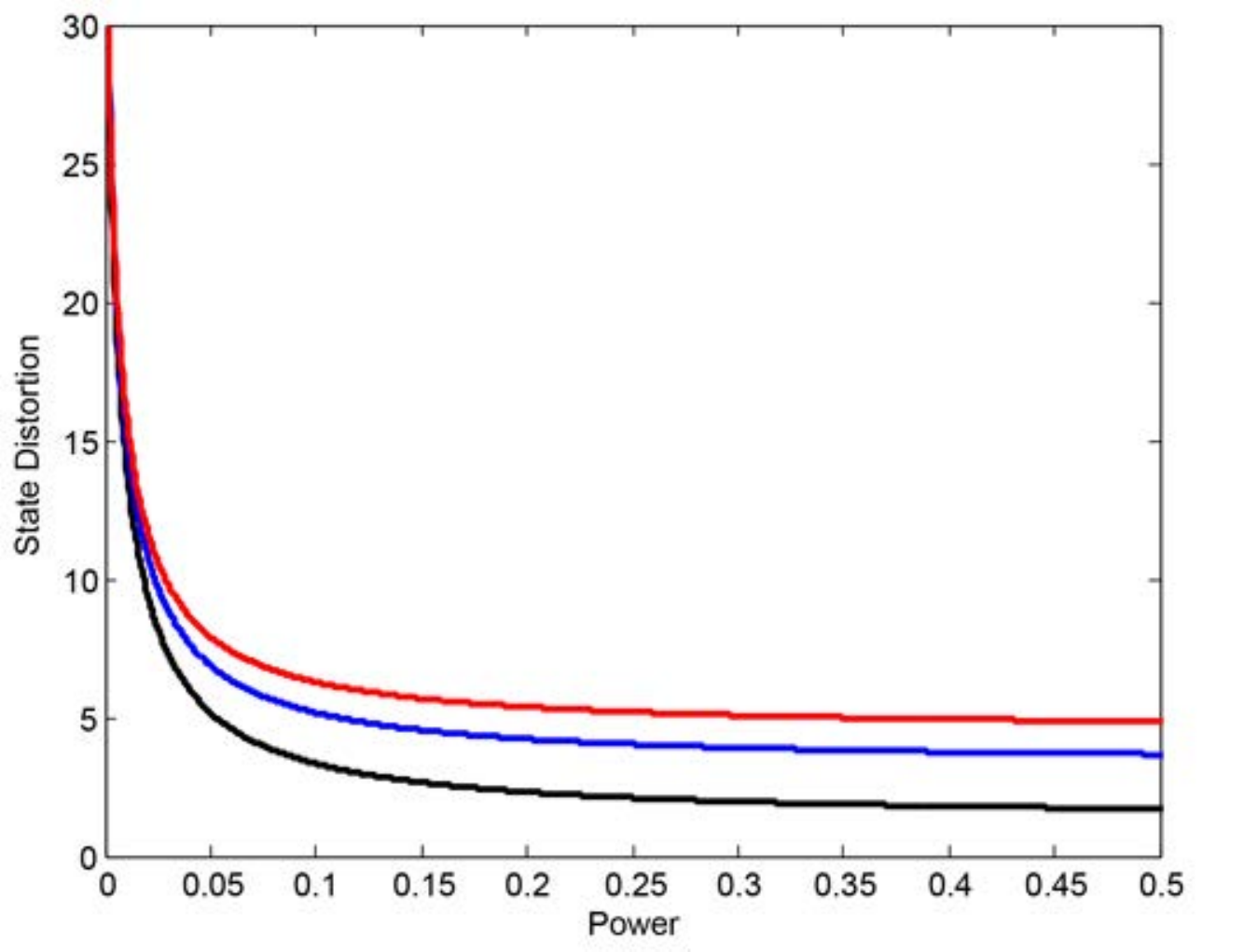}
                \caption{When $\sigma_v^2 = 1$ (Black line), $\sigma_v^2 = 10$ (Blue line), $\sigma_v^2 = 20$ (Red line)}
                \label{fig:simulaless21}
        \end{subfigure}
        \begin{subfigure}[b]{0.4\textwidth}
                \centering
                \includegraphics[width=\textwidth]{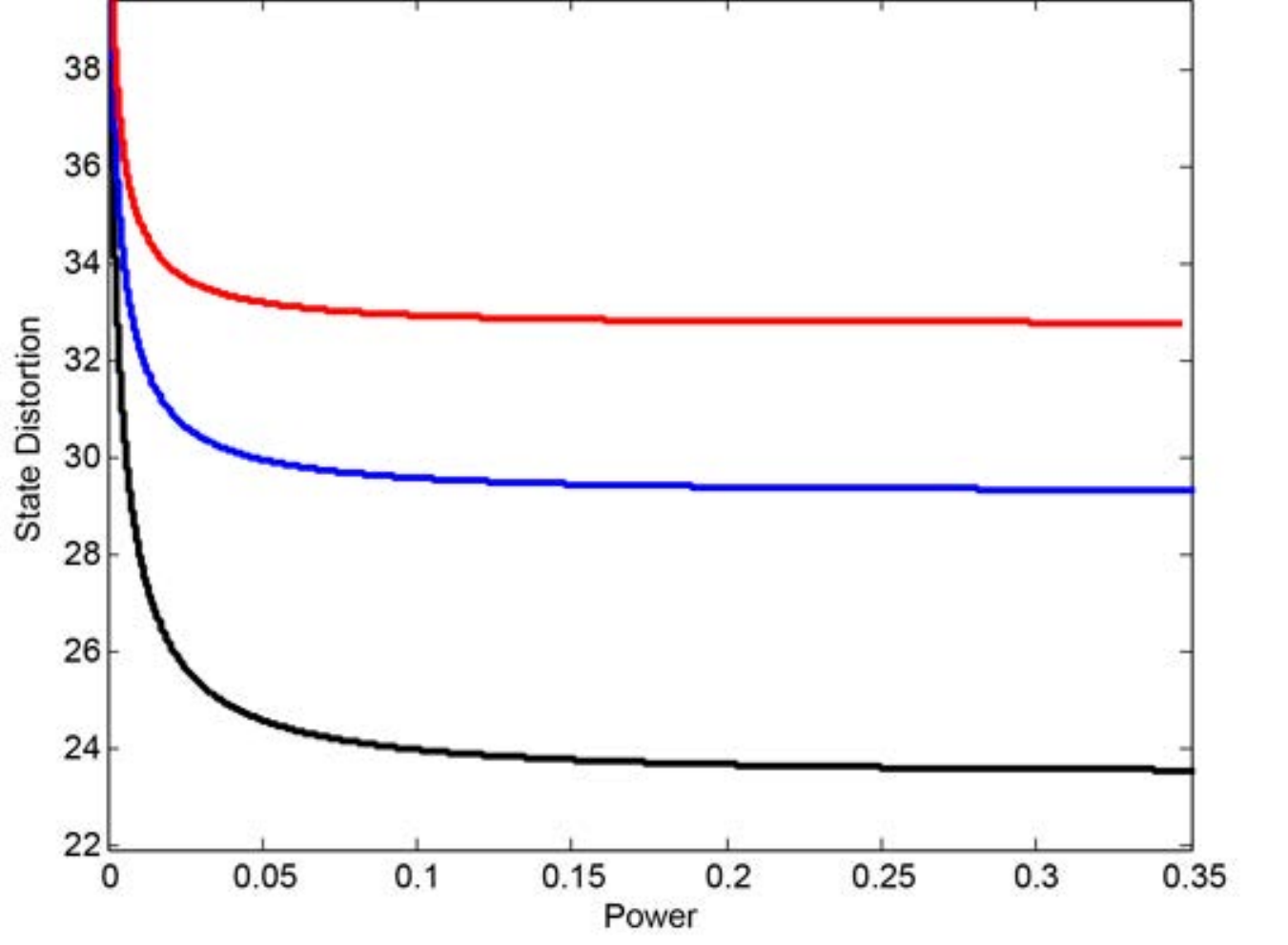}
                \caption{When $\sigma_v^2 = 1000$ (Black line), $\sigma_v^2 = 2000$ (Blue line), $\sigma_v^2 = 3000$ (Red line)}
                \label{fig:simulaless22}
        \end{subfigure}
\caption{The Optimal State Distortion-Input Power Tradeoff: When $a =0.99$ with different values of $\sigma_v^2$}
\label{fig:simulaless2}
\end{center}
\end{figure*}

\begin{figure*}[htbp]
\begin{center}
        \begin{subfigure}[b]{0.4\textwidth}
                \centering
                \includegraphics[width=\textwidth]{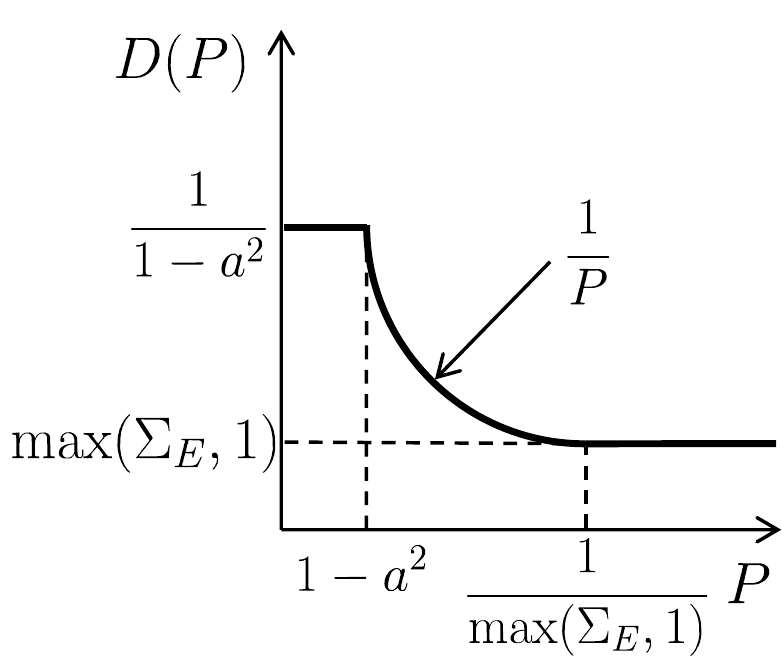}
                \caption{When $\max(\Sigma_E,1) \leq \frac{1}{1-a^2}$}
                \label{fig:aless11}
        \end{subfigure}
        \begin{subfigure}[b]{0.4\textwidth}
                \centering
                \includegraphics[width=\textwidth]{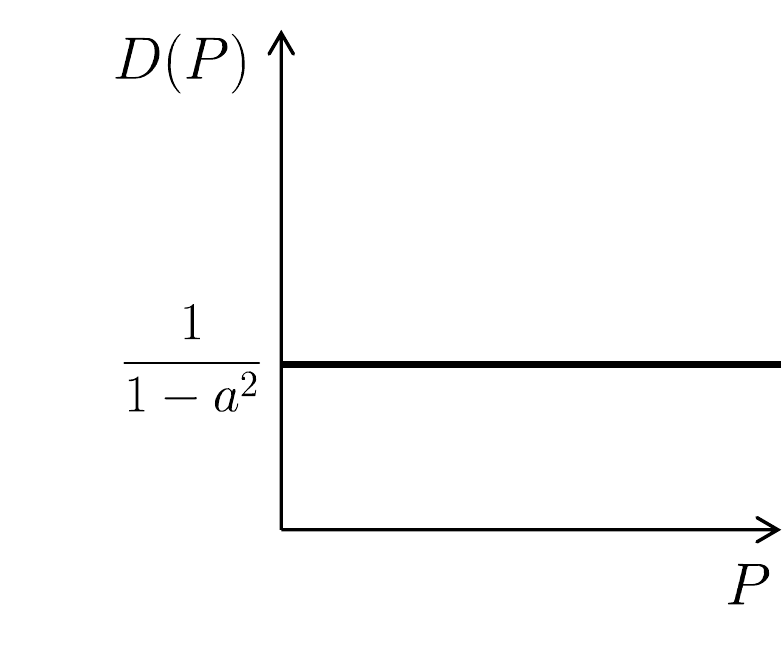}
                \caption{When $\max(\Sigma_E,1) \geq \frac{1}{1-a^2}$}
                \label{fig:aless12}
        \end{subfigure}
\caption{State Distortion-Input Power Tradeoff: When $0.9 \leq  |a| < 1$}
\label{fig:aless1}
\end{center}
\end{figure*}

Let's consider the case when $0.9 \leq |a| < 1$. In contrast to the case of $1 <|a| \leq 2.5$, the system is table by itself in this case. Therefore, the state distortion never increases above $\frac{1}{1-a^2}$. 

As we can see in Figure~\ref{fig:simulaless21}, the essential tradeoff curve is similar to the case of $|a|=1$. For all control power $P$, the state distortion saturates at the Kalman filtering performance $\max(\Sigma_E, 1)$. For the control power between $1-a^2$ and $\frac{1}{\max(\Sigma_E, 1)}$, the state distortion is inversely proportional to the control power.

However, when the power becomes smaller than $1-a^2$, the state distortion becomes larger than $\frac{1}{1-a^2}$ which is achievable even without any control. Therefore, for the power smaller than $1-a^2$, the state distortion stays at $\frac{1}{1-a^2}$. Therefore, a conceptual tradeoff curve is given as Figure~\ref{fig:aless21}.

Furthermore, Figure~\ref{fig:simulaless22} shows that as $\Sigma_E$ increases, the state distortion without control ($\frac{1}{1-a^2}$) and the Kalman filtering performance ($\max(\Sigma_E, 1)$)  becomes similar. Eventually, when $\max(\Sigma_E, 1) \geq \frac{1}{1-a^2}$, as depicted in Figure~\ref{fig:aless22} the minimum state distortion becomes $\frac{1}{1-a^2}$ which is achievable even without any control.

Corollary~\ref{cor:5}  gives formal statements of these observations.

\begin{corollary}
Consider the centralized LQG problem shown in Problem~\ref{prob:b}. When $|a| <1$, the achievable power-distortion tradeoff $(D_{\sigma_v}(P), P)$ by the strategies of Definition~\ref{def:cen} is upper bounded as follows:
\begin{align}
(D_{\sigma_v}(P),P) \leq (\frac{1}{1-a^2},0),
\label{eqn:tradeoff2}
\end{align}
and especially when\footnote{Since $|a| < 1$, the condition $\Sigma_E \leq \frac{1}{1-a^2}$ is equivalent to the condition $\max(1,\Sigma_E) \leq \frac{1}{1-a^2}$.} $\Sigma_E \leq \frac{1}{1-a^2}$ we also have 
\begin{align}
(D_{\sigma_v}(P),P) \leq (\frac{2}{t}, t) \mbox{ for all } 1-a^2 \leq t \leq \frac{1}{\max(1,\Sigma_E)}
\label{eqn:tradeoff3}
\end{align}
where the definition of $\Sigma_E$ is given as \eqref{eqn:kalmanperf}.
\label{cor:5}
\end{corollary}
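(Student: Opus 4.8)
The plan is to obtain both inequalities as special cases of Lemma~\ref{lem:aless21} by choosing the gain $k$ appropriately, and then to reduce the target bound to an elementary inequality in $a$, $\Sigma_E$ and $t$. Throughout I would assume $a>0$ without loss of generality, since the substitution $(a,k)\mapsto(-a,-k)$ leaves both coordinates of the tradeoff in \eqref{eqn:powerdis} invariant (the $2ak-k^2$ and $(a-k)^2$ factors are unchanged); the degenerate point $a=0$ is handled by continuity. The first inequality \eqref{eqn:tradeoff2} is then immediate: putting $k=0$ (which satisfies $|a-k|=|a|<1$) into Lemma~\ref{lem:aless21} yields $P=0$ and $D=\frac{1}{1-a^2}$, the open-loop stationary variance.

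For the inverse-proportional part \eqref{eqn:tradeoff3}, the first step is a change of variables that exposes how little $\Sigma_E$ matters. Writing $\mu:=1-(1-a^2)\Sigma_E$, which is nonnegative exactly under the hypothesis $\Sigma_E\le\frac{1}{1-a^2}$, a short computation from \eqref{eqn:powerdis} gives $D-\Sigma_E=\frac{\mu}{1-(a-k)^2}$ and $P=k^2\,\frac{\mu}{1-(a-k)^2}$. For a fixed target power $t$ in the stated range I would solve $P(k)=t$, which is the quadratic $k^2(\mu+t)-2atk-t(1-a^2)=0$. Its product of roots is $\frac{-t(1-a^2)}{\mu+t}<0$, so there is a unique positive root $k_+$, and $|a-k_+|<1$ holds automatically because $P(k)\to\infty$ only as $k\to a+1$. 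The crucial simplification is that, using the quadratic to eliminate $k^2$, the achieved distortion collapses to $D=\Sigma_E+\frac{\mu+t}{1-a^2+2ak_+}$, so that the goal $D\le\frac{2}{t}$ is equivalent to $2ak_+\ge\frac{t(\mu+t)}{2-\Sigma_E t}-(1-a^2)$ (here $\Sigma_E t\le 1<2$, so the denominator stays positive).

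I would finish by the following chain. Lower-bounding the positive root via $\sqrt{a^2t^2+t(\mu+t)(1-a^2)}\ge at$ gives $k_+\ge\frac{2at}{\mu+t}$, hence $2ak_+\ge\frac{4a^2t}{\mu+t}$, so it suffices to prove the purely algebraic inequality $\frac{4a^2t}{\mu+t}+(1-a^2)\ge\frac{t(\mu+t)}{2-\Sigma_E t}$ on $1-a^2\le t\le\frac{1}{\max(1,\Sigma_E)}$. A direct expansion reveals a useful cancellation: the right-hand side minus $(1-a^2)$ is nonpositive precisely when $t(1+t)\le 2(1-a^2)$ (every $\Sigma_E$-term drops out), so in that regime the inequality is trivial; in the complementary regime it becomes a routine polynomial inequality after clearing denominators. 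Since $P(k_+)=t$ exactly and $D(k_+)\le\frac{2}{t}$, the achieved pair dominates $(\frac{2}{t},t)$ coordinatewise, which is exactly \eqref{eqn:tradeoff3}.

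The main obstacle I anticipate is that last algebraic inequality, because it is genuinely tight. Tracing it to the corner $|a|\to 1$, $\Sigma_E=1$, $t=1=\frac{1}{\max(1,\Sigma_E)}$ shows both sides meeting, consistent with the constant $2$ in \eqref{eqn:tradeoff3} being essentially unimprovable there: this is the saturation point at which the inverse-proportional and flat branches of Figure~\ref{fig:aless11} coincide. Because there is no slack at that corner, the estimate $\sqrt{\cdots}\ge at$ (rather than a cruder bound) and the split on the sign of $t(1+t)-2(1-a^2)$ cannot be avoided; what remains is the careful but mechanical verification of the polynomial inequality over the stated $t$-interval.
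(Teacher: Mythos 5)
Your proposal is correct, but it executes the achievability argument for \eqref{eqn:tradeoff3} by a genuinely different route than the paper. Both proofs rest on Lemma~\ref{lem:aless21}, and both obtain \eqref{eqn:tradeoff2} identically by setting $k=0$. For the inverse-proportional part, however, the paper keeps the gain $k$ as the free parameter: it picks $k^\star\in[0,a]$ with $\max(1,\Sigma_E)=\frac{1}{1-(a-k^\star)^2}$, defines $t:=1-(a-k)^2$ for $k\in[0,k^\star]$, and finishes with two short chains of estimates, $D\le\Sigma_E+\frac{1}{1-(a-k)^2}\le\frac{2}{t}$ (this is exactly your identity $D-\Sigma_E=\frac{\mu}{1-(a-k)^2}$ combined with the crude bound $\mu\le 1$, then $\Sigma_E\le\frac{1}{t}$) and $P\le\frac{k^2}{1-(a-k)^2}\le\frac{1-a+k}{1+a-k}\le 1-(a-k)^2=t$; no equation is ever solved for $k$. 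You instead fix the target power $t$, solve $P(k)=t$ exactly for the unique positive root $k_+$ of $k^2(\mu+t)-2atk-t(1-a^2)=0$, and reduce $D(k_+)\le\frac{2}{t}$ to an algebraic inequality via the lower bound $k_+\ge\frac{2at}{\mu+t}$. Your route is tighter --- the achieved power equals $t$ rather than being merely dominated by $t$ --- and your cancellation showing that $\frac{t(\mu+t)}{2-\Sigma_E t}-(1-a^2)$ has the sign of $t(1+t)-2(1-a^2)$ independently of $\Sigma_E$ is a nice structural observation; the price is the quadratic formula and a case split that the paper's looser parametrization avoids entirely.

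Two loose ends in your write-up need closing, both minor. First, the inequality you defer as ``routine'' must actually be proved; it is true and the proof is short. Writing $b:=1-a^2$, in the nontrivial regime $t(1+t)\ge 2b$ the claim $\frac{4a^2t}{\mu+t}\ge\frac{t(1+t)-2b}{2-\Sigma_E t}$ is equivalent to $4a^2t(2-\Sigma_E t)\ge(\mu+t)\left(t(1+t)-2b\right)$, and this follows from three one-line facts: $4a^2t(2-\Sigma_E t)\ge 4a^2t$ since $\Sigma_E t\le 1$; $(\mu+t)\left(t(1+t)-2b\right)\le 2\left(t(1+t)-2b\right)$ since $\mu+t\le 2$ and $t(1+t)-2b\ge0$; and $4a^2t-2\left(t(1+t)-2b\right)=2(1-t)(t+2b)\ge 0$ since $t\le\frac{1}{\max(1,\Sigma_E)}\le 1$. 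Second, your construction degenerates at $\mu=0$, i.e.\ $\Sigma_E=\frac{1}{1-a^2}$: there $P(k)\equiv 0$, the positive root of your quadratic becomes $k_+=a+1$, and the requirement $|a-k_+|<1$ of Lemma~\ref{lem:aless21} fails, so no admissible $k_+$ exists. This case is harmless but must be said: when $\Sigma_E=\frac{1}{1-a^2}$ one has $\frac{1}{\max(1,\Sigma_E)}=1-a^2$, so the admissible $t$-interval collapses to the single point $t=1-a^2$, and the $k=0$ point from \eqref{eqn:tradeoff2} already gives $(D,P)=(\frac{1}{t},0)\le(\frac{2}{t},t)$.
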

\begin{proof}
See Appendix~\ref{app:cor1} for the proof.
\end{proof}

When $\max(\Sigma_E, 1) \geq \frac{1}{1-a^2}$, \eqref{eqn:tradeoff2} shows the tradeoff curve shown in Figure~\ref{fig:aless12} is achievable. 

When $\max(\Sigma_E, 1) \leq \frac{1}{1-a^2}$, \eqref{eqn:tradeoff3} shows the inversely proportional tradeoff curve shown in Figure~\ref{fig:aless11} when the power is between $1-a^2$ and $\frac{1}{\max(1,\Sigma_E)}$.

In fact, in Figure~\ref{fig:simulaless21} we cannot find a flat region for the power between $0$ and $1-a^2$ which is shown in the approximation of Figure~\ref{fig:aless11}. Therefore, like Section~\ref{subsec:a=1}, \ref{subsec:ageq1}, a better approximation of the tradeoff might be $D_{\sigma_v}(P) \approx \frac{1}{P-(a^2-1)}+\max(\Sigma_E, 1)$ and worth to explore. However, the approximation of Figure~\ref{fig:simulaless21} is good enough to give a constant ratio optimality result. For example, if we compute the distortion for $P \in [0, {1-a^2}]$ with this new approximation, we get $D_{\sigma_v}(P) \in [\frac{1}{2(1-a^2)}+\max(\Sigma_E, 1), \frac{1}{1-a^2}+\max(\Sigma_E, 1) ]$. Especially, for $\max(\Sigma_E, 1)\leq \frac{1}{1-a^2}$ which is the case of Figure~\ref{fig:aless11}, this interval is included in
\begin{align}
[\frac{1}{2(1-a^2)}+\max(\Sigma_E, 1), \frac{1}{1-a^2}+\max(\Sigma_E, 1) ]  \subseteq [\frac{1}{2(1-a^2)}, \frac{2}{1-a^2}]. \nonumber
\end{align}
Therefore, the approximation is essentially the same as the one of Figure~\ref{fig:aless11}, $\frac{1}{1-a^2}$, up to a constant.

Furthermore, Corollary~\ref{cor:6} of Section~\ref{sec:subless1} gives a matching lower bound to the approximation of Figure~\ref{fig:aless11}. First notice that as $\sigma_{v2}$ goes to infinity, the Kalman filtering performance $\Sigma_2$ converges to $\frac{1}{1-a^2}$ which is the disturbance of the stable system without any control. Thus, by putting $\Sigma_2 = \frac{1}{1-a^2}$, thinking the second controller as the centralized controller, and considering the case of $\frac{1}{1-a^2} \geq 40$, the conditions (a), (b), (e) of Corollary~\ref{cor:6} respectively reduce to
\begin{align} 
&D_{\sigma_v}(P) \geq \frac{0.009131}{1-a^2}+1 \mbox{ for all } P \leq 1-a^2 \nonumber\\
&D_{\sigma_v}(P) \geq \frac{0.009131}{P} + 1 \mbox{ for all } 1-a^2 \leq P  \leq \frac{1}{40} \nonumber\\
&D_{\sigma_v}(P) \geq \max(0.2636\Sigma_1, 1) \mbox{ for all } P. \nonumber
\end{align}
Therefore, we can easily observe that by taking the maximum of these bounds, we get the matching lower bound to Figure~\ref{fig:aless11} up to a constant. Therefore, by \cite[Lemma~14]{Park_Approximation_Journal_Parti}, we can also characterize the average cost within a constant ratio using the approximation of Figure~\ref{fig:aless11}.

\subsection{When $|a| \leq 0.9$}

In this case, the state distortion $\frac{1}{1-a^2}$ which can be obtained without any control input, is already small enough (smaller than $5.27$). Therefore, the tradeoff curve is essentially the same as Figure~\ref{fig:aless12}, which is achievable with zero control input.

\section{Lower bounds and Constant Ratio Results for the decentralized LQG problems}
\label{sec:lower}

Now, we intuitively understood the power-distortion tradeoff of the centralized LQG problems with scalar plants. Based on this understanding, we will prove that the single controller linear strategies are enough to achieve the optimal decentralized LQG performance within a constant ratio. In other words, $(D(P_1, P_2), P_1, P_2)$ of Problem~\ref{prob:c} is essentially $(\min(D_{\sigma_{v1}}(P_1), D_{\sigma_{v2}}(P_2)), P_1, P_2)$ where the definition of $D_{\sigma_v}(P)$ is given in Problem~\ref{prob:b}.

For the upper bound on the optimal cost of the decentralized LQG problems, we can simply use the centralized controller's performance shown in Corollary~\ref{cor:1}, \ref{cor:3}, \ref{cor:5}. However, we still need a lower bound on the cost of the decentralized LQG problems, and it turns out the naive lower bound we can obtain by merging two decentralized controllers to a centralized controller is too loose to prove a constant ratio optimality.

Therefore, in this section, we will give a non-trivial lower bound based on information theory~\cite{Cover} and prove that the proposed lower bounds are tight within a constant ratio.

\subsection{When $1 < |a| \leq 2.5$}
\label{sec:aless2}

The ideas for the lower bounds are essentially the same as ones shown in \cite{Park_Approximation_Journal_Parti}. The main idea is the geometric slicing, which can be thought as a counterpart of cutset bounds in information theory~\cite{Cover}. We refer \cite{Park_Approximation_Journal_Parti} for a detailed description of the idea. The only different from the geometric slicing lemma shown in \cite[Lemma 8]{Park_Approximation_Journal_Parti} is that here we use allow arbitrary sequences for slicing the problem since we will use arithmetic sequences to slice the problem for the $|a| =1$ case.

As we did in \cite{Park_Approximation_Journal_Parti}, we first introduce sliced finite-horizon problems.
\begin{problem}[Sliced Finite-horizon LQG problem for Problem~\ref{prob:a}]
Let the system equations, the problem parameters, the underlying random variables, and the restrictions on the controllers be given exactly the same as Problem~\ref{prob:a}. However, now for given $k, k_1, k_2 \in \mathbb{N} (k_1 \leq k, k_2 \leq k)$ and positive sequences $\alpha_{k_1}, \alpha_{k_1+1}, \cdots, \alpha_{k-1}$ and $\beta_{k_1}, \beta_{k_1+1}, \cdots, \beta_{k-1}$, the control objective is 
\begin{align}
\inf_{u_1, u_2} q \mathbb{E}[x^2[k]] + r_1 \sum_{k_1 \leq i \leq k-1} \alpha_i \mathbb{E}[u_1^2[i]] + r_2 \sum_{k_2 \leq i \leq k-1} \beta_i \mathbb{E}[u_2^2[n]].
\end{align}
\label{prob:sliced}
\end{problem}

\begin{lemma}[Geometric Slicing]
Let the system equations, the problem parameters, the underlying random variables, and the restrictions on the controllers be given as in Problem~\ref{prob:a}. When $\sigma_0^2=0$, for all $k, k_1, k_2 \in \mathbb{N} (k_1 \leq k, k_2 \leq k)$ and positive sequences $\alpha_{k_1}, \alpha_{k_1+1}, \cdots, \alpha_{k}$ and $\beta_{k_1}, \beta_{k_1+1}, \cdots, \beta_{k}$, the infinite-horizon cost of Problem~\ref{prob:a} is lower bounded by the finite-horizon cost of Problem~\ref{prob:sliced}, i.e.
\begin{align}
&\inf_{u_1, u_2} \limsup_{N \rightarrow \infty} \frac{1}{N} \sum_{0 \leq n \leq N-1} (q \mathbb{E}[x^2[n]] + r_1 \mathbb{E}[u_1^2[n]] + r_2 \mathbb{E}[u_2^2[n]])\\
&\geq \inf_{u_1, u_2} q \mathbb{E}[x^2[k]] + r_1 \sum_{k_1 \leq i \leq k-1} \alpha_i \mathbb{E}[u_1^2[i]] + r_2 \sum_{k_2 \leq i \leq k-1} \beta_i \mathbb{E}[u_2^2[n]].
\end{align}
Furthermore, both costs are increasing functions of $\sigma_0^2$ and when $\sigma_0^2 = 0$, $u_1[0] = 0$ and $u_2[0] = 0$ are optimal for both problems.
\label{lem:slicing2}
\end{lemma}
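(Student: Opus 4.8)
The plan is to establish the monotonicity claim first, since both the reduction to $\sigma_0^2 = 0$ and the main inequality rest on it, and only then to prove the lower bound by a time-invariant windowing of the infinite-horizon trajectory. Write $V(\sigma^2)$ for the optimal value of the sliced problem (Problem~\ref{prob:sliced}) when the initial state has variance $\sigma^2$, and set $J_{\mathrm{sliced}} = V(0)$. I would first argue that $V$ is non-decreasing in $\sigma^2$: a larger prior variance on $x[0]$ only inflates the Kalman error covariance at every step (by the standard monotonicity of the Riccati recursion in its initial condition), and the sliced objective is monotone in these covariances. The same conclusion can be sought through a coupling in which the high-variance initial state is written as the low-variance one plus an independent increment that the decentralized controllers cannot exploit. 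Granting monotonicity, the claim that $u_1[0]=u_2[0]=0$ is optimal when $\sigma_0^2=0$ follows quickly: then $x[0]=0$ almost surely, so $y_1[0]=v_1[0]$ and $y_2[0]=v_2[0]$ are pure noise carrying no information about the state, hence the best time-$0$ inputs are deterministic; any nonzero choice only adds its weighted control cost and injects the independent disturbance $u_1[0]+u_2[0]$ into $x[1]$, which by monotonicity of the continuation value in the variance of $x[1]$ can only raise the total cost.

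For the main inequality I would fix an arbitrary causal strategy $(u_1,u_2)$ for Problem~\ref{prob:a} with $\sigma_0^2=0$ and, for each shift $m\ge 0$, view its restriction to the window $[m,m+k]$ as a feasible strategy for the sliced problem. Because the dynamics and the driving noises are time-invariant for $n\ge 0$, this windowed problem is statistically a copy of Problem~\ref{prob:sliced} started from initial variance $\mathbb{E}[x^2[m]]\ge 0$; its induced objective is
\[
S_m = q\,\mathbb{E}[x^2[m+k]] + r_1 \sum_{k_1 \le i \le k-1} \alpha_i\,\mathbb{E}[u_1^2[m+i]] + r_2 \sum_{k_2 \le i \le k-1} \beta_i\,\mathbb{E}[u_2^2[m+i]],
\]
and monotonicity gives $S_m \ge V(\mathbb{E}[x^2[m]]) \ge V(0) = J_{\mathrm{sliced}}$ for every $m$. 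Writing $c[n] := q\,\mathbb{E}[x^2[n]] + r_1\,\mathbb{E}[u_1^2[n]] + r_2\,\mathbb{E}[u_2^2[n]]$, I would then sum $S_m$ over the overlapping windows $m=0,\dots,N-k$ and compare with $\sum_{n=0}^{N} c[n]$: the state terms of $\sum_m S_m$ run only over $n\in[k,N]$ and hence form a subset of the full state sum, while each control $u_1^2[t]$ (resp.\ $u_2^2[t]$) accumulates total weight $\sum_{k_1 \le i \le k-1}\alpha_i$ (resp.\ $\sum_{k_2 \le i \le k-1}\beta_i$) as the window slides across it. Under the normalization $\sum_i \alpha_i\le 1$ and $\sum_i\beta_i\le 1$ that the slicing sequences are chosen to satisfy, this yields $\sum_{m=0}^{N-k} S_m \le \sum_{n=0}^{N} c[n]$ up to $O(k)$ boundary terms. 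Dividing by $N$, the left side is at least $\tfrac{N-k+1}{N}J_{\mathrm{sliced}}$, while the right side tends to the infinite-horizon average cost; letting $N\to\infty$ gives $J_{\mathrm{sliced}}$ as a lower bound on that average, and taking the infimum over strategies on both sides would finish the argument.

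The step I expect to be most delicate is the weight bookkeeping in the second paragraph: verifying that the sliding-window overcount of each control input is exactly $\sum_i\alpha_i$, so that the normalization of the slicing sequences is precisely what makes the windowed sum dominated by the true per-stage cost, and handling the $O(k)$ boundary windows together with the $\limsup$ cleanly so that no tightness is lost in the limit. A secondary subtlety is the monotonicity in $\sigma_0^2$ itself: in the decentralized setting the coupling must respect that neither controller observes the extra initial increment, so the safest route is to phrase monotonicity through the covariance recursion rather than a pathwise coupling. Structurally the argument parallels the geometric-slicing lemma of~\cite{Park_Approximation_Journal_Parti}; the only genuinely new ingredient is allowing the sequences $\alpha_i,\beta_i$ to be arbitrary rather than geometric, which is exactly what later permits the $|a|=1$ analysis to slice with arithmetic sequences.
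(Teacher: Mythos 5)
Your window bookkeeping is sound: the overlap count of each control term is exactly $\sum_i \alpha_i$ (resp.\ $\sum_i \beta_i$), the normalization $\sum_i\alpha_i\le 1$, $\sum_i\beta_i\le 1$ --- which the lemma statement omits but which every slicing sequence used in the paper satisfies with equality --- is indeed what makes $\sum_{m} S_m \le \sum_n c[n]$, and the $\limsup$ passage is harmless. The genuine gap is the per-window inequality $S_m \ge V(\mathbb{E}[x^2[m]])$. The window $[m,m+k]$ is \emph{not} ``statistically a copy'' of Problem~\ref{prob:sliced} with initial variance $\mathbb{E}[x^2[m]]$: under a general nonlinear infinite-horizon strategy $x[m]$ need not be Gaussian, and, more importantly, each controller enters the window already holding its past observations $y_j[0:m-1]$, which are correlated with $x[m]$. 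Both effects can only make the windowed problem \emph{easier} than a fresh Gaussian-start sliced problem of matching variance, so your comparison points the wrong way: if the past observations pin $x[m]$ down accurately while $\mathbb{E}[x^2[m]]$ is large, the windowed cost falls strictly below $V(\mathbb{E}[x^2[m]])$ (revealing the initial state to both controllers can only reduce $V$). Thus the chain $S_m \ge V(\mathbb{E}[x^2[m]]) \ge V(0)$ breaks at its first link, even though the end conclusion $S_m \ge V(0)$ that you need is true. A second defect compounds this: you justify monotonicity of $V$ ``by the standard monotonicity of the Riccati recursion,'' but the value of Problem~\ref{prob:sliced} is a decentralized two-controller value and is not generated by any Riccati recursion --- that is the entire subject of this paper --- so neither the monotonicity claim in the ``furthermore'' clause nor your use of it is actually established.

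The repair, which is what the argument of \cite[Lemma~8]{Park_Approximation_Journal_Parti} invoked by the paper supplies, avoids variance monotonicity altogether. Reveal to both controllers, as a genie, the entire pre-window history (in particular $x[m]$, both observation records, and both control records); enlarging the information sets only decreases the infimum, so $S_m$ is lower bounded by the genie-aided windowed cost. Conditioned on that history the in-window noises $w[m:\cdot],v_1[m:\cdot],v_2[m:\cdot]$ are fresh and independent of it, so the window becomes exactly Problem~\ref{prob:sliced} with a \emph{known, deterministic} initial value $d=x[m]$. Finally, the structure of the sliced problem finishes the job: since $k_1\ge 1$, controller~1's control at window time $0$ carries no cost, so any strategy for the known-$d$ problem induces a strategy for the known-$0$ problem of identical cost (controller~1 adds $ad$ to its time-$0$ input so the two state trajectories coincide from time $1$ on, and both controllers shift the time-$0$ observations by the known constant $d$); hence the known-$d$ value is at least $V(0)$, and averaging over the revealed history gives $S_m \ge V(0) = J_{\mathrm{sliced}}$. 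This conditioning-plus-translation step is the missing idea; it also yields, as natural byproducts, the ``furthermore'' claims about $u_1[0]=u_2[0]=0$ at $\sigma_0^2=0$, which in your write-up again lean on the inapplicable Riccati argument. With that substitution your first and third paragraphs can stand essentially as written.
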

\begin{proof}
The proof essentially the same as the proof of \cite[Lemma 8]{Park_Approximation_Journal_Parti}. The only difference is that the geometric sequences in \cite[Lemma 8]{Park_Approximation_Journal_Parti} have to be replaced by $\alpha_n$ and $\beta_n$.
\end{proof}

Using this lemma, we can lower bound on the cost of the decentralized LQG problems as follows.

\begin{lemma}
Define $S_{L}$ as the set of $(k_1,k_2,k)$ such that $k_1 , k_2, k \in \mathbb{N}$ and $1 \leq k_1 \leq k_2 \leq k$. We also define $D_{L,1}(\widetilde{P_1}, \widetilde{P_2}, k_1, k_2, k)$ as follows:
\begin{align}
D_{L,1}(\widetilde{P_1},\widetilde{P_2}; k_1 ,k_2, k)&:=
(
\sqrt{
\frac{ \Sigma + a^{2(k-k_1)} \frac{1-a^{-2(k_2-k_1)}}{1-a^{-2}}  }{2^{2I'(\widetilde{P_1})}}
+a^{2(k-k_2)} \frac{1-a^{-2(k-k_2)}}{1-a^{-2}}
}\\
&
-
\sqrt{a^{2(k-k_1-1)} \frac{(1-a^{-(k-k_1)} )^2}{(1-a^{-1})^2} \widetilde{P_1}}
-
\sqrt{a^{2(k-k_2-1)} \frac{(1-a^{-(k-k_2)} )^2}{(1-a^{-1})^2} \widetilde{P_2}}
)_+^2 + 1
\end{align}
where
\begin{align}
\Sigma&=\frac{ a^{2(k-1)}\frac{1-a^{-2(k_1-1)}}{1-a^{-2}} }{2^{2I}}\\
I&=\frac{k_1-1}{2} \log( 1+ \frac{1}{(k_1-1)\sigma_{v1}^2}
\frac{a^{2(k_1-2)}(1-a^{-2(k_1-1)})}{1-a^{-2}}
\frac{1-a^{-2(k_1-1)}}{1-a^{-2}})\\
&+
\frac{k_1-1}{2} \log( 1+ \frac{1}{(k_1-1)\sigma_{v2}^2}
\frac{a^{2(k_1-2)}(1-a^{-2(k_1-1)})}{1-a^{-2}}
\frac{1-a^{-2(k_1-1)}}{1-a^{-2}})\\
I'(\widetilde{P_1})&= \frac{k_2-k_1}{2} \log(
1+\frac{1}{(k_2-k_1)\sigma_{v2}^2}(2a^{2(k_2-1-k)} \frac{1-a^{-2(k_2-k_1)}}{1-a^{-2}} \Sigma
+ 2a^{2(k_2-1-k_1)} \frac{1-a^{-2(k_2-k_1)}}{1-a^{-2}} \frac{1-a^{-2(k_2-k_1)}}{1-a^{-2}} \\
&+ 2a^{2(k_2-k_1-2)} \frac{1-a^{-2(k_2-k_1)}}{1-a^{-2}} \frac{(1-a^{-(k_2-1-k_1)})(1-a^{-(k-k_1)})}{(1-a^{-1})^2} \widetilde{P_1} )).
\end{align}
Here, when $k_1-1=0$, $I=0$ and when $k_2-k_1=0$, $I'(\widetilde{P_1})=0$.

Let $|a| > 1$. Then, for all $q, r_1, r_2, \sigma_0, \sigma_{v1}, \sigma_{v2} \geq 0$, the minimum cost \eqref{eqn:part11} of Problem~\ref{prob:a} is lower bounded as follows:
\begin{align}
&\inf_{u_1, u_2} \limsup_{N \rightarrow \infty} \frac{1}{N} \sum_{0 \leq n < N}
q \mathbb{E}[x^2[n]] + r_1 \mathbb{E}[u_1^2[n]] + r_2 \mathbb{E}[u_2^2[n]] \\
&\geq  \sup_{(k_1, k_2, k) \in S_L} \min_{\widetilde{P_1}, \widetilde{P_2} \geq 0}
q D_{L,1}(\widetilde{P_1}, \widetilde{P_2}; k_1 , k_2, k) + r_1 \widetilde{P_1} + r_2 \widetilde{P_2}.
\end{align}
\label{lem:aless22}
\end{lemma}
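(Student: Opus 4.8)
The plan is to combine the Geometric Slicing Lemma (Lemma~\ref{lem:slicing2}) with an information-theoretic cutset argument in the spirit of \cite{Park_Approximation_Journal_Parti}. Since the infinite-horizon cost of Problem~\ref{prob:a} is increasing in $\sigma_0^2$ and Lemma~\ref{lem:slicing2} bounds it (at $\sigma_0^2=0$) below by the finite-horizon cost of Problem~\ref{prob:sliced}, I would first reduce the claim to lower bounding that finite-horizon cost for an arbitrary fixed triple $(k_1,k_2,k)\in S_L$ and a conveniently chosen pair of positive weight sequences $\alpha_i,\beta_i$. Because $|a|>1$, I would take $\alpha_i$ and $\beta_i$ to be the (geometric) sequences for which a Cauchy--Schwarz bound converts the weighted budgets $\widetilde{P_1}:=\sum_{k_1\le i<k}\alpha_i\mathbb{E}[u_1^2[i]]$ and $\widetilde{P_2}:=\sum_{k_2\le i<k}\beta_i\mathbb{E}[u_2^2[i]]$ into exactly the amplification coefficients appearing in the subtraction terms of $D_{L,1}$. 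With this choice the sliced cost reads $q\,\mathbb{E}[x^2[k]]+r_1\widetilde{P_1}+r_2\widetilde{P_2}$, so it suffices to prove $\mathbb{E}[x^2[k]]\ge D_{L,1}(\widetilde{P_1},\widetilde{P_2};k_1,k_2,k)$ for every admissible strategy; taking the infimum over strategies, then the minimum over $(\widetilde{P_1},\widetilde{P_2})$ and the supremum over $(k_1,k_2,k)$ yields the stated bound.

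The heart of the argument is the lower bound on $\mathbb{E}[x^2[k]]$, obtained from the three-phase decomposition encoded in $D_{L,1}$. Expanding $x[k]=\sum_{i} a^{k-1-i}(u_1[i]+u_2[i]+w[i])$ with $x[0]=0$ and $u_1[0]=u_2[0]=0$, I isolate the phase-one driving noise ($1\le i<k_1$), whose amplified variance into $x[k]$ is the numerator $a^{2(k-1)}\frac{1-a^{-2(k_1-1)}}{1-a^{-2}}$ of $\Sigma$. During phase one the only information either controller holds about this block is carried by $y_1[1..k_1-1]$ and $y_2[1..k_1-1]$; by the rate-distortion converse (the maximum-entropy property of the Gaussian together with data processing \cite{Cover}), conditioning on everything both controllers know at time $k$ can reduce this block's variance by at most the factor $2^{-2I}$, where $I$ upper-bounds the relevant mutual information. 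I would produce the explicit $I$ by bounding each scalar observation's information by $\tfrac12\log(1+\mathrm{SNR})$ with signal power equal to the pertinent state variance and noise $\sigma_{vj}^2$, then applying concavity of the logarithm (Jensen) across the $k_1-1$ steps and summing the two controllers' contributions. Repeating this in phase two ($k_1\le i<k_2$), where controller~1 now injects power $\widetilde{P_1}$ that appears as additional signal in controller~2's observations---implicit communication through the plant---produces the further factor $2^{-2I'(\widetilde{P_1})}$ and the additive term $a^{2(k-k_2)}\frac{1-a^{-2(k-k_2)}}{1-a^{-2}}$, giving the quantity under the first square root of $D_{L,1}$ as the residual mean-square uncertainty in $x[k]$.

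Given that residual standard deviation, I would treat phase three ($k_2\le i<k$) by bounding the extent to which the remaining control energy can cancel the state. The control inputs enter $x[k]$ additively, so Minkowski's inequality followed by Cauchy--Schwarz against the amplification factors shows that the two controllers together can shift the standard deviation of $x[k]$ by at most $\sqrt{a^{2(k-k_1-1)}\frac{(1-a^{-(k-k_1)})^2}{(1-a^{-1})^2}\widetilde{P_1}}+\sqrt{a^{2(k-k_2-1)}\frac{(1-a^{-(k-k_2)})^2}{(1-a^{-1})^2}\widetilde{P_2}}$; this is exactly where the geometric weights $\alpha_i,\beta_i$ were calibrated. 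Subtracting these amplitudes from the residual standard deviation, truncating at zero since a standard deviation cannot be driven negative (hence the $(\cdot)_+$), squaring, and adding the variance $1$ of the final disturbance $w[k-1]$ (independent of all causal inputs) reproduces $D_{L,1}$ precisely.

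I expect the main obstacle to be the phase-two step: making rigorous, uniformly over all possibly nonlinear strategies, the claim that controller~2's observations reduce the conditional variance by at most $2^{-2I'(\widetilde{P_1})}$ while controller~1 simultaneously injects correlated energy of budget $\widetilde{P_1}$. The observation signal there mixes the still-uncertain noise, past noise, and controller~1's inputs, and bounding its mutual information forces the careful grouping of amplification coefficients and the factor-of-two slack inside $I'(\widetilde{P_1})$. A secondary technical point is checking that the geometric weights $\alpha_i,\beta_i$ are admissible positive sequences whose induced budgets coincide with the $\widetilde{P_1},\widetilde{P_2}$ used in the subtraction terms, so that the single minimization over $(\widetilde{P_1},\widetilde{P_2})$ is applied consistently across all three phases.
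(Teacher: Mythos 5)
Your proposal follows essentially the same route as the paper's proof: geometric slicing with the same geometrically decaying weights, the three-interval decomposition in which the power-limited inputs ($u_1$ on $[k_1,k)$, $u_2$ on $[k_2,k)$) are handled by a Minkowski/Cauchy--Schwarz subtraction rather than by conditioning, and the two entropy reductions --- the factor $2^{-2I}$ from both controllers' information-limited observations and the factor $2^{-2I'(\widetilde{P_1})}$ from controller 2's Witsenhausen-interval observations, into which controller 1's budget $\widetilde{P_1}$ enters as implicit-communication signal power. You also correctly pinpoint the step that carries the technical weight (the conditional mutual-information bound in the second interval, which the paper makes rigorous by splitting the residual error into a Gaussian part of variance exactly $\Sigma$ plus an independent remainder and constructing the auxiliary observation $y_2''$), so the plan matches the paper's proof in both structure and substance.
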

\begin{proof}
For simplicity, we assume $a >1$, $1< k_1 < k_2 <k$. The remaining cases when $a < -1$ or $k_1=1$ or $k_2-k_1=0$ or $k=k_2$ easily follow with minor modifications.

$\bullet$ Geometric Slicing: We apply the geometric slicing idea of Lemma~\ref{lem:slicing2} to get a finite-horizon problem. By putting $\alpha_{k_1}=( \frac{1-a^{-1}}{1-a^{-(k-k_1)}} )$, $\alpha_{k_1+1}=( \frac{1-a^{-1}}{1-a^{-(k-k_1)}} ) a^{-1}$, $\cdots$, $\alpha_k=( \frac{1-a^{-1}}{1-a^{-(k-k_1)}} )a^{-k+1+k_1}$ and $\beta_{k_2}=( \frac{1-a^{-1}}{1-a^{-(k-k_2)}} )$, $\beta_{k_2+1}=( \frac{1-a^{-1}}{1-a^{-(k-k_2)}} )a^{-1}$, $\cdots$, $\beta_{k-1}=( \frac{1-a^{-1}}{1-a^{-(k-k_2)}} )a^{-k+1+k_2}$ the average cost is lower bounded by
\begin{align}
&\inf_{u_1,u_2}(
q \mathbb{E}[x^2[k]] \\
&+r_1
\underbrace{(( \frac{1-a^{-1}}{1-a^{-(k-k_1)}} )\mathbb{E}[u_1^2[k_1]]+( \frac{1-a^{-1}}{1-a^{-(k-k_1)}} )a^{-1}\mathbb{E}[u_1^2[k_1+1]]+\cdots+
( \frac{1-a^{-1}}{1-a^{-(k-k_1)}} )a^{-k+1+k_1} \mathbb{E}[u_1^2[k-1]])}_{:=\widetilde{P_1}}\\
&+r_2
\underbrace{(( \frac{1-a^{-1}}{1-a^{-(k-k_2)}} )\mathbb{E}[u_2^2[k_2]]+( \frac{1-a^{-1}}{1-a^{-(k-k_2)}} )a^{-1}\mathbb{E}[u_2^2[k_2+1]]+\cdots+
( \frac{1-a^{-1}}{1-a^{-(k-k_2)}} )a^{-k+1+k_2} \mathbb{E}[u_2^2[k-1]])}_{:=\widetilde{P_2}})
\end{align}
Here, we denote the second and third terms as $\widetilde{P_1}$ and $\widetilde{P_2}$ respectively. 

$\bullet$ Three stage division: 
As we did \cite{Park_Approximation_Journal_Parti}, we will divide the finite-horizon problem into three time intervals --- information-limited interval, Witsenhausen's interval, power-limited interval. Define
\begin{align}
&W_1:=a^{k-1}w[0]+ \cdots + a^{k-k_1+1}w[k_1-2]\\
&W_2 := a^{k-k_1}w[k_1-1]+\cdots+a^{k-k_2+1}w[k_2-2] \\
&W_3 := a^{k-k_2} w[k_2-1]+\cdots + a w[k-2] \\
&U_{11}:=a^{k-2}u_1[1]+ \cdots+ a^{k-k_1} u_1[k_1-1] \\
&U_{21}:=a^{k-2}u_2[1]+ \cdots+ a^{k-k_1} u_2[k_1-1] \\
&U_1 := a^{k-k_1-1}u_1[k_1]+ \cdots + u_1[k-1] \\
&U_{22} := a^{k-k_1-1}u_2[k_1]+\cdots+a^{k-k_2}u_2[k_2-1])\\
&U_2 := a^{k-k_2-1}u_2[k_2]+ \cdots + u_2[k-1]\\
&X_1 := W_1 + U_{11} + U_{21} \\
&X_2 := W_2 + U_{22}
\end{align}
$W_1$, $W_2$, $W_3$ represent the distortions of three intervals respectively. 
$U_{11}$ and $U_{21}$ respectively represent the first and second controller inputs in the information-limited interval. $U_{1}$ represents the remaining input of the first controller. $U_{22}$ and $U_{2}$ represent the second controller's inputs in Witsenhausen's and power-limited intervals respectively.

The goal of this proof is grouping control inputs and expanding $x[n]$, so that we reveal the effects of the controller inputs on the state and isolate their effects according to their characteristics.

$\bullet$ Power-Limited Inputs: We will first isolate the power-limited inputs of the controllers, i.e. the first controller's input in the Witsenhausen's and power-limited intervals, and the second controller's input in the power-limited interval. Notice that
\begin{align}
x[k]&=w[k-1]+aw[k-2]+ \cdots + a^{k-1}w[0] \\
&\quad +u_1[k-1]+au_1[k-2]+ \cdots + a^{k-1}u_1[0] \\
&\quad +u_2[k-1]+au_2[k-2]+ \cdots + a^{k-1}u_2[0] \\
&=
(a^{k-1}w[0]+ \cdots + a^{k-k_1+1}w[k_1-2] \\
&\quad+a^{k-2}u_1[1]+ \cdots+ a^{k-k_1} u_1[k_1-1] \\
&\quad+a^{k-2}u_2[1]+ \cdots+ a^{k-k_1} u_2[k_1-1]) \\
&\quad +(
a^{k-k_1}w[k_1-1]+\cdots+a^{k-k_2+1}w[k_2-2] \\
&\quad+a^{k-k_1-1}u_2[k_1]+\cdots+a^{k-k_2}u_2[k_2-1])\\
&\quad + (
a^{k-k_2} w[k_2-1]+\cdots + a w[k-2]
)\\
&\quad + (
a^{k-k_1-1}u_1[k_1]+ \cdots + u_1[k-1]
) \\
&\quad + (
a^{k-k_2-1}u_2[k_2]+ \cdots + u_2[k-1]
) \\
&\quad+w[k-1].
\end{align}
Therefore, by \cite[Lemma~1]{Park_Approximation_Journal_Parti} we have
\begin{align}
\mathbb{E}[x^2[k]]&=
\mathbb{E}[(X_1+X_2+W_3+U_1+U_2+w[k-1])^2] \\
&=\mathbb{E}[(X_1+X_2+W_3+U_1+U_2)^2]+\mathbb{E}[w^2[k-1]] \\
&\geq (\sqrt{\mathbb{E}[(X_1+X_2+W_3)^2]}-\sqrt{\mathbb{E}[U_1^2]}-\sqrt{\mathbb{E}[U_2^2]})_+^2 + 1 \\
&= (\sqrt{\mathbb{E}[(X_1+X_2)^2]+\mathbb{E}[W_3^2]}-\sqrt{\mathbb{E}[U_1^2]}-\sqrt{\mathbb{E}[U_2^2]})_+^2 + 1.  \label{eqn:aless28}
\end{align}
where the last equality comes from the causality.
Here, we can see that $\mathbb{E}[(X_1+X_2)^2]$ does not depend on the power-limited inputs.

$\bullet$ Information-Limited Interval:  We will bound the remaining state distortion after the information-limited interval. Define $y_1'$ and $y_2'$ as follows. 
\begin{align}
&y_1'[k]= a^{k-1}w[0]+ a^{k-2}w[1]+ \cdots + w[k-1]+ v_1[k]\\
&y_2'[k]= a^{k-1}w[0]+ a^{k-2}w[1]+ \cdots + w[k-1]+ v_2[k]
\end{align}
Here, $y_1'[k]$, $y_2'[k]$ can be obtained by removing $u_1[1:k-1]$, $u_2[1:k-1]$ from $y_1[k]$, $y_2[k]$, and $u_1[k]$ and $u_2[k]$ are functions of $y_1[1:k]$ and $y_2[1:k]$ respectively.
Therefore, we can see that $y_1[1:k], y_2[1:k]$ are functions of $y_1'[1:k],y_2'[1:k]$. Moreover  $W_1,y_1'[1:k_1-1],y_2'[1:k_1-1]$ are jointly Gaussian.

Let
\begin{align}
&W_1' := W_1 - \mathbb{E}[W_1|y_1'[1:k_1-1],y_2'[1:k_1-1]]\\
&W_1'' := \mathbb{E}[W_1|y_1'[1:k_1-1],y_2'[1:k_1-1]].
\end{align}
Then, $W_1'$, $W_1''$, $W_2$ are independent Gaussian random variables. Moreover, $W_1', W_2$ are independent from $y_1'[1:k_1-1],y_2'[1:k_1-1]$. $W_1''$ is a function of $y_1'[1:k_1-1],y_2'[1:k_1-1]$.

Now, let's lower bound $\mathbb{E}[(X_1+X_2)^2]$. Since Gaussian maximizes the entropy, we have
\begin{align}
&\frac{1}{2}\log( 2 \pi e \mathbb{E}[(X_1+X_2)^2] \\
&\geq h(X_1+X_2)\\
&\geq h(X_1+X_2|y_1'[1:k_1-1],y_2'[1:k_1-1],y_2[k_1:k_2-1])\\
&= h(W_1'+W_1''+U_{11}+U_{12}+W_2+U_{22}|y_1'[1:k_1-1],y_2'[1:k_1-1],y_2[k_1:k_2-1])\\
&= h(W_1'+W_2|y_1'[1:k_1-1],y_2'[1:k_1-1],y_2[k_1:k_2-1]). \label{eqn:14converse5}
\end{align}
We will first lower bound the variance of $W_1'$. Notice that
\begin{align}
\mathbb{E}[y_1'[k]^2]&=a^{2(k-1)}+ a^{2(k-2)} + \cdots + 1 + \sigma_{v1}^2 \\
&=a^{2(k-1)} \frac{1-a^{-2k}}{1-a^{-2}} + \sigma_{v1}^2
\end{align}
and
\begin{align}
\mathbb{E}[y_2'[k]^2]&=a^{2(k-1)}+ a^{2(k-2)} + \cdots + 1 + \sigma_{v2}^2 \\
&=a^{2(k-1)} \frac{1-a^{-2k}}{1-a^{-2}} + \sigma_{v2}^2.
\end{align}
Thus, we have
\begin{align}
&I(W_1;y_1'[1:k_1-1],y_2'[1:k_1-1]) \\
&= h(y_1'[1:k_1-1],y_2'[1:k_1-1])-h(y_1'[1:k_1-1],y_2'[1:k_1-1]| W_1) \\
&\leq \sum_{1 \leq i \leq k_1-1}h(y_1'[i]) + \sum_{1 \leq i \leq k_1-1} h(y_2'[i])
-\sum_{1 \leq i \leq k_1-1}h(v_1[i]) - \sum_{1 \leq i \leq k_1-1} h(v_2[i]) \\
&\leq \sum_{1 \leq k \leq k_1 -1} \frac{1}{2} \log(\frac{ a^{2(k-1)} \frac{1-a^{-2k}}{1-a^{-2}} + \sigma_{v1}^2}{\sigma_{v1}^2}) +
\sum_{1 \leq k \leq k_1 -1} \frac{1}{2} \log(\frac{ a^{2(k-1)} \frac{1-a^{-2k}}{1-a^{-2}} + \sigma_{v2}^2}{\sigma_{v2}^2}) \\
& = \frac{1}{2} \log(\prod_{1 \leq k \leq k_1 -1} \frac{ a^{2(k-1)} \frac{1-a^{-2k}}{1-a^{-2}} + \sigma_{v1}^2}{\sigma_{v1}^2} )+
\frac{1}{2} \log(\prod_{1 \leq k \leq k_1 -1} \frac{ a^{2(k-1)} \frac{1-a^{-2k}}{1-a^{-2}} + \sigma_{v2}^2}{\sigma_{v2}^2} )\\
& \overset{(A)}{\leq}
\frac{k_1-1}{2} \log( \frac{1}{k_1-1}\sum_{1 \leq k \leq k_1 -1} \frac{ a^{2(k-1)} \frac{1-a^{-2k}}{1-a^{-2}} + \sigma_{v1}^2}{\sigma_{v1}^2} )+
\frac{k_1-1}{2} \log( \frac{1}{k_1-1}\sum_{1 \leq k \leq k_1 -1} \frac{ a^{2(k-1)} \frac{1-a^{-2k}}{1-a^{-2}} + \sigma_{v2}^2}{\sigma_{v2}^2} )\\
& =
\frac{k_1-1}{2} \log( 1+ \frac{1}{k_1-1}\sum_{1 \leq k \leq k_1 -1} \frac{ a^{2(k-1)} \frac{1-a^{-2k}}{1-a^{-2}}}{\sigma_{v1}^2} )+
\frac{k_1-1}{2} \log( 1+ \frac{1}{k_1-1}\sum_{1 \leq k \leq k_1 -1} \frac{ a^{2(k-1)} \frac{1-a^{-2k}}{1-a^{-2}}}{\sigma_{v2}^2} )\\
&\leq
\frac{k_1-1}{2} \log( 1+ \frac{1}{k_1-1}\sum_{1 \leq k \leq k_1 -1} \frac{ a^{2(k-1)} \frac{1-a^{-2(k_1-1)}}{1-a^{-2}}}{\sigma_{v1}^2} )+
\frac{k_1-1}{2} \log( 1+ \frac{1}{k_1-1}\sum_{1 \leq k \leq k_1 -1} \frac{ a^{2(k-1)} \frac{1-a^{-2(k_1-1)}}{1-a^{-2}}}{\sigma_{v2}^2} )\\
&=
\frac{k_1-1}{2} \log( 1+ \frac{1}{(k_1-1)\sigma_{v1}^2}
\frac{a^{2(k_1-2)}(1-a^{-2(k_1-1)})}{1-a^{-2}}
\frac{1-a^{-2(k_1-1)}}{1-a^{-2}})\\
&+
\frac{k_1-1}{2} \log( 1+ \frac{1}{(k_1-1)\sigma_{v2}^2}
\frac{a^{2(k_1-2)}(1-a^{-2(k_1-1)})}{1-a^{-2}}
\frac{1-a^{-2(k_1-1)}}{1-a^{-2}})  \label{eqn:aless22}
\end{align}
(A): Arithmetic-Geometric mean.

Let's denote the last equation as $I$. We also have
\begin{align}
\mathbb{E}[W_1^2]&=a^{2(k-1)} + \cdots + a^{2(k-k_1 +1)})\\
&=a^{2(k-1)} ( 1 + \cdots + a^{-2(k_1-2)})\\
&=a^{2(k-1)}\frac{1 - a^{-2(k_1-1)}}{1- a^{-2}}. \label{eqn:aless23}
\end{align}
Now, we can bound the variance of the Gaussian random variable $W_1'$ as follows:
\begin{align}
&\frac{1}{2}\log(2\pi e \mathbb{E}[W_1'^2]) = h(W_1') \\
&\geq h(W_1'|y_1'[1:k_1-1],y_2'[1:k_1-1]) \\
&= h(W_1|y_1'[1:k_1-1],y_2'[1:k_1-1]) \\
&= h(W_1) - I(W_1;y_1'[1:k_1-1],y_2'[1:k_1-1])\\
&\geq \frac{1}{2} \log( 2 \pi e a^{2(k-1)}\frac{1-a^{-2(k_1-1)}}{1-a^{-2}}) - I
\end{align}
where the last inequality follows from \eqref{eqn:aless22} and \eqref{eqn:aless23}.

Thus, 
\begin{align}
\mathbb{E}[W_1'^2] \geq \frac{ a^{2(k-1)}\frac{1-a^{-2(k_1-1)}}{1-a^{-2}} }{2^{2I}} \label{eqn:aless2300}
\end{align}
and denote the last term as $\Sigma$. Since $W_1'$ is Gaussian, we can write $W_1'=W_1'''+W_1''''$ where $W_1'''\sim \mathcal{N}(0,\Sigma)$, and $W_1''',W_1''''$ are independent.

Moreover, we also have
\begin{align}
\mathbb{E}[W_2^2] &= a^{2(k-k_1)} + \cdots + a^{2(k-k_2+1)}\\
&= a^{2(k-k_1)} \frac{1-a^{-2(k_2-k_1)}}{1-a^{-2}}. \label{eqn:aless2301}
\end{align}

By \eqref{eqn:14converse5} we have
\begin{align}
&\frac{1}{2} \log(2 \pi e \mathbb{E}[(X_1+X_2)^2]) \\
&\geq h(W_1'+W_2| y_1'[1:k_1-1],y_2'[1:k_1-1],y_2[k_1:k_2-1])\\
&\geq h(W_1'+W_2| W_1'''', y_1'[1:k_1-1],y_2'[1:k_1-1],y_2[k_1:k_2-1])\\
&= h(W_1'''+W_2| W_1'''', y_1'[1:k_1-1],y_2'[1:k_1-1],y_2[k_1:k_2-1])\\
&= h(W_1'''+W_2| W_1'''', y_1'[1:k_1-1],y_2'[1:k_1-1])\\
& - I(W_1'''+W_2 ; y_2[k_1:k_2-1] | W_1'''', y_1'[1:k_1-1],y_2'[1:k_1-1])\\
&= h(W_1'''+W_2)\\
& - I(W_1'''+W_2 ; y_2[k_1:k_2-1] | W_1'''', y_1'[1:k_1-1],y_2'[1:k_1-1]) \\
&\geq \frac{1}{2} \log( 2 \pi e( \Sigma + a^{2(k-k_1)} \frac{1-a^{-2(k_2-k_1)}}{1-a^{-2}})) \\
& - I(W_1'''+W_2 ; y_2[k_1:k_2-1] | W_1'''', y_1'[1:k_1-1],y_2'[1:k_1-1]) \label{eqn:14convere8}
\end{align}
where the last inequality follows from the fact that $W_1'''$ and $W_2$ are independent Gaussian, and \eqref{eqn:aless2300}, \eqref{eqn:aless2301}.

Now, the question boils down to the upper bound of the last mutual information term, which can be understood as the information contained in the second controller's observation in Witsenhausen's interval.

$\bullet$ Second controller's observation in Witsenhausen's interval: We will bound the amount of information contained in the second controller's observation in Witsenhausen's interval. For $n \geq k_1$, define
\begin{align}
y_2''[n]&:=a^{n-k} W_1''' + a^{n-k_1} w[k_1-1] + a^{n-k_1-1}w[k_1]+ \cdots + w[n-1]\\
&+a^{n-k_1-1}u_1[k_1]+ \cdots + u_1[n-1]\\
&+v_2[n].
\end{align}
Notice that the relationship between $y_2[n]$ and $y_2''[n]$ is
\begin{align}
y_2[n]&=y_2''[n]+a^{n-k_1-1}u_2[k_1]+ \cdots + u_2[n-1]\\
&+a^{n-k}W_1'''' + a^{n-k} \mathbb{E}[W_1 | y_1'[1:k_1-1], y_2'[1:k_1-1]]. \label{eqn:14converse100}
\end{align}

The mutual information of \eqref{eqn:14convere8} is bounded as follows:
\begin{align}
&I(W_1'''+W_2; y_2[k_1:k_2-1] | W_1'''',y_1'[1:k_1-1], y_2'[1:k_1-1])\\
&= h(y_2[k_1:k_2-1]|  W_1'''',y_1'[1:k_1-1], y_2'[1:k_1-1])\\
&- h(y_2[k_1:k_2-1]|  W_1'''+W_2,W_1'''',y_1'[1:k_1-1], y_2'[1:k_1-1]) \\
&= \sum_{k_1 \leq i \leq k_2-1} h(y_2[i]|y_2[k_1:i-1],W_1'''',y_1'[1:k_1-1], y_2'[1:k_1-1])\\
&- \sum_{k_1 \leq i \leq k_2-1} h(y_2[i]|y_2[k_1:i-1],W_1'''+W_2,W_1'''',y_1'[1:k_1-1], y_2'[1:k_1-1]) \\
&\overset{(A)}{=} \sum_{k_1 \leq i \leq k_2-1} h(y_2''[i]|y_2[k_1:i-1],W_1'''',y_1'[1:k_1-1], y_2'[1:k_1-1])\\
&- \sum_{k_1 \leq i \leq k_2-1} h(y_2[i]|y_2[k_1:i-1],W_1'''+W_2,W_1'''',y_1'[1:k_1-1], y_2'[1:k_1-1]) \\
&\overset{(B)}{\leq} \sum_{k_1 \leq i \leq k_2-1}  h(y_2''[i]) - \sum_{k_1 \leq i \leq k_2-1} h(v_2[i])\\
&\leq \sum_{k_1 \leq i \leq k_2-1} \frac{1}{2} \log (2 \pi e \mathbb{E}[y_2''[i]^2])- \sum_{k_1 \leq i \leq k_2-1} \frac{1}{2} \log (2 \pi e \sigma_{v2}^2) \label{eqn:14converse6}
\end{align}
(A): Since $y_2[1:k_1-1]$ is a function of $y_2'[1:k_1-1]$, $u_2[k_1], \cdots, u_2[i]$ are functions of $y_2[k_1:i-1]$, $y_2'[1:k_1-1]$. Thus, all the terms in \eqref{eqn:14converse100} except $y_2''[i]$ can be vanished by the conditioning.\\
(B): By causality, $v_2[i]$ is independent from all conditioning random variables.

First, let's bound the variance of $y_2''[n]$. By \cite[Lemma~1]{Park_Approximation_Journal_Parti}, we have
\begin{align}
\mathbb{E}[y_2''[n]^2] &\leq 
2 \mathbb{E}[(a^{n-k}W_1'''+a^{n-k_1} w[k_1-1] + a^{n-k_1-1}w[k_1]+ \cdots + w[n-1])^2] \\
&+ 2 \mathbb{E}[(a^{n-k_1-1}u_1[k_1]+ \cdots + u_1[n-1])^2] + \sigma_{v2}^2 \\
&= 2 (a^{2(n-k)} \Sigma + a^{2(n-k_1)} + \cdots + 1 )  \\
&+ 2 \mathbb{E}[(a^{n-k_1-1}u_1[k_1]+ \cdots + u_1[n-1])^2] + \sigma_{v2}^2.
\end{align}
Here, by putting $a=a$ and $b=a^{-1}$ to \cite[Lemma~10]{Park_Approximation_Journal_Parti} we have
\begin{align}
&\mathbb{E}[(a^{n-k_1-1}u_1[k_1]+ \cdots + u_1[n-1])^2] \\
&\leq a^{2(n-k_1-1)} \frac{1-a^{-(n-k_1)}}{1-a^{-1}}
(\mathbb{E}[u_1^2[k_1]]+ a^{-1}\mathbb{E}[u_1^2[k_1+1]]+ \cdots + a^{-(n-k_1-1)} \mathbb{E}[u_1^2[n-1]]) \\
&\leq a^{2(n-k_1-1)} \frac{1-a^{-(n-k_1)}}{1-a^{-1}} \frac{1-a^{-(k-k_1)}}{1-a^{-1}} \widetilde{P_1} \\
&=a^{2(n-k_1-1)} \frac{(1-a^{-(n-k_1)})(1-a^{-(k-k_1)})}{(1-a^{-1})^2} \widetilde{P_1}.
\end{align}
Thus, the variance of $y_2''[n]$ is bounded as:
\begin{align}
\mathbb{E}[y_2''[n]^2] \leq 2 a^{2(n-k)} \Sigma + 2 a^{2(n-k_1)} \frac{1-a^{-2(n-k_1+1)}}{1-a^{-2}}
+ 2 a^{2(n-k_1-1)} \frac{(1-a^{-(n-k_1)})(1-a^{-(k-k_1)})}{(1-a^{-1})^2} \widetilde{P_1}+ \sigma_{v2}^2.
\end{align}
Therefore, we have
\begin{align}
&\sum_{k_1 \leq n \leq k_2-1} \mathbb{E}[y_2''[n]^2] \\
&\leq \sum_{k_1 \leq n \leq k_2-1}
2 a^{2(n-k)} \Sigma + 2 a^{2(n-k_1)} \frac{1-a^{-2(n-k_1+1)}}{1-a^{-2}}
+ 2 a^{2(n-k_1-1)} \frac{(1-a^{-(n-k_1)})(1-a^{-(k-k_1)})}{(1-a^{-1})^2} \widetilde{P_1}+ \sigma_{v2}^2 \\
&\leq
2(a^{2(k_1-k)}+ \cdots + a^{2(k_2-1-k)})\Sigma
+\sum_{k_1 \leq n \leq k_2 -1} 2a^{2(n-k_1)} \frac{1-a^{-2(k_2-k_1)}}{1-a^{-2}} \\
&+\sum_{k_1 \leq n \leq k_2 -1} 2a^{2(n-k_1-1)} \frac{(1-a^{-(k_2-1-k_1)})(1-a^{-(k-k_1)})}{(1-a^{-1})^2} \widetilde{P_1}
+(k_2-k_1) \sigma_{v2}^2 \\
&\leq
2a^{2(k_2-1-k)} \frac{1-a^{-2(k_2-k_1)}}{1-a^{-2}} \Sigma
+ 2a^{2(k_2-1-k_1)} \frac{1-a^{-2(k_2-k_1)}}{1-a^{-2}} \frac{1-a^{-2(k_2-k_1)}}{1-a^{-2}} \\
&+ 2a^{2(k_2-k_1-2)} \frac{1-a^{-2(k_2-k_1)}}{1-a^{-2}} \frac{(1-a^{-(k_2-1-k_1)})(1-a^{-(k-k_1)})}{(1-a^{-1})^2} \widetilde{P_1}
+ (k_2 - k_1) \sigma_{v2}^2 \label{eqn:14converse7}
\end{align}
Therefore, by \eqref{eqn:14converse6} and \eqref{eqn:14converse7} we conclude
\begin{align}
&I(W_1'''+W_2; y_2[k_1:k_2-1] | W'''', y_1'[1:k_1-1], y_2'[1:k_1-1])\\
&\leq \sum_{k_1 \leq n \leq k_2-1} \frac{1}{2} \log( \frac{ \mathbb{E}[y_2''[n]^2] }{\sigma_{v2}^2} ) \\
&= \frac{1}{2} \log( \prod_{k_1 \leq n \leq k_2-1} \frac{ \mathbb{E}[y_2''[n]^2] }{\sigma_{v2}^2} ) \\
&\overset{(A)}{\leq} \frac{k_2-k_1}{2} \log( \frac{1}{k_2-k_1} \sum_{k_1 \leq n \leq k_2-1} \frac{ \mathbb{E}[y_2''[n]^2] }{\sigma_{v2}^2} )\\
&\leq \frac{k_2-k_1}{2} \log(
1+\frac{1}{(k_2-k_1)\sigma_{v2}^2}(2a^{2(k_2-1-k)} \frac{1-a^{-2(k_2-k_1)}}{1-a^{-2}} \Sigma
+ 2a^{2(k_2-1-k_1)} \frac{1-a^{-2(k_2-k_1)}}{1-a^{-2}} \frac{1-a^{-2(k_2-k_1)}}{1-a^{-2}} \\
&+ 2a^{2(k_2-k_1-2)} \frac{1-a^{-2(k_2-k_1)}}{1-a^{-2}} \frac{(1-a^{-(k_2-1-k_1)})(1-a^{-(k-k_1)})}{(1-a^{-1})^2} \widetilde{P_1} )
)
\end{align}
(A): Arithmetic-Geometric mean

Denote the last equation as $I'(\widetilde{P_1})$. By \eqref{eqn:14convere8}, we can conclude
\begin{align}
&\frac{1}{2}\log( 2 \pi e \mathbb{E}[(X_1+X_2)^2] \geq \frac{1}{2} \log( 2 \pi e( \Sigma + a^{2(k-k_1)} \frac{1-a^{-2(k_2-k_1)}}{1-a^{-2}})) - I'(\widetilde{P_1})
\end{align}
which implies
\begin{align}
\mathbb{E}[(X_1+X_2)^2] \geq \frac{ \Sigma + a^{2(k-k_1)} \frac{1-a^{-2(k_2-k_1)}}{1-a^{-2}}  }{2^{2I'(\widetilde{P_1})}}. \label{eqn:aless24}
\end{align}

$\bullet$ Final lower bound: Now, we can merge the inequalities to prove the lemma. The variance of $W_3$ is given as follows:
\begin{align}
\mathbb{E}[W_3^2] &= a^{2(k-k_2)} + \cdots + a^2 = a^{2(k-k_2)} \frac{1-a^{-2(k-k_2)}}{1-a^{-2}}. \label{eqn:aless25}
\end{align}
By putting $a=a$ and $b=a^{-1}$ to \cite[Lemma~10]{Park_Approximation_Journal_Parti}, the variance of $U_1$ is bounded as follows:
\begin{align}
\mathbb{E}[U_1^2]
& \leq a^{2(k-k_1-1)} \frac{1-a^{-(k-k_1)}}{1-a^{-1}} (\mathbb{E}[u_1^2[k_1]]+a^{-1}\mathbb{E}[u_1^2[k_1+1]]+\cdots+a^{-(k-k_1-1)}\mathbb{E}[u_1^2[k-1]]) \\
&= a^{2(k-k_1-1)} \frac{(1-a^{-(k-k_1)} )^2}{(1-a^{-1})^2} \widetilde{P_1}. \label{eqn:aless26}
\end{align}
Likewise, the variance of $U_2$ can be bounded as
\begin{align}
\mathbb{E}[U_2^2] \leq a^{2(k-k_2-1)} \frac{(1-a^{-(k-k_2)} )^2}{(1-a^{-1})^2} \widetilde{P_2}. \label{eqn:aless27}
\end{align}
Finally, by plugging \eqref{eqn:aless24}, \eqref{eqn:aless25}, \eqref{eqn:aless26}, \eqref{eqn:aless27} into \eqref{eqn:aless28}, we prove the lemma.
\end{proof}

\begin{corollary}
Consider the decentralized LQG problem of Problem~\ref{prob:a}. Define
\begin{align}
&\Sigma_1 := \frac{(a^2-1)\sigma_{v1}^2 -1  + \sqrt{((a^2-1)\sigma_{v1}^2 - 1)^2 + 4 a^2 \sigma_{v1}^2}}{2a^2} \label{eqn:cor3stat:1}\\
&\Sigma_2 := \frac{(a^2-1)\sigma_{v2}^2 -1  + \sqrt{((a^2-1)\sigma_{v2}^2 - 1)^2 + 4 a^2 \sigma_{v2}^2}}{2a^2}. \label{eqn:cor3stat:2}
\end{align}
Let $1 < |a| \leq 2.5$. Then, for all $q, r_1, r_2, \sigma_0, \sigma_{v1}, \sigma_{v2} >0$, the minimum cost \eqref{eqn:part11} of Problem~\ref{prob:a} is lower bounded as follows:
\begin{align}
&\inf_{u_1, u_2} \limsup_{N \rightarrow \infty} \frac{1}{N} \sum_{0 \leq n < N}
q \mathbb{E}[x^2[n]] + r_1 \mathbb{E}[u_1^2[n]] + r_2 \mathbb{E}[u_2^2[n]] \\
&\geq \min_{\widetilde{P_1}, \widetilde{P_2} \geq 0} q D_L(\widetilde{P_1},\widetilde{P_2}) + r_1 \widetilde{P_1} + r_2 \widetilde{P_2}
\end{align}
where $D_L(\widetilde{P_1},\widetilde{P_2})$ satisfies the following conditions.

(a) If $\Sigma_1 \geq 150$, $\Sigma_2 \geq 150$, $\widetilde{P_1} \leq \frac{(a^2-1)^2 \Sigma_1}{40000}$, $\widetilde{P_2} \leq \frac{(a^2-1)^2 \Sigma_2}{40000}$ then $D_L(\widetilde{P_1},\widetilde{P_2})= \infty$.

(b) If $\Sigma_1 \geq 150$, $\Sigma_2 \geq 150$, $\widetilde{P_1} \leq \frac{(a^2-1)^2 \Sigma_1}{40000}$ then $D_L(\widetilde{P_1},\widetilde{P_2}) \geq 0.002774 \Sigma_2 + 1$.

(c) If $\widetilde{P_1} \leq \frac{1}{20}(a^2-1)$, $\widetilde{P_2} \leq \frac{1}{20}(a^2-1)$ then $D_L(\widetilde{P_1},\widetilde{P_2})=\infty$.

(d) If $\widetilde{P_1} \leq \frac{1}{75}$ and $\widetilde{P_2} \leq \frac{1}{75}$ then $D_L(\widetilde{P_1},\widetilde{P_2}) \geq 0.00389 \frac{1}{\max(P_1,P_2)} + 1$.

(e) If $\Sigma_2 \geq 150$, $\widetilde{P_1} \leq \frac{1}{\Sigma_2}$ then $D_L(\widetilde{P_1},\widetilde{P_2}) \geq 0.0006976\Sigma_2 + 1$.

(f) If $\Sigma_2 \geq 150$, $\frac{1}{\Sigma_2} \leq \widetilde{P_1} \leq \frac{1}{150}$ then  $D_L(\widetilde{P_1},\widetilde{P_2}) \geq \frac{0.0006976}{\widetilde{P_1}} +1 $.

(g) If $\Sigma_2 \geq 150$, $\widetilde{P_1} \leq \frac{1}{20}(a^2-1)$, $\widetilde{P_2} \leq \frac{(a^2-1)^2 \Sigma_2}{40000}$  then $D_L(\widetilde{P_1},\widetilde{P_2})=\infty$.

(h) If $\Sigma_1 \geq 150$, $\widetilde{P_1} \leq \frac{(a^2-1)^2 \Sigma_1}{40000}$, $\widetilde{P_2} \leq \frac{1}{20}(a^2-1)$ then $D_L(\widetilde{P_1},\widetilde{P_2})=\infty$.

(i) If $\Sigma_2 \geq 150$, $\widetilde{P_1} \leq \frac{1}{20}(a^2-1)$ then $D_L(\widetilde{P_1},\widetilde{P_2}) \geq 0.0002732 \Sigma_2 + 1$.

(j) For all $\widetilde{P_1}$ and $\widetilde{P_2}$, $D_L(\widetilde{P_1},\widetilde{P_2}) \geq \max( 0.1035 \Sigma_1 ,1)$.
\label{cor:2}
\end{corollary}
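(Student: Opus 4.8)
The plan is to specialize the general lower bound of Lemma~\ref{lem:aless22} by choosing, for each of the ten regimes (a)--(j), a slicing triple $(k_1,k_2,k)\in S_L$ tailored to that regime, and then to control the three data-processing quantities $\Sigma$, $I$, $I'(\widetilde P_1)$ together with the geometric coefficients $\frac{1-a^{-2m}}{1-a^{-2}}$ and $\frac{1-a^{-m}}{1-a^{-1}}$ that appear inside $D_{L,1}$. Concretely I would set $D_L(\widetilde P_1,\widetilde P_2):=\sup_{(k_1,k_2,k)\in S_L} D_{L,1}(\widetilde P_1,\widetilde P_2;k_1,k_2,k)$ and verify that this single function satisfies each stated inequality; since every case only asserts a lower bound that is witnessed by \emph{one} admissible slice, it suffices to exhibit a good slice per case. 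As in the surrounding discussion of Corollaries~\ref{cor:1},~\ref{cor:3},~\ref{cor:5}, I would read each estimate as a bound on the power--distortion tradeoff at a fixed $(\widetilde P_1,\widetilde P_2)$ (so that the supremum over slices is a legitimate supremum of valid lower bounds) and then convert to the stated average-cost/Lagrangian form through \cite[Lemma~14]{Park_Approximation_Journal_Parti}. I would also reduce to the labelling $\sigma_{v1}\le\sigma_{v2}$ (equivalently $\Sigma_1\le\Sigma_2$), so that controller~$1$ is the better observer and controller~$2$ plays the role of the power-limited, blurry receiver that the asymmetric slicing ($k_1\le k_2$) is built around.

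First I would record the elementary estimates that make the formulas usable on the compact range $1<|a|\le 2.5$: two-sided bounds on $\frac{1-a^{-2m}}{1-a^{-2}}$ and $\frac{1-a^{-m}}{1-a^{-1}}$ (sandwiched between absolute constants and the geometric factors $a^{2m}$), and the comparisons needed to turn the $a$-dependent coefficients into the explicit numbers such as $7.25$ and $49$. The crucial preliminary step, however, is to relate the information-theoretic quantity
\[
\Sigma=\frac{a^{2(k-1)}\frac{1-a^{-2(k_1-1)}}{1-a^{-2}}}{2^{2I}}
\]
to the closed-form Kalman performance $\Sigma_1$ of \eqref{eqn:cor3stat:1}, and to relate the Witsenhausen factor $2^{-2I'(\widetilde P_1)}$ together with the term $a^{2(k-k_1)}\frac{1-a^{-2(k_2-k_1)}}{1-a^{-2}}$ to $\Sigma_2$ of \eqref{eqn:cor3stat:2}. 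I would do this by letting the information-limited length $k_1-1$ and the Witsenhausen length $k_2-k_1$ grow (as functions of $\Sigma_1,\Sigma_2$ and $a$), so that the per-step log-terms in $I$ and $I'$ telescope to the steady-state estimation error. This is exactly where the threshold $\Sigma_j\ge 150$ and the large constant $40000=200^2$ enter: they quantify how long the intervals must be for the finite-horizon $\Sigma$ to land within a fixed factor of the infinite-horizon $\Sigma_1,\Sigma_2$.

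With these comparisons in hand, the divergent cases (a), (c), (g), (h) follow by sending $k\to\infty$ with $k_1$ and $k_2-k_1$ held at their chosen values: inside the $(\,\cdot\,)_+^2$ of $D_{L,1}$ the signal radicand and the two subtracted power radicands all scale like $a^{2k}$, so whenever $\widetilde P_1,\widetilde P_2$ lie below the stated thresholds (which encode the stabilization requirement $P\gtrsim a^2-1$, respectively $P\gtrsim (a^2-1)^2\max(\Sigma,1)$ in the large-$\Sigma$ corner of Figure~\ref{fig:aless22}) the signal radical strictly dominates, the positive part grows without bound, and $\sup_k D_{L,1}=\infty$. For the finite cases I would instead fix a finite $k$: cases (b), (e), (f), (i) use the Witsenhausen stage, upper-bounding $I'(\widetilde P_1)$ by the power constraint on $\widetilde P_1$ so that the surviving radicand is a constant multiple of $\Sigma_2$, yielding the bounds proportional to $\Sigma_2$ (or to $1/\widetilde P_1$ in the intermediate power range of (f)); case (d) is the analogous $1/\max(\widetilde P_1,\widetilde P_2)$ bound in the small-power range; and the floor (j) comes from the degenerate slice in which the information-limited interval alone already forces $\mathbb{E}[x^2[k]]\ge \Sigma+1$, producing $\max(0.1035\,\Sigma_1,1)$.

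The main obstacle I anticipate is not any single case but the uniform bookkeeping of constants: each bound requires simultaneously lower-bounding the signal radicand, upper-bounding $2^{2I}$ and $2^{2I'(\widetilde P_1)}$, and upper-bounding the subtracted $\sqrt{\mathbb{E}[U_i^2]}$ terms through \eqref{eqn:aless26}--\eqref{eqn:aless27}, and then checking that the chosen interval lengths make all three compatible with the \emph{same} explicit numbers across the whole range $1<|a|\le 2.5$. The delicate point is the $\Sigma\leftrightarrow\Sigma_1$ and $(\text{Witsenhausen term})\leftrightarrow\Sigma_2$ comparisons above: the finite-horizon information quantities approach their steady-state Kalman values only at a geometric rate, so the constants degrade unless the interval lengths are tuned carefully, and it is precisely this tension that pins down the awkward numerical thresholds ($150$, $\tfrac{1}{20}$, $\tfrac{1}{75}$, $40000$) rather than any cleaner values.
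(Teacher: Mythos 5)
Your plan for cases (a)--(i) is essentially the paper's own proof: pick a slicing triple per regime (sandwiching $\Sigma_1/24$, $\Sigma_2/24$, $\tfrac{1}{30P}$, or $\tfrac{1}{24\widetilde P_1}$ between consecutive values of $\frac{a^{2(k_i-1)}-1}{1-a^{-2}}$), bound $I$ and $I'(\widetilde P_1)$ by absolute constants of the form $\log e^{0.5319}$, $\tfrac12\log e^{1.0717}$ using the power hypotheses, bound the subtracted terms via \eqref{eqn:aless26}--\eqref{eqn:aless27}, and send $k\to\infty$ in (a), (c), (g), (h) while fixing $k$ ($=k_2$ or chosen from the sandwich) in (b), (d), (e), (f), (i). Two remarks on the framing: Lemma~\ref{lem:aless22} is already stated as an average-cost (Lagrangian) bound, so no appeal to \cite[Lemma~14]{Park_Approximation_Journal_Parti} is needed inside this corollary (that lemma enters only in Proposition~\ref{prop:1}); and your definition $D_L:=\sup_{(k_1,k_2,k)}D_{L,1}$ quietly swaps $\sup_k\min_{\widetilde P}$ for $\min_{\widetilde P}\sup_k$, an order-of-quantifiers liberty the paper itself also takes (its slices in (d) and (f) depend on $\widetilde P_1,\widetilde P_2$), so I do not count it against you.

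The genuine gap is case (j). You propose to get the floor $\max(0.1035\,\Sigma_1,1)$ from the degenerate slice $k_1=k_2=k$, i.e.\ from $D_{L,1}\ge\Sigma+1$ with $\Sigma=\mathbb{E}[W_1^2]/2^{2I}$. But the machinery that relates this finite-horizon $\Sigma$ to the steady-state $\Sigma_1$ --- the sandwich $\frac{a^{2(k_1-2)}-1}{1-a^{-2}}\le\frac{\Sigma_1}{24}<\frac{a^{2(k_1-1)}-1}{1-a^{-2}}$ together with the bound $I\le\log e^{0.5319}$ --- requires $k_1\ge3$, hence $\Sigma_1\ge 24a^2$, which at $|a|=2.5$ is exactly the threshold $\Sigma_1\ge150$. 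Statement (j), however, must hold for \emph{all} $\Sigma_1$ with no power or size hypothesis, and in the band $10\lesssim\Sigma_1<150$ the required bound $0.1035\,\Sigma_1$ strictly exceeds the trivial floor $D_{L,1}\ge1$, so your argument as described simply does not cover it (and for smaller sandwich constants the $I$-bound degrades, so the constant $0.1035$ is not obviously recoverable). The paper proves (j) by an entirely different and cleaner route: relax the two controllers to a single centralized controller that sees both $y_1[n]$ and $y_2[n]$ with no input-power constraint, degrade $\sigma_{v2}$ to $\sigma_{v1}$, observe that by maximum-ratio combining this is equivalent to one observation with noise variance $\sigma_{v1}^2/2$, and then invoke the exact Riccati characterization of Lemma~\ref{lem:aless21} to get $D_L\ge\Sigma_E+1$ with $\Sigma_E$ the Kalman error at noise $\sigma_{v1}^2/2$; explicit algebra comparing $\Sigma_E$ with $\Sigma_1$ then yields the uniform constant $\frac{1}{4(1+\sqrt2)}\ge0.1035$ for every $\Sigma_1$. (The same MRC device reappears in Corollary~\ref{cor:4}~(d) and Corollary~\ref{cor:6}~(e), so it is a structural ingredient you are missing, not a one-off trick.) To repair your proposal, either adopt this centralized/MRC argument for (j), or supply a separate moderate-$\Sigma_1$ analysis of the degenerate slice with re-derived constants --- the latter is possible in principle but is precisely the kind of uniform bookkeeping your own write-up flags as delicate.
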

\begin{proof}
See Appendix~\ref{sec:ageq1} for the proof.
\end{proof}

In this corollary, $\Sigma_1$ and $\Sigma_2$ are the Kalman filtering performance of the first and second controllers respectively.

Now, we have lower bounds on the average decentralized control cost of Problem~\ref{prob:a}. Furthermore, by inspecting the form of the lower bounds, the term $D_L(\widetilde{P_1}, \widetilde{P_2})$ can be speculated as a lower bound on the power-distortion tradeoff $D(P_1, P_2)$ of Problem~\ref{prob:c}. 

Furthermore, \cite[Lemma~14]{Park_Approximation_Journal_Parti} shows the average cost problem in Problem~\ref{prob:a} and the power-distortion tradeoff problem in Problem~\ref{prob:c} are closely related, i.e. if we can characterize the power-distortion tradeoff within a constant ratio, then we can characterize the average cost within a constant ratio.
Therefore, in the following discussion, we will focus on the power-distortion tradeoff and justify that why it can be characterized within a constant. Throughout the discussion, we will consider $D_L(\widetilde{P_1}, \widetilde{P_2})$ as if it is a lower bound on $D(P_1, P_2)$ without rigorous justification. 

By comparing the achievable cost shown in Corollary~\ref{cor:1}, we will prove that they are within a constant ratio. In other words, we will prove the power-distortion tradeoff $(D(P_1, P_2), P_1, P_2)$ is essentially the better performance between two single controllers, i.e. $(\min(D_{\sigma_1}(P_1), D_{\sigma_2}(P_2)), P_1, P_2)$.

To justify this, we will divide the cases. As discussed in Section~\ref{sec:central}, the centralized controller's performances behave qualitatively differently depending on $\max(\Sigma_1, 1)$, $\max(\Sigma_2, 1)$, $\frac{1}{a^2-1}$. Therefore, we will divide into three cases\footnote{Since $\sigma_{v1} \leq \sigma_{v2}$, $\Sigma_1$ is always smaller than $\Sigma_2$.} depending on these values. Then, we will further divide the cases by $P_1$ and $P_2$.

\subsubsection{When $\max(\Sigma_1, 1) \leq \max(\Sigma_2, 1) \leq \Theta(\frac{1}{a^2-1})$} We will again divide the cases based on $P_1, P_2$.

$\bullet$ When $P_1 \leq \Theta(a^2-1)$ and $P_2 \leq \Theta(a^2-1)$. As we can see from Figure~\ref{fig:aless21}, each controller does not have enough power to stabilize the system. The statement (c) of Corollary~\ref{cor:2} tells that the system is unstable even in decentralized control problems.

$\bullet$ When $P_1 \leq \Theta(a^2-1)$ and $\Theta(a^2-1) \leq P_2 \leq \Theta(\frac{1}{\max(\Sigma_2, 1)})$. In this case, the control performance is determined by the second controller. From Figure~\ref{fig:aless21} we can see that $D(P_1, P_2) = O(\frac{1}{P_2})$ is achievable. The statement (d) of Corollary~\ref{cor:2} tells it is tight up to a constant ratio.

$\bullet$ When $P_1 \leq \Theta(a^2-1)$ and $\Theta(\frac{1}{\max(\Sigma_2, 1)}) \leq P_2$. Like above the second controller dominates the performance, and Figure~\ref{fig:aless21} shows $D(P_1, P_2) = O(\max(\Sigma_2, 1))$. The statement (i) of Corollary~\ref{cor:2} shows its tightness.

%
%

$\bullet$ When $\Theta(a^2-1) \leq P_1 \leq \Theta(\frac{1}{\max(\Sigma_2, 1)})$ and $ P_2 \leq \Theta(\frac{1}{\max(\Sigma_2, 1)})$. In this case, the control performance is determined by the controller with a larger power, and Figure~\ref{fig:aless21} shows $D(P_1, P_2) = O(\frac{1}{\max(P_1, P_2)})$ is achievable. The statement (d) of Corollary~\ref{cor:2} gives a matching lower bound.

$\bullet$ When $\Theta(a^2-1) \leq P_1 \leq \Theta(\frac{1}{\max(\Sigma_2, 1)})$ and $\Theta(\frac{1}{\max(\Sigma_2, 1)}) \leq P_2$. In this case, the second controller dominates the performance, and Figure~\ref{fig:aless21} shows $D(P_1, P_2) = O(\Sigma_2, 1)$ is achievable. The statement (e) of Corollary~\ref{cor:2} gives a matching lower bound.

$\bullet$ When $\Theta(\frac{1}{\max(\Sigma_2, 1)}) \leq P_1 \leq \Theta(\frac{1}{\max(\Sigma_1, 1)})$. In this case, the first controller dominates the performance, and Figure~\ref{fig:aless21} shows $D(P_1, P_2) = O(\frac{1}{P_1})$ is achievable. The statement (f) of Corollary~\ref{cor:2} gives a matching lower bound.

$\bullet$ When $\Theta(\frac{1}{\max(\Sigma_1, 1)}) \leq P_1$. In this case, the first controller dominates the performance, and Figure~\ref{fig:aless21} shows $D(P_1, P_2) = O(\max(\Sigma_1, 1))$ is achievable. The statement (j) of Corollary~\ref{cor:2} gives a matching lower bound.

\subsubsection{When $\max(\Sigma_1, 1) \leq \Theta(\frac{1}{a^2-1}) \leq \max(\Sigma_2, 1) $} We will further divide the cases based on $P_1, P_2$.

$\bullet$ When $P_1 \leq \Theta(a^2-1)$ and $P_2 \leq \Theta((a^2-1)^2 \max(\Sigma_2,1))$.
As we can see from Figure~\ref{fig:aless21}, each controller does not have enough power to stabilize the system by itself. The statement (g) of Corollary~\ref{cor:2} shows that the system is unstable indeed for decentralized control problems.

$\bullet$ When $P_1 \leq \Theta(a^2-1)$ and $\Theta((a^2-1)^2 \max(\Sigma_2,1)) \leq P_2$. 
In this case, the second controller dominates the performance, and Figure~\ref{fig:aless22} shows $D(P_1, P_2)=O(\max(\Sigma_2, 1))$. The statement (i) of Corollary~\ref{cor:2} give a matching lower bound up to a constant ratio.

$\bullet$ When $\Theta(a^2-1) \leq P_1 \leq \Theta(\frac{1}{\max(\Sigma_2, 1)})$. Since we assume $\Theta(\frac{1}{a^2-1})\leq \max(\Sigma_2, 1)$, this case never happens.

$\bullet$ When $\Theta(\frac{1}{\max(\Sigma_2, 1)}) \leq P_1 \leq \Theta(\frac{1}{\max(\Sigma_1, 1)})$.
In this case, the first controller dominates the performance, and Figure~\ref{fig:aless21} shows $D(P_1, P_2)=O(\frac{1}{P_1})$ is achievable. The statement (f) of Corollary~\ref{cor:2} gives a matching lower bound.

$\bullet$ When $\Theta(\frac{1}{\max(\Sigma_1, 1)}) \leq P_1$. The first controller dominates the performance, but as we can see in Figure~\ref{fig:aless21} its performance is saturated by the Kalman filtering and $D(P_1, P_2)=O(\max(\Sigma_1, 1))$. The statement (j) of Corollary~\ref{cor:2} gives a matching lower bound.

\subsubsection{When $\Theta(\frac{1}{a^2-1}) \leq \max(\Sigma_1, 1) \leq \max(\Sigma_2, 1)$} We will divide the cases based on $P_1, P_2$.

$\bullet$ When $P_1 \leq \Theta((a^2-1)^2 \max(\Sigma_1, 1))$ and $P_2 \leq \Theta((a^2-1)^2 \max(\Sigma_1, 1))$. In this case, as shown in Figure~\ref{fig:aless22} each controller cannot stabilize the system by itself. The statement (a) of Corollary~\ref{cor:2} shows that the decentralized system is indeed unstable.

$\bullet$ When $P_1 \leq \Theta((a^2-1)^2 \max(\Sigma_1, 1))$ and $\Theta((a^2-1)^2 \max(\Sigma_1, 1)) \leq P_2$. In this case, the second controller dominates the performance, and Figure~\ref{fig:aless22} shows $D(P_1, P_2) \leq O(\frac{1}{\max(\Sigma_2, 1)})$ is achievable. The statement (b) of Corollary~\ref{cor:2} gives a matching lower bound.

$\bullet$ When $\Theta((a^2-1)^2 \max(\Sigma_1, 1)) \leq P_1$. In this case, the first controller dominates the performance, and Figure~\ref{fig:aless22} shows $D(P_1, P_2) \leq O(\max(\Sigma_1, 1))$ is achievable. The statement (j) of Corollary~\ref{cor:2} gives a matching lower bound.

Formally, the average cost can be characterized within a constant ratio as follows.
\begin{proposition}
Consider the decentralized LQG control of Problem~\ref{prob:a}. There exists $c \leq 2 \times 10^6$ such that for all $1 < |a| \leq 2.5$, $q$, $r_1$, $r_2$, $\sigma_{v1}$ and $\sigma_{v2}$,
\begin{align}
\frac{
\underset{u_1,u_2 \in L_{lin,kal}}{\inf}
\underset{N \rightarrow \infty}{\limsup}
\frac{1}{N}
\underset{0 \leq n < N}{\sum}
\mathbb{E}[qx^2[n]+r_1 u_1^2[n]+r_2 u_2^2[n]]
}
{
\underset{u_1,u_2}{\inf}
\underset{N \rightarrow \infty}{\limsup}
\frac{1}{N}
\underset{0 \leq n < N}{\sum}
\mathbb{E}[qx^2[n]+r_1 u_1^2[n]+r_2 u_2^2[n]]
}
\leq c. \nonumber
\end{align}
\label{prop:1}
\end{proposition}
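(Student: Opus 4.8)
The plan is to work throughout with the power-distortion tradeoff of Problem~\ref{prob:c} and to transfer the conclusion to the average-cost formulation only at the end. First I would bound the decentralized tradeoff $D(P_1,P_2)$ from above and below by quantities that agree up to a universal constant. The upper bound is immediate: a decentralized strategy may switch off one controller and run the optimal centralized Kalman strategy of Definition~\ref{def:cen} on the other, so that $D(P_1,P_2) \le \min\big(D_{\sigma_{v1}}(P_1), D_{\sigma_{v2}}(P_2)\big)$, and Corollary~\ref{cor:1} supplies explicit achievable points $(D_{\sigma_v}(P),P)$ for each single controller --- exactly the points realized by the $L_{lin,kal}$ strategies. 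The lower bound is supplied by Corollary~\ref{cor:2}, whose quantity $D_L(\widetilde{P_1},\widetilde{P_2})$ serves as a lower bound on $D(P_1,P_2)$.

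The core of the argument is a finite case analysis showing that these two bounds coincide up to a constant in every regime. Following the qualitative picture of Section~\ref{sec:central}, I would partition the parameters according to the ordering of $\max(\Sigma_1,1)$, $\max(\Sigma_2,1)$ and $\frac{1}{a^2-1}$ into the three regimes $\max(\Sigma_1,1)\le\max(\Sigma_2,1)\le\Theta(\frac{1}{a^2-1})$, then $\max(\Sigma_1,1)\le\Theta(\frac{1}{a^2-1})\le\max(\Sigma_2,1)$, and finally $\Theta(\frac{1}{a^2-1})\le\max(\Sigma_1,1)\le\max(\Sigma_2,1)$; within each regime I would subdivide by the sizes of $P_1,P_2$ relative to the corner powers $a^2-1$, $(a^2-1)^2\max(\Sigma_i,1)$ and $\frac{1}{\max(\Sigma_i,1)}$. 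For each subcase the achievable distortion read off from Figures~\ref{fig:aless21} and \ref{fig:aless22} via Corollary~\ref{cor:1} (treating the better of the two single controllers as dominant) is matched against the relevant clause of Corollary~\ref{cor:2}: the stability thresholds (a), (c), (g), (h) certify $D=\infty$ where neither controller can stabilize the plant; clauses (d) and (f) certify the inverse-proportional $\Theta(\frac{1}{\max(P_1,P_2)})$ region; and clauses (b), (e), (i), (j) certify the saturated $\Theta(\max(\Sigma_i,1))$ region. Checking that each matched pair differs by at most a fixed ratio, uniformly over $1<|a|\le 2.5$, pins down the constant in the tradeoff characterization.

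Once $(D(P_1,P_2),P_1,P_2)$ is shown to lie within a constant factor of $(\min(D_{\sigma_{v1}}(P_1),D_{\sigma_{v2}}(P_2)),P_1,P_2)$, I would invoke \cite[Lemma~14]{Park_Approximation_Journal_Parti}, which converts a constant-ratio characterization of the power-distortion tradeoff into a constant-ratio characterization of the average cost in \eqref{eqn:part11}. Since the achievable side of this tradeoff is attained precisely by the single-controller $L_{lin,kal}$ strategies, this produces the ratio bound of Proposition~\ref{prop:1} with an explicit $c$.

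\emph{The main obstacle} is the bookkeeping of constants rather than anything conceptual: the case analysis is finite but has many branches, and the numerical thresholds appearing in Corollary~\ref{cor:2} (the $150$, $40000$, $\frac{1}{20}$, $\frac{1}{75}$, and so on) must be reconciled with the centralized corner powers of Corollary~\ref{cor:1} so that no choice of $(P_1,P_2)$ slips through a gap between a subcase's upper and lower bound. A subtler point, flagged in the discussion preceding the proposition, is that $D_L$ is literally a lower bound on the sliced finite-horizon cost of Lemma~\ref{lem:aless22}; making it a genuine lower bound on $D(P_1,P_2)$ and aligning the auxiliary powers $\widetilde{P_1},\widetilde{P_2}$ with the average-power constraints must be handled carefully through \cite[Lemma~14]{Park_Approximation_Journal_Parti}. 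Ensuring that the product of all per-subcase ratios stays below $2\times 10^{6}$ is what ultimately fixes the final constant.
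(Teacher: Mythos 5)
Your proposal follows essentially the same route as the paper's own proof: an upper bound from the single-controller strategies of Corollary~\ref{cor:1}, the lower bound $D_L$ of Corollary~\ref{cor:2}, a finite case analysis over $\Sigma_1,\Sigma_2$ versus $\frac{1}{a^2-1}$ and then over $\widetilde{P_1},\widetilde{P_2}$, and finally \cite[Lemma~14]{Park_Approximation_Journal_Parti} to pass from the tradeoff characterization (in the scaled form $\min(D_{\sigma_{v1}}(c\widetilde{P_1}),D_{\sigma_{v2}}(c\widetilde{P_2}))\leq c\,D_L(\widetilde{P_1},\widetilde{P_2})$) to the average-cost ratio. The subtleties you flag --- that $D_L$ is not literally a bound on $D(P_1,P_2)$ and that the numerical thresholds of Corollary~\ref{cor:2} must mesh with the corner powers of Corollary~\ref{cor:1} without gaps --- are exactly the points the paper's appendix proof handles by its explicit subcase bookkeeping.
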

\begin{proof}
See Appendix~\ref{sec:ageq1}.
\end{proof}

\subsection{When $|a| = 1$}
\label{sec:subaeq1}

In this case, we can prove the following lemmas which parallel Lemma~\ref{lem:aless22} and Corollary~\ref{cor:2} of the case when $1 < |a| \leq 2.5$.

\begin{lemma}
We use the definition of $S_L$ shown in Lemma~\ref{lem:aless22}, i.e. the set of $(k_1, k_2, k)$ such that $k_1,k_2,k \in \mathbb{N}$ and $1 \leq k_1 \leq k_2 \leq k$. We define $D_{L,2}(\widetilde{P_1},\widetilde{P_2},k_1,k_2,k)$ as follows:
\begin{align}
D_{L,2}(\widetilde{P_1},\widetilde{P_2},k_1,k_2,k) \geq
(
\sqrt{
\frac{ \Sigma + k_2-k_1  }{2^{2I'(P_1)}}
+k-k_2
}
-
\sqrt{(k-k_1)^2 \widetilde{P_1}}
-
\sqrt{(k-k_2)^2 \widetilde{P_2}}
)_+^2 + 1
\end{align}
where
\begin{align}
\Sigma&=\frac{ k_1-1 }{2^{2I}}\\
I&=\frac{k_1-1}{2} \log( 1+ \frac{k_1-1}{\sigma_{v1}^2})+\frac{k_1-1}{2} \log( 1+ \frac{k_1-1}{\sigma_{v2}^2})\\
I'(\widetilde{P_1})&= \frac{k_2-k_1}{2} \log(1+\frac{1}{\sigma_{v2}^2}(2 \Sigma + 2(k_2-k_1) + 2(k_2-k_1-1)(k-k_1) \widetilde{P_1})).
\end{align}
Here, when $k_1-1=0$, $I=0$ and when $k_2-k_1=0$, $I'(\widetilde{P_1})=0$.

Let $|a|=1$. Then, for all $q, r_1, r_2, \sigma_0, \sigma_{v1}, \sigma_{v2} \geq 0$, the minimum cost \eqref{eqn:part11} of Problem~\ref{prob:a} is lower bounded as follows:
\begin{align}
&\inf_{u_1, u_2} \limsup_{N \rightarrow \infty} \frac{1}{N} \sum_{0 \leq n < N}
q \mathbb{E}[x^2[n]] + r_1 \mathbb{E}[u_1^2[n]] + r_2 \mathbb{E}[u_2^2[n]] \\
&\geq  \sup_{(k_1, k_2, k) \in S_L} \min_{\widetilde{P_1}, \widetilde{P_2} \geq 0}
q D_{L,2}(\widetilde{P_1}, \widetilde{P_2}; k_1 , k_2, k) + r_1 \widetilde{P_1} + r_2 \widetilde{P_2}.
\end{align}
\label{lem:eq11}
\end{lemma}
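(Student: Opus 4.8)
The plan is to transcribe the proof of Lemma~\ref{lem:aless22} line-for-line, specializing every step to $a=1$ so that each geometric sum collapses to its arithmetic counterpart. I deliberately avoid trying to deduce this from Lemma~\ref{lem:aless22} by a continuity-in-$a$ argument, since the continuity of the infinite-horizon optimal cost at $a=1$ is not established in the excerpt; re-running the finite-horizon derivation directly at $a=1$ sidesteps that issue. The one structural change, already anticipated in the discussion preceding Lemma~\ref{lem:slicing2}, is that the geometric slicing weights degenerate and must be replaced by uniform ones. Concretely, since Lemma~\ref{lem:slicing2} permits arbitrary positive weight sequences, I would begin by applying it with $\alpha_i = \frac{1}{k-k_1}$ for $k_1 \le i \le k-1$ and $\beta_i = \frac{1}{k-k_2}$ for $k_2 \le i \le k-1$. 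These are exactly the $a\to 1$ limits of the geometric weights $(\frac{1-a^{-1}}{1-a^{-(k-k_1)}})a^{-(i-k_1)}$ used before, and with this choice the grouped quantities become the ordinary average powers $\widetilde{P_1}=\frac{1}{k-k_1}\sum_i \mathbb{E}[u_1^2[i]]$ and $\widetilde{P_2}=\frac{1}{k-k_2}\sum_i \mathbb{E}[u_2^2[i]]$.

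With $a=1$ the three-stage decomposition is otherwise unchanged: I would define $W_1,W_2,W_3,U_{11},U_{21},U_1,U_{22},U_2,X_1,X_2$ exactly as before, now as plain unweighted sums of the $w$'s and $u$'s. The power-limited step is identical: \cite[Lemma~1]{Park_Approximation_Journal_Parti} gives the triangle-type bound \eqref{eqn:aless28}, namely $\mathbb{E}[x^2[k]] \ge (\sqrt{\mathbb{E}[(X_1+X_2)^2]+\mathbb{E}[W_3^2]}-\sqrt{\mathbb{E}[U_1^2]}-\sqrt{\mathbb{E}[U_2^2]})_+^2+1$. Here $\mathbb{E}[W_3^2]=k-k_2$, and I would bound $\mathbb{E}[U_1^2]\le (k-k_1)^2\widetilde{P_1}$ and $\mathbb{E}[U_2^2]\le (k-k_2)^2\widetilde{P_2}$ using the $b=1$ instance of \cite[Lemma~10]{Park_Approximation_Journal_Parti}, which at $a=b=1$ is just the Cauchy--Schwarz inequality $\mathbb{E}[(\sum_i u[i])^2]\le m\sum_i\mathbb{E}[u^2[i]]$ together with the uniform normalization of $\widetilde{P_1},\widetilde{P_2}$.

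For the information-limited interval the computations specialize cleanly: $\mathbb{E}[W_1^2]=k_1-1$, and $\mathbb{E}[y_1'[i]^2]=i+\sigma_{v1}^2\le (k_1-1)+\sigma_{v1}^2$, $\mathbb{E}[y_2'[i]^2]=i+\sigma_{v2}^2\le (k_1-1)+\sigma_{v2}^2$ for $i\le k_1-1$. Running the same entropy chain with the arithmetic--geometric-mean step yields $I(W_1;y_1'[1:k_1-1],y_2'[1:k_1-1])\le I$ with $I=\frac{k_1-1}{2}\log(1+\frac{k_1-1}{\sigma_{v1}^2})+\frac{k_1-1}{2}\log(1+\frac{k_1-1}{\sigma_{v2}^2})$, hence $\mathbb{E}[W_1'^2]\ge \Sigma:=\frac{k_1-1}{2^{2I}}$, and I would split $W_1'=W_1'''+W_1''''$ with $W_1'''\sim\mathcal{N}(0,\Sigma)$ exactly as before. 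The Witsenhausen-interval mutual-information bound goes through verbatim: at $a=1$ the variance estimate becomes $\mathbb{E}[y_2''[n]^2]\le 2\Sigma+2(n-k_1+1)+2(n-k_1)(k-k_1)\widetilde{P_1}+\sigma_{v2}^2$, and after summing over $k_1\le n\le k_2-1$ the arithmetic--geometric-mean step produces precisely $I'(\widetilde{P_1})=\frac{k_2-k_1}{2}\log(1+\frac{1}{\sigma_{v2}^2}(2\Sigma+2(k_2-k_1)+2(k_2-k_1-1)(k-k_1)\widetilde{P_1}))$. Combining this with \eqref{eqn:aless28} through $\mathbb{E}[(X_1+X_2)^2]\ge (\Sigma+k_2-k_1)/2^{2I'(\widetilde{P_1})}$ delivers the claimed expression for $D_{L,2}$.

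The main obstacle is genuinely only bookkeeping rather than a new idea: I must check that every degenerate ratio $\frac{1-a^{-2m}}{1-a^{-2}}$ and $\frac{1-a^{-m}}{1-a^{-1}}$ appearing in the $|a|>1$ derivation is replaced by its correct limit $m$, and that \cite[Lemma~10]{Park_Approximation_Journal_Parti} remains applicable when its ratio parameter equals $1$ (where it reduces to plain Cauchy--Schwarz). Once the uniform-weight slicing and this $b=1$ specialization are justified, each inequality of the Lemma~\ref{lem:aless22} proof transcribes directly, and no case in that proof uses $|a|>1$ in an essential way other than through these now-degenerate geometric factors.
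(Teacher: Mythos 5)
Your proposal is correct and follows essentially the same route as the paper's own proof: the paper likewise re-runs the Lemma~\ref{lem:aless22} derivation directly at $a=1$, replacing the geometric slicing weights with the uniform weights $\alpha_i=\frac{1}{k-k_1}$, $\beta_i=\frac{1}{k-k_2}$, keeping the same three-stage decomposition, and using Cauchy--Schwarz (your $b=1$ specialization of the weighted sum bound) for the power-limited terms and the arithmetic--geometric-mean step for $I$ and $I'(\widetilde{P_1})$. The specialized quantities you list ($\mathbb{E}[W_1^2]=k_1-1$, $\mathbb{E}[W_3^2]=k-k_2$, the $y_2''$ variance bound, and the final combination) match the paper's computations exactly.
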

\begin{proof}
See Appendix~\ref{sec:aeq1}.
\end{proof}

\begin{corollary}
Consider the decentralized LQG problem of Problem~\ref{prob:a}. 
Let $|a|=1$. Then, for all $q, r_1, r_2 >0$, the minimum cost \eqref{eqn:part11} of Problem~\ref{prob:a} is lower bounded as follows:
\begin{align}
&\inf_{u_1, u_2} \limsup_{N \rightarrow \infty} \frac{1}{N} \sum_{0 \leq n < N}
q \mathbb{E}[x^2[n]] + r_1 \mathbb{E}[u_1^2[n]] + r_2 \mathbb{E}[u_2^2[n]] \\
&\geq \min_{\widetilde{P_1}, \widetilde{P_2} \geq 0} q D_L(\widetilde{P_1},\widetilde{P_2}) + r_1 \widetilde{P_1} + r_2 \widetilde{P_2}
\end{align}
where $D_L(\widetilde{P_1},\widetilde{P_2})$ satisfies the following conditions.\\
(a) If $\sigma_{v2} \geq 16$ and $\widetilde{P_1} \leq \frac{1}{4\sigma_{v2}}$ then $D_L(\widetilde{P_1}, \widetilde{P_2}) \geq  0.09168\sigma_{v2} + 1$.\\
(b) If $\sigma_{v2} \geq 16$ and $\frac{1}{4\sigma_{v2}} \leq \widetilde{P_1} \leq \frac{1}{64}$ then $D_L(\widetilde{P_1}, \widetilde{P_2}) \geq  \frac{0.02417}{\widetilde{P_1}} + 1$.\\
(c) If $\widetilde{P_1} \leq \frac{1}{50}$, $\widetilde{P_2} \leq \frac{1}{50}$ then $D_L(\widetilde{P_1}, \widetilde{P_2}) \geq \frac{0.003772}{\max(\widetilde{P_1}, \widetilde{P_2})}+1$. \\
(d) For all $\widetilde{P_1}, \widetilde{P_2}$, $D_L(\widetilde{P_1}, \widetilde{P_2}) \geq \max(\frac{\sqrt{2}}{2}\sigma_{v1},1)$.
\label{cor:4}
\end{corollary}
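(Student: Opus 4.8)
The plan is to derive all four bounds from the geometric-slicing lower bound of Lemma~\ref{lem:eq11}, paralleling the proof of Corollary~\ref{cor:2} but using the $|a|=1$ expression for $D_{L,2}$. Because $|a|=1$ the slicing weights are the uniform (arithmetic) sequences summing to one, so the realized weighted powers are window averages controlled by the budgets, $\widetilde{P_1}\le P_1$ and $\widetilde{P_2}\le P_2$; since $D_{L,2}$ is non-increasing in each power argument (more power enlarges the subtracted terms and increases $I'$), for every admissible slicing $(k_1,k_2,k)$ the power-constrained distortion of Problem~\ref{prob:c} obeys $D(P_1,P_2)\ge D_{L,2}(\widetilde{P_1},\widetilde{P_2};k_1,k_2,k)\ge D_{L,2}(P_1,P_2;k_1,k_2,k)$, hence $D(P_1,P_2)\ge\sup_{(k_1,k_2,k)}D_{L,2}$. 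I would take $D_L$ to be this pointwise supremum; each of (a)--(d) is then a regional lower bound on $D_L$, and it suffices to exhibit, in each region of the $(\widetilde{P_1},\widetilde{P_2})$ plane, a single slicing making $D_{L,2}$ at least the stated quantity. The average-cost statement follows from this tradeoff bound through \cite[Lemma~14]{Park_Approximation_Journal_Parti}.

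For the three tradeoff-shaped bounds I would specialize the slicing as follows. In (a) and (b) the second power $\widetilde{P_2}$ is unconstrained, so the key move is to set $k=k_2$, which removes the $\sqrt{(k-k_2)^2\widetilde{P_2}}$ term entirely; taking also $k_1=1$ kills $\Sigma$ and $I$, leaving $D_{L,2}=(\sqrt{m/2^{2I'}}-m\sqrt{\widetilde{P_1}})_+^2+1$ with $m:=k_2-1$ and $I'=\tfrac{m}{2}\log(1+\sigma_{v2}^{-2}(2m+2(m-1)m\widetilde{P_1}))$. For (a) I would choose $m$ proportional to $\sigma_{v2}$; the hypothesis $\widetilde{P_1}\le\frac{1}{4\sigma_{v2}}$ keeps the subtracted term $m\sqrt{\widetilde{P_1}}$ a fixed fraction of $\sqrt{m}\asymp\sqrt{\sigma_{v2}}$, and makes the argument of the logarithm $O(1/\sigma_{v2})$ so that $I'=\tfrac{m}{2}\cdot O(1/\sigma_{v2})=O(1)$; thus $D_{L,2}=\Theta(\sigma_{v2})$. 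For (b) I would instead balance the two terms by taking $m\asymp 1/\widetilde{P_1}$, giving $D_{L,2}\approx\frac{1}{16\cdot 2^{4I'}\widetilde{P_1}}+1=\Theta(1/\widetilde{P_1})$ once $I'$ is again shown to be $O(1)$ on the range $\frac{1}{4\sigma_{v2}}\le\widetilde{P_1}\le\frac{1}{64}$ (at the left endpoint this regime coincides with (a), and the two bounds match up to a constant). For (c), where both powers are small, I would take $k_1=k_2=1$ so that both controllers lie in the power-limited interval, giving $D_{L,2}=(\sqrt{m}-m(\sqrt{\widetilde{P_1}}+\sqrt{\widetilde{P_2}}))_+^2+1$; optimizing over $m$ yields $\approx\frac{1}{16(\sqrt{\widetilde{P_1}}+\sqrt{\widetilde{P_2}})^2}+1$, and $(\sqrt{\widetilde{P_1}}+\sqrt{\widetilde{P_2}})^2\le 4\max(\widetilde{P_1},\widetilde{P_2})$ converts this into the claimed $\Theta(1/\max(\widetilde{P_1},\widetilde{P_2}))$ form.

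The universal floor (d) is of a different nature, and I would prove it by a direct estimation argument rather than through the $\Sigma$-term of the slicing: the arithmetic--geometric-mean step built into $\Sigma$ is too lossy (it caps $\Sigma$ near $0.3\,\sigma_{v1}$), whereas (d) asks for the coefficient $\tfrac{\sqrt{2}}{2}$. The clean way is to note that a genie holding both observation streams $y_1^n,y_2^n$ (and therefore both control signals, which are functions of them) forms the centralized estimate $\hat{x}[n]$, and since $x[n]-\hat{x}[n]$ is orthogonal to everything the controllers can produce, $\mathbb{E}[x^2[n]]\ge$ the two-observation Kalman error. Two observations with noises $\sigma_{v1}^2,\sigma_{v2}^2$ act as one observation of noise $(\sigma_{v1}^{-2}+\sigma_{v2}^{-2})^{-1}\ge\sigma_{v1}^2/2$, so at $|a|=1$ the steady-state error is at least $\tfrac{-1+\sqrt{1+2\sigma_{v1}^2}}{2}\ge\tfrac{\sqrt{2}}{2}\sigma_{v1}$ in the worst case $\sigma_{v2}=\sigma_{v1}$; the unit driving noise gives the separate bound $\ge 1$, and together they give $\max(\tfrac{\sqrt{2}}{2}\sigma_{v1},1)$.

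The main obstacle is the explicit, uniform constant-chasing rather than the structure. In (a)--(c) the quantity $I'(\widetilde{P_1})$ must be bounded by an absolute constant uniformly over the whole admissible range of $(m,\sigma_{v2},\widetilde{P_1})$, using $\log(1+\epsilon)\approx\epsilon$ for the small arguments that arise; it is precisely this requirement that forces the numerical thresholds in the hypotheses ($\sigma_{v2}\ge 16$, $\widetilde{P_1}\le\frac{1}{64}$, $\widetilde{P_i}\le\frac{1}{50}$). Pinning $2^{2I'}$ to a concrete constant, together with rounding the optimal $m$ to an integer, is what degrades the clean analytic optima (such as the $\frac{1}{16}$ in (c)) to the stated constants ($0.003772$, $0.02417$, $0.09168$). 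I expect the bulk of the appendix effort to lie in these monotonicity-and-rounding estimates, while the choice of slicing in each regime is the conceptual content.
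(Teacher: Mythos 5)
Your proposal follows essentially the same route as the paper's own proof: for (a) and (b) the paper sets $k_1=1$, $k=k_2$ with $k_2-1\approx\sigma_{v2}/4$ and $k_2-1\approx 1/(16\widetilde{P_1})$ respectively, for (c) it sets $k_1=k_2=1$ with $k-1\approx 1/(50\max(\widetilde{P_1},\widetilde{P_2}))$, and for (d) it uses exactly your genie argument (centralized controller holding both observation streams, reduced by maximum-ratio combining to a single observation of noise $\sigma_{v1}^2/2$). Your mechanism of pinning $I'(\widetilde{P_1})$ to an absolute constant in each regime, with the hypotheses $\sigma_{v2}\ge 16$, $\widetilde{P_1}\le\frac{1}{64}$, $\widetilde{P_i}\le\frac{1}{50}$ supplying exactly the room needed, is also the paper's mechanism, so (a)--(c) are sound.

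There is, however, one genuine flaw in your (d): the inequality $\frac{-1+\sqrt{1+2\sigma_{v1}^2}}{2}\ge\frac{\sqrt{2}}{2}\sigma_{v1}$ is false for every $\sigma_{v1}>0$, because $\sqrt{1+2\sigma_{v1}^2}<1+\sqrt{2}\,\sigma_{v1}$; the two-observation Kalman error $\Sigma_E$ always falls short of $\frac{\sqrt{2}}{2}\sigma_{v1}$ by up to an additive $\frac{1}{2}$, and combining with the separate bound $\ge 1$ via a maximum cannot repair this, since for large $\sigma_{v1}$ the maximum is just $\Sigma_E$, which is still too small. The repair is to combine additively rather than by a maximum: what the genie argument actually lower-bounds the steady-state distortion by is the prediction error $a^2\Sigma_E+1=\Sigma_E+1$ at $|a|=1$ --- equivalently, in the centralized cost $\inf_{|1-k|<1}\bigl(\Sigma_E+\frac{1}{1-(1-k)^2}\bigr)$ the second term is at least $1$ and must be kept, not discarded --- and then $\Sigma_E+1=\frac{1+\sqrt{1+2\sigma_{v1}^2}}{2}\ge\frac{\sqrt{2}}{2}\sigma_{v1}$ does hold, which together with $D_L\ge 1$ gives (d). (The paper's own write-up of (d) contains a similar slip in its $\max(\Sigma_E,1)$ manipulation, but the additive chain just described is available verbatim from its displayed infimum expression.)
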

\begin{proof}
See Appendix~\ref{sec:aeq1}.
\end{proof}

Like Section~\ref{sec:aless2}, we will intuitively argue why the power-distortion tradeoff can be characterized within a constant ratio by considering $D_L(\widetilde{P_1}, \widetilde{P_2})$ as if it is a lower bound on $D(P_1, P_2)$.


Notice that by \eqref{eqn:kalmanperf}, when $|a|=1$ the Kalman filtering performance of the controllers are given as $\Sigma_1 = \frac{-1 + \sqrt{1+4 \sigma_v^2}}{2}$ and $\Sigma_2 = \frac{-1 + \sqrt{1+4 \sigma_v^2}}{2}$ respectively. Therefore, we can see $\Sigma_1 \approx \sigma_1$ and $\Sigma_2 \approx \sigma_1$ and so we can think of $\sigma_1$, $\sigma_2$ shown in Corollary~\ref{cor:4} as if they are $\Sigma_1$, $\Sigma_2$.

As we discussed in Section~\ref{subsec:a=1}, when $|a|=1$ there are only one case for the power-distortion tradeoff. Thus, we will only divide the cases by $P_1$ and $P_2$.

$\bullet$ When $P_1 \leq \Theta(\frac{1}{\max(\Sigma_2, 1)})$ and $P_2 \leq \Theta(\frac{1}{\max(\Sigma_2, 1)})$. The controller with a larger power dominates the performance, and Figure~\ref{fig:aeq11} shows $D(P_1, P_2) = O(\frac{1}{\max(P_1, P_2)})$ is achievable. The statement (c) of Corollary~\ref{cor:4} gives a matching lower bound.

$\bullet$ When $P_1 \leq \Theta(\frac{1}{\max(\Sigma_2, 1)})$ and $\Theta(\frac{1}{\max(\Sigma_2, 1)}) \leq P_2$. In this case, the second controller dominates the performance, but its performance is saturated by the Kalman filtering. Figure~\ref{fig:aeq11} shows $D(P_1, P_2) = O(\max(\Sigma_2, 1))$. The statement (a) of Corollary~\ref{cor:4} gives a matching lower bound.

$\bullet$ When $\Theta(\frac{1}{\max(\Sigma_2, 1)}) \leq P_1 \leq \Theta(\frac{1}{\max(\Sigma_1, 1)})$. In this case, the first controller dominates the performance, and Figure~\ref{fig:aeq11} shows $D(P_1, P_2)=O(\frac{1}{P_1})$. The statement (b) of Corollary~\ref{cor:4} gives a matching lower bound.

$\bullet$ When $P_1 \geq \Theta(\frac{1}{\max(\Sigma_1, 1)})$. 
In this case, the first controller dominates the performance, but its performance is saturated by the Kalman filtering. Figure~\ref{fig:aeq11} shows $D(P_1, P_2)=O(\max(\Sigma_1, 1))$ is achievable. The statement (d) of Corollary~\ref{cor:4} gives a matching lower bound.

Formally, the constant ratio result for the average cost LQG problems can be written as follows.
\begin{proposition}
Consider the decentralized LQG control of Problem~\ref{prob:a}. There exists $c \leq 540$ such that for all $|a|=1$, $q$, $r_1$, $r_2$, $\sigma_{v1}$ and $\sigma_{v2}$,
\begin{align}
\frac{
\underset{u_1,u_2 \in L_{lin,kal}}{\inf}
\underset{N \rightarrow \infty}{\limsup}
\frac{1}{N}
\underset{0 \leq n < N}{\sum}
\mathbb{E}[qx^2[n]+r_1 u_1^2[n]+r_2 u_2^2[n]]
}
{
\underset{u_1,u_2}{\inf}
\underset{N \rightarrow \infty}{\limsup}
\frac{1}{N}
\underset{0 \leq n < N}{\sum}
\mathbb{E}[qx^2[n]+r_1 u_1^2[n]+r_2 u_2^2[n]]
}
\leq c. \nonumber
\end{align}
\label{prop:2}
\end{proposition}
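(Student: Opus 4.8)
The plan is to mirror the argument behind Proposition~\ref{prop:1}, now using the $|a|=1$ building blocks: the achievability of Corollary~\ref{cor:3} and the information-theoretic lower bound of Corollary~\ref{cor:4}. Rather than attacking the average cost directly, I would first characterize the power-distortion tradeoff $D(P_1,P_2)$ of Problem~\ref{prob:c} within a constant ratio, and then invoke \cite[Lemma~14]{Park_Approximation_Journal_Parti} to transfer this characterization to the average-cost ratio. The upper bound on $D(P_1,P_2)$ comes for free by letting a single controller run its optimal linear Kalman strategy while the other stays silent; by Corollary~\ref{cor:3} this yields $D(P_1,P_2) \leq \min(D_{\sigma_{v1}}(P_1), D_{\sigma_{v2}}(P_2))$, which behaves like $\max(\tfrac{1}{P_i},\Sigma_i,1)$ in each power regime (recall $\Sigma_i \approx \sigma_{vi}$ when $|a|=1$). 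The matching lower bound on the decentralized cost is supplied by the four estimates (a)--(d) of Corollary~\ref{cor:4}, treated---as the text does---as a lower bound on $D(P_1,P_2)$.

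The core of the proof is the region-by-region comparison already sketched just before the proposition. Since $\sigma_{v1}\leq\sigma_{v2}$ forces $\Sigma_1\leq\Sigma_2$, I would partition the $(P_1,P_2)$ plane by the thresholds $\tfrac{1}{\max(\Sigma_2,1)}$ and $\tfrac{1}{\max(\Sigma_1,1)}$ into the four cases listed in the text. In each case I identify the dominant controller and the achievable distortion order, then read off the matching lower bound: case one ($D\sim\tfrac{1}{\max(P_1,P_2)}$) against (c); case two ($D\sim\max(\Sigma_2,1)$) against (a); case three ($D\sim\tfrac{1}{P_1}$) against (b); and case four ($D\sim\max(\Sigma_1,1)$) against (d). In every region the ratio of the achievable upper bound to the lower bound is an explicit absolute constant, obtained by dividing the achievability constant (near $2$ in Corollary~\ref{cor:3}) by the lower-bound constant from Corollary~\ref{cor:4}; the binding region is the inversely-proportional case (c), where $2/0.003772 \approx 530$ drives the final bound. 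Taking the worst such ratio over the four regions yields the claimed $c\leq 540$.

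The final step is the transfer from the tradeoff to the average cost, where \cite[Lemma~14]{Park_Approximation_Journal_Parti} does the heavy lifting: if the achievable and the lower-bound tradeoff curves are within a constant ratio uniformly in $(P_1,P_2)$, then the optimal average costs---which are the Lagrangian minima $\inf_{P_1,P_2} qD(P_1,P_2)+r_1P_1+r_2P_2$ of the respective tradeoffs---are within the same constant ratio for every $q,r_1,r_2$. Since the numerator of Proposition~\ref{prop:2} is exactly the average cost attainable over $L_{lin,kal}$ and hence bounded by the Lagrangian minimum of the achievable tradeoff, while the denominator is lower bounded by $\min_{\widetilde{P_1},\widetilde{P_2}} qD_L+r_1\widetilde{P_1}+r_2\widetilde{P_2}$ via Corollary~\ref{cor:4}, the ratio inherits the constant $c\leq540$.

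The main obstacle I expect is not conceptual but a subtle gap the paper itself flags together with the careful bookkeeping of constants across the four regions: Corollary~\ref{cor:4} rigorously bounds the average-cost \emph{minimization} $\min_{\widetilde{P_1},\widetilde{P_2}} qD_L+r_1\widetilde{P_1}+r_2\widetilde{P_2}$, not literally the pointwise tradeoff $D(P_1,P_2)$. Making the region-by-region comparison fully rigorous therefore requires arguing at the level of the Lagrangian minima directly, so that the constant-ratio conclusion survives the \emph{transition boundaries} where two regimes meet and the dominant term switches, rather than formalizing the informal step "$D_L$ is a lower bound on $D(P_1,P_2)$." Ensuring the constant stays below $540$ across all these seams---so that the per-region ratios do not compound through the conversion---is where most of the technical effort will concentrate.
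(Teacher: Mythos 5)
Your proposal is correct and follows essentially the same route as the paper: the paper's proof of Proposition~\ref{prop:2} likewise reduces, via \cite[Lemma~14]{Park_Approximation_Journal_Parti} (exactly as in Proposition~\ref{prop:1}), to the pointwise functional inequality $\min(D_{\sigma 1}(c\widetilde{P_1}), D_{\sigma 2}(c\widetilde{P_2})) \leq c\, D_L(\widetilde{P_1},\widetilde{P_2})$, and then performs the same region-by-region comparison of the Corollary~\ref{cor:3} achievability against the Corollary~\ref{cor:4} bounds (a)--(d), with the binding ratio $2/0.003772 < 540$ arising in the case matched against (c). The only cosmetic difference is that the paper partitions by $\sigma_{v1},\sigma_{v2}$ versus $16$ and explicit power thresholds like $\tfrac{1}{64}$ and $\tfrac{1}{4\sigma_{v2}}$ (to respect the hypotheses of Corollaries~\ref{cor:3} and \ref{cor:4}), which is the concrete form of your $\Theta(1/\max(\Sigma_i,1))$ thresholds since $\Sigma_i \approx \sigma_{vi}$ when $|a|=1$.
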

\begin{proof}
See Appendix~\ref{sec:aeq1}.
\end{proof}

\subsection{When $0.9 \leq |a| < 1$}
\label{sec:subless1}
\begin{lemma}
We use the definition of $S_L$ shown in Lemma~\ref{lem:aless22}, i.e. the set of $(k_1, k_2, k)$ such that $k_1,k_2,k \in \mathbb{N}$ and $1 \leq k_1 \leq k_2 \leq k$. We define $D_{L,3}(\widetilde{P_1},\widetilde{P_2},k_1,k_2,k)$ as follows:
\begin{align}
D_{L,3}(\widetilde{P_1},\widetilde{P_2}) &:=
(
\sqrt{
\frac{ \Sigma + a^{2(k-k_2+1)}\frac{1-a^{2(k_2-k_1)}}{1-a^2}  }{2^{2I'(\widetilde{P_1})}}
+a^2 \frac{1-a^{2(k-k_2)}}{1-a^2}
}
-
\sqrt{(\frac{1-a^{k-k_1}}{1-a} )^2 \widetilde{P_1}}
-
\sqrt{(\frac{1-a^{k-k_2}}{1-a} )^2 \widetilde{P_2}}
)_+^2 + 1
\end{align}
where
\begin{align}
\Sigma&=\frac{ a^{2(k-k_1+1)}\frac{1-a^{2(k_1-1)}}{1-a^2} }{2^{2I}}\\
I&=\frac{1}{2} \log( 1+
\frac{1}{\sigma_{v1}^2} \frac{1-a^{2(k_1-1)}}{1-a^2})^{k_1-1} +
\frac{1}{2} \log( 1+
\frac{1}{\sigma_{v2}^2} \frac{1-a^{2(k_1-1)}}{1-a^2})^{k_1-1}\\
I'(\widetilde{P_1})&= \frac{1}{2} \log(
1+\frac{1}{(k_2-k_1)\sigma_{v2}^2}(2a^{2(k_1-k)} \frac{1-a^{2(k_2-k_1)}}{1-a^{2}} \Sigma
+ 2 (k_2-k_1)\frac{1-a^{2(k_2-1-k_1+1)}}{1-a^2} \\
&+ 2 a^{k_1-k} \frac{1-a^{k_2-k_1}}{1-a}\frac{(1-a^{k_2-1-k_1})(1-a^{k-k_1})}{(1-a)^2} \widetilde{P_1} )
)^{k_2-k_1}
\end{align}
Here, when $k_1-1=0$, $I=0$ and when $k_2-k_1=0$, $I'(\widetilde{P_1})=0$.

Let $0 \leq |a| < 1$. Then, for all $q, r_1, r_2, \sigma_{0}, \sigma_{v1}, \sigma_{v2} \geq 0$, the minimum cost \eqref{eqn:part11} of Problem~\ref{prob:a} is lower bounded as follows:
\begin{align}
&\inf_{u_1, u_2} \limsup_{N \rightarrow \infty} \frac{1}{N} \sum_{0 \leq n < N}
q \mathbb{E}[x^2[n]] + r_1 \mathbb{E}[u_1^2[n]] + r_2 \mathbb{E}[u_2^2[n]] \\
&\geq  \sup_{(k_1, k_2, k) \in S_L} \min_{\widetilde{P_1}, \widetilde{P_2} \geq 0}
q D_{L,3}(\widetilde{P_1}, \widetilde{P_2}; k_1 , k_2, k) + r_1 \widetilde{P_1} + r_2 \widetilde{P_2}.
\end{align}
\label{lem:less11}
\end{lemma}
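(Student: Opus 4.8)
The plan is to follow the proof of Lemma~\ref{lem:aless22} almost verbatim, changing only the common ratio used in the geometric slicing from $a^{-1}$ (appropriate for the unstable case $|a|>1$) to $a$ itself (appropriate for the stable case $|a|<1$). All of the information-theoretic machinery --- the three-stage decomposition, the cutset-style mutual-information bounds, and the triangle inequality for peeling off the power-limited inputs --- carries over unchanged; only the closed-form geometric sums differ, with every factor of the form $\frac{1-a^{-2m}}{1-a^{-2}}$ replaced by $\frac{1-a^{2m}}{1-a^2}$.

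First I would invoke the geometric slicing of Lemma~\ref{lem:slicing2}, now with the normalized weights
\begin{align}
\alpha_{k_1+j} = \frac{1-a}{1-a^{k-k_1}}\, a^{j}, \quad \beta_{k_2+j} = \frac{1-a}{1-a^{k-k_2}}\, a^{j},
\end{align}
which still sum to one, so that the weighted control energies collapse into the scalars $\widetilde{P_1}$ and $\widetilde{P_2}$ exactly as before. I would then introduce the identical three-stage decomposition of $x[k]$ into the information-limited block $W_1$, Witsenhausen's block $W_2$, and the power-limited block $W_3$, together with the grouped inputs $U_{11},U_{21},U_1,U_{22},U_2$ and the aggregates $X_1,X_2$.

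Next I would isolate the power-limited inputs via \cite[Lemma~1]{Park_Approximation_Journal_Parti} to obtain
\begin{align}
\mathbb{E}[x^2[k]] \geq \left(\sqrt{\mathbb{E}[(X_1+X_2)^2]+\mathbb{E}[W_3^2]}-\sqrt{\mathbb{E}[U_1^2]}-\sqrt{\mathbb{E}[U_2^2]}\right)_+^2 + 1,
\end{align}
and then bound $\mathbb{E}[(X_1+X_2)^2]$ from below by the two successive mutual-information arguments. The information-limited step combines entropy maximization with the arithmetic-geometric mean inequality to produce $I$ and the residual variance $\Sigma$, where the stable sum now reads $\mathbb{E}[W_1^2]=a^{2(k-k_1+1)}\frac{1-a^{2(k_1-1)}}{1-a^2}$. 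The Witsenhausen step bounds the information in $y_2[k_1:k_2-1]$ to yield $I'(\widetilde{P_1})$, after applying \cite[Lemma~10]{Park_Approximation_Journal_Parti} (now with $b=a$ rather than $b=a^{-1}$) to control the growth of $\mathbb{E}[{y_2''[n]}^2]$. Finally I would assemble the pieces, computing $\mathbb{E}[W_3^2]=a^2\frac{1-a^{2(k-k_2)}}{1-a^2}$ together with $\mathbb{E}[U_1^2]\leq(\frac{1-a^{k-k_1}}{1-a})^2\widetilde{P_1}$ and $\mathbb{E}[U_2^2]\leq(\frac{1-a^{k-k_2}}{1-a})^2\widetilde{P_2}$, and substitute into the displayed inequality to recover exactly the claimed $D_{L,3}$.

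The main obstacle is the reversal of the direction of amplification. In the unstable case the earliest noise dominates the final state and the control contributions are amplified, whereas in the stable case it is the latest noise terms that dominate and the control contributions are contracted. I expect the real care to lie in re-deriving each geometric sum under this flipped ordering so that the telescoping still yields the clean factors $\frac{1-a^{2m}}{1-a^2}$, in checking that the arithmetic-geometric mean steps and the repeated use of \cite[Lemma~10]{Park_Approximation_Journal_Parti} remain valid with common ratio $a$, and in separately treating the degenerate cases $k_1=1$, $k_2=k_1$, $k=k_2$, and $a<0$, which collapse the corresponding intervals and where the conventions $I=0$ and $I'(\widetilde{P_1})=0$ must be applied.
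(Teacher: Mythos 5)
Your outline is, in structure, exactly the paper's proof: the same appeal to Lemma~\ref{lem:slicing2}, the same three-stage decomposition into $W_1,W_2,W_3$ with grouped inputs, the same two mutual-information arguments producing $I$, $\Sigma$, and $I'(\widetilde{P_1})$, and the same final assembly (the paper bounds $\mathbb{E}[U_1^2]$, $\mathbb{E}[U_2^2]$ and the $y_2''$ variances by direct weighted Cauchy--Schwarz, citing \cite[Lemma~1]{Park_Approximation_Journal_Parti}, rather than by \cite[Lemma~10]{Park_Approximation_Journal_Parti}; that difference is cosmetic). However, the one ingredient you single out as the substantive adaptation --- the slicing weights --- is assigned in the \emph{reversed} time order, and with your assignment the proof does not go through. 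You take $\alpha_{k_1+j}=\frac{1-a}{1-a^{k-k_1}}a^{j}$, so the largest weight sits on the earliest input and the weight on $u_1[k-1]$ is of order $a^{k-k_1-1}$. But in the expansion of $x[k]$ the input $u_1[i]$ carries the coefficient $a^{k-1-i}$, which for $|a|<1$ is largest for the \emph{latest} inputs; the weights must track these coefficients. Concretely, the peeling step needs $\mathbb{E}[U_1^2]\leq(\frac{1-a^{k-k_1}}{1-a})^2\widetilde{P_1}$ with $U_1=a^{k-k_1-1}u_1[k_1]+\cdots+u_1[k-1]$, and this is false for your $\widetilde{P_1}$: a controller that concentrates all of its effort at time $k-1$ gives $\mathbb{E}[U_1^2]=\mathbb{E}[u_1^2[k-1]]$ while $\widetilde{P_1}=\frac{1-a}{1-a^{k-k_1}}a^{k-k_1-1}\mathbb{E}[u_1^2[k-1]]$, so $\mathbb{E}[U_1^2]/\widetilde{P_1}=\frac{1-a^{k-k_1}}{1-a}\,a^{-(k-k_1-1)}$, which diverges as $k$ grows. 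The same reversal invalidates the bound on $\mathbb{E}[(a^{n-k_1-1}u_1[k_1]+\cdots+u_1[n-1])^2]$ inside $I'(\widetilde{P_1})$ and the bound on $\mathbb{E}[U_2^2]$ against $\widetilde{P_2}$. Since Corollary~\ref{cor:6} applies the lemma with $k$ chosen large (growing with $\Sigma_2$ or $\frac{1}{1-a^2}$), constants degrading exponentially in $k$ make the resulting bound vacuous; no cleverer inequality can rescue it, because the failure is exhibited by an explicit controller.

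The fix is the opposite assignment, which is what the paper uses: $\alpha_i=\frac{1-a}{1-a^{k-k_1}}a^{k-1-i}$ for $k_1\leq i\leq k-1$ and $\beta_i=\frac{1-a}{1-a^{k-k_2}}a^{k-1-i}$ for $k_2\leq i\leq k-1$, i.e., weights proportional to the influence coefficients $a^{k-1-i}$, largest at the latest time (both families still sum to one, so Lemma~\ref{lem:slicing2} applies equally well). Then weighted Cauchy--Schwarz gives $\mathbb{E}[U_1^2]\leq\bigl(\sum_i a^{k-1-i}\bigr)\bigl(\sum_i a^{k-1-i}\mathbb{E}[u_1^2[i]]\bigr)=(\frac{1-a^{k-k_1}}{1-a})^2\widetilde{P_1}$, and the partial sums appearing in the variance of $y_2''[n]$ are likewise dominated by $\widetilde{P_1}$ with the stated geometric factors $a^{n-k}\frac{(1-a^{n-k})(1-a^{k-k_1})}{(1-a)^2}$. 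With that single correction the rest of your outline coincides with the paper's argument, including the degenerate cases $k_1=1$, $k_2=k_1$, $k_2=k$, and $a\leq 0$. Note that your own description of the ``main obstacle'' --- that in the stable case the latest terms dominate --- is precisely the reason the weights must concentrate on late times; your weight formula contradicts the very observation you made.
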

\begin{proof}
See Appendix~\ref{sec:aless1} for the proof.
\end{proof}

\begin{corollary}
Consider the decentralized LQG problem of Problem~\ref{prob:a}. Define
\begin{align}
&\Sigma_1 := \frac{(a^2-1)\sigma_{v1}^2 -1  + \sqrt{((a^2-1)\sigma_{v1}^2 - 1)^2 + 4 a^2 \sigma_{v1}^2}}{2a^2} \\
&\Sigma_2 := \frac{(a^2-1)\sigma_{v2}^2 -1  + \sqrt{((a^2-1)\sigma_{v2}^2 - 1)^2 + 4 a^2 \sigma_{v2}^2}}{2a^2}.
\end{align}
Let $0.9 \leq |a| < 1$. Then, for all $q, r_1, r_2 > 0$, the minimum cost \eqref{eqn:part11} of Problem~\ref{prob:a} is lower bounded as follows:
\begin{align}
&\inf_{u_1, u_2} \limsup_{N \rightarrow \infty} \frac{1}{N} \sum_{0 \leq n < N}
q \mathbb{E}[x^2[n]] + r_1 \mathbb{E}[u_1^2[n]] + r_2 \mathbb{E}[u_2^2[n]] \\
&\geq \min_{\widetilde{P_1}, \widetilde{P_2} \geq 0} q D_L(\widetilde{P_1},\widetilde{P_2}) + r_1 \widetilde{P_1} + r_2 \widetilde{P_2}
\end{align}
where $D_L(\widetilde{P_1}, \widetilde{P_2})$ satisfies the following conditions.

Then, we have a lower bound $D_L(\widetilde{P_1}, \widetilde{P_2})$ on $D(P_1, P_2)$ where $D_L(\widetilde{P_1}, \widetilde{P_2})$ satisfies the followings:\\
(a) If $\Sigma_2 \geq 40$, $\widetilde{P_1} \leq \frac{1}{\Sigma_2}$ then $D_L(\widetilde{P_1}, \widetilde{P_2}) \geq 0.009131 \Sigma_2 +1$.\\
(b) If $\Sigma_2 \geq 40$, $\frac{1}{\Sigma_2} \leq \widetilde{P_1} \leq \frac{1}{40}$ then $D_L(\widetilde{P_1}, \widetilde{P_2}) \geq \frac{0.009131}{\widetilde{P_1}}+1$.\\
(c) If $\frac{1-a^2}{20} \leq \max(\widetilde{P_1}, \widetilde{P_2}) \leq \frac{1}{40}$ then $D_L(P_1, P_2)\geq \frac{0.001201}{\max(\widetilde{P_1}, \widetilde{P_2})}+1$.\\
(d) If $\max(\widetilde{P_1}, \widetilde{P_2}) \leq \frac{1-a^2}{20}$ then $D_L(\widetilde{P_1}, \widetilde{P_2}) \geq \frac{0.0869}{1-a^2}+1$.\\
(e) For all $\widetilde{P_1}, \widetilde{P_2}$, $D_L(\widetilde{P_1}, \widetilde{P_2}) \geq \max(0.2636\Sigma_1,1)$.
\label{cor:6}
\end{corollary}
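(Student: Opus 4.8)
The plan is to derive Corollary~\ref{cor:6} directly from the slicing bound of Lemma~\ref{lem:less11}, exactly paralleling how Corollary~\ref{cor:2} is obtained from Lemma~\ref{lem:aless22} in the unstable regime. I would set $D_L(\widetilde{P_1},\widetilde{P_2}):=\sup_{(k_1,k_2,k)\in S_L} D_{L,3}(\widetilde{P_1},\widetilde{P_2};k_1,k_2,k)$, so that Lemma~\ref{lem:less11} already supplies a cost bound of the displayed form; it then remains only to verify the five pointwise estimates (a)--(e). Each estimate corresponds to one region of the conceptual tradeoff in Figure~\ref{fig:aless1}, and for each region I would exhibit a single explicit triple $(k_1,k_2,k)$ for which $D_{L,3}$ already exceeds the claimed value under that case's hypotheses on $(\widetilde{P_1},\widetilde{P_2})$. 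Since $D_L$ is the supremum over all triples and $\text{cost}\ge\sup_k\min_{\widetilde{P}}[\cdots]\ge\min_{\widetilde{P}}[\cdots]$ for any fixed slicing, one admissible choice per region suffices.

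For the Kalman floors (a) and (e) I would collapse the slicing onto the information-limited interval. For (e) I would take $k_1=k_2=k$, so that the power subtractions, the $W_3$ term, and the Witsenhausen contribution all vanish and $D_{L,3}=\Sigma+1$ with $\Sigma=a^2\tfrac{1-a^{2(k_1-1)}}{1-a^2}2^{-2I}$; optimizing the finite window length $k_1$ balances the signal accumulation $\tfrac{1-a^{2(k_1-1)}}{1-a^2}$ against the leakage $I$, and yields $\Sigma$ within a constant factor of the combined two-observation Kalman variance, which is itself within a constant factor of $\Sigma_1$ because a second independent observation can at most double the SNR (here the constant $0.2636$). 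Estimate (a) is the same floor but for controller $2$: I would keep a Witsenhausen interval $[k_1,k_2)$ on which only $\sigma_{v2}$ enters $I'(\widetilde{P_1})$, and use the hypothesis $\widetilde{P_1}\le 1/\Sigma_2$ to keep the $\widetilde{P_1}$-dependent term inside $I'(\widetilde{P_1})$ bounded, so that the floor degrades only by a constant from $\Sigma_2$.

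Estimates (b), (c), (d) trace the inversely-proportional and power-limited portions of Figure~\ref{fig:aless11}. For (b) I would reuse the slicing of (a) but let the Witsenhausen length $k_2-k_1$ grow with $\widetilde{P_1}$; the $\widetilde{P_1}$-dependent term inside $I'(\widetilde{P_1})$, namely $a^{k_1-k}\tfrac{1-a^{k_2-k_1}}{1-a}\tfrac{(1-a^{k_2-1-k_1})(1-a^{k-k_1})}{(1-a)^2}\widetilde{P_1}$, then drives $I'(\widetilde{P_1})\sim\log(\cdot\,\widetilde{P_1})$ and turns the floor into $\Theta(1/\widetilde{P_1})$. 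For the stability floor (d), I would instead grow the power-limited interval $[k_2,k)$ so that $a^2\tfrac{1-a^{2(k-k_2)}}{1-a^2}$ approaches $\tfrac{1}{1-a^2}$, while the hypothesis $\max(\widetilde{P_1},\widetilde{P_2})\le(1-a^2)/20$ keeps the subtracted terms $(\tfrac{1-a^{k-k_i}}{1-a})^2\widetilde{P_i}$ a small constant fraction of that floor; (c) is the transitional regime where the window length is taken proportional to $1/(1-a^2)$.

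The main obstacle, and the reason the constants are as awkward as $0.009131$ and $0.2636$, is that for $|a|<1$ the geometric sums $\tfrac{1-a^{2m}}{1-a^2}$ saturate at $\tfrac{1}{1-a^2}$ rather than diverging, so---unlike the $|a|\ge1$ case---long horizons give no additional leverage and every window length must be chosen finite and essentially on the scale $m\asymp 1/(1-a^2)$ in order to simultaneously accumulate a constant fraction of the saturated signal and prevent the leakage terms $I$, $I'(\widetilde{P_1})$ and the power subtractions from wiping out the gain. Carrying this out requires bounding each factor $a^{2j}$ over the chosen window from below using $a^2\ge 0.81$ (the role of the hypothesis $|a|\ge0.9$) and from above by $1$, and then checking the resulting explicit numbers; this bookkeeping, rather than any conceptual step, is where the work lies. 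The only other point to handle with care is the order of the $\sup$ over slicings and the $\min$ over $(\widetilde{P_1},\widetilde{P_2})$: it is enough that, in each power regime, the slicing matched to that regime lower-bounds $D_L$ there, which is precisely what the subsequent constant-ratio comparison against the achievable bounds of Corollary~\ref{cor:5} requires.
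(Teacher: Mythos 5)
Your treatment of parts (a)--(d) is essentially the paper's own proof. The paper likewise evaluates Lemma~\ref{lem:less11} at one explicit triple per regime: $k_1=1$, $k_2=k$ with the window matched to $\Sigma_2/40$ for (a) and to $1/(40\widetilde{P_1})$ for (b), and $k_1=k_2=1$ with the window matched to $1/(40\max(\widetilde{P_1},\widetilde{P_2}))$ for (c) and to $a^{2k}\approx 1/2$ for (d); your sup-over-triples definition of $D_L$ and the glossed order of $\sup$ and $\min$ are exactly at the level of rigor the paper itself adopts (it says it treats $D_L$ as a lower bound on the tradeoff ``without rigorous justification''). Two mechanisms in your sketch are misdescribed, though. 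In (b) the paper does \emph{not} let $I'(\widetilde{P_1})$ grow like $\log(1/\widetilde{P_1})$ --- that would destroy the bound, since the floor is divided by $2^{2I'}$; instead the window is chosen so the accumulated disturbance is $\approx 1/(40\widetilde{P_1})$ while $I'$ is pinned to a constant ($\le \frac12\log e^{0.01047}$), and the $\Theta(1/\widetilde{P_1})$ behavior comes from the accumulated disturbance, with the hypothesis $\widetilde{P_1}\le 1/\Sigma_2$ (or $\widetilde{P_1}\ge 1/\Sigma_2$ in (b)) also needed to control the \emph{subtracted} power term outside $I'$, which is where most of the constant is lost. Similarly in (c) the window tracks $1/(40\max(\widetilde{P_1},\widetilde{P_2}))$, reaching scale $1/(1-a^2)$ only at the bottom of the allowed power range.

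The genuine divergence, and the place where your proposal is at risk, is (e). The paper does not prove (e) from the slicing lemma at all: it lower-bounds the decentralized cost by that of a centralized controller which sees both observations with no power constraint, uses maximum ratio combining to replace the two observations by a single one with noise variance $\sigma_{v1}^2/2$, and then applies the \emph{exact} steady-state Riccati formula of Lemma~\ref{lem:aless21}; the constant $0.2636\approx\frac{9}{10}\cdot\frac{1}{2+\sqrt{2}}$ falls out of that exact computation. Your route --- $k_1=k_2=k$ in Lemma~\ref{lem:less11}, giving $D_{L,3}=\Sigma+1$ with $\Sigma=a^2 S_m 2^{-2I}$, $S_m=\frac{1-a^{2m}}{1-a^2}$, $m=k-1$, $2^{2I}=[(1+S_m/\sigma_{v1}^2)(1+S_m/\sigma_{v2}^2)]^{m}$ --- pays an exponential information penalty that the Riccati recursion does not. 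Concretely, in the regime $\sigma_{v1}=\sigma_{v2}=\sigma\to\infty$ with $(1-a^2)\sigma\to 0$, one has $\Sigma_1\approx\sigma$, while optimizing $m=\beta\sigma$ gives at best $\max_\beta \beta\sigma e^{-2\beta^2}=\sigma/(2\sqrt{e})\approx 0.303\,\sigma$: your floor clears $0.2636\,\Sigma_1$ by only about fifteen percent, and every lossy step you defer to ``bookkeeping'' (integer window lengths, $S_m\le m$, the factor $a^2$, replacing $\ln(1+x)$ by $x$) must fit inside that margin uniformly over $0.9\le|a|<1$ and all $\sigma_{v1}\le\sigma_{v2}$. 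So your (e) is not demonstrably wrong, but it is not the paper's argument, the margin is thin and unverified, and the heuristic you offer for it --- ``a second observation at most doubles the SNR, here the constant $0.2636$'' --- is not where the constant comes from and would not by itself produce it. If you want (e) with the stated constant and without a delicate uniform optimization, do what the paper does: reduce to the two-observation centralized problem and invoke Lemma~\ref{lem:aless21} directly.
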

\begin{proof}
See Appendix~\ref{sec:aless1} for the proof.
\end{proof}

Like Section~\ref{sec:aless2}, we will intuitively argue why the power-distortion tradeoff can be characterized within a constant ratio by considering $D_L(\widetilde{P_1}, \widetilde{P_2})$ as if it is a lower bound on $D(P_1, P_2)$. The characterization of the power-distortion tradeoff is equivalent to the characterization of the average cost. Thus, we will intuitively argue how we can characterize the power-distortion tradeoff within a constant ratio. For this, we will first divide the cases by $\Sigma_1, \Sigma_2$, then we will further divide the cases by $P_1, P_2$.

\subsubsection{When $\max(\Sigma_1, 1) \leq \max(\Sigma_2, 1) \leq \Theta(\frac{1}{1-a^2})$} We will further divide the cases based on $\widetilde{P_1}, \widetilde{P_2}$.

$\bullet$ When $P_1 \leq \Theta(1-a^2)$ and $P_2 \leq \Theta(1-a^2)$. 
From Figure~\ref{fig:aless11} we can see that $D(P_1, P_2)= \frac{1}{1-a^2}$ is achievable without any control input.
The statement (d) of Corollary~\ref{cor:6} gives a lower bound tight up to a constant ratio.

$\bullet$ When $P_1 \leq \Theta(1-a^2)$ and $\Theta(1-a^2) \leq P_2 \leq \Theta(\frac{1}{\max(\Sigma_2,1)})$. The second controller dominates the control performance. 
Figure~\ref{fig:aless11} shows $D(P_1, P_2)=O(\frac{1}{P_2})$ is achievable in this case. 
The statement (c) of Corollary~\ref{cor:6} gives a lower bound tight up to a constant ratio.

$\bullet$ When $P_1 \leq \Theta(1-a^2)$ and $\Theta(\frac{1}{\max(\Sigma_2,1)}) \leq P_2$. 
In this case, the second controller's performance is saturated by the Kalman filtering, and Figure~\ref{fig:aless11} shows $D(P_1, P_2)= O(\max(\Sigma_2, 1))$ is achievable. The statement (a) of Corollary~\ref{cor:6} gives a lower  bound tight up to a constant ratio.

$\bullet$ When $\Theta(1-a^2) \leq P_1 \leq \Theta(\frac{1}{\max(\Sigma_2,1)})$ and $P_2 \leq \Theta(1-a^2)$. The first controller dominates the control performance. Figure~\ref{fig:aless11} shows $D(P_1, P_2)=O(\frac{1}{P_1})$ is achievable in this case. The statement (c) of Corollary~\ref{cor:6} gives a lower bound tight up to a constant ratio.

$\bullet$ When $\Theta(1-a^2) \leq P_1 \leq \Theta(\frac{1}{\max(\Sigma_2,1)})$ and $\Theta(1-a^2) \leq P_2 \leq \Theta(\frac{1}{\max(\Sigma_2,1)})$. The controller with larger power dominates the control performance. Figure~\ref{fig:aless11} shows $D(P_1, P_2)= O(\frac{1}{\max(P_1, P_2)})$ is achievable with the controller with a lager power. The statement (c) of Corollary~\ref{cor:6} gives a lower bound tight up to a constant ratio.

$\bullet$ When $\Theta(1-a^2) \leq P_1 \leq \Theta(\frac{1}{\max(\Sigma_2,1)})$ and $\Theta(\frac{1}{\max(\Sigma_2,1)}) \leq P_2$. The second controller dominates the control performance. Figure~\ref{fig:aless11} shows $D(P_1, P_2) = O(\max(\Sigma_2, 1))$ is achievable. The statement (a) of Corollary~\ref{cor:6} gives a lower bound tight up to a constant ratio.

$\bullet$ When $\Theta(\frac{1}{\max(\Sigma_2,1)}) \leq P_1 \leq \Theta(\frac{1}{\max(\Sigma_1,1)})$. 
The first controller dominates the performance. Figure~\ref{fig:aless11} shows $D(P_1, P_2)=O(\frac{1}{P_1})$ is achievable. The statement (b) of Corollary~\ref{cor:6} gives a lower bound tight up to a constant ratio.

$\bullet$ When $\Theta(\frac{1}{\max(\Sigma_1,1)}) \leq P_1$. The first controller dominates the performance. Figure~\ref{fig:aless11} shows $D(P_1, P_2)= O(\max(\Sigma_1, 1))$ is achievable. 
The statement (e) of Corollary~\ref{cor:6} gives a lower bound tight up to a constant ratio.


\subsubsection{When $\max(\Sigma_1, 1) \leq \Theta(\frac{1}{1-a^2}) = \max(\Sigma_2, 1)$} We will further divide the cases based on $P_1, P_2$.

$\bullet$ When $P_1 \leq \Theta(1-a^2)$. From Figure~\ref{fig:aless11} we can see that $D(P_1, P_2)= \frac{1}{1-a^2}$ is achievable without any control input. The statement (b) of Corollary~\ref{cor:6} give a matching lower bound. More precisely, since $\Theta(\frac{1}{1-a^2}) = \max(\Sigma_2, 1)$, for a large value of $\Sigma_2$ we can put $P_1 = \Theta(1-a^2)$ in the statement (b). Then, the bound reduces to $D(P_1, P_2) = \Omega(\frac{1}{1-a^2})$.

$\bullet$ When $\Theta(1-a^2) \leq P_1 \leq \Theta(\frac{1}{\max(\Sigma_1,1)})$. 
In this case, the first controller dominates the performance, and Figure~\ref{fig:aless11} shows $D(P_1, P_2) = O(\frac{1}{P_1})$ is achievable. The statement (b) of Corollary~\ref{cor:6} gives a matching lower bound.

$\bullet$ When $\Theta(\frac{1}{\max(\Sigma_1,1)}) \leq P_1$. 
In this case, the first controller dominates the performance, and Figure~\ref{fig:aless11} shows $D(P_1, P_2) = O(\max(\Sigma_1, 1))$ is achievable. The statement (e) of Corollary~\ref{cor:6} gives a matching lower bound. 

\subsubsection{When $\Theta(\frac{1}{1-a^2}) = \max(\Sigma_1, 1) \approx \max(\Sigma_2, 1)$}
In this case, the Kalman filtering noise $\Sigma_1$ and $\Sigma_2$ is already compatible with $\frac{1}{1-a^2}$, the state distortion attainable without any control inputs. Therefore, we cannot expect a significant control gain, and the optimal state distortion is $\Theta(\frac{1}{1-a^2})$. Since $\Theta(\frac{1}{1-a^2}) = \max(\Sigma_1, 1)$, the statement (e) of Corollary~\ref{cor:6} gives a matching lower bound.

Formally, the average LQG cost for $0.9 \leq |a| < 1$ can be characterized as follows.
\begin{proposition}
Consider the decentralized LQG control of Problem~\ref{prob:a}. There exists $c \leq 1700$ such that for all $0.9 \geq |a|<1$, $q$, $r_1$, $r_2$, $\sigma_{v1}$ and $\sigma_{v2}$,
\begin{align}
\frac{
\underset{u_1,u_2 \in L_{lin,kal}}{\inf}
\underset{N \rightarrow \infty}{\limsup}
\frac{1}{N}
\underset{0 \leq n < N}{\sum}
\mathbb{E}[qx^2[n]+r_1 u_1^2[n]+r_2 u_2^2[n]]
}
{
\underset{u_1,u_2}{\inf}
\underset{N \rightarrow \infty}{\limsup}
\frac{1}{N}
\underset{0 \leq n < N}{\sum}
\mathbb{E}[qx^2[n]+r_1 u_1^2[n]+r_2 u_2^2[n]]
}
\leq c. \nonumber
\end{align}
\label{prop:3}
\end{proposition}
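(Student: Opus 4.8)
The plan is to follow the same three-part template used for Propositions~\ref{prop:1} and~\ref{prop:2}: first exhibit an achievable power-distortion tradeoff using only a single controller, then invoke the information-theoretic lower bound of Corollary~\ref{cor:6}, show that the two agree within a universal constant over the whole $(P_1,P_2)$-plane, and finally lift this tradeoff characterization to the average-cost ratio through \cite[Lemma~14]{Park_Approximation_Journal_Parti}. Since a strategy in $L_{lin,kal}$ silences one controller and runs the centralized Kalman-filtering controller of Definition~\ref{def:cen} on the other, using controller $i$ alone achieves exactly the single-controller tradeoff $D_{\sigma_{vi}}(P_i)$. By Corollary~\ref{cor:5} this tradeoff is upper bounded, up to the explicit constants there, by the conceptual curve of Figure~\ref{fig:aless1}: the distortion behaves like $\Theta(1/P)$ for $1-a^2 \le P \le 1/\max(\Sigma_i,1)$, saturates at $\Theta(\max(\Sigma_i,1))$ for larger $P$, and is floored at $\tfrac{1}{1-a^2}$ for $P \le 1-a^2$ because the stable plant needs no control to keep the distortion bounded. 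Taking the better of the two controllers yields the achievable bound $D(P_1,P_2) \le O(\min(D_{\sigma_{v1}}(P_1),D_{\sigma_{v2}}(P_2)))$.

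For the matching lower bound I would carry out the case split exactly as organized in the three subsubsections preceding the statement, treating $D_L(\widetilde{P_1},\widetilde{P_2})$ of Corollary~\ref{cor:6} as a lower bound on $D(P_1,P_2)$. The cases are indexed first by the ordering of $\max(\Sigma_1,1)$, $\max(\Sigma_2,1)$ and $\tfrac{1}{1-a^2}$, and then refined by the size of $P_1,P_2$ relative to the breakpoints $1-a^2$, $1/\max(\Sigma_2,1)$ and $1/\max(\Sigma_1,1)$ (these are correctly nested since $\Sigma_1 \le \Sigma_2$). In each region one of (a)--(e) supplies a bound that matches the achievable distortion up to a constant: (d) pins the floor $\tfrac{1}{1-a^2}$ when both powers are tiny, (c) and (b) reproduce the $\Theta(1/\max(P_1,P_2))$ and $\Theta(1/P_1)$ inverse-proportional regimes, (a) gives the saturation at $\Theta(\Sigma_2)$ when the weaker controller dominates, and the universal bound (e), $\max(0.2636\,\Sigma_1,1)$, covers the large-$P_1$ saturation at $\Theta(\Sigma_1)$. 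Because the breakpoints in Corollaries~\ref{cor:5} and~\ref{cor:6} differ only by fixed numerical factors, the powers invoked in the achievable scheme and in the lower bound differ only by a constant, which is precisely the hypothesis needed to apply \cite[Lemma~14]{Park_Approximation_Journal_Parti} and pass from the tradeoff to the average cost.

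The main obstacle is bookkeeping the explicit constants so that their product, maximized over all regions, stays below $1700$. Each case contributes a ratio formed from the achievable constant of Corollary~\ref{cor:5} (the factor $2$ in \eqref{eqn:tradeoff3}) divided by the lower-bound coefficient of Corollary~\ref{cor:6}, together with the power-rescaling constant that \cite[Lemma~14]{Park_Approximation_Journal_Parti} introduces. The binding region is the inverse-proportional regime, where the achievable $2/P$ meets the bound $0.001201/\max(\widetilde{P_1},\widetilde{P_2})$ of~(c); this gives a ratio near $2/0.001201 \approx 1665$, which is what forces the stated $c \le 1700$. Extra care is needed at the transition $\Theta(\tfrac{1}{1-a^2}) = \max(\Sigma_2,1)$ of the second subsubsection, where the interval $[1-a^2, 1/\max(\Sigma_1,1)]$ degenerates and one must verify that evaluating (b) at $\widetilde{P_1} = \Theta(1-a^2)$ still reproduces the floor $\Theta(\tfrac{1}{1-a^2})$ rather than leaving a gap. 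Finally, the restriction $|a| \ge 0.9$ is used exactly to keep $\tfrac{1}{1-a^2}$ bounded below by a constant, ensuring the additive $+1$ terms in Corollary~\ref{cor:6} and the approximation $a^2\Sigma_E+1 \approx \max(\Sigma_E,1)$ never dominate and spoil the constant ratio.
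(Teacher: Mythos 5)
Your proposal follows essentially the same route as the paper's own proof: reduce the average-cost claim to a constant-ratio comparison of tradeoff curves via \cite[Lemma~14]{Park_Approximation_Journal_Parti}, upper bound the achievable distortion by running a single Kalman-filtering controller through Corollary~\ref{cor:5}, lower bound it by the region-by-region application of statements (a)--(e) of Corollary~\ref{cor:6}, and track the explicit constants over the case split in $\Sigma_1,\Sigma_2$ and $\widetilde{P_1},\widetilde{P_2}$. You also correctly locate the binding constant, $2/0.001201 \approx 1665$, in the inverse-proportional regime where the achievable $2/P$ meets Corollary~\ref{cor:6}(c), which is precisely the case that forces the paper's $c \le 1700$.
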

\begin{proof}
See Appendix~\ref{sec:aless1} for the proof.
\end{proof}

\subsection{When $|a| \leq 0.9$}

\begin{proposition}
Consider the decentralized LQG control of Problem~\ref{prob:a}. There exists $c \leq 6$ such that for all $ |a|< 0.9$, $q$, $r_1$, $r_2$, $\sigma_{v1}$ and $\sigma_{v2}$,
\begin{align}
\frac{
\underset{u_1,u_2 \in L_{lin,kal}}{\inf}
\underset{N \rightarrow \infty}{\limsup}
\frac{1}{N}
\underset{0 \leq n < N}{\sum}
\mathbb{E}[qx^2[n]+r_1 u_1^2[n]+r_2 u_2^2[n]]
}
{
\underset{u_1,u_2}{\inf}
\underset{N \rightarrow \infty}{\limsup}
\frac{1}{N}
\underset{0 \leq n < N}{\sum}
\mathbb{E}[qx^2[n]+r_1 u_1^2[n]+r_2 u_2^2[n]]
}
\leq c \nonumber
\end{align}
\label{prop:4}
\end{proposition}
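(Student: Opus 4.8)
The plan is to bypass the information-theoretic lower bounds used in the other regimes and instead exploit the fact that, when $|a|\le 0.9$, the open-loop system is already essentially as good as any controller can hope to be. First I would bound the numerator from above by the cost of the zero-input strategy, which lies in $L_{lin,kal}$ (take $k=0$ in either form of the definition, giving $u_1[n]=u_2[n]=0$). Under this strategy the state obeys $x[n+1]=ax[n]+w[n]$, so $\mathbb{E}[x^2[n]]=a^{2n}\sigma_0^2+\frac{1-a^{2n}}{1-a^2}$; the transient term $a^{2n}\sigma_0^2$ contributes nothing to the long-run average, so the numerator is at most $\frac{q}{1-a^2}$. This is exactly the open-loop point $(D_{\sigma_v}(P),P)\le(\frac{1}{1-a^2},0)$ recorded in \eqref{eqn:tradeoff2} of Corollary~\ref{cor:5}, specialized to zero power.

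Next I would lower bound the denominator by the process-noise floor. By causality $w[n-1]$ is independent of $ax[n-1]+u_1[n-1]+u_2[n-1]$, so for every admissible strategy and every $n\ge 1$ we have $\mathbb{E}[x^2[n]]=\mathbb{E}[(ax[n-1]+u_1[n-1]+u_2[n-1])^2]+1\ge 1$. Discarding the nonnegative control-power terms and using monotonicity of $\limsup$ (since the full integrand dominates $q\,\mathbb{E}[x^2[n]]$), this shows the denominator is at least $q$.

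Combining the two estimates gives that the ratio in Proposition~\ref{prop:4} is at most $\frac{1}{1-a^2}$, and for $|a|\le 0.9$ this is at most $\frac{1}{1-0.81}=\frac{1}{0.19}<6$, so $c\le 6$. There is no genuinely hard step here: the only points needing care are that the transient $a^{2n}\sigma_0^2$ does not affect the long-run average in the numerator, and the degenerate case $q=0$, in which the denominator bound forces the numerator bound $\frac{q}{1-a^2}$ to vanish as well, so the inequality $(\text{numerator})\le c\cdot(\text{denominator})$ holds trivially. The real content is simply the observation that the universal noise floor $\mathbb{E}[x^2[n]]\ge 1$ already matches the zero-input distortion $\frac{1}{1-a^2}\le 5.27$ up to the constant $\frac{1}{1-a^2}$, so none of the slicing or cutset machinery of the earlier sections is required in this regime.
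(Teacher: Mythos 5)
Your proposal is correct, and its essential content --- comparing the zero-input distortion $\frac{1}{1-a^2}\le\frac{1}{1-0.9^2}<6$ against the universal noise floor $\mathbb{E}[x^2[n]]\ge 1$ --- is exactly the comparison the paper makes. The structural difference is in the packaging. The paper routes the argument through the power-distortion formalism: it invokes \cite[Lemma~14]{Park_Approximation_Journal_Parti} to reduce the average-cost ratio to the tradeoff inequality $D_U(cP_1,cP_2)\le c\,D_L(P_1,P_2)$, then cites Lemma~\ref{lem:aless21} with $k=0$ for the achievable point $(\frac{1}{1-a^2},0)$ and Lemma~\ref{lem:less11} for the lower bound $D_L(\widetilde{P_1},\widetilde{P_2})\ge 1$. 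You instead work directly on the average costs: the unit floor comes from causality (independence of $w[n-1]$ from $ax[n-1]+u_1[n-1]+u_2[n-1]$, so $\mathbb{E}[x^2[n]]\ge 1$ for every admissible strategy), and the numerator bound comes from an explicit computation under the zero strategy, which indeed lies in $L_{lin,kal}$ at $k=0$. What the paper's route buys is uniformity: Propositions~\ref{prop:1}, \ref{prop:2}, \ref{prop:3} all pass through the same Lemma~14 reduction, so the $|a|<0.9$ case reads as a degenerate instance of one template. What your route buys is self-containment: no Part-I machinery and no slicing lemma is needed, the transient $a^{2n}\sigma_0^2$ is explicitly shown to wash out of the Ces\`{a}ro average, and the degenerate case $q=0$ --- where the displayed ratio is formally $0/0$ and the claim must be read as numerator $\le c\cdot$ denominator --- is handled explicitly, a point the paper leaves implicit inside the Lemma~14 framework.
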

\begin{proof}
By \cite[Lemma~14]{Park_Approximation_Journal_Parti}, it is enough to show that there exists $c \in \mathbb{R}$ such that $D_U(c P_1, c P_2) \leq c \cdot D_L(P_1, P_2)$.

Upper bound: Putting $k=0$ to Lemma~\ref{lem:aless21} gives
\begin{align}
(D_U(P_1),P_1) \leq (\frac{1}{1-a^2},0) \leq (\frac{1}{1-0.9^2},0)
\end{align}

Lower bound: By Lemma~\ref{lem:less11},
\begin{align}
D_L(P_1,P_2) \geq 1
\end{align}

Ratio: $c$ is upper bounded by
\begin{align}
c \leq \frac{1}{1-0.9^2} \leq 6
\end{align}

Therefore, the lemma is proved.
\end{proof}

\subsection{Proof of Theorem~\ref{thm:2}}
\label{subsec:mainthm}
Now, by combining the results of Proposition~\ref{prop:1}, \ref{prop:2}, \ref{prop:3} we can prove the main theorem of the paper.
\begin{proof}[Proof of Theorem~\ref{subsec:mainthm}]
The proof immediately follows from Proposition~\ref{prop:1}, \ref{prop:2}, \ref{prop:3}.
\end{proof}

\bibliographystyle{IEEEtran}
\bibliography{IEEEabrv,seyongbib}

\appendix

\subsection{Proof of Corollary~\ref{cor:3}, \ref{cor:1}, \ref{cor:5}}
\label{app:cor1}

\begin{proof}[Proof of Corollary~\ref{cor:3} of Page~\pageref{cor:3}]
For simplicity, we will only proof for the case when $a=1$. The proof for the case of $a=-1$ follows similarly by replacing $a$ with $-a$.

In this case, Lemma~\ref{lem:aless21} reduces to that for all $|1-k| < 1$,
\begin{align}
D_{\sigma_v}(P) &\leq \frac{(2k-k^2)\Sigma_E + 1}{1-(1-k)^2}=\frac{(2k-k^2)\Sigma_E+1}{2k-k^2}=\frac{1}{2k-k^2}+\Sigma_E \label{eqn:cor3:1}
\end{align}
\begin{align}
P\leq k^2( \frac{(2k-k^2)\Sigma_E+1}{1-(1-k)^2} - \Sigma_E)=k^2 ( \frac{1}{2k-k^2}+\Sigma_E - \Sigma_E) = \frac{k^2}{2k-k^2} \label{eqn:cor3:2}
\end{align}
where
\begin{align}
\Sigma_E &= \frac{-1 + \sqrt{4 \sigma_{v}^2 + 1}}{2}.
\end{align}

Let $k^\star \in (0,1]$ be a constant such that $\max(1,\Sigma_E)= \frac{1}{2k^\star - k^{\star2}}$. Here, we can see that such $k^\star$ always exists since $\max(1,\Sigma_E) \geq 1$ and $\frac{1}{2k-k^2}$ is a decreasing function on $k$. Let $k \in (0,k^\star]$. Then, we can see since $0 < k^\star \leq 1$, $|1-k| < 1$. Then, \eqref{eqn:cor3:1} and \eqref{eqn:cor3:2} are again upper bounded as follows:
\begin{align}
D_{\sigma_v}(P) &= \frac{1}{2k-k^2} + \Sigma_E \\
&\leq \frac{1}{2k-k^2} + \max(1,\Sigma_E) \\
&= \frac{1}{2k-k^2} + \frac{1}{2k^\star - k^{\star 2}} \\
&\leq \frac{2}{2k-k^2}
\end{align}
where the last inequality follows from $0 < k \leq k^\star$.
\begin{align}
P &= \frac{k^2}{2k-k^2} \\
& \leq \frac{k^2 (2-k)^2}{2k-k^2} \\
& = 2k-k^2
\end{align}
where the inequality follows from $0 < k \leq k^\star \leq 1$.

Let's put $t=2k-k^2$. Then, we have $(D_{\sigma_v}(P),P) \leq (\frac{2}{t},t)$ where $t \in (0, 2k^\star - k^{\star 2}]$. Therefore, $t \in (0, \frac{1}{\max(1,\Sigma_E)}]$. This finishes the proof of the first claim.

When $\sigma_v \geq 16$, we have
\begin{align}
\Sigma_E &= \frac{-1 + \sqrt{4 \sigma_v^2 + 1 }}{2} \leq \frac{\sqrt{4\sigma_v^2 +1}}{2} \\
&\leq \frac{\sqrt{4 \sigma_v^2 + \frac{1}{16^2}  \sigma_v^2}}{2} = 1.000488... \sigma_v\\
&\leq 1.0005 \sigma_v.
\end{align}
Therefore, the range of $t$ at least includes $(0, \frac{1}{1.0005 \sigma_v}]$ and the second claim is true.

When $\sigma_v \leq 16$, we have
\begin{align}
\Sigma_E &= \frac{-1 + \sqrt{4 \cdot 16^2 +1 }}{2} =15.0078105... \leq 15.008.
\end{align}
Therefore,the range of $t$ at least includes $(0, \frac{1}{15.008}]$ and the third claim is true.
\end{proof}

\begin{proof}[Proof of Corollary~\ref{cor:1} of Page~\pageref{cor:1}]
For simplicity, we prove only for the case when $a>1$. The proof for the case of $a < -1$ follows similarly by replacing $a$ with $-a$.

Proof of (i): Let's put $k=a-\frac{1}{a}$ in Lemma~\ref{lem:aless21}. Since $|a-a+\frac{1}{a}|=|\frac{1}{a}| < 1$, the power-distortion tradeoff in \eqref{eqn:powerdis} still holds. Thus, we can see that
\begin{align}
D_U(P)&\leq \frac{(2a(a-\frac{1}{a})-(a-\frac{1}{a})^2)\Sigma_E + 1}{1-(\frac{1}{a})^2} \\
&=\frac{a^2-\frac{1}{a^2}}{1-\frac{1}{a^2}} \Sigma_E + \frac{1}{1-(\frac{1}{a})^2}\\
&=(a^2+1)\Sigma_E + \frac{a^2}{a^2-1} 
\end{align}
and
\begin{align}
P&\leq (\frac{a^2-1}{a})^2 (a^2 \Sigma_E + \frac{a^2}{a^2-1}) - \Sigma_E \\
&\leq (a^2-1)^2 \Sigma_E + (a^2-1),
\end{align}
which finishes the proof of (i).

Proof of (ii): We will divide into two cases depending on $\Sigma_E$. 

Case 1) When $\max(1, (1+a^2)\Sigma_E) > \frac{1}{1-(\frac{1}{a})^2}$.

In this case, the domain for $t$ is an empty set and we do not have to prove anything.

Case 2) When $\max(1, (1+a^2)\Sigma_E) \leq \frac{1}{1-(\frac{1}{a})^2}$.

Since $\max(1, (1+a^2)\Sigma_E) \leq \frac{1}{1-(\frac{1}{a})^2}$, there exists $\Delta^\star \in [0, \frac{1}{a}]$ such that
\begin{align}
\max(1,(a^2+1) \Sigma_E)= \frac{1}{1-(\frac{1}{a}-\Delta^\star)^2}.
\end{align}

Let's put $k=a-\frac{1}{a}+\Delta$ in Lemma~\ref{lem:aless21} where $\Delta \in [0,\Delta^\star]$. Then, we have the following upper bound on $D_{\sigma_v}(P)$  and $P$.

\begin{align}
D_{\sigma_v}(P)&\leq  \frac{ (2ak-k^2)\Sigma_E + 1}{1-(a-k)^2} \label{eqn:evalkal1} \\
&= \frac{2ak-k^2}{1-(a-k)^2}\Sigma_E + \frac{1}{1-(a-k)^2}\\
&= \frac{a^2-1+1-(a-k)^2}{1-(a-k)^2}\Sigma_E + \frac{1}{1-(a-k)^2}\\
&= (\frac{a^2-1}{1-(a-k)^2}+1)\Sigma_E + \frac{1}{1-(a-k)^2}\\
&= (\frac{a^2-1}{1-(\frac{1}{a}-\Delta)^2}+1)\Sigma_E + \frac{1}{1-(a-k)^2}\\
&\overset{(A)}{\leq} (\frac{a^2-1}{1-(\frac{1}{a})^2}+1)\Sigma_E + \frac{1}{1-(a-k)^2}\\
&= (a^2+1)\Sigma_E + \frac{1}{1-(a-k)^2} \\
&= (a^2+1)\Sigma_E + \frac{1}{1-(\frac{1}{a}-\Delta)^2} \\
&\overset{(B)}{\leq} \max(1,(a^2+1)\Sigma_E) + \frac{1}{1-(\frac{1}{a}-\Delta)^2} \\
&= \frac{1}{1-(\frac{1}{a}-\Delta^\star)^2}  + \frac{1}{1-(\frac{1}{a}-\Delta)^2} \\
&\leq \frac{1}{1-(\frac{1}{a}-\Delta)^2} + \frac{1}{1-(\frac{1}{a}-\Delta)^2} \\
&= \frac{2}{1-(\frac{1}{a}-\Delta)^2} \label{eqn:evalkal2}
\end{align}
(A): $0 \leq \Delta \leq \frac{1}{a}$\\
(B): $0 \leq \Delta \leq \Delta^\star \leq \frac{1}{a}$
\begin{align}
P&\leq k^2( \frac{(2ak-k^2) \Sigma_E + 1}{1-(a-k)^2} - \Sigma_E ) \\
&\leq k^2( \frac{(2ak-k^2) \Sigma_E + 1}{1-(a-k)^2}) \\
&\overset{(A)}{\leq} k^2 \frac{2}{1-(\frac{1}{a}-\Delta)^2}\\
&= (a-\frac{1}{a}+\Delta)^2 \frac{2}{1-(\frac{1}{a}-\Delta)^2} \\
&\overset{(B)}{\leq} (a+\Delta a  - \frac{1}{a}+ \Delta)^2 \frac{2}{1-(\frac{1}{a}-\Delta)^2} \\
&= ((a+1)(1 - \frac{1}{a}+ \Delta))^2 \frac{2}{1-(\frac{1}{a}-\Delta)^2} \\
&= \frac{2(a+1)^2 (1-\frac{1}{a}+\Delta)^2}{1-(\frac{1}{a}-\Delta)^2} \\
&= \frac{2(a+1)^2 (1-\frac{1}{a}+\Delta)}{1+\frac{1}{a}-\Delta} \\
&\overset{(C)}{\leq} 2(a+1)^2 (1-\frac{1}{a}+\Delta)(1+\frac{1}{a}-\Delta)\\
&= 2(a+1)^2 (1-(\frac{1}{a}-\Delta)^2)
\end{align}
(A): This comes from the comparison of \eqref{eqn:evalkal1} and \eqref{eqn:evalkal2}.\\
(B): Since $\Delta \geq 0$, $a >1$, we have $a - \frac{1}{a} + \Delta > 0$. Moreover, $\Delta a \geq 0$.\\
(C): $0 \leq \Delta \leq \frac{1}{a}$, $(1+\frac{1}{a}-\Delta) \geq 1$.

Therefore, by putting $t=2(a+1)^2 (1-(\frac{1}{a}-\Delta)^2)$ we can conclude
\begin{align}
(D_{\sigma_v}(P), P) \leq (\frac{4(a+1)^2}{t}, t).
\end{align}
Since $\Delta \in [0,\Delta^\star]$, we have $t \in [2(a+1)^2(1-(\frac{1}{a})^2), 2(a+1)^2 (1-(\frac{1}{a}-\Delta^\star)^2)]$. Moreover, by the definition of $\Delta^\star$, it is equivalent to $t \in [2(a+1)^2(1-(\frac{1}{a})^2), \frac{2(a+1)^2}{\max(1,(a^2+1)\Sigma_E)}]$. 

This finishes the proof of (ii).

When $1 < |a| \leq 2.5$, (i) is upper bounded as
\begin{align}
(D_{\sigma_v}(P), P)  &\leq ((a^2+1)\Sigma_E + \frac{a^2}{a^2-1}, (a^2-1)^2 \Sigma_E + (a^2-1)) \\
& \leq (7.25\Sigma_E + \frac{6.25}{a^2-1}, (a^2-1)^2 \Sigma_E + (a^2-1)).
\end{align}
Thus, we get (i').

When $1 < |a| \leq 2.5$, (ii) is also upper bounded as
\begin{align}
(D_{\sigma_v}(P), P) &\leq (\frac{4(|a|+1)^2}{t}, t) \\
&\leq  (\frac{49}{t}, t).
\end{align}
Moreover, for $1 < |a| \leq 2.5$
\begin{align}
&2(|a|+1)^2 (1-(\frac{1}{a})^2) \leq t \leq \frac{2(|a|+1)^2}{\max(1, (a^2+1)\Sigma_E)} \\
&(\Leftrightarrow)
2(1+\frac{1}{|a|})^2 (a^2-1) \leq t \leq  \frac{2(|a|+1)^2}{\max(1, (a^2+1)\Sigma_E)} \\
&(\Rightarrow)  8(a^2-1) \leq t \leq \frac{8}{\max(1, 7.25 \Sigma_E)}.
\end{align}
Therefore, we get (ii').
\end{proof}

\begin{proof}[Proof of Corollary~\ref{cor:5} of Page~\pageref{cor:5}]
For simplicity, we will only proof for the case when $0 \leq a < 1$. The proof for $-1< a \leq 0$ follows similarly by replacing $a$ with $-a$.

First part of the lemma easily follows by putting $k=0$ in Lemma~\ref{lem:aless21}.

Let's prove the second part of the lemma. Since the second part of the lemma assumes $\Sigma_E \leq \frac{1}{1-a^2}$, there always exists $k^\star \in [0,a]$ such that $\max(1,\Sigma_E)=\frac{1}{1-(a-k^\star)^2}$.

Since $0 \leq k^\star \leq a$ and $0 \leq a <1$, for all $k \in [0,k^\star]$ we have $|a-k| < 1$. Thus, by Lemma~\ref{lem:aless21}, for all $k \in [0,k^\star]$ we have the following upper bounds on $D_{\sigma_v}(P), P$.
\begin{align}
D_{\sigma_v}(P)&\leq \frac{(2ak-k^2)\Sigma_E +1 }{1-(a-k)^2} \label{eqn:cor3:10}\\
&\overset{(A)}{\leq} \frac{(1-a^2+2ak-k^2)\Sigma_E + 1}{1-(a-k)^2} \\
&= \frac{(1-(a-k)^2)\Sigma_E + 1}{1-(a-k)^2} \\
&= \Sigma_E + \frac{1}{1-(a-k)^2} \label{eqn:cor3:11}\\
&\overset{(B)}{\leq} \frac{1}{1-(a-k^\star)^2} + \frac{1}{1-(a-k)^2} \\
&\overset{(C)}{\leq} \frac{2}{1-(a-k)^2}
\end{align}
(A): $0 \leq a < 1$, $0 < k \leq a$, $\Sigma_E \geq 0$.\\
(B): $\max(1, \Sigma_E) = \frac{1}{1-(a-k^\star)^2}$.\\
(C): $0 \leq k \leq k^\star \leq a$.

\begin{align}
P&\leq k^2(\frac{(2ak-k^2)\Sigma_E +1 }{1-(a-k)^2}-\Sigma_E)\\
&\overset{(A')}{\leq} k^2(  \Sigma_E + \frac{1}{1-(a-k)^2} - \Sigma_E ) \\
&= \frac{k^2}{1-(a-k)^2} \\
&\overset{(B')}{\leq} \frac{(1-a+k)^2}{1-(a-k)^2} \\
&= \frac{1-a+k}{1+a-k} \\
&\overset{(C')}{\leq} (1-a+k)(1+a-k) \\
&= 1-(a-k)^2
\end{align}
(A'): $\eqref{eqn:cor3:10} \leq \eqref{eqn:cor3:11}$.\\
(B'): $0 \leq a < 1$ and $0 < k \leq a$.\\
(C'): $0 \leq a < 1$ and $0 < k \leq a$.

Let's put $t=1-(a-k)^2$. Then, we have $(D_{\sigma_1}(P), P) \leq (\frac{2}{t}, t)$. Moreover, since $0 \leq k \leq k^\star \leq a$, $t \in [1-a^2, 1-(a-k^\star)^2]$. Furthermore, since $t$, $t \in [1-a^452, \frac{1}{\max(1,\Sigma_E)}]$. This finishes the proof of the lemma.
\end{proof}

\subsection{Proof of Corollary~\ref{cor:2} and Proposition~\ref{prop:1}}
\label{sec:ageq1}
\begin{proof}[Proof of Corollary~\ref{cor:2} of page~\pageref{cor:2}]
For simplicity, we first prove for the case when $1 < a \leq 2.5$. The proof for the case when $-2.5 \leq a < -1$ follows similarly.

First, let's upper bound $\Sigma_1$ and $\Sigma_2$ of \eqref{eqn:cor3stat:1} and \eqref{eqn:cor3stat:2}. When $|(a^2-1)\sigma_{v1}^2 -1 | \geq |2 a \sigma_{v1}|$,  we have
\begin{align}
\Sigma_1 &\leq \frac{(a^2-1)\sigma_{v1}^2 -1  + \sqrt{2 ((a^2-1)\sigma_{v1}^2 - 1)^2}}{2a^2} \\
&\leq \frac{(1+\sqrt{2})|(a^2-1)\sigma_{v1}^2 - 1 |}{2a^2} \\
&\leq \frac{(1+\sqrt{2})\max(1,(a^2-1)\sigma_{v1}^2)}{2a^2} \label{eqn:cor2:1}
\end{align}
When $|(a^2-1)\sigma_{v1}^2 -1 | \leq |2 a \sigma_{v1}|$, we have
\begin{align}
\Sigma_1 & \leq \frac{|2a\sigma_{v1}|+\sqrt{(2a\sigma_{v1})^2 + 4 a^2 \sigma_{v1}^2}}{2a^2} \\
&= \frac{(1+\sqrt{2})2a\sigma_{v1}}{2a^2} \label{eqn:cor2:2}
\end{align}
Therefore, by \eqref{eqn:cor2:1} and \eqref{eqn:cor2:2}, we can conclude
\begin{align}
\Sigma_1 &\leq \frac{(1+\sqrt{2}) \max(1,(a^2-1)\sigma_{v1}^2, 2 a \sigma_{v1}) }{2a^2}. \label{eqn:eval3}
\end{align}
Likewise, we also have
\begin{align}
\Sigma_2 &\leq \frac{(1+\sqrt{2}) \max(1,(a^2-1)\sigma_{v2}^2, 2 a \sigma_{v2}) }{2a^2}.
\end{align}

We also have for all $k \geq 3$
\begin{align}
&\frac{a^2(1-a^{-2(k-1)})}{1-a^{-2(k-2)}}=\frac{a^{2(k-1)}-1}{a^{2(k-2)}-1}=
\frac{(a-1)(1+\cdots+ a^{(2k-4)} + a^{(2k-3)} )}
{(a-1)(1+\cdots+ a^{(2k-5)} )} \label{eqn:eval0}\\
&=
\frac{1+\cdots+ a^{(2k-4)} + a^{(2k-3)}}
{1+\cdots+ a^{(2k-5)}} \\
&= 1 + \frac{a^{(2k-4)}+a^{(2k-3)}}{1+\cdots+a^{(2k-5)}} \\
&\overset{(A)}{\leq} 1 + \frac{a^{(2k-4)}+a^{(2k-3)}}{a^{(2k-6)}+a^{(2k-5)}} \\
&= 1 + a^2 \leq 1+2.5^2 = 7.25. \label{eqn:eval1}
\end{align}
(A): This comes from $k \geq 3$.

Then, let's prove the statements of the lemma.

Proof of (a):

Since $\Sigma_1 \geq 150 $ and $\Sigma_2 \geq 150$, there exist $k_1 \geq 3$ and $k_2 \geq 3$ such that
\begin{align}
\frac{a^{2(k_1-2)}-1}{1-a^{-2}} \leq \frac{\Sigma_1}{24} < \frac{a^{2(k_1-1)}-1}{1-a^{-2}} \label{eqn:eval2}\\
\frac{a^{2(k_2-2)}-1}{1-a^{-2}} \leq \frac{\Sigma_2}{24} < \frac{a^{2(k_2-1)}-1}{1-a^{-2}}
\end{align}
We will evaluate Lemma~\ref{lem:aless22} with these $k_1$ and $k_2$, and increase $k$ arbitrary large.

Moreover, since $\Sigma_1 \geq 150$ implies $\sigma_{v1} \geq 1$, \eqref{eqn:eval3} further reduces to
\begin{align}
\Sigma_1 \leq \frac{(1+\sqrt{2}) \max((a^2-1)\sigma_{v1}^2, 2 a \sigma_{v1}) }{2a^2}. \label{eqn:eval103}
\end{align}

Let's upper bound $I$ of Lemma~\ref{lem:aless22}. First, we have
\begin{align}
&\frac{a^{2(k_1-2)}(1-a^{-2(k_1-1)})^2 }{(1-a^{-2})^2} \\
&\overset{(A)}{\leq} \frac{a^{2(k_1-2)}(1-a^{-2(k_1-1)}) }{(1-a^{-2})^2} \\
&\overset{(B)}{\leq} \frac{a^{2(k_1-2)}(7.25 a^{-2}(1-a^{-2(k_1-2)}))}{(1-a^{-2})^2} \\
&= 7.25 a^{-2}( \frac{a^{2(k_1-2)}-1}{1-a^{-2}} ) \frac{1}{1-a^{-2}} \\
&\leq \frac{7.25 \Sigma_1}{24} \frac{a^{-2}}{1-a^{-2}}=\frac{7.25 \Sigma_1}{24} \frac{1}{a^2-1}. \label{eqn:eval4}
\end{align}
(A): For $k_1 \geq 3$, $1- a^{-2(k_1-1)}\leq 1$.\\
(B): By comparing \eqref{eqn:eval0} and \eqref{eqn:eval1}, we get $7.25a^{-2}(1-a^{-2(k_1-2)}) \geq (1-a^{-2(k_1-1)})$.\\
(C): This comes from \eqref{eqn:eval2}.

Moreover, we also have
\begin{align}
&\frac{a^{2(k_1-2)}(1-a^{-2(k_1-1)})^2}{(1-a^{-2})^2} \\
&\overset{(A)}{\leq} \frac{ a^{2(k_1-2)}(7.25 a^{-2}(1-a^{-2(k_1-2)}))^2 }{(1-a^{-2})^2}\\
&\overset{(B)}{\leq} 7.25^2 (\frac{a^{2(k_1-2)}(1-a^{-2(k_1-2)})}{1-a^{-2}} )^2 \\
&\overset{(C)}{\leq} (\frac{7.25 \Sigma_1}{24})^2. \label{eqn:eval5}
\end{align}
(A): By comparing \eqref{eqn:eval0} and \eqref{eqn:eval1}, we get $7.25a^{-2}(1-a^{-2(k_1-2)}) \geq (1-a^{-2(k_1-1)})$.\\
(B): $a >1$ and $k_1 \geq 3$.\\
(C): This comes from \eqref{eqn:eval2}.

By merging the results so far, we can conclude
\begin{align}
&\frac{a^{2(k_1-2)}(1-a^{-2(k_1-1)})^2 }{(1-a^{-2})^2} \label{eqn:eval:100} \\
&\overset{(A)}{\leq} \min( \frac{7.25 \Sigma_1}{24} \frac{1}{a^2-1} ,(\frac{7.25 \Sigma_1}{24})^2) \label{eqn:eval:101}\\
&\overset{(B)}{\leq} \max( \frac{7.25}{24} \frac{1}{a^2-1} \frac{(1+\sqrt{2})(a^2-1)\sigma_{v1}^2}{2a^2} , (\frac{7.25}{24})^2 (\frac{(1+\sqrt{2})2a \sigma_{v1}}{2a^2})^2 ) \\
&= \max( \frac{7.25}{24} \frac{1+\sqrt{2}}{2a^2} , (\frac{7.25}{24})^2 (\frac{1+\sqrt{2}}{a})^2 ) \sigma_{v1}^2 \\
&\overset{(C)}{\leq} \max( \frac{7.25}{24} \frac{1+\sqrt{2}}{2} , (\frac{7.25}{24})^2 (1+\sqrt{2})^2 ) \sigma_{v1}^2 \\
&\leq 0.5319 \sigma_{v1}^2 \label{eqn:14converse2}
\end{align}
(A): This comes from \eqref{eqn:eval4} and \eqref{eqn:eval5}.\\
(B): When $(a^2-1) \sigma_{v1}^2 \geq 2a \sigma_{v1}$, by \eqref{eqn:eval103} we have $\Sigma_1 \leq \frac{(1+\sqrt{2})(a^2-1) \sigma_{v1}^2}{2a^2}$. Thus, by plugging it into \eqref{eqn:eval:101}, we get 
\begin{align}
\eqref{eqn:eval:100} \leq \frac{7.25}{24} \frac{1}{a^2-1} \frac{(1+\sqrt{2})(a^2-1)\sigma_{v1}^2}{2a^2}.
\end{align}
Likewise, when $(a^2-1) \sigma_{v1}^2 \leq 2 a \sigma_{v1}$, by \eqref{eqn:eval103} we have $\Sigma_1 \leq 
\frac{(1+\sqrt{2})2a \sigma_{v1}}{2a^2}$. Therefore, by plugging it into \eqref{eqn:eval:101}, we get
\begin{align}
\eqref{eqn:eval:100} \leq (\frac{7.25}{24})^2 (\frac{(1+\sqrt{2})2a \sigma_{v1}}{2a^2})^2.
\end{align}
(C): Because $a > 1$.

In the same ways, we can also prove that
\begin{align}
&\frac{a^{2(k_2-2)}(1-a^{-2(k_2-1)})^2 }{(1-a^{-2})^2} \leq 0.5319 \sigma_{v2}^2.
 \label{eqn:14converse1}
\end{align}

Therefore, by plugging \eqref{eqn:14converse2} and \eqref{eqn:14converse1} into $I$ of Lemma~\ref{lem:aless22}, we can upper bound $I$ by
\begin{align}
I &\leq (k_1-1) \log(1+\frac{1}{k_1-1}0.5319) \\
&\leq \log e^{0.5319}. \label{eqn:eval9}
\end{align}

Let's upper bound $I'(\widetilde{P_1})$. First, we have
\begin{align}
&2a^{2(k_2-1-k)} \frac{1-a^{-2(k_2-k_1)}}{1-a^{-2}} \Sigma
+ 2a^{2(k_2-1-k_1)} \frac{1-a^{-2(k_2-k_1)}}{1-a^{-2}} \frac{1-a^{-2(k_2-k_1)}}{1-a^{-2}} \\
&\overset{(A)}{\leq} 2a^{2(k_2-1-k)} \frac{1-a^{-2(k_2-k_1)}}{1-a^{-2}} \frac{a^{2(k-1)}(1-a^{-2(k_1-1)})}{1-a^{-2}}\\
&+
2a^{2(k_2-1-k_1)} \frac{1-a^{-2(k_2-k_1)}}{1-a^{-2}} \frac{1-a^{-2(k_2-k_1)}}{1-a^{-2}}\\
&=
2a^{2(k_2-2)}(\frac{1-a^{-2(k_2-k_1)}}{1-a^{-2}})
(
\frac{(1-a^{-2(k_1-1)})}{1-a^{-2}} + a^{2(-k_1 + 1)} \frac{1-a^{-2(k_2-k_1)}}{1-a^{-2}}
)\\
&=
2a^{2(k_2-2)}(\frac{1-a^{-2(k_2-k_1)}}{1-a^{-2}})
(\frac{1-a^{-2(k_1-1)}+a^{-2(k_1-1)}-a^{-2(k_2-1)} }{1-a^{-2}}) \\
&\overset{(B)}{\leq}
2a^{2(k_2-2)}(\frac{1-a^{-2(k_2-1)}}{1-a^{-2}})^2 \\
&\overset{(C)}{\leq}
2 \cdot 0.5319\sigma_{v2}^2. \label{eqn:14converse9}
\end{align}
(A): Since $I \geq 0$, $\Sigma \leq a^{2(k-1)} \frac{1-a^{-2(k_1-1)}}{1-a^{-2}}$.\\
(B): $k_1 \geq 1$.\\
(C): It comes from \eqref{eqn:14converse1}.

We also have
\begin{align}
&2a^{2(k_2-k_1-2)}
\frac{1-a^{-2(k_2-k_1)}}{1-a^{-2}} \frac{(1-a^{-(k_2-1-k_1)})(1-a^{-(k-k_1)})}{(1-a^{-1})^2} 
\widetilde{P_1}
\\
&\overset{(A)}{\leq}
2a^{2(k_2-k_1-2)}
\frac{1-a^{-2(k_2-k_1)}}{1-a^{-2}} \frac{(1-a^{-(k_2-1-k_1)})(1-a^{-(k-k_1)})}{(1-a^{-1})^2}
\frac{24(a^2-1)^2}{40000} \frac{a^{2(k_1-1)}-1}{1-a^{-2}}
\\
&\overset{(B)}{\leq}
2a^{2(k_2-2)}(\frac{1-a^{-2(k_2-1)}}{1-a^{-2}})^2 \frac{24 a^{-2}}{40000} \frac{(a^2-1)^2}{(1-a^{-1})^2} \\
&= 2a^{2(k_2-2)}(\frac{1-a^{-2(k_2-1)}}{1-a^{-2}})^2 \frac{24(a+1)^2}{40000} \\
&\leq \frac{48(2.5+1)^2}{40000} 0.5319\sigma_{v2}^2 \\
&= 0.00781893 \sigma_{v2}^2. \label{eqn:14converse11}
\end{align}
(A): Since we have $\widetilde{P_1} \leq \frac{(a^2-1)^2 \Sigma_1}{40000}$
and $\Sigma_1 \leq 24 \frac{a^{2(k_1-1)}-1}{1-a^{-2}}$ by \eqref{eqn:eval2}. \\
(B): Since $k_2-1 \geq k_2-k_1$ and $2(k_2-1) \geq (k_2-1-k_1)$. \\
(C): By \eqref{eqn:14converse1} and $0 \leq a \leq 2.5$.

Therefore, by \eqref{eqn:14converse9} and \eqref{eqn:14converse11}, we can bound $I'(\widetilde{P_1})$ of Lemma~\ref{lem:aless22} by
\begin{align}
I'(\widetilde{P_1}) & \leq  \frac{k_2-k_1}{2}\log(1+\frac{1}{k_2-k_1}(
2 \cdot 0.5319 + 0.00781893
)) \\
&\leq  \frac{k_2-k_1}{2}\log(1+\frac{1}{k_2-k_1}(
1.07161893
)) \\
&\leq \frac{1}{2} \log e^{1.0717}. \label{eqn:eval8}
\end{align}

Moreover, we have
\begin{align}
&a^{2(k-k_1-1)} \frac{(1-a^{-(k-k_1)})^2}{(1-a^{-1})^2} \widetilde{P_1} \\
&\overset{(A)}{\leq} a^{2(k-k_1-1)} \frac{(1-a^{-(k-k_1)})^2}{(1-a^{-1})^2} \frac{24(a^2-1)^2}{40000} \frac{a^{2(k_1-1)}-1}{1-a^{-2}}\\
&= a^{2(k-k_1-1)} \frac{1-2a^{-(k-k_1)}+a^{-2(k-k_1)}}{(1-a^{-1})^2} \frac{24(a^2-1)^2}{40000} \frac{a^{2(k_1-1)}-1}{1-a^{-2}}\\
&\overset{(B)}{\leq} a^{2(k-k_1-1)} \frac{1-a^{-2(k-k_1)}}{(1-a^{-1})^2} \frac{24(a^2-1)^2}{40000} \frac{a^{2(k_1-1)}-1}{1-a^{-2}}\\
&= \frac{a^{2(k-2)} (1-a^{-2(k-k_1)})(1-a^{-2(k_1-1)})}{(1-a^{-2})} \cdot \frac{24(a^2-1)^2}{40000(1-a^{-1})^2} \\
&\leq \frac{a^{2(k-2)} (1-a^{-2(k-1)})}{(1-a^{-2})} \cdot \frac{24(a^2-1)^2}{40000(1-a^{-1})^2} \\
&= \frac{a^{2(k-1)} (1-a^{-2(k-1)})}{(1-a^{-2})} \cdot \frac{24(a+1)^2}{40000} \\
&\overset{(C)}{\leq} \frac{a^{2(k-1)} (1-a^{-2(k-1)})}{(1-a^{-2})} \cdot \frac{24(2.5+1)^2}{40000} \\
&= \frac{a^{2(k-1)} (1-a^{-2(k-1)})}{(1-a^{-2})} \cdot \frac{147}{20000} \label{eqn:eval6}
\end{align}
(A): By \eqref{eqn:eval2} and $\widetilde{P_1} \leq \frac{(a^2-1)^2 \Sigma_1}{40000}$. \\
(B): Since $k \geq k_1$. \\
(C): Since $1 \leq a \leq 2.5$.

Likewise, we can also prove that
\begin{align}
&a^{2(k-k_2-1)} \frac{(1-a^{-(k-k_2)})^2}{(1-a^{-1})^2} \widetilde{P_2} \leq \frac{a^{2(k-1)} (1-a^{-2k-1)})}{(1-a^{-2})} \cdot \frac{147}{20000}.  \label{eqn:eval7}
\end{align}

Finally, by plugging \eqref{eqn:eval9}, \eqref{eqn:eval8}, \eqref{eqn:eval6}, \eqref{eqn:eval7} into Lemma~\ref{lem:aless22} we have
\begin{align}
&D_L(\widetilde{P_1}, \widetilde{P_2})\\
&\geq
(
\sqrt{
\frac{a^{2(k-1)}\frac{1-a^{-2(k_1-1)}}{1-a^{-2}} + a^{2(k-k_1)} \frac{1-a^{-2(k_2-k_1)}}{1-a^{-2}} +
a^{2(k-k_2)} \frac{1-a^{-2(k-k_2)}}{1-a^{-2}}
}{2^{2(I+I'(\widetilde{P_1}))}}
} \\
&-
\sqrt{
a^{2(k-k_1-1)}
\frac{(1-a^{-(k-k_1)})^2}{(1-a^{-1})^2} \widetilde{P_1}
}
-
\sqrt{
a^{2(k-k_2-1)}
\frac{(1-a^{-(k-k_2)})^2}{(1-a^{-1})^2} \widetilde{P_2}
}
)_+^2 +1
\\
&\geq
\frac{a^{2(k-1)}(1-a^{-2(k-1)})}{1-a^{-2}}
(\sqrt{ \frac{1}{e^{2 \cdot 0.5319 + 1.0717} }}-\sqrt{\frac{147}{20000}} -\sqrt{\frac{147}{20000}}  )_+^2+1\\
&\geq
\frac{a^{2(k-1)}(1-a^{-2(k-1)})}{1-a^{-2}} 0.02969 + 1.
\end{align}

Therefore, by choosing $k$ arbitrary large, we have $D_L(\widetilde{P_1},\widetilde{P_2})=\infty$.

Proof of (b):

Like (a), since $\Sigma_1 \geq 150$ and $\Sigma_2 \geq 150$, there exist $k_1 \geq 3$ and $k_2 \geq 3$ such that
\begin{align}
\frac{a^{2(k_1-2)}-1}{1-a^{-2}} \leq \frac{\Sigma_1}{24} < \frac{a^{2(k_1-1)}-1}{1-a^{-2}}, \\
\frac{a^{2(k_2-2)}-1}{1-a^{-2}} \leq \frac{\Sigma_2}{24} < \frac{a^{2(k_2-1)}-1}{1-a^{-2}}.
\end{align}
We put the parameters of Lemma~\ref{lem:aless22} as such $k_1$, $k_2$ and $k=k_2$. Then, the lower bound of Lemma~\ref{lem:aless22} reduces to
\begin{align}
D_L(\widetilde{P_1}, \widetilde{P_2})
&\geq
(
\sqrt{
\frac{ \Sigma + a^{2(k-k_1)} \frac{1-a^{-2(k-k_1)}}{1-a^{-2}}  }{2^{2I'(\widetilde{P_1})}}
}-
\sqrt{a^{2(k-k_1-1)} \frac{(1-a^{-(k-k_1)} )^2}{(1-a^{-1})^2} \widetilde{P_1}}
)_+^2 + 1.
\end{align}

Since we choose $k_1$ and $k_2$ in the same way as (a) and have the same bound on $\widetilde{P_1}$, we still have \eqref{eqn:eval9}, \eqref{eqn:eval8}, \eqref{eqn:eval6} which are
\begin{align}
&I \leq \log e^{0.5319},\\
&I'( \widetilde{P_1}) \leq \frac{1}{2} \log e^{1.0717}, \\
&a^{2(k-k_1-1)} \frac{(1-a^{-(k-k_1)})^2}{(1-a^{-1})^2} \widetilde{P_1}
\leq  \frac{a^{2(k-1)}(1-a^{-2(k-1)})}{(1-a^{-2})} \cdot \frac{147}{20000}.
\end{align}
Therefore, we can conclude
\begin{align}
&D_L(\widetilde{P_1}, \widetilde{P_2})\\
&\geq
(
\sqrt{
\frac{a^{2(k-1)}\frac{1-a^{-2(k_1-1)}}{1-a^{-2}} + a^{2(k-k_1)} \frac{1-a^{-2(k-k_1)}}{1-a^{-2}}
}{2^{2(I+I'(P_1))}}
} -
\sqrt{
a^{2(k-k_1-1)}
\frac{(1-a^{-(k-k_1)})^2}{(1-a^{-1})^2} P_1
}
)_+^2 +1
\\
&\geq
\frac{a^{2(k-1)}(1-a^{-2(k-1)})}{1-a^{-2}}
(\sqrt{ \frac{1}{e^{2 \cdot 0.5319 + 1.0717} }}-\sqrt{\frac{147}{20000}}  )_+^2+1\\
&\geq 
\frac{\Sigma_2}{24}
(\sqrt{ \frac{1}{e^{2 \cdot 0.5319 + 1.0717} }}-\sqrt{\frac{147}{20000}}  )_+^2+1\\
&\geq
 0.002774 \Sigma_2 + 1.
\end{align}

Proof of (c):

We will put $k_1 = k_2 = 1$ in Lemma~\ref{lem:aless22} and increase $k$ arbitrary large. First, the lower bound in Lemma~\ref{lem:aless22} reduces to
\begin{align}
D_L(\widetilde{P_1}, \widetilde{P_2}) \geq (\sqrt{a^{2(k-1)}\frac{1-a^{-2(k-1)}}{1-a^{-2}}}-\sqrt{\frac{a^{2(k-2)}(1-a^{-(k-1)})^2 }{(1-a^{-1})^2} \widetilde{P_1}}
-\sqrt{\frac{a^{2(k-2)}(1-a^{-(k-1)})^2 }{(1-a^{-1})^2} \widetilde{P_2}})_+^2 + 1. \label{eqn:14converse333}
\end{align}
Here, we have
\begin{align}
&\frac{a^{2(k-2)}(1-a^{-(k-1)})^2 }{(1-a^{-1})^2} \widetilde{P_1}\\
&\leq
\frac{a^{2(k-2)}(1-2a^{-(k-1)}+a^{-2(k-1)})}{(1-a^{-1})^2} \frac{1}{20}(a^{2}-1) \\
&\overset{(A)}{\leq}
\frac{a^{2(k-1)}(1-a^{-2(k-1)})}{(1-a^{-2})}(1+a^{-1})^2 \frac{1}{20} \\
&\overset{(B)}{\leq}
\frac{1}{5}\frac{a^{2(k-1)}(1-a^{-2(k-1)})}{(1-a^{-2})} \label{eqn:14converse3}
\end{align}
(A): Since $k \geq 1$.\\
(B): Since $a \geq 1$.

Likewise, we can also prove that
\begin{align}
&\frac{a^{2(k-2)}(1-a^{-(k-1)})^2 }{(1-a^{-1})^2} \widetilde{P_2} \leq 
\frac{1}{5}\frac{a^{2(k-1)}(1-a^{-2(k-1)})}{(1-a^{-2})} \label{eqn:14converse33}
\end{align}

Finally, by plugging \eqref{eqn:14converse3}, \eqref{eqn:14converse33} into \eqref{eqn:14converse333}, we get
\begin{align}
D_L(\widetilde{P_1},\widetilde{P_2}) \geq a^{2(k-1)} \frac{1-a^{-2(k-1)}}{1-a^{-2}} (1 - \sqrt{\frac{1}{5}} - \sqrt{\frac{1}{5}})^2_+ + 1.
\end{align}
Therefore, by choosing $k$ arbitrary large, we have $D_L(\widetilde{P_1},\widetilde{P_2})=\infty$.

Proof of (d):

Let $k_1 = k_2 =1$ and $P=\max(\widetilde{P_1}, \widetilde{P_2})$. Since $P \leq \frac{1}{75}$, we can find $k \geq 2$ such that
\begin{align}
\frac{a^{(k-1)}-1}{1-a^{-1}} \leq \frac{1}{30P} <  \frac{a^{k}-1}{1-a^{-1}} \label{eqn:14converseadd2}
\end{align}
By setting the parameters of Lemma~\ref{lem:aless22} to such $k_1, k_2, k$, the lower bound in Lemma~\ref{lem:aless22} reduces to
\begin{align}
D_L(\widetilde{P_1}, \widetilde{P_2}) \geq (\sqrt{a^{2(k-1)}\frac{1-a^{-2(k-1)}}{1-a^{-2}}}-\sqrt{\frac{a^{2(k-2)}(1-a^{-(k-1)})^2 }{(1-a^{-1})^2} \widetilde{P_1}}
-\sqrt{\frac{a^{2(k-2)}(1-a^{-(k-1)})^2 }{(1-a^{-1})^2} \widetilde{P_2}})_+^2 + 1. \label{eqn:14converseadd1}
\end{align}
The first term of \eqref{eqn:14converseadd1} is lower bounded as follows:
\begin{align}
&a^{2(k-1)} \frac{1-a^{-2(k-1)}}{1-a^{-2}}\\
&\overset{(A)}{\geq} \frac{a^{k}-1}{1-a^{-1}} \frac{1}{1+a^{-1}} \\
&\overset{(B)}{\geq} \frac{1}{60P} \label{eqn:14converseadd3}
\end{align}
(A): $k \geq 2$.\\
(B): \eqref{eqn:14converseadd2} and $a \geq 1$.

The second term of \eqref{eqn:14converseadd1} is upper bounded as follows:
\begin{align}
\frac{a^{2(k-2)}(1-a^{-(k-1)})^2}{(1-a^{-1})^2} \widetilde{P_1} \overset{(A)}{\leq} \frac{(a^{k-1}-1)^2}{(1-a^{-1})^2} \widetilde{P_1} \overset{(B)}{\leq} \frac{1}{900P^2} \widetilde{P_1} \leq \frac{1}{900P} \label{eqn:14converseadd4}
\end{align}
(A): $a \geq 1$.\\
(B): \eqref{eqn:14converseadd2}.

Likewise, the third term of \eqref{eqn:14converseadd1} is upper bounded as
\begin{align}
\frac{a^{2(k-2)}(1-a^{-(k-1)})^2}{(1-a^{-1})^2} \widetilde{P_1} \leq \frac{1}{900P}. \label{eqn:14converseadd5}
\end{align}

Therefore, by plugging \eqref{eqn:14converseadd3}, \eqref{eqn:14converseadd4}, \eqref{eqn:14converseadd5} into \eqref{eqn:14converseadd1}, we conclude
\begin{align}
D_L(\widetilde{P_1}, \widetilde{P_2}) &\geq \frac{1}{P}(\sqrt{\frac{1}{60}}-\sqrt{\frac{1}{900}}-\sqrt{\frac{1}{900}})^2+1 \\
&\geq 0.00389 \frac{1}{P} + 1
\end{align}

Proof of (e):

Since $\Sigma_2 \geq 150$, we can find $k \geq 3$ such that
\begin{align}
\frac{a^{2(k-2)}-1}{1-a^{-2}} \leq \frac{\Sigma_2}{24} < \frac{a^{2(k-1)}-1}{1-a^{-2}}. \label{eqn:14converseadd6}
\end{align}
Let $k_2 = k$ and $k_1 =1$. By putting the parameters of Lemma~\ref{lem:aless22} with these parameters, the lower bound of Lemma~\ref{lem:aless22} reduces to 
\begin{align}
D_L(\widetilde{P_1}, \widetilde{P_2})
\geq
(
\sqrt{\frac{a^{2(k-1)}\frac{1-a^{-2(k-1)}}{1-a^{-2}} }{2^{2I'(\widetilde{P_1})}}}
-
\sqrt{\frac{a^{2(k-2)}(1-a^{-(k-1)})^2}{(1-a^{-1})^2}\widetilde{P_1}}
)_+^2+1. \label{eqn:14converseadd7}
\end{align}

We will upper bound $I'(\widetilde{P_1})$. 
First, since we chose $k$ in the same ways as $k_2$ of (a), \eqref{eqn:14converse9} still holds, i.e. 
\begin{align}
&2a^{2(k-2)} \frac{1-a^{-2(k-1)}}{1-a^{-2}} \Sigma
+ 2a^{2(k-2)} \frac{1-a^{-2(k-1)}}{1-a^{-2}} \frac{1-a^{-2(k-1)}}{1-a^{-2}}
\leq
2 \cdot 0.5319\sigma_{v2}^2. \label{eqn:14conversee1}
\end{align}
Moreover, we have
\begin{align}
&2a^{2(k-3)}
\frac{1-a^{-2(k-1)}}{1-a^{-2}}
\frac{(1-a^{-(k-2)})(1-a^{-(k-1)})}{(1-a^{-1})^2} \widetilde{P_1}  \\
&\overset{(A)}{\leq}
2a^{2(k-3)}
\frac{1-a^{-2(k-1)}}{1-a^{-2}}
\frac{(1-a^{-(k-2)})(1-a^{-(k-1)})}{(1-a^{-1})^2}
\frac{1}{24} \frac{1-a^{-2}}{a^{2(k-2)}-1}
\\
&\overset{(B)}{\leq}
\frac{1}{12} \frac{a^{-4}(a^{2(k-1)}-1)}{a^{2(k-2)}-1} \frac{(1-a^{-(k-1)})^2}{(1-a^{-1})^2} \\
&\overset{(C)}{\leq}
\frac{1}{12} (1+a^{-1})^2 a^{-2} \frac{a^{2(k-2)}(1-a^{-2(k-1)})^2}{(1-a^{-2})^2} 
\\
&\overset{(D)}{\leq}
\frac{1}{3} 0.5319 \sigma_{v2}^2 \label{eqn:14conversee2}
\end{align}
(A): $\widetilde{P_1} \leq \frac{1}{\Sigma_2} \leq \frac{1}{24} \frac{1-a^{-2}}{a^{2(k-2)}-1}$.\\
(B): $1-a^{-(k-2)} \leq 1 - a^{-(k-1)}$.\\
(C): Since $k \geq 3$ and $1 \leq a \leq 2.5$, we have
\begin{align}
&(a^{2(k-1)}-1) \leq (a^{4(k-2)}-1)\\
(\Leftrightarrow) &(a^{2(k-1)}-1) \leq (a^{2(k-2)}-1)(a^{2(k-2)}+1) \\
(\Rightarrow) &(a^{2(k-1)}-1) \leq (a^{2(k-2)}-1)(a^{k-1}+1)^2 \\
(\Leftrightarrow) & (a^{2(k-1)}-1) \leq a^{2(k-1)} (a^{2(k-2)}-1)(1+a^{-(k-1)})^2\\
(\Leftrightarrow) & \frac{a^{-4}(a^{2(k-1)}-1)(1-a^{-(k-1)})^2}{(a^{2(k-2)}-1)(1-a^{-1})^2}
\leq (1+a^{-1})^2 a^{-2} \frac{a^{2(k-2)}(1-a^{-2(k-1)})^2}{(1-a^{-2})^2}.
\end{align}
(D): This comes from $1 \leq a \leq 2.5$ and \eqref{eqn:14converse1}.

Therefore, by \eqref{eqn:14conversee1} and \eqref{eqn:14conversee2}, we have
\begin{align}
I'(\widetilde{P_1})& \leq \frac{1}{2} \log (1 + \frac{1}{k-1}(
2 \cdot 0.5319 + \frac{1}{3} 0.5319
)^{k-1} \\
&\leq \frac{1}{2} \log (1 + \frac{1}{k-1}(
1.2411
))^{k-1} \\
&\leq \frac{1}{2} \log  e^{1.2411}. \label{eqn:14converseadd8}
\end{align}
We also have
\begin{align}
&\frac{a^{2(k-2)}(1-a^{-(k-1)})^2}{(1-a^{-1})^2}\widetilde{P_1} \\
&=
\frac{a^{-2}(a^{k-1}-1)^2}{(1-a^{-1})^2} \widetilde{P_1} \\
&\overset{(A)}{\leq}
\frac{a^{-2}(a^{2(k-2)}-1)^2}{(1-a^{-2})^2}(1+a^{-1})^2 \widetilde{P_1} \\
&\overset{(B)}{\leq}
(\frac{\Sigma_2}{24})^2 4 \widetilde{P_1} \\
&\overset{(C)}{\leq} \frac{\Sigma_2}{144} \label{eqn:14converseadd9}
\end{align}
(A): This comes from $2(k-2) \geq (k-1)$.\\
(B): By \eqref{eqn:14converseadd6} and $1 \leq a \leq 2.5$.\\
(C): Since $P_1 \leq \frac{1}{\Sigma_2}$.

Therefore, by plugging \eqref{eqn:14converseadd6}, \eqref{eqn:14converseadd8}, \eqref{eqn:14converseadd9} into \eqref{eqn:14converseadd7}, we get
\begin{align}
D_L(\widetilde{P_1},\widetilde{P_2})
&\geq
(
\sqrt{\frac{a^{2(k-1)} \frac{1-a^{-2(k-1)}}{1-a^{-2}} }{2^{2I'(\widetilde{P_1})}}}
-\sqrt{\frac{\Sigma_2}{144}}
)_+^2 + 1 \\
&\geq
(
\sqrt{\frac{ \Sigma_2 }{24 \cdot 2^{2I'(\widetilde{P_1})}}}
-\sqrt{\frac{\Sigma_2}{144}}
)_+^2 + 1 \\
&\geq \Sigma_2( \sqrt{\frac{1}{24 \cdot  e^{1.2411}} }-\sqrt{\frac{1}{144}} )^2 + 1 \\
&\geq 0.0006976\Sigma_2  +1. 
\end{align}

Proof of (f):

Since $\widetilde{P_1} \leq \frac{1}{150}$, there exists $k \geq 3$ such that 
\begin{align}
\frac{a^{2(k-2)}-1}{1-a^{-2}} \leq \frac{1}{24 \widetilde{P_1}} < \frac{a^{2(k-1)}-1}{1-a^{-2}}. \label{eqn:14converseadd10}
\end{align}
Let $k_2=k$ and $k_1 =1$. By putting the parameters of Lemma~\ref{lem:aless22} as these parameters, the lower bound of Lemma~\ref{lem:aless22} reduces to 
\begin{align}
D_L(\widetilde{P_1},\widetilde{P_2})
\geq
(
\sqrt{\frac{a^{2(k-1)}\frac{1-a^{-2(k-1)}}{1-a^{-2}} }{2^{2I'(\widetilde{P_1})}}}
-
\sqrt{\frac{a^{2(k-2)}(1-a^{-(k-1)})^2}{(1-a^{-1})^2}\widetilde{P_1}}
)_+^2+1 \label{eqn:14converseadd11}
\end{align}
We will upper bound $I'(\widetilde{P_1})$.
Since we assumed $\frac{1}{\widetilde{P_1}} \leq \Sigma_2$, by \eqref{eqn:14converseadd10}
we have $\frac{a^{2(k-2)}-1}{1-a^{-2}} \leq \frac{\Sigma_2}{24}$.
Therefore, \eqref{eqn:14converse9} still holds and we have
\begin{align}
&2a^{2(k-2)} \frac{1-a^{-2(k-1)}}{1-a^{-2}} \Sigma
+ 2a^{2(k-2)} \frac{1-a^{-2(k-1)}}{1-a^{-2}} \frac{1-a^{-2(k-1)}}{1-a^{-2}}
\leq
2 \cdot 0.5319\sigma_{v2}^2.
\end{align}
Since $\widetilde{P_1} \leq \frac{1}{24} \frac{1-a^{-2}}{a^{2(k-2)}-1}$, following the same process of \eqref{eqn:14conversee2} we have
\begin{align}
&2a^{2(k-3)}
\frac{1-a^{-2(k-1)}}{1-a^{-2}}
\frac{(1-a^{-(k-2)})(1-a^{-(k-1)})}{(1-a^{-1})^2} \widetilde{P_1}  \\
&\leq \frac{1}{3} 0.5319 \sigma_{v2}^2.
\end{align}
Therefore, $I'(\widetilde{P_1})$ is upper bounded by
\begin{align}
I'(\widetilde{P_1})& \leq \frac{1}{2} \log (1 + \frac{1}{k-1}(
2 \cdot 0.5319 + \frac{1}{3} 0.5319
)^{k-1} \\
&\leq \frac{1}{2} \log (1 + \frac{1}{k-1}(
1.2411
))^{k-1} \\
&\leq \frac{1}{2} \log  e^{1.2411} \label{eqn:14converseadd12}
\end{align}
Moreover, we also have
\begin{align}
&\frac{a^{2(k-2)}(1-a^{-(k-1)})^2}{(1-a^{-1})^2} \widetilde{P_1} \\
&=
\frac{a^{-2}(a^{k-1}-1)^2}{(1-a^{-1})^2} \widetilde{P_1} \\
&\overset{(A)}{\leq}
\frac{a^{-2}(a^{2(k-2)}-1)^2}{(1-a^{-2})^2}(1+a^{-1})^2 \widetilde{P_1} \\
&\overset{(B)}{\leq}
(\frac{1}{24 \widetilde{P_1}})^2 4 \widetilde{P_1} = \frac{1}{144 \widetilde{P_1}} \label{eqn:14converseadd13}
\end{align}
(A): This comes from $2(k-2) \geq (k-1)$.\\
(B): By \eqref{eqn:14converseadd10} and $1 \leq a \leq 2.5$.

Therefore, by plugging \eqref{eqn:14converseadd10}, \eqref{eqn:14converseadd12}, \eqref{eqn:14converseadd13} into \eqref{eqn:14converseadd11}, we have
\begin{align}
D_L(\widetilde{P_1},\widetilde{P_2})
&\geq
(
\sqrt{\frac{a^{2(k-1)} \frac{1-a^{-2(k-1)}}{1-a^{-2}} }{2^{2I'(\widetilde{P_1})}}}
-\sqrt{\frac{1}{144 \widetilde{P_1}}}
)_+^2 + 1 \\
&\geq
(
\sqrt{\frac{ 1 }{24 \widetilde{P_1} \cdot 2^{2I'(\widetilde{P_1})}}}
-\sqrt{\frac{1}{144 \widetilde{P_1}}}
)_+^2 + 1 \\
&\geq \frac{1}{\widetilde{P_1}}( \sqrt{\frac{1}{24 \cdot  e^{1.2411}} }-\sqrt{\frac{1}{144}} )^2 + 1 \\
&\geq \frac{0.000697686...}{\widetilde{P_1}}+1 \\
&\geq \frac{0.0006976}{\widetilde{P_1}}+1. \\
\end{align}

Proof of (g): 

Since $\Sigma_2 \geq 150$, we can find $k_2 \geq 3$ such that
\begin{align}
\frac{a^{2(k_2-2)}-1}{1-a^{-2}} \leq \frac{\Sigma_2}{24} < \frac{a^{2(k_2-1)}-1}{1-a^{-2}}
\end{align}
Let $k_1=1$ and increase $k$ arbitrary large. By plugging such parameters to Lemma~\ref{lem:aless22}, the lemma reduces to 
\begin{align}
D_L(\widetilde{P_1},\widetilde{P_2})&\geq
(
\sqrt{
\frac{ a^{2(k-1)} \frac{1-a^{-2(k_2-1)}}{1-a^{-2}}  }{2^{2I'(\widetilde{P_1})}}
+a^{2(k-k_2)} \frac{1-a^{-2(k-k_2)}}{1-a^{-2}}
}\\
&
-
\sqrt{a^{2(k-2)} \frac{(1-a^{-(k-1)} )^2}{(1-a^{-1})^2} \widetilde{P_1}}
-
\sqrt{a^{2(k-k_2-1)} \frac{(1-a^{-(k-k_2)} )^2}{(1-a^{-1})^2} \widetilde{P_2}}
)_+^2 + 1. \label{eqn:14converseadd20}
\end{align}
We will first upper bound $I'(\widetilde{P_1})$. Following the same steps as \eqref{eqn:14converse9}, we get
\begin{align}
&2a^{2(k-2)} \frac{1-a^{-2(k-1)}}{1-a^{-2}} \Sigma
+ 2a^{2(k-2)} \frac{1-a^{-2(k-1)}}{1-a^{-2}} \frac{1-a^{-2(k-1)}}{1-a^{-2}}
\leq
2 \cdot 0.5319\sigma_{v2}^2.
\end{align}
We also have
\begin{align}
&a^{2(k_2-3)} \frac{1-a^{-2(k_2-1)}}{1-a^{-2}} \frac{(1-a^{-(k_2-2)})(1-a^{-(k-1)})}{(1-a^{-1})^2} \widetilde{P_1}\\
&\overset{(A)}{\leq} a^{2(k_2-3)} \frac{(1-a^{-2(k_2-1)})(1-a^{-(k_2-2)})}{(1-a^{-1})^2} \frac{\widetilde{P_1}}{1-a^{-2}} \\
&\overset{(B)}{\leq} a^{2(k_2-3)} \frac{(1-a^{-2(k_2-1)})^2}{(1-a^{-1})^2} \frac{1}{20}a^2 \\
&= \frac{a^{2(k_2-2)}(1-a^{-2(k_2-1)})^2 }{(1-a^{-2})^2} \frac{1}{20}(1+a^{-1})^2 \\
&\overset{(C)}{\leq} \frac{a^{2(k_2-2)}(1-a^{-2(k_2-1)})^2 }{(1-a^{-2})^2} \frac{1}{5} \\
&\overset{(D)}{\leq} \frac{0.5319 }{5}\sigma_{v2}^2.
\end{align}
(A): Since $0 \leq 1-a^{-(k-1)}  \leq 1$.\\
(B): Since we assumed $\widetilde{P_1} \leq \frac{1}{20}(a^2-1)$.\\
(C): Since $1 \leq a \leq 2.5$.\\
(D): This follows from that \eqref{eqn:14converse1} still holds.

Therefore, $I'(\widetilde{P_1})$ is upper bounded by
\begin{align}
I'(\widetilde{P_1})& \leq \frac{1}{2} \log( 1 + \frac{(2+\frac{2}{5})0.5319}{k_2-1}
)^{k_2-1} \\
&\leq \frac{1}{2} \log( 1 + \frac{1.27656}{k_2-1}
)^{k_2-1} \\
& \leq \frac{1}{2} \log e^{1.27656}. \label{eqn:eval10}
\end{align}
Following the same steps as \eqref{eqn:14converse3}, we still have
\begin{align}
&\frac{a^{2(k-2)}(1-a^{-(k-1)})^2 }{(1-a^{-1})^2} \widetilde{P_1} \leq
\frac{1}{5}\frac{a^{2(k-1)}(1-a^{-2(k-1)})}{(1-a^{-2})}. \label{eqn:14converseadd21}
\end{align}
Following the same steps as \eqref{eqn:eval6}, we still have
\begin{align}
&a^{2(k-k_2-1)} \frac{(1-a^{-2(k-k_2)})^2}{(1-a^{-1})^2} \widetilde{P_2} \leq  \frac{a^{2(k-1)}(1-a^{-2(k-1)})}{(1-a^{-2})} \frac{147}{20000} \label{eqn:14converseadd22}
\end{align}
Therefore, by plugging \eqref{eqn:eval10}, \eqref{eqn:14converseadd21}, \eqref{eqn:14converseadd22} into \eqref{eqn:14converseadd20} we conclude
\begin{align}
D_L(\widetilde{P_1}, \widetilde{P_2}) &\geq \frac{a^{2(k-1)}(1-a^{-2(k-1)})}{(1-a^{-2})} (\sqrt{\frac{1}{e^{1.27656}}}
-\sqrt{\frac{1}{5}}-\sqrt{\frac{147}{20000}})^2 +1 \\
&\geq  0.00002252 \frac{a^{2(k-1)}(1-a^{-2(k-1)})}{(1-a^{-2})}  + 1. \\
\end{align}
Finally, by increasing $k$ arbitrarily large, we can prove $D_L(\widetilde{P_1}, \widetilde{P_2}) = \infty$.

Proof of (h):

Compared to (g), we can notice that only the conditions for the controller $1$ and $2$ are flipped. Thus, by symmetry the proof is the same as (g).

Proof of (i):

Since $\Sigma_2 \geq 150$, we can find $k \geq 3$ such that
\begin{align}
\frac{a^{2(k-2)}-1}{1-a^{-2}} \leq \frac{\Sigma_2}{24} < \frac{a^{2(k-1)}-1}{1-a^{-2}}. \label{eqn:eval:new1}
\end{align}
Let $k_1=1$ and $k_2=k$. By plugging these parameters into Lemma~\ref{lem:aless22}, the lower bound of  Lemma~\ref{lem:aless22} reduces to
\begin{align}
D_L(\widetilde{P_1},\widetilde{P_2})
\geq
(
\sqrt{\frac{a^{2(k-1)}\frac{1-a^{-2(k-1)}}{1-a^{-2}} }{2^{2I'(\widetilde{P_1})}}}
-
\sqrt{\frac{a^{2(k-2)}(1-a^{-(k-1)})^2}{(1-a^{-1})^2}\widetilde{P_1}}
)_+^2+1. \label{eqn:14converseadd30}
\end{align}

Following the same steps as \eqref{eqn:eval10}, we still have
\begin{align}
&I'(\widetilde{P_1}) \leq \frac{1}{2} \log e^{1.27656}. \label{eqn:14converseadd31}
\end{align}
Following the same steps as \eqref{eqn:14converse3}, we can prove
\begin{align}
\frac{a^{2(k-2)}(1-a^{-(k-1)})^2 }{(1-a^{-1})^2} \widetilde{P_1} \leq
\frac{1}{5}\frac{a^{2(k-1)}(1-a^{-2(k-1)})}{(1-a^{-2})}. \label{eqn:14converseadd32}
\end{align}
Therefore, by plugging \eqref{eqn:14converseadd31}, \eqref{eqn:14converseadd32} into \eqref{eqn:14converseadd30}, we conclude
\begin{align}
D_L(\widetilde{P_1},\widetilde{P_2}) &\geq \frac{a^{2(k-1)}(1-a^{-2(k-1)})}{(1-a^{-2})} (\sqrt{\frac{1}{e^{1.27656}}}-\sqrt{\frac{1}{5}})^2 +1 \\
&\geq \frac{a^{2(k-1)}(1-a^{-2(k-1)})}{(1-a^{-2})} 0.00655882... +1 \\
&\overset{(A)}{\geq} \frac{\Sigma_2}{24} 0.00655882... +1 \\
&\geq 0.000273284... \Sigma_2 + 1 \\
&\geq 0.0002732 \Sigma_2 +1.
\end{align}
(A): This comes from \eqref{eqn:eval:new1}.

Proof of (j):

We will prove this by analyzing the centralized controller performance which has both $y_1[n]$, $y_2[n]$ and has no input power constraint.

Define $y_1'[n]:=x[n]+v_1'[n]$ and $y_2'[n]:=x[n]+v_2'[n]$ where $v_1'[n] \sim \mathcal{N}(0,\sigma_1^2)$ and $v_2'[n] \sim \mathcal{N}(0,\sigma_1^2)$ are i.i.d. random variables. Since the costs of centralized controllers are monotone in the variances of observations, the cost of the centralized controller with the observations $y_1[n]$, $y_2[n]$ is larger than the cost of the centralized controller with the observations $y_1'[n]$, $y_2'[n]$. Moreover, by the maximum ratio combining, the cost of the centralized controller with the observations $y_1'[n]$, $y_2'[n]$ is equivalent to the cost of the centralized controller with a scalar observation $\frac{y_1'[n]+y_2'[n]}{2}$. 

Now, we can apply Lemma~\ref{lem:aless21} to analyze the performance of such a controller with the observation $\frac{y_1'[n]+y_2'[n]}{2}$. 
Let $\Sigma_E$ be the Kalman filtering performance with the observation $\frac{y_1'[n]+y_2'[n]}{2}$. Then, by Lemma~\ref{lem:aless21}, $\Sigma_E$ is lower bounded by
\begin{align}
\Sigma_E & = \frac{(a^2-1)(\frac{\sigma_{v1}^2}{2})-1
+\sqrt{((a^2-1)(\frac{\sigma_{v1}^2}{2})-1)^2 + 4a^2 \frac{\sigma_{v1}^2}{2}}
}{2a^2}\\
&\geq  \frac{\max( (a^2-1)\frac{\sigma_{v1}^2}{2}-1 , \sqrt{2}a\sigma_{v1} -1)}{2a^2}. \label{eqn:14converseadd42}
\end{align}
Therefore, for all $\widetilde{P_1}, \widetilde{P_2}$ the decentralized controller's cost is lower bounded as follows:
\begin{align}
D_L(\widetilde{P_1},\widetilde{P_2}) &\overset{(A)}{\geq} \inf_{|a-k| < 1}\frac{(2ak-k^2)\Sigma_E+1}{1-(a-k)^2}\\
&= \inf_{|a-k| < 1} \frac{2ak-k^2}{1-(a-k)^2} \Sigma_E + \frac{1}{1-(a-k)^2} \\
&\overset{(B)}{\geq} \inf_{|a-k| < 1} \frac{1-a^2+2ak-k^2}{1-(a-k)^2} \Sigma_E +  1 \\
&= \Sigma_E + 1 \label{eqn:14converseadd41}
\end{align}
(A): The decentralized control cost is larger than the centralized controller's cost with the observation $\frac{y_1'[n]+y_2'[n]}{2}$. Moreover, when $|a-k| \geq 1$ the centralized control system is unstable, and the cost diverges to infinity. When $|a-k| < 1$, the cost analysis follows from Lemma~\ref{lem:aless21}.\\
(B): This comes from $a >1$ and $2ak-k^2 \geq 1-(a-k)^2 > 0$.

Therefore, by \eqref{eqn:14converseadd42} and \eqref{eqn:14converseadd41} for all $\widetilde{P_1}, \widetilde{P_2}$ we have
\begin{align}
D_L(\widetilde{P_1},\widetilde{P_2}) &\geq \max(\frac{\max( (a^2-1)\frac{\sigma_{v1}^2}{2}-1 , \sqrt{2}a\sigma_{v1} -1)}{2a^2},1)\\
&\geq \frac{\max((a^2-1)\frac{\sigma_{v1}^2}{2}-1, \sqrt{2}a \sigma_{v1} -1) }{4a^2} + \frac{1}{2} \\
&\geq \frac{\max((a^2-1)\frac{\sigma_{v1}^2}{2}-1, \sqrt{2}a \sigma_{v1} -1) }{4a^2} + \frac{1}{2a^2} \\
&\geq \frac{\max((a^2-1)\frac{\sigma_{v1}^2}{2}, \sqrt{2}a \sigma_{v1}, 1)}{4a^2}
\end{align}
By \eqref{eqn:eval3} we already know 
\begin{align}
\Sigma_1 \leq \frac{(1+\sqrt{2})\max(1,(a^2-1)\sigma_{v1}^2,2a\sigma_{v1})}{2a^2}.
\end{align}
Therefore,
\begin{align}
D(\widetilde{P_1},\widetilde{P_2}) &\geq \frac{ \max(1,(a^2-1)\frac{\sigma_{v1}^2}{2}, \sqrt{2}a \sigma_{v1}) }{4a^2} \\
&\geq \min( \frac{\frac{1}{4}}{\frac{1+\sqrt{2}}{2}} , \frac{\frac{1}{8}}{ \frac{1+\sqrt{2}}{2}} , \frac{ \frac{\sqrt{2}}{4}}{1+\sqrt{2}} ) \Sigma_1 \\
&= \frac{1}{4(1+\sqrt{2})} \Sigma_1\\
&\geq 0.1035\Sigma_1.
\end{align}
\end{proof}
As mentioned in \eqref{eqn:14converseadd41}, $D(\widetilde{P_1}, \widetilde{P_2}) \geq 1$. Thus, the statement (j) is true.

\begin{proof}[Proof of Proposition~\ref{prop:1} of page~\pageref{prop:1}]
Consider the power-distortion tradeoff $D(P_1, P_2)$ for the decentralized control problem shown in Problem~\ref{prob:c}. Since we can achieve the tradeoff of the single controller systems by turning on only one controller, we have $(D(P_1, P_2), P_1, P_2) \leq (\min(D_{\sigma_{v1}}(P_1), D_{\sigma_{v2}}(P_2)), P_1, P_2)$ where the definition of $D_{\sigma}(P)$ is shown in Problem~\ref{prob:b}. 

By \cite[Lemma~14]{Park_Approximation_Journal_Parti}, if there exists $c \geq 1$ such that for all $\widetilde{P_1}, \widetilde{P_2} \geq 0$, $\min(D_{\sigma 1}(c \widetilde{P_1}), D_{\sigma 2}(c \widetilde{P_2})) \leq c \cdot D_L(\widetilde{P_1},\widetilde{P_2})$, then for all $q, r_1, r_2 \geq 0$ we have
\begin{align}
\frac{\min_{P_1, P_2 \geq 0} q \min(D_{\sigma 1}(c P_1), D_{\sigma 2}(c P_2)) + r_1 P_1 + r_2 P_2}{\min_{\widetilde{P_1}, \widetilde{P_2} \geq 0} q D_L(\widetilde{P_1},\widetilde{P_2}) + r_1 P_1 + r_2 P_2} \leq c
\end{align}
which finishes the proof. Therefore, we will only prove that such $c$ exists.


Before we start the proof, define the subscript $max$ as $argmax_{i \in \{1,2 \}} \widetilde{P_i}$. For example, if $\widetilde{P_1} < \widetilde{P_2}$ then $\widetilde{P_{max}}=\widetilde{P_2}$, $P_{max}=P_2$, $\Sigma_{max}=\Sigma_2$, $D_{\sigma_{v max}}(P)=D_{\sigma_{v 2}}(P)$ and so on. Furthermore, for notational simplicity, we write $D_{\sigma_{v1}}(\cdot), D_{\sigma_{v2}}(\cdot), D_{\sigma_{v max}}(\cdot)$ as $D_{v1}(\cdot), D_{v2}(\cdot), D_{v max}(\cdot)$ respectively.

For the proof, we will first divide the cases based on $\Sigma_1, \Sigma_2$ then further divide based on $\widetilde{P_1}, \widetilde{P_2}$. Remind that since $\sigma_{v1} \leq \sigma_{v2}$, we have $\Sigma_1 \leq \Sigma_2$. We can use this fact to reduce the cases.

(i) When $\Sigma_1 \leq \Sigma_2 \leq 150$

(i-i) When $\frac{1}{150} \leq \max(\widetilde{P_1},\widetilde{P_2})$

Lower bound: By Corollary~\ref{cor:2} (j)
\begin{align}
D_L(\widetilde{P_1},\widetilde{P_2}) \geq 1
\end{align}

Upper bound:

If $(a^2-1) \leq \frac{1}{\max(1,7.25\Sigma_{max})}$, then the range for $t$ in Corollary~\ref{cor:1} (ii') is not an empty set. Therefore, by plugging $t=\frac{8}{\max(1,7.25 \Sigma_E)}$ we get
\begin{align}
(D_{ \sigma max}(P_{max}),P_{max})&\leq ( \frac{49}{8} \max(2,14.5\Sigma_{max}),\frac{8}{\max(1,7.25\Sigma_{max})}) \\
&\leq (\frac{49}{8} \cdot 14.5 \cdot 150, 8) (\because \Sigma_1 \leq \Sigma_2 \leq 150).
\end{align}

If $(a^2-1) \geq \frac{1}{\max(1,7.25\Sigma_{max})}$, by Corollary~\ref{cor:1} (i') we get
\begin{align}
(D_{\sigma max}(P_{max}),P_{max})& \leq (7.25 \Sigma_{max}+ \frac{6.25}{a^2-1}, (a^2-1)^2 \Sigma_{max}+(a^2-1)) \\
&\leq (7.25 \Sigma_{max}+ \frac{6.25}{a^2-1}, 27.5625 \Sigma_{max}+5.25)  (\because 1 < |a| \leq 2.5)\\
&\leq (7.25 \Sigma_{max}+ 6.25 {\max(1,7.25 \Sigma_{max})}, 27.5625 \Sigma_{max}+5.25) \\
&\leq (7.25 \cdot 150 + 6.25 \cdot {7.25 \cdot 150}, 27.5625 \cdot 150 +5.25) (\because \Sigma_1 \leq \Sigma_2 \leq 150)\\
&\leq (7884.375,4139.625).
\end{align}

Ratio: $c$ is upper bounded by
\begin{align}
c \leq \frac{4139.625}{\frac{1}{150}} < 10^6.
\end{align}

(i-ii) When $\frac{1}{20}(a^2-1) \leq \max(\widetilde{P_1},\widetilde{P_2}) \leq \frac{1}{150}$

Lower bound: By Corollary~\ref{cor:2} (d),
\begin{align}
D_L(\widetilde{P_1},\widetilde{P_2}) \geq 0.00389 \frac{1}{\max(\widetilde{P_1},\widetilde{P_2})}+1. \label{eqn:lowerbound1}
\end{align}

Upper bound:

If $8(a^2-1) \leq \max(\widetilde{P_1},\widetilde{P_2}) \leq \frac{8}{\max(1,7.25\Sigma_{max})}$, then we can put $t=\widetilde{P_{max}}$ in Corollary~\ref{cor:1} (ii') for $D_{\sigma max}(P_{max})$. Therefore, we get
\begin{align}
(D_{\sigma max}(P_{max}),P_{max}) \leq (\frac{49}{\widetilde{P_{max}}}, \widetilde{P_{max}} ).
\end{align}

If $\frac{1}{20}(a^2-1) \leq \max(\widetilde{P_1},\widetilde{P_2}) \leq 8(a^2-1)$

In this case, the lower bound \eqref{eqn:lowerbound1} can be further lower bounded as
\begin{align}
(D_{L}(P_1,P_2),P_{max}) \geq (\frac{0.00389}{24.5(a^2-1)}+1,\frac{1}{20}(a^2-1)).
\end{align}
By Corollary~\ref{cor:1} (i'), we have
\begin{align}
(D_{\sigma max}(P_{max}),P_{max}) &\leq (7.25\Sigma_{max}+\frac{6.25}{a^2-1} , (a^2-1)^2 \Sigma_{max}+(a^2-1) ) \\
&\leq ( 7.25 \cdot 150 + \frac{6.25}{a^2-1}, 5.25 \cdot 150(a^2-1)+(a^2-1)) (\because 1 \leq |a| < 2.5, \Sigma_1 \leq \Sigma_2 \leq 150)\\
&= (\frac{6.25}{a^2-1}+1087.5 , 788.5(a^2-1) ).
\end{align}

If $\frac{8}{\max(1,7.25\Sigma_{max})} \leq \max(\widetilde{P_1},\widetilde{P_2}) \leq \frac{1}{150}$

Notice that this case never happens since
\begin{align}
\frac{8}{\max(1,7.25\Sigma_{max})} \geq \frac{8}{7.25 \cdot 150} > \frac{1}{150}.
\end{align}
%
%
%
%
%

Ratio: $c$ is upper bounded by
\begin{align}
c \leq \frac{24.5 \times 6.25}{0.00389} < 40000.
\end{align}

(i-iii) When $\max(\widetilde{P_1},\widetilde{P_2}) \leq \frac{1}{20}(a^2-1)$

Lower bound: By Corollary~\ref{cor:2} (c),
\begin{align}
D_L(\widetilde{P_1},\widetilde{P_2}) = \infty.
\end{align}
We do not need a corresponding upper bound.

(ii) When $\Sigma_1 \leq 150 \leq \Sigma_2$

(ii-i) When $\frac{20}{a^2-1} \geq \Sigma_2$

(ii-i-i) When $\frac{1}{150} \leq \widetilde{P_1}$

Lower bound: By Corollary~\ref{cor:2} (j),
\begin{align}
D_L(\widetilde{P_1},\widetilde{P_2}) \geq 1
\end{align}

If $(a^2-1) \leq \frac{1}{\max(1,7.25\Sigma_1)}$

Upper bound: By putting $t=\frac{8}{\max(1,7.25 \Sigma_1)}$ to Corollary~\ref{cor:1} (ii') for $D_{\sigma 1}(P_1)$, we have
\begin{align}
(D_{\sigma 1}(P_1),P_1) &\leq
(\frac{49}{8}\max(1,7.25\Sigma_{1}),\frac{8}{\max(1,7.25\Sigma_{1})}) \\
&\leq (\frac{49}{8} \cdot 7.25 \cdot 150, 8) (\because \Sigma_1 \leq 150)
\end{align}

If $(a^2-1) \geq \frac{1}{\max(1,7.25\Sigma_1)}$

Upper bound: By Corollary~\ref{cor:1} (i'),
\begin{align}
(D_{\sigma 1}(P_1),P_1) & \leq (7.25 \Sigma_1 + \frac{6.25}{a^2-1}, 27.5625 \Sigma_1 + 5.25) \\
& \leq ( 7.25 \cdot 150 + 6.25 \max(1,7.25\Sigma_1),27.5625 \cdot 150 + 5.25 ) (\because \Sigma_1 \leq 150)\\
& \leq ( 7.25 \cdot 150 + 6.25 \cdot 7.25 \cdot 150,27.5625 \cdot 150 + 5.25 ).
\end{align}
Ratio: $c$ is upper bounded by
\begin{align}
c \leq \frac{27.5625 \cdot 150 + 5.25}{\frac{1}{150}} = 620943.75.
\end{align}

(ii-i-ii) When $\frac{1}{\Sigma_2} \leq \widetilde{P_1} \leq \frac{1}{150}$

Lower bound: By Corollary~\ref{cor:2} (f)
\begin{align}
D_L(\widetilde{P_1},\widetilde{P_2}) \geq \frac{0.0006976}{\widetilde{P_1}} +1. \label{eqn:lowerbound2}
\end{align}

If $\frac{8}{\max(1,7.25\Sigma_1)} \leq \widetilde{P_1} \leq \frac{1}{150}$,

This never happens since $\frac{8}{\max(1,7.25\Sigma_1)} \geq \frac{8}{7.25 \cdot 150} > \frac{1}{150}$.

If $8(a^2-1) \leq  \widetilde{P_1} \leq \frac{8}{\max(1,7.25\Sigma_1)}$,

Upper bound: By plugging $t=\widetilde{P_1}$ to Corollary~\ref{cor:1} (ii') for $D_{\sigma 1}(P_1)$, we get
\begin{align}
(D_{\sigma 1}(P_1), P_1) \leq (\frac{49}{\widetilde{P_1}}, \widetilde{P_1}).
\end{align}

If $\frac{1}{\Sigma_2} \leq \widetilde{P_1} \leq 8(a^2-1)$,

Here the lower bound of \eqref{eqn:lowerbound2} is further lower bounded by
\begin{align}
(D_{L}(\widetilde{P_1}, \widetilde{P_2}), \widetilde{P_1}) & \geq (\frac{0.0006976}{8(a^2-1)}+1 , \frac{1}{\Sigma_2}). (\because \frac{1}{\Sigma_2} \leq \widetilde{P_1} \leq 8(a^2-1) ).
\end{align}

Upper bound: When $\frac{1}{\Sigma_2} \leq \widetilde{P_1} \leq 8(a^2-1)$ and $(a^2-1) \leq \frac{1}{\max(1,7.25\Sigma_1)}$,
we can plug $t=8(a^2-1)$ for $D_{\sigma 1}(P_1)$ to Corollary~\ref{cor:1} (ii') for $D_{\sigma 1}(P_1)$. Then, we get
\begin{align}
(D_{\sigma 1}(P_1),P_1) &\leq  (\frac{49}{8(a^2-1)}, 8(a^2-1)) \\
&\leq (\frac{49}{8(a^2-1)}, \frac{8 \cdot 20}{\Sigma_2} ) (\because \mbox{In (ii-i), we assumed }\frac{20}{a^2-1} \geq \Sigma_2).
\end{align}

When $\frac{1}{\Sigma_2} \leq P_1 \leq 8(a^2-1)$ and $(a^2-1) > \frac{1}{\max(1,7.25\Sigma_1)}$, by Corollary~\ref{cor:1} (i') we get
\begin{align}
(D_{\sigma 1}(P_1),P_1) &\leq (7.25 \Sigma_1 + \frac{6.25}{a^2-1}, (a^2-1)^2 \Sigma_1 + (a^2-1)) \\
&\leq (7.25 \Sigma_1+\frac{6.25}{a^2-1}, \frac{20^2 \Sigma_1}{ \Sigma_2^2} + \frac{20}{\Sigma_2}) \\
&(\because \mbox{In (ii-i), we assumed } \frac{20}{a^2-1} \geq \Sigma_2)\\
&\leq (\frac{6.25}{a^2-1}+7.25 \cdot 150, \frac{20^2 + 20}{\Sigma_2}). (\because \Sigma_1 \leq 150 \leq \Sigma_2)
\end{align}

Ratio: $c$ is upper bounded by
\begin{align}
c \leq \frac{8 \times 6.25}{0.0006976} < 72000.
\end{align}

(ii-i-iii) When $\widetilde{P_1} \leq \frac{1}{\Sigma_2}$ and $\max(\widetilde{P_1},\widetilde{P_2})=\widetilde{P_2} > \frac{1}{\Sigma_2}$

Lower bound: By Corollary~\ref{cor:2} (e)
\begin{align}
D_L(P_1,P_2) \geq 0.0006976\Sigma_2 + 1.
\end{align}

First, since $\Sigma_2 \geq 150$, we can see that $\max(1,7.25\Sigma_2)=7.25\Sigma_2$.

If $(a^2-1) \leq \frac{1}{7.25 \Sigma2}$

Upper bound: By plugging $t=\frac{8}{7.25 \Sigma_2}$ into Corollary~\ref{cor:1} (ii') for $D_{\sigma 2}(P_2)$, we get
\begin{align}
(D_{\sigma 2}(P_2),P_2) \leq (\frac{49}{8} \cdot 7.25 \Sigma_2 , \frac{8}{7.25\Sigma_2}).
\end{align}

If $(a^2-1) \geq \frac{1}{7.25 \Sigma2}$

Upper bound: By Corollary~\ref{cor:1} (i'), we get
\begin{align}
(D_{\sigma 2}(P_2),P_2) &\leq (7.25 \Sigma_2 + \frac{6.25}{a^2-1}, (a^2-1)^2 \Sigma_2 + (a^2-1)) \\
&\leq (7.25 \Sigma_2+\frac{6.25}{a^2-1}, \frac{20^2 \Sigma_2}{ \Sigma_2^2} + \frac{20}{\Sigma_2}) \\
&(\because \mbox{In (ii-i), we assumed }\frac{20}{a^2-1} \geq \Sigma_2)\\
&\leq (7.25 \Sigma_2+6.25 \cdot 7.25 \Sigma_2, \frac{20^2 + 20}{\Sigma_2}).
\end{align}

Ratio: $c$ is upper bounded by
\begin{align}
c \leq \frac{7.25 + 6.25 \times 7.25}{0.0006976} < 76000.
\end{align}

(ii-i-iv) When $\widetilde{P_1} \leq \frac{1}{\Sigma_2}$ and $\frac{1}{20}(a^2-1) \leq \max(\widetilde{P_1},\widetilde{P_2}) \leq \frac{1}{\Sigma_2}$

Lower bound: By Corollary~\ref{cor:2} (d), we have
\begin{align}
D_{L}(\widetilde{P_1},\widetilde{P_2}) \geq \frac{0.00389}{\max(\widetilde{P_1},\widetilde{P_2})}+1. \label{eqn:lowerbound3}
\end{align}

If $\frac{8}{\max(1,7.25 \Sigma_{max})} \leq \max(\widetilde{P_1},\widetilde{P_2}) \leq \frac{1}{\Sigma_2}$

This case never happens, since $\frac{8}{\max(1, 7.25 \Sigma_{max})} = \frac{8}{7.25 \Sigma_2} > \frac{1}{\Sigma_2}$.

If $8(a^2-1) \leq \max(\widetilde{P_1},\widetilde{P_2}) \leq \frac{8}{\max(1,7.25 \Sigma_{max})}$

Upper bound: By plugging $t=\widetilde{P_{max}}$ into Corollary~\ref{cor:1} (ii') for $D_{\sigma max}(P_{max})$, we have
\begin{align}
(D_{\sigma max}(P_{max}), P_{max}) \leq (\frac{49}{\widetilde{P_{max}}}, \widetilde{P_{max}}).
\end{align}

If $\frac{1}{20}(a^2-1) \leq \max(P_1,P_2) \leq 8(a^2-1)$ 

In this case, the lower bound of \eqref{eqn:lowerbound3} is further lower bounded by
\begin{align}
(D_L(\widetilde{P_1},\widetilde{P_2}), P_{max}) \geq (\frac{0.00389}{8(a^2-1)}+1, \frac{1}{20}(a^2-1)).
\end{align}

Upper bound: By Corollary~\ref{cor:1} (i'), we have
\begin{align}
(D_{\sigma max}(P_{max}),P_{max}) &\leq ( 7.25 \Sigma_{max} + \frac{6.25}{a^2-1},(a^2-1)^2 \Sigma_{max}+(a^2-1))\\
&\leq ( 7.25 \Sigma_2 + \frac{6.25}{a^2-1},(a^2-1)^2 \Sigma_2+(a^2-1)) (\because \Sigma_1 \leq \Sigma_2)\\
&\leq (\frac{7.25 \cdot 20}{a^2-1}+ \frac{6.25}{a^2-1}, 20(a^2-1) + (a^2-1)) (\because \mbox{In (ii-i), we assumed }\frac{20}{a^2-1} \geq \Sigma_2)\\
&\leq ( \frac{7.25 \cdot 20 + 6.25}{a^2-1}, 21(a^2-1))
\end{align}

Ratio: $c$ is upper bounded by
\begin{align}
c \leq \frac{7.25 \cdot 20 + 6.25}{\frac{0.00389}{8}} < 320000.
\end{align}

(ii-i-v) When $\widetilde{P_1} \leq \frac{1}{\Sigma_2}$ and $\max(\widetilde{P_1},\widetilde{P_2}) \leq \frac{1}{20}(a^2-1) $

Lower bound: By Corollary~\ref{cor:2} (c),
\begin{align}
D_L(\widetilde{P_1}, \widetilde{P_2}) \geq \infty.
\end{align}
Thus, we do not need a corresponding upper bound in this case.

(ii-ii) When $\frac{20}{a^2-1} \leq \Sigma_2$

(ii-ii-i) When $\frac{1}{150} \leq \widetilde{P_1}$

Compared to (ii-i-i), the only difference is $\Sigma_2$ and $\Sigma_2$ does not affect the result of (ii-i-i). Therefore, in the same way as (ii-i-i), we can prove that $c$ is bounded by the same constant as (ii-i-i).

(ii-ii-ii) When $\frac{1}{20}(a^2-1) \leq \widetilde{P_1} \leq \frac{1}{150}$

Lower bound: Since in (ii-ii) we assumed $\frac{20}{a^2-1} \leq \Sigma_2$, we have$\frac{1}{\Sigma_2} \leq \frac{1}{20}(a^2-1) \leq P_1  \leq \frac{1}{150}$. Therefore, we can apply Corollary~\ref{cor:2} (f) to get
\begin{align}
D_L(\widetilde{P_1},\widetilde{P_2}) \geq \frac{0.0006976}{\widetilde{P_1}}+1. \label{eqn:lowerbound4}
\end{align}

If $\frac{8}{\max(1,7.25 \Sigma_1)} \leq \widetilde{P_1} \leq \frac{1}{150}$

Since we assumed $\Sigma_1 \leq 150$ in (ii), $\frac{8}{\max(1, 7.25 \Sigma_1)} \geq \frac{8}{7.25 \cdot 150} > \frac{1}{150}$. Therefore, this case never happens.

If $8(a^2-1) \leq \widetilde{P_1} \leq \frac{8}{\max(1,7.25 \Sigma_1)}$

Upper bound: By plugging $t=\widetilde{P_1}$ into Corollary~\ref{cor:1} (ii') for $D_{\sigma 1}(P_1)$, we have
\begin{align}
(D_{\sigma 1}(P_1), P_1) \leq (\frac{49}{\widetilde{P_1}}, \widetilde{P_1}).
\end{align}

If $\frac{1}{20}(a^2-1) \leq \widetilde{P_1} \leq 8(a^2-1)$

In this case, the lower bound of \eqref{eqn:lowerbound4} is further lower bounded by
\begin{align}
(D_L(\widetilde{P_1},\widetilde{P_2}), \widetilde{P_1}) \geq (\frac{0.0006976}{8(a^2-1)}+1, \frac{1}{20}(a^2-1)).
\end{align}

Upper bound: By Corollary~\ref{cor:1} (i')
\begin{align}
(D_{U}(P_1),P_1) &\leq ( 7.25 \Sigma_1 + \frac{6.25}{a^2-1},(a^2-1)^2 \Sigma_1+(a^2-1))\\
&\leq (7.25 \cdot 150 + \frac{6.25}{a^2-1},5.25 \cdot 150 \cdot (a^2-1)+(a^2-1))\\
&(\because \Sigma_1 \leq 150, 1 < |a| \leq 2.5)
\end{align}
Ratio: $c$ is upper bounded by
\begin{align}
c \leq \frac{6.25 \times 8}{0.0006976} < 72000.
\end{align}

(ii-ii-iii) When $\widetilde{P_1} \leq \frac{1}{20}(a^2-1)$ and $\widetilde{P_2} \geq \frac{(a^2-1)^2 \Sigma_2}{40000}$

Lower bound: By Corollary~\ref{cor:2} (i), we have
\begin{align}
D_L(\widetilde{P_1}, \widetilde{P_2}) \geq 0.0002732 \Sigma_2 + 1.
\end{align}

Upper bound: By Corollary~\ref{cor:1} (i'), we have
\begin{align}
(D_{\sigma 2}(P_2),P_2) &\leq (7.25 \Sigma_2 + \frac{6.25}{a^2-1}, (a^2-1)^2 \Sigma_2 + (a^2-1))\\
&\leq (7.25 \Sigma_2 + \frac{6.25}{20}\Sigma_2 , (a^2-1)^2 \Sigma_2 + (a^2-1)^2 \frac{\Sigma_2}{20} )\\
&(\because \mbox{In (ii-ii), we assumed }\frac{20}{a^2-1} \leq \Sigma_2)
\end{align}

Ratio: $c$ is upper bounded by
\begin{align}
c \leq \frac{1+\frac{1}{20}}{\frac{1}{40000}} \leq 42000.
\end{align}

(ii-ii-iv) When $\widetilde{P_1} \leq \frac{1}{20}(a^2-1)$ and $\widetilde{P_2} \leq \frac{(a^2-1)^2 \Sigma_2}{40000}$

Lower bound: By Corollary~\ref{cor:1} (g), we have
\begin{align}
D_L(\widetilde{P_1}, \widetilde{P_2}) = \infty
\end{align}
We do not need a matching upper bound.

(iii) When $150 \leq \Sigma_1 \leq \Sigma_2$

In this case, we can see that $\max(1,7.25 \Sigma_1)=7.25 \Sigma_1$, $\max(1,7.25 \Sigma_2)=7.25 \Sigma_2$.

(iii-i) When $\frac{20}{a^2-1} \geq \Sigma_1$ and $\frac{20}{a^2-1} \geq \Sigma_2$

(iii-i-i)  When $\frac{1}{\Sigma_1} \leq \widetilde{P_1}$

Lower bound: By Corollary~\ref{cor:2} (j), we have
\begin{align}
D_L(\widetilde{P_1}, \widetilde{P_2}) \geq 0.1035 \Sigma_1.
\end{align}

If $(a^2-1) \leq \frac{1}{7.25\Sigma_1}$

Upper bound: By plugging $t=\frac{8}{7.25 \Sigma_1}$ into Corollary~\ref{cor:1} (ii') for $D_{\sigma 1}(P_1)$, we get 
\begin{align}
(D_{\sigma 1}(P_1),P_1) = (49 \cdot \frac{7.25 \Sigma_1}{8}, \frac{8}{7.25 \Sigma_1}). 
\end{align}

If $(a^2-1) \geq \frac{1}{7.25\Sigma_1}$

Upper bound: By Corollary~\ref{cor:1} (i'), we have
\begin{align}
(D_{\sigma 1}(P_1),P_1) &= (7.25 \Sigma_1 + \frac{6.25}{a^2-1}, (a^2-1)^2 \Sigma_1 + (a^2-1))\\
&\leq (7.25 \Sigma_1 + 6.25 \cdot 7.25 \Sigma_1 , (\frac{20}{\Sigma_1})^2 \Sigma_1 + \frac{20}{\Sigma_1})\\
&(\because \mbox{In (iii-i), we assumed } \frac{20}{a^2 - 1} \geq \Sigma_1)\\
&=(7.25^2 \Sigma_1,  \frac{20\cdot 21}{\Sigma_1}).
\end{align}

Ratio: $c$ is upper bounded by
\begin{align}
c \leq \frac{7.25^2}{0.1035} < 510.
\end{align}

(iii-i-ii) When $\frac{1}{\Sigma_2} \leq \widetilde{P_1} \leq \frac{1}{\Sigma_1}$

Lower bound: Since in (iii) we assumed $150 \leq \Sigma_1$, we have $\frac{1}{\Sigma_2} \leq \widetilde{P_1} \leq \frac{1}{\Sigma_1} \leq \frac{1}{150}$. Therefore, we can apply Corollary~\ref{cor:2} (f) to conclude
\begin{align}
D_L(\widetilde{P_1}, \widetilde{P_2}) \geq \frac{0.0006976}{\widetilde{P_1}} +1. \label{eqn:lowerbound5}
\end{align}

If $\frac{8}{7.25 \Sigma_1} \leq \widetilde{P_1} \leq \frac{1}{\Sigma_1}$

Since $\frac{8}{7.25 \Sigma_1}> \frac{1}{\Sigma_1}$, this case never happens.

If $8(a^2-1) \leq \widetilde{P_1} \leq \frac{8}{7.25 \Sigma_1}$

Upper bound: By plugging $t=\widetilde{P_1}$ into Corollary~\ref{cor:1} (ii') for $D_{\sigma 1}(P_1)$, we have
\begin{align}
(D_{\sigma 1}(P_1), P_1) \leq (\frac{49}{\widetilde{P_1}}, \widetilde{P_1}).
\end{align}

If $\frac{1}{\Sigma_2} \leq \widetilde{P_1} \leq 8(a^2-1)$

In this case, the lower bound of \eqref{eqn:lowerbound5} is further lower bounded by
\begin{align}
(D_{L}(\widetilde{P_1}, \widetilde{P_2}), \widetilde{P_1}) & \geq (\frac{0.0006976}{8(a^2-1)}+1 , \frac{1}{\Sigma_2}).
\end{align}

If $\frac{1}{\Sigma_2} \leq P_1 \leq 24.5(a^2-1)$ and $(a^2-1) \leq \frac{1}{7.25\Sigma_1}$

Upper bound: By plugging $t=8(a^2-1)$ into Corollary~\ref{cor:1} (ii') for $D_{\sigma 1}(P_1)$, we have
\begin{align}
(D_{\sigma 1}(P_1), P_1) &\leq  (\frac{49}{8(a^2-1)}, 8(a^2-1)) \\
&\leq (\frac{49}{8(a^2-1)}, \frac{8 \cdot 20}{\Sigma_2} ) \\
&(\because \mbox{In (iii-i), we assumed } \frac{20}{a^2-1} \geq \Sigma_2)
\end{align}

If $\frac{1}{\Sigma_2} \leq P_1 \leq 8(a^2-1)$ and $(a^2-1) > \frac{1}{7.25\Sigma_1}$

Upper bound: By Corollary~\ref{cor:1} (i'), we have
\begin{align}
(D_{\sigma 1}(P_1),P_1) &\leq (7.25 \Sigma_1 + \frac{6.25}{a^2-1}, (a^2-1)^2 \Sigma_1 + (a^2-1)) \\
&\leq (\frac{7.25 \cdot 20}{a^2-1}+\frac{6.25}{a^2-1}, \frac{20^2 \Sigma_1}{ \Sigma_2^2} + \frac{20}{\Sigma_2}) \\
&(\because \mbox{In (iii-i), we assumed } \Sigma_1 \leq \frac{20}{a^2-1}, \Sigma_2 \leq \frac{20}{a^2-1})\\
&\leq (\frac{7.25 \cdot 20 + 6.25 }{a^2-1}, \frac{20^2 + 20}{\Sigma_2})\\
&(\because \Sigma_1 \leq \Sigma_2)
\end{align}

Ratio: $c$ is upper bounded by
\begin{align}
c \leq \frac{7.25 \cdot 20 + 6.25}{ \frac{0.0006976}{8}} \leq 2 \times 10^6.
\end{align}

(iii-i-iii) When $\widetilde{P_1} \leq \frac{1}{\Sigma_2}$ and $\max(\widetilde{P_1}, \widetilde{P_2})=\widetilde{P_2} > \frac{1}{\Sigma_2}$

Lower bound: By Corollary~\ref{cor:2} (e),
\begin{align}
D_L(\widetilde{P_1}, \widetilde{P_2}) \geq 0.0006976 \Sigma_2 + 1.
\end{align}

If $(a^2-1) \leq \frac{1}{7.25\Sigma_2}$

Upper bound: By plugging $t=\frac{8}{7.25 \Sigma_2}$ into Corollary~\ref{cor:1} (ii') for $D_{\sigma 2}(P_2)$, we get
\begin{align}
(D_{\sigma 2}(P_2),P_2) \leq (14.5 \Sigma_2 , \frac{24.5}{7.25\Sigma_2}).
\end{align}

If $(a^2-1) \geq \frac{1}{7.25\Sigma_2}$

Upper bound: By Corollary~\ref{cor:1} (i'), we have
\begin{align}
(D_{\sigma 2}(P_2),P_2) &\leq (7.25 \Sigma_2 + \frac{6.25}{a^2-1}, (a^2-1)^2 \Sigma_2 + (a^2-1)) \\
&\leq (7.25 \Sigma_2 + {6.25 \cdot 7.25}\Sigma_2, \frac{20^2}{\Sigma_2}+\frac{20}{\Sigma_2}) \\
&(\because \mbox{In (iii-i), we assumed } \Sigma_2 \leq \frac{20}{a^2-1})\\
&\leq (7.25^2 \Sigma_2 , \frac{20 \cdot 21}{\Sigma_2}).
\end{align}

Ratio: $c$ is upper bounded by
\begin{align}
c \leq \frac{7.25^2}{0.0006976} < 80000.
\end{align}

(iii-i-iv) When $\widetilde{P_1} \leq \frac{1}{\Sigma_2}$ and $\frac{1}{20}(a^2-1) \leq \max(\widetilde{P_1}, \widetilde{P_2}) \leq \frac{1}{\Sigma_2}$

Lower bound: Since we assumed $\Sigma_2 \geq 150$ in (iii), we have $\max(P_1,P_2) \leq \frac{1}{\Sigma_2} \leq  \frac{1}{150} \leq \frac{1}{75}$. Therefore, by Corollary~\ref{cor:2} (d) we can see
\begin{align}
D_{L}(\widetilde{P_1}, \widetilde{P_2}) \geq \frac{0.00389}{\max(\widetilde{P_1}, \widetilde{P_2})}+1. \label{eqn:lowerbound6}
\end{align}

If $\frac{8}{7.25 \Sigma_{max}} \leq \max(\widetilde{P_1}, \widetilde{P_2}) \leq \frac{1}{\Sigma_2}$

This never happens since
\begin{align}
\frac{8}{7.25\Sigma_{max}} > \frac{1}{\Sigma_{max}} \geq \frac{1}{\Sigma_2}.
\end{align}

If $8(a^2-1) \leq \max(\widetilde{P_1}, \widetilde{P_2}) \leq \frac{8}{7.25 \Sigma_{max}}$

Upper bound: By plugging $t=\widetilde{P_{max}}$ into Corollary~\ref{cor:1} (ii'), we get
\begin{align}
(D_{\sigma max}(P_{max}), P_{max}) \leq (\frac{49}{\widetilde{P_{max}}}, \widetilde{P_{max}} ).
\end{align}

If $\frac{1}{20}(a^2-1) \leq \max(\widetilde{P_1}, \widetilde{P_2}) \leq 8(a^2-1)$ 

In this case, we can notice that the lower bound of \eqref{eqn:lowerbound6} is further lower bounded by 
\begin{align}
(D_L(\widetilde{P_1}, \widetilde{P_2}), \widetilde{P_{max}}) \geq (\frac{0.00389}{8(a^2-1)}+1, \frac{1}{20}(a^2-1))
\end{align}

Upper bound: By Corollary~\ref{cor:1} (i'), we get
\begin{align}
(D_{\sigma max}(P_{max}),P_{max}) &\leq ( 7.25 \Sigma_{max} + \frac{6.25}{a^2-1},(a^2-1)^2 \Sigma_{max}+(a^2-1))\\
&\leq (\frac{7.25 \cdot 20}{a^2-1}+ \frac{6.25}{a^2-1}, 20(a^2-1) + (a^2-1)) \\
&(\because \mbox{In (iii-i), we assumed }\Sigma_1 \leq \frac{20}{a^2-1}, \Sigma_2 \leq \frac{20}{a^2-1}) \\
&\leq ( \frac{7.25 \cdot 20 + 6.25}{a^2-1}, 21(a^2-1))
\end{align}

Ratio: $c$ is upper bounded by
\begin{align}
c \leq \frac{7.25 \cdot 20 + 6.25}{\frac{0.00389}{8}} < 320000.
\end{align}

(iii-i-v) When $\widetilde{P_1} \leq \frac{1}{\Sigma_2}$ and $\max(\widetilde{P_1}, \widetilde{P_2}) \leq \frac{1}{20}(a^2-1) $

Lower bound: By Corollary~\ref{cor:2} (c),
\begin{align}
D_L(\widetilde{P_1}, \widetilde{P_2}) \geq \infty.
\end{align}

We do not need a corresponding upper bound.

(iii-ii) When $\Sigma_1 \leq \frac{20}{a^2-1} \leq \Sigma_2$

(iii-ii-i)  When $\frac{1}{\Sigma_1} \leq \widetilde{P_1}$

Compared to the case (iii-i-i), the conditions for $\Sigma_1$, $\widetilde{P_1}$ are the same and the only difference is the condition for $\Sigma_2$. However, the condition for $\Sigma_2$ does not affect the argument of (iii-i-i). Thus, the same bound on $c$ as (iii-i-i) still holds for this case.

(iii-ii-ii) When $\frac{1}{20}(a^2-1) \leq \widetilde{P_1} \leq \frac{1}{\Sigma_1}$

Lower bound: Since we assumed $150 \leq \Sigma_1$ in (iii), we have $\frac{1}{\Sigma_2} \leq \frac{1}{20}(a^2-1) \leq P_1 \leq \frac{1}{\Sigma_1} \leq \frac{1}{150}$. Thus, we can apply Corollary~\ref{cor:2} (f) to get
\begin{align}
D_L(\widetilde{P_1}, \widetilde{P_2}) \geq \frac{0.0006976}{\widetilde{P_1}}+1. \label{eqn:lowerbound8}
\end{align}

If $\frac{8}{7.25 \Sigma_1} \leq \widetilde{P_1} \leq \frac{1}{\Sigma_1}$

This case never happens.

If $8(a^2-1) \leq \widetilde{P_1} \leq \frac{8}{7.25 \Sigma_1}$

Upper bound: By plugging $t=\widetilde{P_1}$ into Corollary~\ref{cor:1} (ii') for $D_{\sigma 1}(P_1)$, we get
\begin{align}
(D_{\sigma 1}(P_1), P_1) \leq (\frac{49}{\widetilde{P_1}}, \widetilde{P_1}).
\end{align}

If $\frac{1}{20}(a^2-1) \leq P_1 \leq 8(a^2-1)$

In this case, the lower bound of \eqref{eqn:lowerbound8} can be further lower bounded as
\begin{align}
(D_L(\widetilde{P_1}, \widetilde{P_2}), \widetilde{P_1}) \geq (\frac{0.0006976}{8(a^2-1)}+1, \frac{1}{20}(a^2-1))
\end{align}

Upper bound: By Corollary~\ref{cor:1} (i'), we have
\begin{align}
(D_{\sigma 1}(P_1),P_1) &\leq ( 7.25 \Sigma_1 + \frac{6.25}{a^2-1},(a^2-1)^2 \Sigma_1+(a^2-1))\\
&\leq (\frac{7.25 \cdot 20}{a^2-1}+ \frac{6.25}{a^2-1}, 20(a^2-1) + (a^2-1)) \\
&(\because \mbox{In (iii-ii), we assumed } \Sigma_1 \leq \frac{20}{a^2-1}.) \\
&\leq ( \frac{7.25 \cdot 20 + 6.25}{a^2-1}, 21(a^2-1))
\end{align}

Ratio: $c$ is upper bounded by
\begin{align}
c \leq \frac{7.25 \cdot 20 + 6.25}{\frac{0.006976}{8}} < 320000.
\end{align}

(iii-ii-iii) When $\widetilde{P_1} \leq \frac{1}{20}(a^2-1)$ and $\widetilde{P_2} \geq \frac{(a^2-1)^2 \Sigma_2}{40000}$

Lower bound: By Corollary~\ref{cor:2} (i), we have
\begin{align}
D_L(\widetilde{P_1}, \widetilde{P_2}) \geq 0.0002732 \Sigma_2 + 1.
\end{align}

Upper bound: By Corollary~\ref{cor:1} (i'), we have
\begin{align}
(D_{\sigma 2}(P_2),P_2) &\leq (7.25 \Sigma_2 + \frac{6.25}{a^2-1}, (a^2-1)^2 \Sigma_2 + (a^2-1))\\
&\leq (7.25 \Sigma_2 + \frac{6.25}{20}\Sigma_2 , (a^2-1)^2 \Sigma_2 + (a^2-1)^2 \frac{\Sigma_2}{20} )\\
&(\because \mbox{In (iii-ii), we assumed } \frac{20}{a^2-1} \leq \Sigma_2)
\end{align}

Ratio: $c$ is upper bounded by
\begin{align}
c \leq 40000(1+\frac{1}{20}) \leq 42000.
\end{align}

(iii-ii-iv) When $\widetilde{P_1} \leq \frac{1}{20}(a^2-1)$ and $\widetilde{P_2} \leq \frac{(a^2-1)^2 \Sigma_2}{40000}$

Lower bound: By Corollary~\ref{cor:2} (g),
\begin{align}
D_L(\widetilde{P_1}, \widetilde{P_2}) = \infty.
\end{align}

Therefore, we do not need a corresponding upper bound.

(iii-iii) When $\frac{20}{a^2-1} \leq \Sigma_1 \leq \Sigma_2$

(iii-iii-i) When $\widetilde{P_1} \geq \frac{(a^2-1)^2 \Sigma_1}{40000}$

Lower bound: By Corollary~\ref{cor:2} (j), we have
\begin{align}
D_L(\widetilde{P_1}, \widetilde{P_2}) \geq 0.1035 \Sigma_1
\end{align}

Upper bound: By Corollary~\ref{cor:1} (i'), we have
\begin{align}
(D_{\sigma 1}(P_1),P_1) &\leq (7.25 \Sigma_1 + \frac{6.25}{a^2-1}, (a^2-1)^2 \Sigma_1 + (a^2-1))\\
&\leq (7.25 \Sigma_1 + \frac{6.25}{20}\Sigma_1 , (a^2-1)^2 \Sigma_1 + (a^2-1)^2 \frac{\Sigma_1}{20} )\\
&(\because \mbox{In (iii-iii), we assumed }\frac{20}{a^2-1} \leq \Sigma_1)
\end{align}

Ratio: $c$ is upper bounded by
\begin{align}
c \leq 40000(1+\frac{1}{20}) \leq 42000.
\end{align}

(iii-iii-ii) When $\widetilde{P_1} \leq \frac{(a^2-1)^2 \Sigma_1}{40000}$ and $\widetilde{P_2} \geq \frac{(a^2-1)^2 \Sigma_2}{40000}$

Lower bound: By Corollary~\ref{cor:2} (b), we have
\begin{align}
D_L(P_1,P_2) \geq 0.002774 \Sigma_2 +1.
\end{align}

Upper bound: By Corollary~\ref{cor:1} (i'), we have
\begin{align}
(D_{U}(P_2),P_2) &\leq (7.25 \Sigma_2 + \frac{6.25}{a^2-1}, (a^2-1)^2 \Sigma_2 + (a^2-1))\\
&\leq (7.25 \Sigma_2 + \frac{6.25}{20}\Sigma_2 , (a^2-1)^2 \Sigma_2 + (a^2-1)^2 \frac{\Sigma_2}{20} )\\
&(\because \mbox{In (iii-iii), we assume }\frac{20}{a^2-1} \leq \Sigma_2)
\end{align}

Ratio: $c$ is upper bounded by
\begin{align}
c \leq 40000(1+\frac{1}{20}) \leq 42000.
\end{align}

(iii-iii-iii) When $\widetilde{P_1} \leq \frac{(a^2-1)^2 \Sigma_1}{40000}$ and $\widetilde{P_2} \leq \frac{(a^2-1)^2 \Sigma_2}{40000}$

Lower bound: By Corollary~\ref{cor:2} (a), we have
\begin{align}
D_L(\widetilde{P_1}, \widetilde{P_2})=\infty.
\end{align}

Therefore, we do not need a corresponding upper bound.

Finally, by (i), (ii), (iii), we get the constant $c \leq 6 \times 10^6$ and prove the proposition.
\end{proof}

\subsection{Proof of Lemma~\ref{lem:eq11}, Corollary~\ref{cor:3} and Proposition~\ref{prop:2}}
\label{sec:aeq1}

\begin{proof}[Proof of Lemma~\ref{lem:eq11} of Page~\pageref{lem:eq11}]
For simplicity, we assume $a=1$, $1 < k_1 < k_2 < k$. The remaining cases when $a=-1$ or $k_1=1$ or $k_2=k_1$ or $k=k_2$ easily follow with minor modifications.

We essentially follow the proof of Lemma~\ref{lem:aless22}. However, since $|a|=1$, the sum of the sequence, $\frac{1}{|a|}, \frac{1}{|a|^2}, \cdots$, is not less than $1$ any more. Therefore, in the geometric slicing, we replace geometric sequences with arithmetic sequences.

$\bullet$ Geometric Slicing: We apply the slicing idea of Lemma~\ref{lem:slicing2} to get a finite-horizon problem. By putting $\alpha_{k_1}=\frac{1}{k-k_1}$, $\alpha_{k_1+1}=\frac{1}{k-k_1}$, $\cdots$, $\alpha_k=\frac{1}{k-k_1}$ and $\beta_{k_2}=\frac{1}{k-k_2}$, $\beta_{k_2+1}= \frac{1}{k-k_2}$, $\cdots$, $\beta_{k-1}=\frac{1}{k-k_2}$ the average cost is lower bounded by
\begin{align}
&\inf_{u_1, u_2} (q \mathbb{E}[x^2[k]] \\
&+ r_1 \underbrace{( \frac{1}{k-k_1}\mathbb{E}[u_1^2[k_1]]+ \cdots + \frac{1}{k-k_1} \mathbb{E}[u_1^2[k-1]])}_{:=\widetilde{P_1}} \\
&+ r_2 \underbrace{( \frac{1}{k-k_2}\mathbb{E}[u_2^2[k_2]]+ \cdots + \frac{1}{k-k_2} \mathbb{E}[u_2^2[k-1]])}_{:=\widetilde{P_2}} )
\end{align}

$\bullet$ Three stage division: As we did in the proof of Lemma~\ref{lem:aless22}, we divide the resulting finite-horizon problem into three time intervals --- information-limited interval, Witsenhausen's interval, power-limited interval. Define
\begin{align}
&W_1:=w[0]+ \cdots + w[k_1-2]\\
&W_2 := w[k_1-1]+\cdots+w[k_2-2] \\
&W_3 :=  w[k_2-1]+\cdots + w[k-2] \\
&U_{11}:=u_1[1]+ \cdots+ u_1[k_1-1] \\
&U_{21}:=u_2[1]+ \cdots+ u_2[k_1-1] \\
&U_{22} := u_2[k_1]+\cdots+u_2[k_2-1]\\
&U_1 := u_1[k_1]+ \cdots + u_1[k-1] \\
&U_2 := u_2[k_2]+ \cdots + u_2[k-1]\\
&X_1 := W_1 + U_{11} + U_{12} \\
&X_2 := W_2 + U_{22}
\end{align}
Like the proof of Lemma~\ref{lem:aless22}, $W_1, W_2, W_3$ represent the distortions of three intervals. $U_{11}$ and $U_{21}$ represent the first and second controller inputs in the information-limited interval. $U_1$ represents the remaining input of the first controller. $U_{22}$ and $U_2$ represent the second controller's input in Witsenhausen's and power-limited intervals respectively.

The goal of this proof is grouping control inputs, so that we reveal the effects of the controller inputs on the state and isolate their effects according to their characteristics.

$\bullet$ Power-Limited Inputs: We first isolate the power-limited inputs, i.e. the first controller's input in the Witsenhausen's and power-limited interval, and the second controller's input in the power-limited interval. Notice that
\begin{align}
x[k]&=w[k-1]+w[k-2]+ \cdots + w[0] \\
&\quad +u_1[k-1]+u_1[k-2]+ \cdots + u_1[0] \\
&\quad +u_2[k-1]+u_2[k-2]+ \cdots + u_2[0] \\
&=
(w[0]+ \cdots + w[k_1-2] \\
&\quad+u_1[1]+ \cdots+ u_1[k_1-1] \\
&\quad+u_2[1]+ \cdots+ u_2[k_1-1]) \\
&\quad +(
w[k_1-1]+\cdots+w[k_2-2] \\
&\quad+u_2[k_1]+\cdots+u_2[k_2-1])\\
&\quad + (
w[k_2-1]+\cdots + w[k-2]
)\\
&\quad + (
u_1[k_1]+ \cdots + u_1[k-1]
) \\
&\quad + (
u_2[k_2]+ \cdots + u_2[k-1]
) \\
&\quad+w[k-1].
\end{align}
Therefore, by \cite[Lemma~1]{Park_Approximation_Journal_Parti} we have
\begin{align}
\mathbb{E}[x^2[k]]&=
\mathbb{E}[(X_1+X_2+W_3+U_1+U_2+w[k-1])^2] \\
&=\mathbb{E}[(X_1+X_2+W_3+U_1+U_2)^2]+\mathbb{E}[w^2[k-1]] \\
&\geq (\sqrt{\mathbb{E}[(X_1+X_2+W_3)^2]}-\sqrt{\mathbb{E}[U_1^2]}-\sqrt{\mathbb{E}[U_2^2]})_+^2 + 1 \\
&= (\sqrt{\mathbb{E}[(X_1+X_2)^2]+\mathbb{E}[W_3^2]}-\sqrt{\mathbb{E}[U_1^2]}-\sqrt{\mathbb{E}[U_2^2]})_+^2 + 1 \label{eqn:eq111}
\end{align}
where the last equality follows from the causality. Here, we can see that $\mathbb{E}[(X_1+X_2)^2]$ does not depend on the power-limited inputs.

$\bullet$ Information-Limited Interval: We will bound the remaining state distortion after the information-limited interval. Denote $y_1'$ and $y_2'$ as follows:
\begin{align}
&y_1'[k]= w[0]+ w[1]+ \cdots + w[k-1]+ v_1[k]\\
&y_2'[k]= w[0]+ w[1]+ \cdots + w[k-1]+ v_2[k]
\end{align}
Here, $y_1'[k]$, $y_2'[k]$ can be obtained by removing $u_1[1:k-1]$, $u_2[1:k-1]$ from $y_1[k]$, $y_2[k]$, and $u_1[k]$ and $u_2[k]$ are functions of $y_1[1:k]$ and $y_2[1:k]$ respectively. Therefore, we can see that $y_1[1:k]$, $y_2[1:k]$ are functions of $y_1'[1:k]$, $y_2'[1:k]$. Moreover, $W_1$, $y_1'[1:k_1-1]$, $y_2'[1:k_1-1]$ are jointly Gaussian.

Let
\begin{align}
&W_1' := W_1 - \mathbb{E}[W_1|y_1'[1:k_1-1],y_2'[1:k_1-1]]\\
&W_1'' := \mathbb{E}[W_1|y_1'[1:k_1-1],y_2'[1:k_1-1]].
\end{align}
Then, $W_1'$, $W_1''$, $W_2$ are independent Gaussian random variables. Moreover, $W_1', W_2$ are independent from $y_1'[1:k_1-1],y_2'[1:k_1-1]$. $W_1''$ is a function of $y_1'[1:k_1-1],y_2'[1:k_1-1]$.

Now, let's lower bound $\mathbb{E}[(X_1+X_2)^2]$. Since Gaussian maximizes the entropy, we have
\begin{align}
&\frac{1}{2}\log( 2 \pi e \mathbb{E}[(X_1+X_2)^2] \\
&\geq h(X_1+X_2)\\
&\geq h(X_1+X_2|y_1'[1:k_1-1],y_2'[1:k_1-1],y_2[k_1:k_2-1])\\
&= h(W_1'+W_1''+U_{11}+U_{12}+W_2+U_{22}|y_1'[1:k_1-1],y_2'[1:k_1-1],y_2[k_1:k_2-1])\\
&= h(W_1'+W_2|y_1'[1:k_1-1],y_2'[1:k_1-1],y_2[k_1:k_2-1])  \label{eqn:eq13}
\end{align}
We will first lower bound the variance of $W_1'$. Notice that
\begin{align}
\mathbb{E}[y_1'[k]^2]&= \mathbb{E}[w^2[0]] + \cdots + \mathbb{E}[w^2[k-1]] + \mathbb{E}[v_1^2[k]]=k + \sigma_{v1}^2\\
\end{align}
and
\begin{align}
\mathbb{E}[y_2'[k]^2]&=k + \sigma_{v2}^2.
\end{align}

Thus, we have
\begin{align}
&I(W_1;y_1'[1:k_1-1],y_2'[1:k_1-1]) \\
&= h(y_1'[1:k_1-1],y_2'[1:k_1-1])- h(y_1'[1:k_1-1],y_2'[1:k_1-1]|W_1)\\
&\leq \sum_{1 \leq i \leq k_1-1} h(y_1'[i]) + \sum_{1 \leq i \leq k_1-1} h(y_2'[i])
-\sum_{1 \leq i \leq k_1-1} h(v_1[i]) - \sum_{1 \leq i \leq k_1-1} h(v_2[i])\\
&\leq
\sum_{1 \leq k \leq k_1 -1} \frac{1}{2} \log(\frac{ k + \sigma_{v1}^2}{\sigma_{v1}^2}) +
\sum_{1 \leq k \leq k_1 -1} \frac{1}{2} \log(\frac{ k + \sigma_{v2}^2}{\sigma_{v2}^2}) \\
& = \frac{1}{2} \log(\prod_{1 \leq k \leq k_1 -1} \frac{ k + \sigma_{v1}^2}{\sigma_{v1}^2} )+
\frac{1}{2} \log(\prod_{1 \leq k \leq k_1 -1} \frac{ k + \sigma_{v2}^2}{\sigma_{v2}^2} )\\
& \overset{(A)}{\leq}
\frac{k_1-1}{2} \log( \frac{1}{k_1-1}\sum_{1 \leq k \leq k_1 -1} \frac{ k + \sigma_{v1}^2}{\sigma_{v1}^2} )+
\frac{k_1-1}{2} \log( \frac{1}{k_1-1}\sum_{1 \leq k \leq k_1 -1} \frac{ k + \sigma_{v2}^2}{\sigma_{v2}^2} )\\
& =
\frac{k_1-1}{2} \log( 1+ \frac{1}{k_1-1}\sum_{1 \leq k \leq k_1 -1} \frac{ k }{\sigma_{v1}^2} )+
\frac{k_1-1}{2} \log( 1+ \frac{1}{k_1-1}\sum_{1 \leq k \leq k_1 -1} \frac{ k }{\sigma_{v2}^2} )\\
&\leq
\frac{k_1-1}{2} \log( 1+ \frac{1}{k_1-1}\sum_{1 \leq k \leq k_1 -1} \frac{ k_1-1}{\sigma_{v1}^2} )+
\frac{k_1-1}{2} \log( 1+ \frac{1}{k_1-1}\sum_{1 \leq k \leq k_1 -1} \frac{ k_1-1}{\sigma_{v2}^2} )\\
&=
\frac{k_1-1}{2} \log( 1+ \frac{1}{(k_1-1)\sigma_{v1}^2}
(k_1-1)^2)+
\frac{k_1-1}{2} \log( 1+ \frac{1}{(k_1-1)\sigma_{v2}^2}
(k_1-1)^2)\\
&=
\frac{k_1-1}{2} \log( 1+ \frac{k_1-1}{\sigma_{v1}^2})+
\frac{k_1-1}{2} \log( 1+ \frac{k_1-1}{\sigma_{v2}^2}) \label{eqn:eq11}
\end{align}
(A): Arithmetic-Geometric mean.

Let's denote the last equation as $I$. We also have 
\begin{align}
\mathbb{E}[W_1^2] &= k_1-1 \label{eqn:eq12}
\end{align}
Now, we can bound the variance of the Gaussian random variable $W_1'$ as follows:
\begin{align}
&\frac{1}{2}\log(2\pi e \mathbb{E}[W_1'^2]) = h(W_1') \\
&\geq h(W_1'|y_1'[1:k_1-1],y_2'[1:k_1-1]) \\
&= h(W_1|y_1'[1:k_1-1],y_2'[1:k_1-1]) \\
&= h(W_1) - I(W_1;y_1'[1:k_1-1],y_2'[1:k_1-1]) \\
&\geq \frac{1}{2} \log(2 \pi e (k_1-1)) - I
\end{align}
where the last inequality follows from \eqref{eqn:eq11} and \eqref{eqn:eq12}.

Thus, 
\begin{align}
\mathbb{E}[W_1'^2] \geq \frac{k_1-1}{2^{2I}} \label{eqn:eq1100}
\end{align}
and denote the last term as $\Sigma$. Since $W_1'$ is Gaussian, $W_1'=W_1'''+W_1''''$ where $W_1'''\sim \mathcal{N}(0,\Sigma)$, and $W_1''',W_1''''$ are independent.

Moreover, we also have
\begin{align}
\mathbb{E}[W_2^2] = \mathbb{E}[(w[k_1-1]+ \cdots + w[k_2-2])^2] = k_2 - k_1.  \label{eqn:eq1101}
\end{align}

By \eqref{eqn:eq13}, we have
\begin{align}
&\frac{1}{2} \log(2 \pi e \mathbb{E}[(X_1+X_2)^2]) \\
&\geq h(W_1'+W_2| y_1'[1:k_1-1],y_2'[1:k_1-1],y_2[k_1:k_2-1])\\
&\geq h(W_1'+W_2| W_1'''', y_1'[1:k_1-1],y_2'[1:k_1-1],y_2[k_1:k_2-1])\\
&= h(W_1'''+W_2| W_1'''', y_1'[1:k_1-1],y_2'[1:k_1-1],y_2[k_1:k_2-1])\\
&= h(W_1'''+W_2| W_1'''', y_1'[1:k_1-1],y_2'[1:k_1-1])\\
& - I(W_1'''+W_2 ; y_2[k_1:k_2-1] | W_1'''', y_1'[1:k_1-1],y_2'[1:k_1-1])\\
&= h(W_1'''+W_2)\\
& - I(W_1'''+W_2 ; y_2[k_1:k_2-1] | W_1'''', y_1'[1:k_1-1],y_2'[1:k_1-1]) \\
&\geq \frac{1}{2} \log(2 \pi e(\Sigma+ k_2-k_1))\\
& - I(W_1'''+W_2 ; y_2[k_1:k_2-1] | W_1'''', y_1'[1:k_1-1],y_2'[1:k_1-1]) 
\label{eqn:eq16}
\end{align}
where the last inequality comes from the fact that $W_1'''$ and $W_2$ are independent Gaussian, and \eqref{eqn:eq1100}, \eqref{eqn:eq1101}. Now, the question boils down to the upper bound of the last mutual information term, which can be understood as the information contained in the second controller's observation in Witsenhausen's interval.

$\bullet$ Second controller's observation in Witsenhausen's interval: We will bound the amount of information contained in the second controller's observation in Witsenhausen's interval. For $n \geq k_1$, define
\begin{align}
y_2''[n]&:=W_1''' + w[k_1-1] + w[k_1]+ \cdots + w[n-1]\\
&+u_1[k_1]+ \cdots + u_1[n-1]\\
&+v_2[n]
\end{align}
Notice the relationship between $y_2[n]$ and $y_2''[n]$:
\begin{align}
y_2[n]=y_2''[n]+u_2[k_1]+ \cdots + u_2[n-1]+ W_1'''+ \mathbb{E}[W_1|y_1'[1:k_1-1], y_2'[1:k_1-1]].
\label{eqn:eq1110}
\end{align}
The mutual information of \eqref{eqn:eq16} is bounded as follows:
\begin{align}
&I(W_1'''+W_2; y_2[k_1:k_2-1] | W_1'''',y_1'[1:k_1-1], y_2'[1:k_1-1])\\
&= h(y_2[k_1:k_2-1]|  W_1'''',y_1'[1:k_1-1], y_2'[1:k_1-1])\\
&- h(y_2[k_1:k_2-1]|  W_1'''+W_2,W_1'''',y_1'[1:k_1-1], y_2'[1:k_1-1]) \\
&= \sum_{k_1 \leq i \leq k_2-1} h(y_2[i]|y_2[k_1:i-1],W_1'''',y_1'[1:k_1-1], y_2'[1:k_1-1])\\
&- \sum_{k_1 \leq i \leq k_2-1} h(y_2[i]|y_2[k_1:i-1],W_1'''+W_2,W_1'''',y_1'[1:k_1-1], y_2'[1:k_1-1]) \\
&\overset{(A)}{=} \sum_{k_1 \leq i \leq k_2-1} h(y_2''[i]|y_2[k_1:i-1],W_1'''',y_1'[1:k_1-1], y_2'[1:k_1-1])\\
&- \sum_{k_1 \leq i \leq k_2-1} h(y_2[i]|y_2[k_1:i-1],W_1'''+W_2,W_1'''',y_1'[1:k_1-1], y_2'[1:k_1-1]) \\
&\overset{(B)}{\leq} \sum_{k_1 \leq i \leq k_2-1} h(y_2''[i]) - \sum_{k_1 \leq i \leq k_2-1} h(v_2[i]) \\
&\leq \sum_{k_1 \leq i \leq k_2-1} \frac{1}{2} \log (2 \pi e \mathbb{E}[y_2''[i]^2])- \sum_{k_1 \leq i \leq k_2-1} \frac{1}{2} \log (2 \pi e \sigma_{v2}^2) \label{eqn:eq14}
\end{align}
(A): Since $y_2[1:k_1-1]$ is a function of $y_2'[1:k_1-1]$, $u_2[k_1], \cdots, u_2[i]$ are functions of $y_2[k_1:i-1], y_2'[1:k_1-1]$. Thus, all the terms in \eqref{eqn:eq1110} except $y_2''[i]$ can be vanished by the conditioning\\
(B): By causality, $v_2[i]$ is independent from all conditioning random variables.

First, let's bound the variance of $y_2''[n]$. By \cite[Lemma~1]{Park_Approximation_Journal_Parti}, we have
\begin{align}
\mathbb{E}[y_2''[n]^2] &\leq 2 \mathbb{E}[(W_1'''+w[k_1-1] + w[k_1]+ \cdots + w[n-1])^2] \\
&+ 2 \mathbb{E}[(u_1[k_1]+ \cdots + u_1[n-1])^2] + \sigma_{v2}^2 \\
&= 2 (\Sigma + n-k_1+1 )  + 2 \mathbb{E}[(u_1[k_1]+ \cdots + u_1[n-1])^2] + \sigma_{v2}^2.
\end{align}
By \cite[Lemma~1]{Park_Approximation_Journal_Parti}, we have
\begin{align}
&\mathbb{E}[(u_1[k_1]+ \cdots + u_1[n-1])^2] \\
&\leq (\sqrt{\mathbb{E}[u_1^2[k_1]]} + \cdots + \sqrt{\mathbb{E}[u_1^2[n-1]]})^2 \\
&\overset{(A)}{\leq} (n-k_1)(\mathbb{E}[u_1^2[k_1]]+  \mathbb{E}[u_1^2[k_1+1]]+ \cdots +  \mathbb{E}[u_1^2[n-1]]) \\
&\leq (n-k_1)(k-k_1) \widetilde{P_1}.
\end{align}
(A): Cauchy-Schwarz inequality

Thus, the variance of $y_2''[n]$ is bounded as:
\begin{align}
\mathbb{E}[y_2''[n]^2] \leq 2 \Sigma + 2(n-k_1+1) + 2(n-k_1)(k-k_1) \widetilde{P_1} + \sigma_{v2}^2
\end{align}
Therefore, we have
\begin{align}
&\sum_{k_1 \leq n \leq k_2-1} \mathbb{E}[y_2''[n]^2] \\
&\leq \sum_{k_1 \leq n \leq k_2-1}
(2 \Sigma + 2(n-k_1+1) + 2(n-k_1) (k-k_1) \widetilde{P_1} + \sigma_{v2}^2)\\
&\leq
2(k_2-k_1)\Sigma
+\sum_{k_1 \leq n \leq k_2 -1} 2(k_2-k_1)+\sum_{k_1 \leq n \leq k_2 -1} 2(k_2-k_1-1)(k-k_1) \widetilde{P_1} + (k_2-k_1) \sigma_{v2}^2 \\
&=
2(k_2-k_1)\Sigma+2(k_2-k_1)^2+2(k_2-k_1)(k_2-k_1-1)(k-k_1) \widetilde{P_1} + (k_2-k_1) \sigma_{v2}^2 \label{eqn:eq15}
\end{align}
Therefore, by \eqref{eqn:eq14} and \eqref{eqn:eq15} we conclude
\begin{align}
&I(W_1'''+W_2; y_2[k_1:k_2-1] | W'''', y_1'[1:k_1-1], y_2'[1:k_1-1])\\
&\leq \sum_{k_1 \leq n \leq k_2-1} \frac{1}{2} \log( \frac{ \mathbb{E}[y_2''[n]^2] }{\sigma_{v2}^2} ) \\
&= \frac{1}{2} \log( \prod_{k_1 \leq n \leq k_2-1} \frac{ \mathbb{E}[y_2''[n]^2] }{\sigma_{v2}^2} ) \\
&\overset{(A)}{\leq} \frac{k_2-k_1}{2} \log( \frac{1}{k_2-k_1} \sum_{k_1 \leq n \leq k_2-1} \frac{ \mathbb{E}[y_2''[n]^2] }{\sigma_{v2}^2} ) \\
&\leq \frac{k_2-k_1}{2} \log(
1+\frac{1}{(k_2-k_1)\sigma_{v2}^2}(2(k_2-k_1)\Sigma+2(k_2-k_1)^2+2(k_2-k_1)(k_2-k_1-1)(k-k_1) \widetilde{P_1} )
)\\
&\leq \frac{k_2-k_1}{2} \log(
1+\frac{1}{\sigma_{v2}^2}(2\Sigma+2(k_2-k_1)+2(k_2-k_1-1)(k-k_1) \widetilde{P_1} )
)
\end{align}
(A): Arithmetic-Geometric mean

Denote the last equation as $I'(\widetilde{P_1})$. By \eqref{eqn:eq16}, we can conclude
\begin{align}
&\frac{1}{2}\log( 2 \pi e \mathbb{E}[(X_1+X_2)^2] \geq \frac{1}{2} \log( 2 \pi e( \Sigma + k_2-k_1)) - I'(\widetilde{P_1})
\end{align}
which implies
\begin{align}
\mathbb{E}[(X_1+X_2)^2] \geq \frac{ \Sigma + k_2-k_1  }{2^{2I'(\widetilde{P_1})}}. \label{eqn:eq17}
\end{align}

$\bullet$ Final lower bound: Now, we can merge the inequalities to prove the lemma. The variance of $W_3$ is
\begin{align}
\mathbb{E}[W_3^2] &= k-k_2. \label{eqn:eq18}
\end{align}
By \cite[Lemma~1]{Park_Approximation_Journal_Parti} and Cauchy-Schwarz inequality, the variance of $U_1$ is upper bounded as follows:
\begin{align}
\mathbb{E}[U_1^2]
& \leq (\sqrt{a^{2(k-k_1-1)}\mathbb{E}[u_1^2[k_1]]}+\cdots+\sqrt{\mathbb{E}[u_1^2[k-1]]})^2 \\
& \leq
(k-k_1)(\mathbb{E}[u_1^2[k_1]]+\mathbb{E}[u_1^2[k_1+1]]+\cdots+\mathbb{E}[u_1^2[k-1]]) \\
&= (k-k_1)^2 \widetilde{P_1}. \label{eqn:eq19}
\end{align}
Likewise, the variance of $U_2$ can be bounded as
\begin{align}
\mathbb{E}[U_2^2] \leq (k-k_2)^2 \widetilde{P_2}. \label{eqn:eq110}
\end{align}
By plugging \eqref{eqn:eq17}, \eqref{eqn:eq18}, \eqref{eqn:eq19}, \eqref{eqn:eq110} into \eqref{eqn:eq111}, we finally prove the lemma.
\end{proof}

\begin{proof}[Proof of Corollary~\ref{cor:4} of Page~\pageref{cor:4}]
Proof of (a):

Since $\sigma_{v2} \geq 16$, we can find $k_2 \geq 6$ such that
\begin{align}
k_2 -2 \leq  \frac{\sigma_{v2}}{4} < k_2-1 \label{eqn:eq1add1}
\end{align}
We put such $k_3$, $k_1=1$ and $k=k_2$ as the parameters of Lemma~\ref{lem:eq11}.
Then, the lower bound of Lemma~\ref{lem:eq11} reduces to 
\begin{align}
D_L(\widetilde{P_1},\widetilde{P_2}) \geq ( \sqrt{ \frac{k_2-1}{2^{2I'(\widetilde{P_1})}}} - \sqrt{(k_2-1)^2 \widetilde{P_1}})_+^2  +1. \label{eqn:eq1add4}
\end{align}
Since $k_2 \geq 6$, we have
\begin{align}
\frac{k_2-2}{k_2-1} \geq \frac{4}{5}. \label{eqn:eq112}
\end{align}
Thus, $I'(\widetilde{P_1})$ is lower bounded by
\begin{align}
I'(\widetilde{P_1})&= \frac{1}{2} \log(1+\frac{1}{\sigma_{v2}^2}(2(k_2-1) + 2(k_2-2)(k_2-1) \widetilde{P_1}))^{k_2-1} \\
&= \frac{1}{2} \log(1+\frac{1}{k_2-1}( \frac{2(k_2-1)^2}{\sigma_{v2}^2} + \frac{2(k_2-2)(k_2-1)^2 \widetilde{P_1}}{\sigma_{v2}^2}))^{k_2-1} \\
&\overset{(A)}{\leq} \frac{1}{2} \log(1+\frac{1}{k_2-1}( 2(\frac{5}{4})^2\frac{(k_2-2)^2}{\sigma_{v2}^2} + 2(\frac{5}{4})^2\frac{(k_2-2)(k_2-2)^2 }{4\sigma_{v2}^3}))^{k_2-1} \\
&\overset{(B)}{\leq} \frac{1}{2} \log(1+\frac{1}{k_2-1}( 2(\frac{5}{4})^2(\frac{1}{4})^2 + 2(\frac{5}{4})^2(\frac{1}{4})^3))^{k_2-1} \\
&\leq \frac{1}{2} \log e^{\frac{125}{512}} \label{eqn:eq1add2}
\end{align}
(A): \eqref{eqn:eq112} and $\widetilde{P_1} \leq \frac{1}{4 \sigma_{v2}}$. \\
(B): \eqref{eqn:eq1add1}.

Moreover, we have
\begin{align}
(k_2-1)^2 \widetilde{P_1} &\overset{(A)}{\leq} \frac{5}{4}(k_2-1) (k_2-2) \widetilde{P_1}\\
& \overset{(B)}{\leq} \frac{5}{4}(k_2-1) \frac{k_2-2}{4\sigma_{v2}} \\
& \overset{(C)}{\leq} \frac{5}{4}(k_2-1) \frac{1}{16} \\
& = \frac{5}{64} (k_2-1) \label{eqn:eq1add3}
\end{align}
(A): \eqref{eqn:eq112}\\
(B): $\widetilde{P_1} \leq \frac{1}{4 \sigma_{v2}}$ \\
(C): \eqref{eqn:eq1add1}

Therefore, by plugging \eqref{eqn:eq1add2}, \eqref{eqn:eq1add3} into \eqref{eqn:eq1add4}, we get
\begin{align}
D_L(\widetilde{P_1},\widetilde{P_2}) &\geq ( \sqrt{ \frac{k_2-1}{2^{2I'(P_1)}}} - \sqrt{(k_2-1)^2 \widetilde{P_1}})_+^2  +1  \\
& \geq ( \sqrt{ \frac{k_2-1}{e^{\frac{125}{512}}}} - \sqrt{\frac{5}{64}(k_2-1)} )_+^2 + 1 \\
& = 0.366724...(k_2-1) +1 \\
& \geq 0.366724...\frac{\sigma_{v2}}{4} +1 \\
& = 0.09168106... \sigma_{v2} +1
\end{align}
where the last inequality follows from \eqref{eqn:eq1add1}.

Proof of (b):

Since $\frac{1}{\widetilde{P_1}} \geq 64$, we can find $k_2 \geq 6$ such that
\begin{align}
k_2 -2 \leq  \frac{1}{16 \widetilde{P_1}} < k_2-1. \label{eqn:eq1add20}
\end{align}
We put such $k_2$, $k_1=1$ and $k=k_2$ as the parameters of Lemma~\ref{lem:eq11}. Then, the lower bound of Lemma~\ref{lem:eq11} reduces to
\begin{align}
D_L(\widetilde{P_1},\widetilde{P_2})\geq ( \sqrt{ \frac{k_2-1}{2^{2I'(\widetilde{P_1})}}} - \sqrt{(k_2-1)^2 \widetilde{P_1}})_+^2  +1. \label{eqn:eq1add23}
\end{align}
First, $I'(\widetilde{P_1})$ is lower bounded by
\begin{align}
I'(\widetilde{P_1})&= \frac{1}{2} \log(1+\frac{1}{\sigma_{v2}^2}(2(k_2-1) + 2(k_2-2)(k_2-1)\widetilde{P_1}))^{k_2-1} \\
&= \frac{1}{2} \log(1+\frac{1}{k_2-1}( \frac{2(k_2-1)^2}{\sigma_{v2}^2} + \frac{2(k_2-2)(k_2-1)^2 \widetilde{P_1}}{\sigma_{v2}^2}))^{k_2-1} \\
&\overset{(A)}{\leq} \frac{1}{2} \log(1+\frac{1}{k_2-1}( 2(k_2-1)^2 (4\widetilde{P_1})^2 + 2(k_2-2)(k_2-1)^2 \widetilde{P_1} (4\widetilde{P_1})^2))^{k_2-1} \\
&\overset{(B)}{\leq} \frac{1}{2} \log(1+\frac{1}{k_2-1}( 2(\frac{5}{4})^2(k_2-2)^2 (4 \widetilde{P_1})^2 + 2(\frac{5}{4})^2(k_2-2)^3 \widetilde{P_1} (4\widetilde{P_1})^2))^{k_2-1} \\
&\overset{(C)}{\leq} \frac{1}{2} \log(1+\frac{1}{k_2-1}( 2(\frac{5}{4})^2 (\frac{1}{4})^2 + 2(\frac{5}{4})^2 \frac{1}{16} (\frac{1}{4})^2 )^{k_2-1}\\
&\leq \frac{1}{2} \log e^{\frac{425}{2048}}. \label{eqn:eq1add21}
\end{align}
(A): $\frac{1}{4 \sigma_{v2}} \leq P_1$\\
(B): Since $k_2 \geq 6$, \eqref{eqn:eq112} still holds.\\
(C): \eqref{eqn:eq1add20}

Moreover, we also have
\begin{align}
(k_2-1)^2 \widetilde{P_1} &\overset{(A)}{\leq} \frac{5}{4}(k_2-1) (k_2-2) \widetilde{P_1}\\
& \overset{(B)}{\leq} \frac{5}{4}(k_2-1) \frac{1}{16} \\
& = \frac{5}{64} (k_2-1). \label{eqn:eq1add22}
\end{align}
(A): Since $k_2 \geq 6$, \eqref{eqn:eq112} still holds. \\
(B): \eqref{eqn:eq1add20}

Therefore, plugging \eqref{eqn:eq1add21}, \eqref{eqn:eq1add22} into \eqref{eqn:eq1add23} we can conclude
\begin{align}
D_L(\widetilde{P_1}, \widetilde{P_2}) &\geq ( \sqrt{ \frac{k_2-1}{2^{2I'(\widetilde{P_1})}}} - \sqrt{(k_2-1)^2 \widetilde{P_1}})_+^2  +1  \\
& \geq ( \sqrt{ \frac{k_2-1}{e^{\frac{425}{2048}}}} - \sqrt{\frac{5}{64}(k_2-1)} )_+^2 + 1 \\
& = 0.386801...(k_2-1) +1 \\
& \geq 0.386801...\frac{1}{16 \widetilde{P_1}} +1 \\
& = \frac{0.0241750...}{16 \widetilde{P_1}}  +1
\end{align}
where the last inequality comes from \eqref{eqn:eq1add20}.

Proof of (c):

Denote $P:=\max(\widetilde{P_1},\widetilde{P_2})$. Since $P \leq \frac{1}{50}$, there exists $k \geq 3$ such that
\begin{align}
k-2 \leq \frac{1}{50 P} < k-1. \label{eqn:eq1add30}
\end{align}
We put such $k$ and $k_1=k_2=1$ as the parameters of Lemma~\ref{lem:eq11}. Then, the lower bound of Lemma~\ref{lem:eq11} reduces to
\begin{align}
D_L(\widetilde{P_1},\widetilde{P_2}) \geq (\sqrt{k-1} - \sqrt{(k-1)^2 \widetilde{P_1}} - \sqrt{(k-1)^2 \widetilde{P_2}})_+^2 +1.
\end{align}
Since $k \geq 3$, we have
\begin{align}
\frac{k-2}{k-1} \geq \frac{1}{2}. \label{eqn:eq1add31}
\end{align}
Therefore, we conclude
\begin{align}
D_L(\widetilde{P_1},\widetilde{P_2}) &\geq (\sqrt{k-1} - \sqrt{4(k-1)^2 P})^2_+ +1 \\
&\overset{(A)}{\geq} (\sqrt{k-1} - \sqrt{16(k-2)^2 P})^2_+ +1 \\
&\overset{(B)}{\geq} (\sqrt{\frac{1}{50P}} - \sqrt{\frac{16}{50^2 P}})^2_+ +1 \\
&\geq 0.00377258...\frac{1}{P} +1.
\end{align}
(A): \eqref{eqn:eq1add30}\\
(B): \eqref{eqn:eq1add31}

Proof of (d):

As mentioned in the proof of Corollary~\ref{cor:2} (j), the centralized controller's distortion that has both observations $y_1[n], y_2[n]$ and has no input power constraints is a lower bound on the decentralized controller's distortion.

Let $y_1'[n]:=x[n]+v_1'[n]$ and $y_2'[n]:=x[n]+v_2'[n]$ where $v_1'[n] \sim \mathcal{N}(0,\sigma_1^2)$ and $v_2'[n] \sim \mathcal{N}(0,\sigma_1^2)$ are i.i.d. random variables. Just like the proof of Corollary~\ref{cor:2} (j), the performance of the centralized controller with both observations is equivalent to a centralized controller with observation $\frac{y_1'[n]+y_2'[n]}{2}$  by the maximum ratio combining.

Let $\Sigma_E$ be the estimation error of the Kalman filtering with a scalar observation $\frac{y_1'[n]+y_2'[n]}{2}$. By Lemma~\ref{lem:aless21},
\begin{align}
\Sigma_E &= \frac{-1 + \sqrt{4 \frac{\sigma_{v1}^2}{2}+1}}{2}\\
&=\frac{-1 + \sqrt{2 \sigma_{v1}^2 + 1}}{2}.
\end{align}
Then, for all $\widetilde{P_1}$ and $\widetilde{P_2}$, the cost of the decentralized controllers is lower bounded as follows:
\begin{align}
D_L(\widetilde{P_1},\widetilde{P_2}) &\overset{(A)}{\geq} \inf_{|1-k| < 1} \frac{(2k-k^2)\Sigma_E + 1}{1-(1-k)^2} \\
&= \inf_{|1-k| < 1} \Sigma_E + \frac{1}{1-(1-k)^2}\\
&\geq \Sigma_E.
\end{align}
(A): The decentralized control cost is larger than the centralized controller's cost with the observation $\frac{y_1'[n]+y_2'[n]}{2}$. Moreover, when $|a-k| \geq 1$ the centralized control system is unstable, and the cost diverges to infinity. When $|a-k| < 1$, the cost analysis follows from Lemma~\ref{lem:aless21}.

By Lemma~\ref{lem:eq11}, $D_L(\widetilde{P_1},\widetilde{P_2}) \geq 1$. Finally, for all $\widetilde{P_1},\widetilde{P_2}$ we have
\begin{align}
D_L(\widetilde{P_1},\widetilde{P_2}) & \geq \max(\Sigma_E,1) = \max( \frac{-1 + \sqrt{2 \sigma_{v1}^2 + 1}}{2} , 1) \\
& \geq \frac{1}{2} ( \frac{-1 + \sqrt{2 \sigma_{v1}^2 + 1}}{2}) + \frac{1}{2} \\
& \geq \frac{1}{4} + \frac{\sqrt{2 \sigma_{v1}^2 + 1}}{2} \\
& \geq \frac{\sqrt{2}}{2} \sigma_{v1}.
\end{align}
Since we already know $D_L(\widetilde{P_1},\widetilde{P_2}) \geq 1$, the statement (d) of the corollary is true.
\end{proof}

\begin{proof}[Proof of Proposition~\ref{prop:2} of Page~\pageref{prop:2}]
Like the proof of Proposition~\ref{prop:1}, we define the subscript $max$ as $argmax_{i \in \{ 1,2\}} \widetilde{P_i}$, and write $D_{\sigma_{v1}}(\cdot), D_{\sigma_{v2}}(\cdot), D_{\sigma_{v max}}(\cdot)$ as $D_{v1}(\cdot), D_{v2}(\cdot), D_{v max}(\cdot)$ respectively.

By the same argument as the proof of Proposition~\ref{prop:1}, it is enough to show that there exists $c \leq 10^6$ such that for all $\widetilde{P_1}, \widetilde{P_2} \geq 0$, $\min(D_{\sigma 1}(c \widetilde{P_1}), D_{\sigma 2}(c \widetilde{P_2})) \leq c \cdot D_L(\widetilde{P_1}, \widetilde{P_2})$.

In the proof, we first divide the cases based on $\sigma_1 , \sigma_2$ (essentially equivalent to $\Sigma_1$, $\Sigma_2$), and then based on $\widetilde{P_1}, \widetilde{P_2}$. Here, we can use the fact that $\sigma_1 \leq \sigma_2$ to reduce the cases.

(i) When $\sigma_{v1} \leq 16$, $\sigma_{v2} \leq 16$

(i-i) If $\max(\widetilde{P_1}, \widetilde{P_2}) \geq \frac{1}{64}$

Lower bound: By Corollary~\ref{cor:4} (d),
\begin{align}
D_L(\widetilde{P_1}, \widetilde{P_2}) \geq 1.
\end{align}

Upper bound: Since $\sigma_{v1}, \sigma_{v2} \leq 16$, we can plug $t=\frac{1}{15.008}$ into the equation \eqref{eqn:cor:32} of Corollary~\ref{cor:3}. Thus, we have
\begin{align}
(D_{\sigma max}(P_{max}),P_{max}) \leq ( 30.016, \frac{1}{15.008}).
\end{align}

Ratio: $c$ is upper bounded by
\begin{align}
c \leq 30.016.
\end{align}

(i-ii) If $\max(\widetilde{P_1}, \widetilde{P_2}) \leq \frac{1}{64}$

Lower bound: Since $\max(\widetilde{P_1}, \widetilde{P_2}) \leq \frac{1}{64} \leq \frac{1}{50}$, by Corollary~\ref{cor:4} (c) we can conclude
\begin{align}
D_L(\widetilde{P_1}, \widetilde{P_2}) \geq \frac{0.003772}{\max(\widetilde{P_1}, \widetilde{P_2})}+1.
\end{align}

Upper bound: Since $\sigma_{v1}, \sigma_{v2} \leq 16$ and $\widetilde{P_{max}} \leq \frac{1}{64} \leq \frac{1}{15.008}$, we can plug $t=\widetilde{P_{max}}$ into the equation \eqref{eqn:cor:32} of  Corollary~\ref{cor:3}. Thus, we have
\begin{align}
(D_{\sigma max}(P_{max}),P_{max}) \leq ( \frac{2}{\widetilde{P_{max}}}, \widetilde{P_{max}}).
\end{align}

Ratio: $c$ is upper bounded by
\begin{align}
c \leq \frac{2}{0.003772} < 540.
\end{align}

(ii) When $ \sigma_{v1} \leq 16 \leq \sigma_{v2}$

(ii-i) If $\widetilde{P_1} \geq \frac{1}{64}$

Lower bound: By Corollary~\ref{cor:4} (d), we have
\begin{align}
D_L(\widetilde{P_1}, \widetilde{P_2}) \geq 1.
\end{align}

Upper bound: Since $\sigma_1 \leq 16$, we can plug $t=\frac{1}{15.008}$ into the equation \eqref{eqn:cor:32} of Corollary~\ref{cor:3}. Thus, we have
\begin{align}
(D_{\sigma 1}(P_1),P_1) \leq (30.016, \frac{1}{15.008}).
\end{align}

Ratio: $c$ is upper bounded by
\begin{align}
c \leq 30.016.
\end{align}

(ii-ii) If $\frac{1}{4 \sigma_{v2}} \leq \widetilde{P_1} \leq \frac{1}{64}$

Lower bound: By Corollary~\ref{cor:4} (b), we have
\begin{align}
D_L(\widetilde{P_1}, \widetilde{P_2}) \geq \frac{0.02417}{\widetilde{P_1}} +1.
\end{align}

Upper bound: Since $\widetilde{P_1} \leq \frac{1}{64} \leq \frac{1}{15.008}$, we can plug $t=\widetilde{P_1}$ into the equation \eqref{eqn:cor:32} of Corollary~\ref{cor:3}. Thus, we have
\begin{align}
(D_{\sigma 1}(P_1), P_1) \leq (\frac{2}{\widetilde{P_1}}, \widetilde{P_1}).
\end{align}

Ratio: $c$ is upper bounded by
\begin{align}
c \leq \frac{2}{0.02417} < 83.
\end{align}

(ii-iii) If $\widetilde{P_1} \leq \frac{1}{4 \sigma_{v2}}$ and $\widetilde{P_2} \geq \frac{1}{4 \sigma_{v2}}$

Lower bound: By Corollary~\ref{cor:4} (a), we have
\begin{align}
D_L(\widetilde{P_1}, \widetilde{P_2}) \geq 0.09168 \sigma_{v2} + 1
\end{align}

Upper bound: Since $\sigma_{v2} \geq 16$, we can plug $t=\frac{1}{1.0005\sigma_{v2}}$ into the equation \eqref{eqn:cor:31} of Corollary~\ref{cor:3}. Thus, we have
\begin{align}
(D_{\sigma 2}(\widetilde{P_2}), \widetilde{P_2}) \leq (2.001 \sigma_{v2}, \frac{1}{1.0005 \sigma_{v2}}).
\end{align}

Ratio: $c$ is upper bounded by
\begin{align}
c \leq \frac{2.001}{0.09168} < 22.
\end{align}

(ii-iv) If $\widetilde{P_1} \leq \frac{1}{4 \sigma_{v2}}$ and $\widetilde{P_2} \leq \frac{1}{4 \sigma_{v2}}$

Lower bound: Since $\widetilde{P_1} \leq \frac{1}{4 \sigma_{v2}} \leq \frac{1}{64} \leq \frac{1}{50}$, $\widetilde{P_2} \leq \frac{1}{4 \sigma_{v2}} \leq \frac{1}{64} \leq \frac{1}{50}$, by Corollary~\ref{cor:4} (c) we have
\begin{align}
D_L(\widetilde{P_1}, \widetilde{P_2}) \geq \frac{0.003772}{\max(\widetilde{P_1}, \widetilde{P_2})} + 1.
\end{align}

Upper bound: Since $\widetilde{P_1} \leq \frac{1}{4 \sigma_{v2}} \leq \frac{1}{64} \leq \frac{1}{15.008}$, $\widetilde{P_2} \leq \frac{1}{4\sigma_{v2}} \leq \frac{1}{1.0005\sigma_{v2}}$, these satisfies the conditions for \eqref{eqn:cor:31}, \eqref{eqn:cor:32} of Corollary~\ref{cor:3} respectively. Therefore, by plugging $t=\widetilde{P_{max}}$ into Corollary~\ref{cor:3}, we have
\begin{align}
(D_{\sigma max}(P_{max}), P_{max}) \leq (\frac{2}{\widetilde{P_{max}}}, \widetilde{P_{max}}).
\end{align}

Ratio: $c$ is upper bounded by
\begin{align}
c \leq \frac{2}{0.003772} < 540.
\end{align}

(iii) When $\sigma_{v1} \geq 16$ and $\sigma_{v2} \geq 16$

(iii-i) If $\widetilde{P_1} \geq \frac{1}{4 \sigma_{v1}}$

Lower bound: By Corollary~\ref{cor:4} (d), we have
\begin{align}
D_L(\widetilde{P_1}, \widetilde{P_2}) \geq \frac{\sqrt{2}}{2} \sigma_{v1}.
\end{align}

Upper bound: Since $\sigma_{v1} \geq 16$, we can plug $t=\frac{1}{1.0005 \sigma_{v1}}$ into \eqref{eqn:cor:31} of Corollary~\ref{cor:3}. Thus, we have
\begin{align}
(D_{\sigma 1}(P_1),P_1 ) \leq (2.001 \sigma_{v1}, \frac{1}{1.0005 \sigma_{v1}}).
\end{align}

Ratio: $c$ is upper bounded by
\begin{align}
c \leq \frac{4}{1.0005} < 4.
\end{align}

(iii-ii) If $\frac{1}{4 \sigma_{v2}} \leq \widetilde{P_1} \leq \frac{1}{4 \sigma_{v1}}$

Lower bound: Since $\frac{1}{4 \sigma_{v2}} \leq \widetilde{P_1} \leq \frac{1}{4 \sigma_{v1}} \leq \frac{1}{64}$, by Corollary~\ref{cor:4} (b) we have
\begin{align}
D_L(\widetilde{P_1}, \widetilde{P_2}) \geq \frac{0.02417}{\widetilde{P_1}} + 1.
\end{align}

Upper bound: Since $\frac{1}{\widetilde{P_1}} \leq \frac{1}{4\sigma_{v1}} \leq \frac{1}{1.0005\sigma_{v1}}$, we can plug $t=\widetilde{P_1}$ into the equation \eqref{eqn:cor:31} of Corollary~\ref{cor:3}. Thus, we have
\begin{align}
(D_{\sigma 1}(P_1),P_1) \leq ( \frac{2}{\widetilde{P_1}}, \widetilde{P_1}).
\end{align}

Ratio: $c$ is upper bounded by
\begin{align}
c \leq \frac{2}{0.02417} < 83.
\end{align}

(iii-iii) If $\widetilde{P_1} \leq \frac{1}{4 \sigma_{v2}}$ and $\widetilde{P_2} \geq \frac{1}{4 \sigma_{v2}}$

Lower bound: By Corollary~\ref{cor:4} (a), we have
\begin{align}
D_L(\widetilde{P_1}, \widetilde{P_2}) \geq 0.09168 \sigma_{v2} + 1.
\end{align}

Upper bound: Since $\sigma_{v2} \geq 16$, we can plug $t=\frac{1}{1.0005 \sigma_{v2}}$ into the equation \eqref{eqn:cor:31} of Corollary~\ref{cor:3}. Thus, we have
\begin{align}
(D_{\sigma 2}(P_2), P_2) \leq (2.001\sigma_{v2}, \frac{1}{1.0005\sigma_{v2}}).
\end{align}

Ratio: $c$ is upper bounded by
\begin{align}
c \leq \frac{2.001}{0.09168} < 22.
\end{align}

(iii-iv) If $\widetilde{P_1} \leq \frac{1}{4 \sigma_{v2}}$ and $\widetilde{P_2} \leq \frac{1}{4 \sigma_{v2}}$

Lower bound: Since $\widetilde{P_1} \leq \frac{1}{4\sigma_{v2}}\leq \frac{1}{64} \leq \frac{1}{50}$, $\widetilde{P_2} \leq \frac{1}{4\sigma_{v2}}\leq \frac{1}{64} \leq \frac{1}{50}$, by Corollary~\ref{cor:4} (c) we have
\begin{align}
D_L(\widetilde{P_1}, \widetilde{P_2}) \geq \frac{0.003772}{\max(\widetilde{P_1}, \widetilde{P_2})} +1.
\end{align}

Upper bound: Since $\widetilde{P_1} \leq \frac{1}{4 \sigma_{v2}} \leq \frac{1}{4 \sigma_{v1}} \leq \frac{1}{1.0005\sigma_{v1}}$ and $\widetilde{P_2} \leq \frac{1}{4 \sigma_{v2}} \leq \frac{1}{1.0005 \sigma_{v2}}$, we can plug $t=\widetilde{P_{max}}$ into the equation \eqref{eqn:cor:31} of Corollary~\ref{cor:3}. Thus, we have
\begin{align}
(D_{\sigma max}(P_{max}), P_{max}) \leq ( \frac{2}{\widetilde{P_{max}}}, \widetilde{P_{max}}).
\end{align}

Ratio: $c$ is upper bounded by
\begin{align}
c \leq \frac{2}{0.003772} < 540.
\end{align}

Finally, by (i), (ii), (iii), the constant $c$ is upper bounded by $10^6$ and the proposition is proved.
\end{proof}

\subsection{Proof of Lemma~\ref{lem:less11}, Corollary~\ref{cor:3} and Proposition~\ref{prop:2}}
\label{sec:aless1}

\begin{proof}[Proof of Lemma~\ref{lem:less11} of Page~\pageref{lem:less11}]
For simplicity, we assume $0 \leq a < 1$, $1<k_1 < k_2 < k$. The remaining case when $-1 < a  \leq 0$ or $k_1=1$ or $k_1=k_2$ or $k=k_2$ easily follow with minor modifications.

$\bullet$ Geometric Slicing: We apply the geometric slicing idea of Lemma~\ref{lem:slicing2} to get a finite-horizon problem. By putting $\alpha_{k_1}=( \frac{1-a}{1-a^{k-k_1}} ) a^{k-k_1-1}$, $\alpha_{k_1+1}=( \frac{1-a}{1-a^{k-k_1}} ) a^{k-k_1-2}$, $\cdots$, $\alpha_k=\frac{1-a}{1-a^{k-k_1}}$ and $\beta_{k_2}=( \frac{1-a^{-1}}{1-a^{-(k-k_2)}} )$, $\beta_{k_2+1}=( \frac{1-a^{-1}}{1-a^{-(k-k_2)}} )a^{-1}$, $\cdots$, $\beta_{k-1}=( \frac{1-a^{-1}}{1-a^{-(k-k_2)}} )a^{-k+1+k_2}$ the average cost is lower bounded by
\begin{align}
&q \mathbb{E}[x^2[k]] \\
&+r_1 \underbrace{(( \frac{1-a}{1-a^{k-k_1}} ) a^{k-k_1-1} \mathbb{E}[u_1^2[k_1]]+( \frac{1-a}{1-a^{k-k_1}} ) a^{k-k_1-2} \mathbb{E}[u_1^2[k_1+1]]+\cdots+
( \frac{1-a}{1-a^{k-k_1}} ) \mathbb{E}[u_1^2[k-1]])}_{:=\widetilde{P_1}}\\
&+r_2 \underbrace{(( \frac{1-a}{1-a^{k-k_2}} ) a^{k-k_1-1} \mathbb{E}[u_2^2[k_2]]+( \frac{1-a}{1-a^{k-k_2}} ) a^{k-k_1-2}\mathbb{E}[u_2^2[k_2+1]]+\cdots+
( \frac{1-a}{1-a^{k-k_2}} ) \mathbb{E}[u_2^2[k-1]])}_{:=\widetilde{P_2}})
\end{align}
Here, we denote the second and third terms as $\widetilde{P_1}$ and $\widetilde{P_2}$ respectively.

$\bullet$ Three stage division: As we did in the proof of Lemma~\ref{lem:aless22}, we will divide the finite-horizon problem into three time intervals --- information-limited interval, Witsenhausen's interval, power-limited interval. Define 
\begin{align}
&W_1:=a^{k-1}w[0]+ \cdots + a^{k-k_1+1}w[k_1-2]\\
&W_2 := a^{k-k_1}w[k_1-1]+\cdots+a^{k-k_2+1}w[k_2-2] \\
&W_3 := a^{k-k_2} w[k_2-1]+\cdots + a w[k-2] \\
&U_{11}:=a^{k-2}u_1[1]+ \cdots+ a^{k-k_1} u_1[k_1-1] \\
&U_{21}:=a^{k-2}u_2[1]+ \cdots+ a^{k-k_1} u_2[k_1-1] \\
&U_1 := a^{k-k_1-1}u_1[k_1]+ \cdots + u_1[k-1] \\
&U_{22} := a^{k-k_1-1}u_2[k_1]+\cdots+a^{k-k_2}u_2[k_2-1])\\
&U_2 := a^{k-k_2-1}u_2[k_2]+ \cdots + u_2[k-1]\\
&X_1 := W_1 + U_{11} + U_{21} \\
&X_2 := W_2 + U_{22}
\end{align}
$W_1, W_2, W_3$ represent the distortions of three intervals respectively. $U_{11}$ and $U_{21}$ represent the first and second controller inputs in the information-limited interval respectively. $U_1$ represent the remaining input of the first controller. $U_{22}$ and $U_2$ represent the second controller's input in Witsenhausen's and power-limited intervals respectively.

The goal of this proof is grouping control inputs and expanding $x[n]$, so that we reveal the effects of the controller inputs on the state and isolate their effects according to their characteristics.

$\bullet$ Power-Limited Inputs: We will first isolate the power limited inputs, i.e. the first controller's input in the Witsenhausen's and power-limited intervals, and the second controller's input in the power-limited interval. Notice that
\begin{align}
x[k]&=w[k-1]+aw[k-2]+ \cdots + a^{k-1}w[0] \\
&\quad +u_1[k-1]+au_1[k-2]+ \cdots + a^{k-1}u_1[0] \\
&\quad +u_2[k-1]+au_2[k-2]+ \cdots + a^{k-1}u_2[0] \\
&=
(a^{k-1}w[0]+ \cdots + a^{k-k_1+1}w[k_1-2] \\
&\quad+a^{k-2}u_1[1]+ \cdots+ a^{k-k_1} u_1[k_1-1] \\
&\quad+a^{k-2}u_2[1]+ \cdots+ a^{k-k_1} u_2[k_1-1]) \\
&\quad +(
a^{k-k_1}w[k_1-1]+\cdots+a^{k-k_2+1}w[k_2-2] \\
&\quad+a^{k-k_1-1}u_2[k_1]+\cdots+a^{k-k_2}u_2[k_2-1])\\
&\quad + (
a^{k-k_2} w[k_2-1]+\cdots + a w[k-2]
)\\
&\quad + (
a^{k-k_1-1}u_1[k_1]+ \cdots + u_1[k-1]
) \\
&\quad + (
a^{k-k_2-1}u_2[k_2]+ \cdots + u_2[k-1]
) \\
&\quad+w[k-1].
\end{align}
Therefore, by \cite[Lemma~1]{Park_Approximation_Journal_Parti} we have
\begin{align}
\mathbb{E}[x^2[k]]&=
\mathbb{E}[(X_1+X_2+W_3+U_1+U_2+w[k-1])^2] \\
&=\mathbb{E}[(X_1+X_2+W_3+U_1+U_2)^2]+\mathbb{E}[w^2[k-1]] \\
&\geq (\sqrt{\mathbb{E}[(X_1+X_2+W_3)^2]}-\sqrt{\mathbb{E}[U_1^2]}-\sqrt{\mathbb{E}[U_2^2]})_+^2 + 1 \\
&= (\sqrt{\mathbb{E}[(X_1+X_2)^2]+\mathbb{E}[W_3^2]}-\sqrt{\mathbb{E}[U_1^2]}-\sqrt{\mathbb{E}[U_2^2]})_+^2 + 1  \label{eqn:less111}
\end{align}
where the last equality comes form the causality. Here, we can see that $\mathbb{E}[(X_1+X_2)^2]$ does not depend on the inputs from the power-limited intervals.

$\bullet$ Information-Limited Interval: We will bound the remaining state distortion after the information-limited interval. Define $y_1'$ and $y_2'$ as follows:
\begin{align}
&y_1'[k]= a^{k-1}w[0]+ a^{k-2}w[1]+ \cdots + w[k-1]+ v_1[k]\\
&y_2'[k]= a^{k-1}w[0]+ a^{k-2}w[1]+ \cdots + w[k-1]+ v_2[k].
\end{align}
Here, $y_1'[k]$, $y_2'[k]$ can be obtained by removing $u_1[1:k-1]$, $u_2[1:k-1]$ from $y_1[k]$, $y_2[k]$, and $u_1[k]$ and $u_2[k]$ are functions of $y_1[1:k]$ and $y_2[1:k]$ respectively. Therefore, we can see that $y_1[1:k], y_2[1:k]$ are functions of $y_1'[1:k], y_2'[1:k]$. Moreover, $W_1$, $y_1'[1:k_1-1]$, $y_2'[1:k_1-1]$ are jointly Gaussian.

Let
\begin{align}
&W_1' := W_1 - \mathbb{E}[W_1|y_1'[1:k_1-1],y_2'[1:k_1-1]]\\
&W_1'' := \mathbb{E}[W_1|y_1'[1:k_1-1],y_2'[1:k_1-1]].
\end{align}
Then, $W_1'$, $W_1''$, $W_2$ are independent Gaussian random variables. Moreover, $W_1', W_2$ are independent from $y_1'[1:k_1-1],y_2'[1:k_1-1]$. $W_1''$ is a function of $y_1'[1:k_1-1],y_2'[1:k_1-1]$.

Now, let's lower bound $\mathbb{E}[(X_1+X_2)^2]$. Since Gaussian maximizes the entropy, we have
\begin{align}
&\frac{1}{2}\log( 2 \pi e \mathbb{E}[(X_1+X_2)^2] \\
&\geq h(X_1+X_2)\\
&\geq h(X_1+X_2|y_1'[1:k_1-1],y_2'[1:k_1-1],y_2[k_1:k_2-1])\\
&= h(W_1'+W_1''+U_{11}+U_{12}+W_2+U_{22}|y_1'[1:k_1-1],y_2'[1:k_1-1],y_2[k_1:k_2-1])\\
&= h(W_1'+W_2|y_1'[1:k_1-1],y_2'[1:k_1-1],y_2[k_1:k_2-1]). \label{eqn:less13}
\end{align}
We will first lower bound the variance of $W_1'$. Notice that
\begin{align}
\mathbb{E}[y_1'[k]^2]&=a^{2(k-1)}+ a^{2(k-2)} + \cdots + 1 + \sigma_{v1}^2 \\
&= \frac{1-a^{2k}}{1-a^2} + \sigma_{v1}^2
\end{align}
and
\begin{align}
\mathbb{E}[y_2'[k]^2]&=a^{2(k-1)}+ a^{2(k-2)} + \cdots + 1 + \sigma_{v1}^2 \\
&= \frac{1-a^{2k}}{1-a^2} + \sigma_{v2}^2.
\end{align}
Thus, we have
\begin{align}
&I(W_1;y_1'[1:k_1-1],y_2'[1:k_1-1])  \\
&= h(y_1'[1:k_1-1],y_2'[1:k_1-1])-h(y_1'[1:k_1-1],y_2'[1:k_1-1]| W_1) \\
&\leq \sum_{1 \leq i \leq k_1-1} h(y_1'[i]) + \sum_{1 \leq i \leq k_1-1} h(y_2'[i]) - \sum_{1 \leq i \leq k_1-1} h(v_1[i]) - \sum_{1 \leq i \leq k_1-1} h(v_2[i])\\
&\leq \sum_{1 \leq k \leq k_1 -1} \frac{1}{2} \log(\frac{ \frac{1-a^{2k}}{1-a^2} + \sigma_{v1}^2}{\sigma_{v1}^2}) +
\sum_{1 \leq k \leq k_1 -1} \frac{1}{2} \log(\frac{ \frac{1-a^{2k}}{1-a^2} + \sigma_{v2}^2}{\sigma_{v2}^2}) \\
& = \frac{1}{2} \log(\prod_{1 \leq k \leq k_1 -1} \frac{ \frac{1-a^{2k}}{1-a^2} + \sigma_{v1}^2}{\sigma_{v1}^2} )+
\frac{1}{2} \log(\prod_{1 \leq k \leq k_1 -1} \frac{ \frac{1-a^{2k}}{1-a^2} + \sigma_{v2}^2}{\sigma_{v2}^2} )\\
&\overset{(A)}{\leq}
\frac{k_1-1}{2} \log( \frac{1}{k_1-1}\sum_{1 \leq k \leq k_1 -1} \frac{ \frac{1-a^{2k}}{1-a^2} + \sigma_{v1}^2}{\sigma_{v1}^2} )+
\frac{k_1-1}{2} \log( \frac{1}{k_1-1}\sum_{1 \leq k \leq k_1 -1} \frac{ \frac{1-a^{2k}}{1-a^2} + \sigma_{v2}^2}{\sigma_{v2}^2} )\\
& =
\frac{k_1-1}{2} \log( 1+ \frac{1}{k_1-1}\sum_{1 \leq k \leq k_1 -1} \frac{ \frac{1-a^{2k}}{1-a^2} }{\sigma_{v1}^2} )+
\frac{k_1-1}{2} \log( 1+ \frac{1}{k_1-1}\sum_{1 \leq k \leq k_1 -1} \frac{ \frac{1-a^{2k}}{1-a^2} }{\sigma_{v2}^2} )\\
&\leq
\frac{k_1-1}{2} \log( 1+ \frac{1}{k_1-1}\sum_{1 \leq k \leq k_1 -1} \frac{ \frac{1-a^{2(k_1-1)}}{1-a^2} }{\sigma_{v1}^2} )+
\frac{k_1-1}{2} \log( 1+ \frac{1}{k_1-1}\sum_{1 \leq k \leq k_1 -1} \frac{ \frac{1-a^{2(k_1-1)}}{1-a^2} }{\sigma_{v2}^2} )\\
&=
\frac{k_1-1}{2} \log( 1+ \frac{1}{\sigma_{v1}^2} \frac{1-a^{2(k_1-1)}}{1-a^2}) +
\frac{k_1-1}{2} \log( 1+ \frac{1}{\sigma_{v2}^2} \frac{1-a^{2(k_1-1)}}{1-a^2}). \label{eqn:less11}
\end{align}
(A): Arithmetic-Geometric mean\\
Let's denote the last equation as $I$. We also have
\begin{align}
\mathbb{E}[W_1^2] &= a^{2(k-1)} + \cdots + a^{2(k-k_1 +1)} \\
&= a^{2(k-k_1+1)} (a^{2(k_1-2)}+\cdots + 1) \\
&= a^{2(k-k_1+1)} \frac{1-a^{2(k_1-1)}}{1-a^2}. \label{eqn:less12}
\end{align}
Now, we can bound the variance of a Gaussian random variable $W_1'$ as follows:
\begin{align}
&\frac{1}{2}\log(2\pi e \mathbb{E}[W_1'^2]) = h(W_1') \\
&\geq h(W_1' |y_1'[1:k_1-1],y_2'[1:k_1-1]) \\
&= h(W_1|y_1'[1:k_1-1],y_2'[1:k_1-1]) \\
&= h(W_1) - I(W_1;y_1'[1:k_1-1],y_2'[1:k_1-1])\\
&\geq \frac{1}{2}\log( 2 \pi e a^{2(k-k_1+1)} \frac{1-a^{2(k_1-1)}}{1-a^2} ) - I
\end{align}
where the last inequality follows from \eqref{eqn:less11} and \eqref{eqn:less12}. 

Thus,
\begin{align}
\mathbb{E}[W_1'^2] \geq \frac{ a^{2(k-k_1+1)}\frac{1-a^{2(k_1-1)}}{1-a^2} }{2^{2I}} \label{eqn:less1100}
\end{align}
and denote the last term as $\Sigma$. Since $W_1'$ is Gaussian, we can write $W_1'=W_1'''+W_1''''$ where $W_1'''\sim \mathcal{N}(0,\Sigma)$, and $W_1''',W_1''''$ are independent.

Moreover, we also have
\begin{align}
\mathbb{E}[W_2^2] &= a^{2(k-k_1)} + \cdots + a^{2(k-k_2+1)}\\
&= a^{2(k-k_2+1)}\frac{1-a^{2(k_2-k_1)}}{1-a^2}. \label{eqn:less1101}
\end{align}
By \eqref{eqn:less13}, we have
\begin{align}
&\frac{1}{2} \log(2 \pi e \mathbb{E}[(X_1+X_2)^2]) \\
&\geq h(W_1'+W_2| y_1'[1:k_1-1],y_2'[1:k_1-1],y_2[k_1:k_2-1])\\
&\geq h(W_1'+W_2| W_1'''', y_1'[1:k_1-1],y_2'[1:k_1-1],y_2[k_1:k_2-1])\\
&= h(W_1'''+W_2| W_1'''', y_1'[1:k_1-1],y_2'[1:k_1-1],y_2[k_1:k_2-1])\\
&= h(W_1'''+W_2| W_1'''', y_1'[1:k_1-1],y_2'[1:k_1-1])\\
& - I(W_1'''+W_2 ; y_2[k_1:k_2-1] | W_1'''', y_1'[1:k_1-1],y_2'[1:k_1-1])\\
&= h(W_1'''+W_2)\\
& - I(W_1'''+W_2 ; y_2[k_1:k_2-1] | W_1'''', y_1'[1:k_1-1],y_2'[1:k_1-1])\\
&\geq \frac{1}{2} \log(2 \pi e (\Sigma+ a^{2(k-k_2+1)}\frac{1-a^{2(k_2-k_1)}}{1-a^2}))\\
& - I(W_1'''+W_2 ; y_2[k_1:k_2-1] | W_1'''', y_1'[1:k_1-1],y_2'[1:k_1-1]) \label{eqn:less16}
\end{align}
where the last inequality comes from the fact that $W_1'''$ and $W_2$ are independent Gaussian, and \eqref{eqn:less1100} and \eqref{eqn:less1101}.

Now, the question boils down to the upper bound of the last mutual information term, which can be understood as the information contained in the second controller's observation in Witsenhausen's interval.

$\bullet$ Second controller's observation in Witsenhausen's interval: We will bound the amount of information contained in the second controller's observation in Witsenhausen's interval. For $n \geq k_1$, define
\begin{align}
y_2''[n]&:=a^{n-k} W_1''' + a^{n-k_1} w[k_1-1] + a^{n-k_1-1}w[k_1]+ \cdots + w[n-1]\\
&+a^{n-k_1-1}u_1[k_1]+ \cdots + u_1[n-1]+v_2[n].
\end{align}
Notice the relationship between $y_2[n]$ and $y_2''[n]$ is
\begin{align}
y_2[n]&=y_2''[n]+a^{n-k_1-1}u_2[k_1]+ \cdots + u_2[n-1]\\
&+a^{n-k}W_1'''' + a^{n-k} \mathbb{E}[W_1 | y_1'[1:k_1-1], y_2'[1:k_1-1]]. \label{eqn:less1200}
\end{align}

The mutual information in \eqref{eqn:less16} is bounded as follows:
\begin{align}
&I(W_1'''+W_2; y_2[k_1:k_2-1] | W_1'''',y_1'[1:k_1-1], y_2'[1:k_1-1])\\
&= h(y_2[k_1:k_2-1]|  W_1'''',y_1'[1:k_1-1], y_2'[1:k_1-1])\\
&- h(y_2[k_1:k_2-1]|  W_1'''+W_2,W_1'''',y_1'[1:k_1-1], y_2'[1:k_1-1]) \\
&= \sum_{k_1 \leq i \leq k_2-1} h(y_2[i]|y_2[k_1:i-1],W_1'''',y_1'[1:k_1-1], y_2'[1:k_1-1])\\
&- \sum_{k_1 \leq i \leq k_2-1} h(y_2[i]|y_2[k_1:i-1],W_1'''+W_2,W_1'''',y_1'[1:k_1-1], y_2'[1:k_1-1]) \\
&\overset{(A)}{=} \sum_{k_1 \leq i \leq k_2-1} h(y_2''[i]|y_2[k_1:i-1],W_1'''',y_1'[1:k_1-1], y_2'[1:k_1-1])\\
&- \sum_{k_1 \leq i \leq k_2-1} h(y_2[i]|y_2[k_1:i-1],W_1'''+W_2,W_1'''',y_1'[1:k_1-1], y_2'[1:k_1-1]) \\
&\overset{(B)}{\leq} \sum{k_1 \leq i \leq k_2-1} h(y_2''[i]) - \sum_{k_1 \leq i \leq k_2-1} h(v_2[i])\\
&\leq \sum_{k_1 \leq i \leq k_2-1} \frac{1}{2} \log (2 \pi e \mathbb{E}[y_2''[i]^2])- \sum_{k_1 \leq i \leq k_2-1} \frac{1}{2} \log (2 \pi e \sigma_{v2}^2) \label{eqn:less14}
\end{align}
(A): Since $y_2[1:k_1-1]$ is a function of $y_2'[1:k_1-1]$, $u_2[k_1], \cdots, u_2[i]$ are functions of $y_2[k_1:i-1], y_2'[1:k_1-1]$. Thus, all the terms in \eqref{eqn:less1200} except $y_2''[i]$ can be vanished by the conditioned.\\
(B): By causality, $v_2[i]$ is independent from all conditioning random variables. 

First, let's bound the variance of $y_2''[n]$. By \cite[Lemma~1]{Park_Approximation_Journal_Parti}, we have
\begin{align}
\mathbb{E}[y_2''[n]^2] &\leq 2 \mathbb{E}[(a^{n-k}W_1'''+a^{n-k_1} w[k_1-1] + a^{n-k_1-1}w[k_1]+ \cdots + w[n-1])^2] \\
&+ 2 \mathbb{E}[(a^{n-k_1-1}u_1[k_1]+ \cdots + u_1[n-1])^2] + \sigma_{v2}^2 \\
&= 2 (a^{2(n-k)} \Sigma + a^{2(n-k_1)} + \cdots + 1 )  \\
&+ 2 \mathbb{E}[(a^{n-k_1-1}u_1[k_1]+ \cdots + u_1[n-1])^2] + \sigma_{v2}^2
\end{align}
Here, by \cite[Lemma~1]{Park_Approximation_Journal_Parti}, we have
\begin{align}
&\mathbb{E}[(a^{n-k_1-1}u_1[k_1]+ \cdots + u_1[n-1])^2] \\
&\leq (\sqrt{ a^{2(n-k_1-1)} \mathbb{E}[u_1^2[k_1]]} + \cdots + \sqrt{\mathbb{E}[u_1^2[n-1]]})^2 \\
&\overset{(A)}{\leq} (a^{(n-k_1-1)} + a^{(n-k_1-2)}+ \cdots + 1 )
(a^{(n-k_1-1)}\mathbb{E}[u_1^2[k_1]]+a^{(n-k_1-2)}\mathbb{E}[u_1^2[k_1+1]]+ \cdots + \mathbb{E}[u_1^2[n-1]]  ) \\
&= \frac{1-a^{n-k_1}}{1-a} \cdot a^{n-k}
(a^{k-k_1-1} \mathbb{E}[u_1^2[k_1]]+ a^{k-k_1-2} \mathbb{E}[u_1^2[k_1+1]] + \cdots + a^{k-n}\mathbb{E}[u_1^2[n-1]])\\
&\leq \frac{1-a^{n-k_1}}{1-a} \cdot a^{n-k} \frac{1-a^{k-k_1}}{1-a} \widetilde{P_1} \\
&= a^{n-k} \frac{(1-a^{n-k})(1-a^{k-k_1})}{(1-a)^2} \widetilde{P_1}.
\end{align}
(A): Cauchy-Schwarz inequality

Thus, the variance of $y_2''[n]$ is bounded as:
\begin{align}
\mathbb{E}[y_2''[n]^2] \leq 2 a^{2(n-k)} \Sigma + 2 \frac{1-a^{2(n-k_1+1)}}{1-a^2}
+ 2 a^{n-k} \frac{(1-a^{n-k})(1-a^{k-k_1})}{(1-a)^2} \widetilde{P_1}+ \sigma_{v2}^2.
\end{align}
Therefore, we have
\begin{align}
&\sum_{k_1 \leq n \leq k_2-1} \mathbb{E}[y_2''[n]^2] \\
&\leq \sum_{k_1 \leq n \leq k_2-1}
2 a^{2(n-k)} \Sigma + 2 \frac{1-a^{2(n-k_1+1)}}{1-a^2}
+ 2 a^{n-k} \frac{(1-a^{n-k})(1-a^{k-k_1})}{(1-a)^2} \widetilde{P_1}+ \sigma_{v2}^2\\
&\leq
2(a^{2(k_1-k)}+ \cdots + a^{2(k_2-1-k)})\Sigma
+\sum_{k_1 \leq n \leq k_2 -1} 2 \frac{1-a^{2(k_2-1-k_1+1)}}{1-a^2} \\
&+\sum_{k_1 \leq n \leq k_2 -1} 2a^{n-k} \frac{(1-a^{k_2-1-k_1})(1-a^{k-k_1})}{(1-a)^2} \widetilde{P_1}
+(k_2-k_1) \sigma_{v2}^2 \\
&\leq
2a^{2(k_1-k)} \frac{1-a^{2(k_2-k_1)}}{1-a^{2}} \Sigma
+ 2 (k_2-k_1)\frac{1-a^{2(k_2-1-k_1+1)}}{1-a^2} \\
&+ 2 a^{k_1-k} \frac{1-a^{k_2-k_1}}{1-a}\frac{(1-a^{k_2-1-k_1})(1-a^{k-k_1})}{(1-a)^2} \widetilde{P_1}
+ (k_2 - k_1) \sigma_{v2}^2.  \label{eqn:less15}
\end{align}
Therefore, by \eqref{eqn:less14} and \eqref{eqn:less15} we conclude
\begin{align}
&I(W_1'''+W_2; y_2[k_1:k_2-1] | W'''', y_1'[1:k_1-1], y_2'[1:k_1-1])\\
&\leq \sum_{k_1 \leq n \leq k_2-1} \frac{1}{2} \log( \frac{ \mathbb{E}[y_2''[n]^2] }{\sigma_{v2}^2} ) \\
&= \frac{1}{2} \log( \prod_{k_1 \leq n \leq k_2-1} \frac{ \mathbb{E}[y_2''[n]^2] }{\sigma_{v2}^2} ) \\
&\overset{(A)}{\leq} \frac{k_2-k_1}{2} \log( \frac{1}{k_2-k_1} \sum_{k_1 \leq n \leq k_2-1} \frac{ \mathbb{E}[y_2''[n]^2] }{\sigma_{v2}^2} ) \\
&\leq \frac{k_2-k_1}{2} \log(
1+\frac{1}{(k_2-k_1)\sigma_{v2}^2}(2a^{2(k_1-k)} \frac{1-a^{2(k_2-k_1)}}{1-a^{2}} \Sigma
+ 2 (k_2-k_1)\frac{1-a^{2(k_2-1-k_1+1)}}{1-a^2} \\
&+ 2 a^{k_1-k} \frac{1-a^{k_2-k_1}}{1-a}\frac{(1-a^{k_2-1-k_1})(1-a^{k-k_1})}{(1-a)^2} \widetilde{P_1} )
)
\end{align}
(A): Arithmetic-Geometric mean

Denote the last equation as $I'(\widetilde{P_1})$. By \eqref{eqn:less16} we conclude
\begin{align}
&\frac{1}{2}\log( 2 \pi e \mathbb{E}[(X_1+X_2)^2] \\
&\geq h(W_1'''+W_2)- I( W_1'''+W_2; y_2[k_1:k_2-1]|W_1'''',y_1'[1:k_1-1],y_2'[1:k_1-1]) \\
&\geq \frac{1}{2} \log( 2 \pi e( \Sigma + a^{2(k-k_2+1)}\frac{1-a^{2(k_2-k_1)}}{1-a^2})) - I'(\widetilde{P_1})
\end{align}
which implies
\begin{align}
\mathbb{E}[(X_1+X_2)^2] \geq \frac{ \Sigma + a^{2(k-k_2+1)}\frac{1-a^{2(k_2-k_1)}}{1-a^2}  }{2^{2I'(\widetilde{P_1})}}. \label{eqn:less110}
\end{align}

$\bullet$ Final lower bound: Now, we can merge the inequalities to prove the lemma. The variance of $W_3$ is given as follows:
\begin{align}
\mathbb{E}[W_3^2] &= a^{2(k-k_2)} + \cdots + a^2 \\
&= a^2 \frac{1-a^{2(k-k_2)}}{1-a^2}. \label{eqn:less17}
\end{align}

By \cite[Lemma~1]{Park_Approximation_Journal_Parti}, the variance of $U_1$ is bounded as follows:
\begin{align}
\mathbb{E}[U_1^2]
& \leq (\sqrt{a^{2(k-k_1-1)}\mathbb{E}[u_1^2[k_1]]}+\cdots+\sqrt{\mathbb{E}[u_1^2[k-1]]})^2 \\
& \overset{(A)}{\leq} (a^{(k-k_1-1)}+a^{(k-k_1-2)}+ \cdots + 1)
( a^{(k-k_1-1)}\mathbb{E}[u_1^2[k_1]]+ a^{(k-k_1-2)}\mathbb{E}[u_1^2[k_1+1]]+\cdots+\mathbb{E}[u_1^2[k-1]] )\\
& =
\frac{1-a^{k-k_1}}{1-a}
( a^{(k-k_1-1)}\mathbb{E}[u_1^2[k_1]]+ a^{(k-k_1-2)}\mathbb{E}[u_1^2[k_1+1]]+\cdots+\mathbb{E}[u_1^2[k-1]] )\\
&=
\frac{1-a^{k-k_1}}{1-a}  \frac{1-a^{k-k_1}}{1-a} \widetilde{P_1} \\
&=
(\frac{1-a^{k-k_1}}{1-a} )^2 \widetilde{P_1}. \label{eqn:less18}
\end{align}
(A): Cauchy-Schwarz inequality

Likewise, the variance of $U_2$ can be bounded as
\begin{align}
\mathbb{E}[U_2^2] \leq (\frac{1-a^{k-k_2}}{1-a} )^2 \widetilde{P_2} \label{eqn:less19}
\end{align}
By plugging in \eqref{eqn:less110}, \eqref{eqn:less17}, \eqref{eqn:less18}, \eqref{eqn:less19} into \eqref{eqn:less111}, we finally prove the lemma.
\end{proof}

\begin{proof}[Proof of Corollary~\ref{cor:6} of Page~\pageref{cor:6}]
For simplicity, we will prove for $0.9 \leq a <1$. The proof for $-1 < a \leq -0.9$ is simply follows by replacing $a$ by $|a|$.

First, we can notice
\begin{align}
\Sigma_1 &=
\frac{(a^2-1){\sigma_{v1}^2} -1 + \sqrt{((a^2-1){\sigma_{v1}^2} -1)^2+4a^2 {\sigma_{v1}^2}}}{2a^2} \\
&=
\frac{4 a^2 {\sigma_{v1}^2}}{2a^2( -(a^2-1){\sigma_{v1}^2}+1+
\sqrt{
((a^2-1){\sigma_{v1}^2}-1)^2 + 4a^2 {\sigma_{v1}^2}
})} \\
&=
\frac{2 {\sigma_{v1}^2}}{ (1-a^2){\sigma_{v1}^2}+1+
\sqrt{
((1-a^2){\sigma_{v1}^2}+1)^2 + 4a^2 {\sigma_{v1}^2}
}}
\end{align}
Since $0.9 \leq a < 1$, $(1-a^2) \sigma_{v1}^2 \geq 0$. Thus, $\Sigma_1$ is upper bounded by
\begin{align}
\Sigma_1 &\leq \frac{2\sigma_{v1}^2}{\sqrt{4 a^2 \sigma_{v1}^2}} = \frac{\sigma_{v1}^2}{a \sigma_{v1}} = \frac{\sigma_{v1}}{a} \leq \frac{10}{9} \sigma_{v1}
\end{align}
and
\begin{align}
\Sigma_1 &\leq \frac{2\sigma_{v1}^2}{(1-a^2){\sigma_{v1}^2} + (1-a^2){\sigma_{v1}^2}} \\
&= \frac{1}{1-a^2}
\end{align}
Likewise, we also have
\begin{align}
\Sigma_2 \leq \frac{10}{9} \sigma_{v2} \label{eqn:less126}
\end{align}
and
\begin{align}
\Sigma_2 \leq \frac{1}{1-a^2} \label{eqn:less120}
\end{align}

Proof of (a):

Notice that by $\Sigma_2 \geq 40$, $0.9 \leq a <1$ and \eqref{eqn:less120} we have
\begin{align}
&\frac{\Sigma_2}{40} \leq \frac{1}{40} \frac{1}{1-a^2} < \frac{a^2}{1-a^2} \\
&\frac{\Sigma_2}{40} \geq 1 \geq a^2 = \frac{a^2-a^4}{1-a^2}
\end{align}
Thus, we can find $k \geq 3$ such that 
\begin{align}
\frac{a^2 - a^{2(k-1)}}{1-a^2} \leq \frac{\Sigma_2}{40} < \frac{a^2 - a^{2k}}{1-a^2} \label{eqn:less121}
\end{align}

Let's put such $k$ and $k_1=1, k_2=k$ as the parameters of Lemma~\ref{lem:less11}. Then, the lower bound of Lemma~\ref{lem:less11} reduces to 
\begin{align}
D_L(\widetilde{P_1},\widetilde{P_2}) \geq
(
\sqrt{\frac{\frac{a^2-a^{2k}}{1-a^2}}{2^{2I'(\widetilde{P_1})}}}
-
\sqrt{(\frac{1-a^{k-1}}{1-a})^2 \widetilde{P_1}}
)_+^2 +1 \label{eqn:less1add1}
\end{align}
where
\begin{align}
I'(\widetilde{P_1}) = \frac{1}{2} \log(1+
\frac{1}{(k-1)\sigma_{v2}^2}(
2(k-1)\frac{1-a^{2(k-1)}}{1-a^2} +
2a^{1-k} \frac{1-a^{k-1}}{1-a}\frac{(1-a^{k-2})(1-a^{k-1})}{(1-a)^2} \widetilde{P_1}
))^{k-1}.
\end{align}
Let's first upper bound $I'(\widetilde{P_1})$. By \eqref{eqn:less120} and \eqref{eqn:less121}, we first have
\begin{align}
&\frac{a^2-a^{2(k-1)}}{1-a^2} \leq \frac{\Sigma_2}{40} \leq \frac{1}{40} \frac{1}{1-a^2} \\
&(\Rightarrow) a^2 - a^{2(k-1)} \leq \frac{1}{40} \\
&(\Leftrightarrow) a^2 - \frac{1}{40} \leq a^{2(k-1)} \\
&(\Rightarrow) (\frac{9}{10})^2 - \frac{1}{40}  \leq a^{2(k-1)} (\because 0.9 \leq a < 1)\\
&(\Leftrightarrow) \frac{157}{200}  \leq a^{2(k-1)}. \label{eqn:leq1converse1}
\end{align}
We also have
\begin{align}
a^{2(k-1)}(k-1) \leq 1+a^2+a^4+ \cdots + a^{2(k-2)} = \frac{1-a^{2(k-1)}}{1-a^2} \label{eqn:leq1converse2}
\end{align}
where the first inequality comes from that $0.9 \leq a < 1$ so $a^{2(k-1)} \leq 1$, $\cdots$, $a^{2(k-1)} \leq a^{2(k-2)}$.

Therefore, by \eqref{eqn:leq1converse1} and \eqref{eqn:leq1converse2}
\begin{align}
&\frac{157}{200}(k-1) \leq \frac{1-a^{2(k-1)}}{1-a^2}\\
&(\Rightarrow) k-1 \leq \frac{200}{157} \frac{1-a^{2(k-1)}}{1-a^2} \label{eqn:less122}
\end{align}

Moreover, we also have
\begin{align}
\frac{1-a^{2(k-1)}}{a^2 - a^{2(k-1)}} &= \frac{1-a^2}{a^2 - a^{2(k-1)}} + \frac{a^2-a^{2(k-1)}}{a^2-a^{2(k-1)}}\\
&= \frac{1-a^2}{a^2 - a^{2(k-1)}} + 1\\
&\leq \frac{1-a^2}{a^2 - a^4} + 1 (\because k \geq 3) \\
&= \frac{1}{a^2} + 1 \\
&\leq (\frac{10}{9})^2 + 1 = \frac{181}{81} (\because 0.9 \leq a <1)
\end{align}
which implies
\begin{align}
\frac{1-a^{2(k-1)}}{1-a^2} \leq \frac{181}{81} \frac{a^2-a^{2(k-1)}}{1-a^2}. \label{eqn:less123}
\end{align}
We also have
\begin{align}
\frac{1-a^{k-1}}{1-a} &\leq \frac{1-a^{k-1}}{1-a} \frac{2}{1+a} (\because 0.9 \leq a < 1)\\
&\leq \frac{1-a^{2(k-2)}}{1-a} \frac{2}{1+a} (\because k \geq 3 \mbox{ implies } 2(k-2) \geq k-1) \\
&\leq \frac{1-a^{2(k-2)}}{1-a} \frac{2}{1+a} \frac{a^2}{0.9^2} (\because 0.9 \leq a < 1)\\
&= \frac{2}{0.9^2} \frac{a^2 - a^{2(k-1)}}{1-a^2}. \label{eqn:leq1converse5}
\end{align}

Therefore, the terms in $I'(\widetilde{P_1})$ are upper bounded as:
\begin{align}
2(k-1) \frac{1-a^{2(k-1)}}{1-a^2} & \leq 2 \frac{200}{157} (\frac{1-a^{2(k-1)}}{1-a^2})^2 (\because \eqref{eqn:less122}) \\
&\leq 2 \frac{200}{157} (\frac{181}{81}\frac{a^2 - a^{2(k-1)}}{1-a^2} )^2 (\because \eqref{eqn:less123})\\
&\leq 2 \frac{200}{157} (\frac{181}{81}\frac{\Sigma_2}{40} )^2 (\because \eqref{eqn:less121}) \label{eqn:less124}
\end{align}
and
\begin{align}
&2a^{1-k} \frac{1-a^{k-1}}{1-a} \frac{(1-a^{k-2})(1-a^{k-1})}{(1-a)^2} \widetilde{P_1}\\
&\leq 2 \sqrt{\frac{200}{157}} ( \frac{1-a^{k-1}}{1-a} )^3 \widetilde{P_1} (\because \eqref{eqn:leq1converse1} \mbox{ and } a^{k-1} \leq a^{k-2}) \\
&\leq 2 \sqrt{\frac{200}{157}} ( \frac{2}{0.9^2} (\frac{a^2 - a^{2(k-1)}}{1-a^2}))^3 \widetilde{P_1} (\because \eqref{eqn:leq1converse5})\\
&\leq 2 \sqrt{\frac{200}{157}} ( \frac{2}{0.9^2} \frac{\Sigma_2}{40} )^3 \frac{1}{\Sigma_2} (\because \eqref{eqn:less121} \mbox{ and the assumption }\widetilde{P_1} \leq \frac{1}{\Sigma_2}) \\
&= 2 \sqrt{\frac{200}{157}} ( \frac{5}{81 })^3 \Sigma_2^2 \label{eqn:less125}
\end{align}

Now, we can upper bound $I'(\widetilde{P_1})$ by
\begin{align}
I'(\widetilde{P_1}) & \overset{(A)}{\leq} \frac{1}{2} \log(1+\frac{\Sigma_2^2}{(k-1)\sigma_{v2}^2}
(
2 \frac{200}{157} (\frac{181}{81 \cdot 40})^2 +
2 \sqrt{\frac{200}{157}}(\frac{5}{81 })^3
))^{k-1}\\
& \overset{(B)}{\leq}
\frac{1}{2} \log(1+\frac{1}{(k-1)} (\frac{10}{9})^2
(
2 \frac{200}{157} (\frac{181}{81 \cdot 40})^2 +
2 \sqrt{\frac{200}{157}}(\frac{5}{81})^3
))^{k-1}\\
&\leq
\frac{1}{2} \log(1+\frac{0.010471667...}{k-1})^{k-1} \\
&\leq
\frac{1}{2} \log e^{0.01047} \label{eqn:less1add2}
\end{align}
(A): \eqref{eqn:less124}, \eqref{eqn:less125} \\
(B): \eqref{eqn:less126}

Moreover, we also have
\begin{align}
(\frac{1-a^{k-1}}{1-a})^2 \widetilde{P_1}  &\leq  ( \frac{2}{0.9^2} \frac{a^2 - a^{2(k-1)}}{1-a^2})^2 \widetilde{P_1} (\because \eqref{eqn:leq1converse5}) \\
&\leq  ( \frac{2}{0.9^2} \frac{\Sigma_2}{40})^2 \frac{1}{\Sigma_2}  (\because \eqref{eqn:less121} \mbox{ and the assumption }\widetilde{P_1} \leq \frac{1}{\Sigma_2})\\
&= (\frac{5}{81})^2 \Sigma_2. \label{eqn:less1add3}
\end{align}
Finally, by plugging \eqref{eqn:less1add2}, \eqref{eqn:less1add3} into \eqref{eqn:less1add1} we can conclude
\begin{align}
D_L(\widetilde{P_1},\widetilde{P_2}) &\geq ( \sqrt{\frac{\Sigma_2}{40 e^{0.01047}}} - \sqrt{(\frac{5}{81})^2 \Sigma_2})_+^2  +1   \\
&\geq  0.0091316992... \Sigma_2 +1.
\end{align}

Proof of (b):

Notice that by $\Sigma_2 \geq 40$, $\frac{1}{\Sigma_2} \leq \widetilde{P_1} \leq \frac{1}{40}$, \eqref{eqn:less120},
\begin{align}
&\frac{1}{40 \widetilde{P_1}} \leq \frac{\Sigma_2}{40} \leq \frac{1}{40} \frac{1}{1-a^2} < \frac{a^2}{1-a^2} \\
&\frac{1}{40 \widetilde{P_1}} \geq 1 \geq a^2 = \frac{a^2-a^4}{1-a^2}
\end{align}
Thus, we can find $k \geq 3$ such that 
\begin{align}
\frac{a^2 - a^{2(k-1)}}{1-a^2} \leq \frac{1}{40 \widetilde{P_1}} < \frac{a^2 - a^{2k}}{1-a^2} \label{eqn:less127}
\end{align}
Let's put such $k$ and $k_1=1$, $k_2=k$ as the parameters of Lemma~\ref{lem:less11}. Then, the lower bound of Lemma~\ref{lem:less11} reduces to
\begin{align}
D_L(\widetilde{P_1},\widetilde{P_2}) \geq
(
\sqrt{\frac{\frac{a^2-a^{2k}}{1-a^2}}{2^{2I'(\widetilde{P_1})}}}
-
\sqrt{(\frac{1-a^{k-1}}{1-a})^2 \widetilde{P_1}}
)_+^2 +1
\end{align}
First, we will upper bound $I'(\widetilde{P_1})$. Since $\frac{1}{40 \widetilde{P_1}} \leq \frac{\Sigma_2}{40}$, we still have \eqref{eqn:leq1converse1}, \eqref{eqn:leq1converse2}, \eqref{eqn:less122} which are
\begin{align}
&\frac{157}{200} \leq a^{2(k-1)} \label{eqn:less1add21}\\
& k-1 \leq \frac{200}{157} \frac{1-a^{2(k-1)}}{1-a^2}. \label{eqn:less1add23}
\end{align}
Since $k \geq 3$, we still have \eqref{eqn:less123} and \eqref{eqn:leq1converse5} which are
\begin{align}
& \frac{1-a^{2(k-1)}}{1-a^2} \leq \frac{181}{81} \frac{a^2 - a^{2(k-1)}}{1-a^2} \label{eqn:less1add24}\\
& \frac{1-a^{k-1}}{1-a} \leq \frac{2}{0.9^2} \frac{a^2 - a^{2(k-1)}}{1-a^2}. \label{eqn:less1add25}
\end{align}

Then, the terms in $I'(\widetilde{P_1})$ are upper bounded as:
\begin{align}
2(k-1) \frac{1-a^{2(k-1)}}{1-a^2} & \leq 2 \frac{200}{157} (\frac{1-a^{2(k-1)}}{1-a^2})^2 (\because \eqref{eqn:less1add23})\\
&\leq 2 \frac{200}{157} (\frac{181}{81}\frac{a^2 - a^{2(k-1)}}{1-a^2} )^2 (\because \eqref{eqn:less1add24})\\
&\leq 2 \frac{200}{157} (\frac{181}{81}\frac{1}{40 \widetilde{P_1}} )^2 (\because \eqref{eqn:less127})\\
&\leq 2 \frac{200}{157} (\frac{181}{81}\frac{1}{40} )^2  \Sigma_2^2 (\because \mbox{Assumption }\frac{1}{\widetilde{P_1}} \leq \Sigma_2) \label{eqn:less1add32}
\end{align}
and
\begin{align}
&2a^{1-k} \frac{1-a^{k-1}}{1-a} \frac{(1-a^{k-2})(1-a^{k-1})}{(1-a)^2} \widetilde{P_1}\\
&\leq 2 \sqrt{\frac{200}{157}} ( \frac{1-a^{k-1}}{1-a} )^3 \widetilde{P_1} (\because \eqref{eqn:less1add21}) \\
&\leq 2 \sqrt{\frac{200}{157}} ( \frac{2}{0.9^2} (\frac{a^2 - a^{2(k-1)}}{1-a^2}))^3 \widetilde{P_1} (\because \eqref{eqn:less1add25})\\
&\leq 2 \sqrt{\frac{200}{157}} ( \frac{2}{0.9^2} \frac{1}{40 \widetilde{P_1}} )^3 \widetilde{P_1} (\because \eqref{eqn:less127})\\
&= 2 \sqrt{\frac{200}{157}} ( \frac{5}{81})^3 \frac{1}{\widetilde{P_1}^2}\\
&\leq 2 \sqrt{\frac{200}{157}} ( \frac{5}{81})^3 \Sigma_2^2. (\because \mbox{Assumption } \frac{1}{\widetilde{P_1}} \leq \Sigma_2)
\label{eqn:less1add33}
\end{align}
Therefore, by \eqref{eqn:less1add32} and \eqref{eqn:less1add33}, $I'(\widetilde{P_1})$ is upper bounded as:
\begin{align}
I'(\widetilde{P_1}) & \leq \frac{1}{2} \log(1+\frac{\Sigma_2^2}{(k-1)\sigma_{v2}^2}
(
2 \frac{200}{157} (\frac{181}{81 \cdot 40})^2 +
2 \sqrt{\frac{200}{157}}(\frac{5}{81 })^3
))^{k-1}\\
&\leq
\frac{1}{2} \log(1+\frac{1}{(k-1)} (\frac{10}{9})^2
(
2 \frac{200}{157} (\frac{181}{81 \cdot 40})^2 +
2 \sqrt{\frac{200}{157}}(\frac{5}{81 })^3
))^{k-1} (\because \eqref{eqn:less126})\\
&\leq
\frac{1}{2} \log(1+\frac{0.010471667...}{k-1})^{k-1} \\
&\leq
\frac{1}{2} \log e^{0.01047}. \label{eqn:less1add35}
\end{align}
Moreover, we also have
\begin{align}
(\frac{1-a^{k-1}}{1-a})^2 \widetilde{P_1}  &\leq  ( \frac{2}{0.9^2} \frac{a^2 - a^{2(k-1)}}{1-a^2})^2 \widetilde{P_1} (\because \eqref{eqn:less1add25})\\
&\leq  ( \frac{2}{0.9^2} \frac{1}{40 \widetilde{P_1}})^2 \widetilde{P_1} (\because \eqref{eqn:less127})\\
&= (\frac{5}{81})^2 \frac{1}{\widetilde{P_1}}. \label{eqn:less1add36}
\end{align}
Finally, by \eqref{eqn:less127}, \eqref{eqn:less1add35}, \eqref{eqn:less1add36}, we can conclude
\begin{align}
D_L(\widetilde{P_1},\widetilde{P_2}) &\geq ( \sqrt{\frac{1}{40 e^{0.01047}\widetilde{P_1}}} - \sqrt{(\frac{5}{81})^2 \frac{1}{\widetilde{P_1}}})_+^2  +1   \\
&\geq  \frac{0.0091316992... }{\widetilde{P_1}} +1
\end{align}

Proof of (c):

Let $P:=\max(\widetilde{P_1},\widetilde{P_2})$. Notice that since $\frac{1-a^2}{20} \leq P \leq \frac{1}{40}$ and $0.9 \leq a < 1$ we have
\begin{align}
& \frac{1}{40P} \leq \frac{1}{2(1-a^2)} < \frac{a^2}{1-a^2}\\
& \frac{1}{40P} \geq 1 \geq a^2 = \frac{a^2 - a^4}{1-a^2}
\end{align}
Therefore, we can find $k \geq 3$ such that
\begin{align}
\frac{a^2-a^{2(k-1)}}{1-a^2} \leq \frac{1}{40P} < \frac{a^2-a^{2k}}{1-a^2} \label{eqn:less1add40}
\end{align}
Let's put such $k$ and $k_1=k_2=1$ as the parameters of Lemma~\ref{lem:less11}.
Then, the lower bound of Lemma~\ref{lem:less11} reduces to
\begin{align}
D_L(\widetilde{P_1},\widetilde{P_2}) \geq ( \sqrt{\frac{a^2-a^{2k}}{1-a^2}} - \sqrt{(\frac{1-a^{k-1}}{1-a})^2 \widetilde{P_1}} - \sqrt{(\frac{1-a^{k-1}}{1-a})^2 \widetilde{P_2}})_+^2 + 1. \label{eqn:less1add42}
\end{align}
Since $k \geq 3$, we still have \eqref{eqn:leq1converse5} which tells $\frac{1-a^{k-1}}{1-a} \leq \frac{2}{0.9^2}\frac{a^2 - a^{2(k-1)}}{1-a^2}$. Thus, by \eqref{eqn:leq1converse5}, we have
\begin{align}
\frac{1-a^{k-1}}{1-a}
&\leq \frac{2}{0.9^2}\frac{a^2 - a^{2(k-1)}}{1-a^2} \\
&\leq \frac{2}{0.9^2} \frac{1}{40P} (\because \eqref{eqn:less1add40})\\
&= \frac{5}{81P} \label{eqn:less1add41}
\end{align}
Thus, by plugging \eqref{eqn:less1add40}, \eqref{eqn:less1add41} into \eqref{eqn:less1add42}, we can conclude
\begin{align}
D_L(\widetilde{P_1},\widetilde{P_2}) &\geq ( \sqrt{\frac{1}{40P}} - \sqrt{(\frac{5}{81P})^2 P} - \sqrt{(\frac{5}{81P} )^2 P} )_+^2 + 1\\
&= (\sqrt{\frac{1}{40}} - \sqrt{(\frac{5}{81} )^2} - \sqrt{(\frac{5}{81} )^2})^2 \frac{1}{P} +1\\
&= 0.0012011... \frac{1}{P} +1
\end{align}

Proof (d):

Since $\frac{1}{2} < a^4$, there exists $k \geq 3$ such that
\begin{align}
a^{2k} \leq \frac{1}{2} < a^{2(k-1)} \label{eqn:less128}
\end{align}
Let's put such $k$ and $k_1=k_2=1$ as the parameters of Lemma~\ref{lem:less11}. Then, the lower bound of Lemma~\ref{lem:less11} reduces to
\begin{align}
D_L(\widetilde{P_1},\widetilde{P_2}) \geq ( \sqrt{\frac{a^2-a^{2k}}{1-a^2}} - \sqrt{(\frac{1-a^{k-1}}{1-a})^2 \widetilde{P_1}} - \sqrt{(\frac{1-a^{k-1}}{1-a})^2 \widetilde{P_2}})_+^2 + 1
\end{align}
Here, we have
\begin{align}
\frac{a^2-a^{2k}}{1-a^2} &\geq \frac{a^2 - \frac{1}{2}}{1-a^2} (\because \eqref{eqn:less128})\\
&\geq \frac{0.9^2 - \frac{1}{2}}{1-a^2} (\because 0.9 \leq a <1)\\
&=\frac{0.31}{1-a^2} \label{eqn:less1add51}
\end{align}
and
\begin{align}
\frac{1-a^{k-1}}{1-a} &\leq \frac{1-\frac{1}{\sqrt{2}}}{1-a} (\because \eqref{eqn:less128})\\
&\leq \frac{1-\frac{1}{\sqrt{2}}}{1-a} \frac{2}{1+a} (\because 0.9 \leq a <1)\\
&= \frac{2(1-\frac{1}{\sqrt{2}})}{1-a^2} \label{eqn:less1add52}
\end{align}
Finally, by the assumption $\max(\widetilde{P_1},\widetilde{P_2}) \leq \frac{1-a^2}{20}$ and \eqref{eqn:less1add51}, \eqref{eqn:less1add52} we can conclude
\begin{align}
D_L(\widetilde{P_1},\widetilde{P_2}) &\geq ( \sqrt{\frac{0.31}{1-a^2}} - \sqrt{ (\frac{2(1-\frac{1}{\sqrt{2}})}{1-a^2})^2 \frac{1-a^2}{20}} - \sqrt{ (\frac{2(1-\frac{1}{\sqrt{2}})}{1-a^2})^2 \frac{1-a^2}{20}})_+^2 +1 \\
&=(\sqrt{0.31} - \sqrt{(2(1-\frac{1}{\sqrt{2}}))^2 \frac{1}{20}} - \sqrt{(2(1-\frac{1}{\sqrt{2}}))^2 \frac{1}{20}} )^2 \frac{1}{1-a^2} +1 \\
&= \frac{0.0869099... }{1-a^2} + 1
\end{align}

Proof of (e):

As mentioned in the proof of Corollary~\ref{cor:2} (j), the centralized controller's distortion that has both observations $y_1[n], y_2[n]$ and has no input power constraints is a lower bound on the decentralized controller's distortion.

Let $y_1'[n]:=x[n]+v_1'[n]$ and $y_2'[n]:=x[n]+v_2'[n]$ where $v_1'[n] \sim \mathcal{N}(0,\sigma_1^2)$ and $v_2'[n] \sim \mathcal{N}(0,\sigma_1^2)$ are i.i.d. random variables. Just like the proof of Corollary~\ref{cor:2} (j), the performance of the centralized controller with both observations is equivalent to a centralized controller with observation $\frac{y_1'[n]+y_2'[n]}{2}$  by the maximum ratio combining.

Let $\Sigma_E$ be the estimation error of the Kalman filtering with a scalar observation $\frac{y_1'[n]+y_2'[n]}{2}$. By Lemma~\ref{lem:aless21},
\begin{align}
\Sigma_E &= \frac{(a^2-1)\frac{\sigma_{v1}^2}{2} -1 + \sqrt{((a^2-1)\frac{\sigma_{v1}^2}{2} -1)^2+4a^2 \frac{\sigma_{v1}^2}{2}}}{2a^2} \\
&=
\frac{4 a^2 \frac{\sigma_{v1}^2}{2}}{2a^2( -(a^2-1)\frac{\sigma_{v1}^2}{2}+1+
\sqrt{
((a^2-1)\frac{\sigma_{v1}^2}{2}-1)^2 + 4a^2 \frac{\sigma_{v1}^2}{2}
})} \\
&=
\frac{\sigma_{v1}^2}{ (1-a^2)\frac{\sigma_{v1}^2}{2}+1+
\sqrt{
((1-a^2)\frac{\sigma_{v1}^2}{2}+1)^2 + 4a^2 \frac{\sigma_{v1}^2}{2}
}}.
\end{align}
Here, we have
\begin{align}
\Sigma_E &\leq \frac{\sigma_{v1}^2}{(1-a^2)\frac{\sigma_{v1}^2}{2}+(1-a^2)\frac{\sigma_{v1}^2}{2}}= \frac{1}{1-a^2}. \label{eqn:less131}
\end{align}

Then, for all $\widetilde{P_1}$ and $\widetilde{P_2}$, the cost of the decentralized controllers is lower bounded as follows:
\begin{align}
D_L(\widetilde{P_1},\widetilde{P_2}) & \overset{(A)}{\geq} \inf_{k: |a-k| < 1} \frac{(2ak-k^2)\Sigma_E+1}{1-(a-k)^2} \\
&=\inf_{k: |a-k| < 1} \frac{a^2-1}{1-(a-k)^2}\Sigma_E + \frac{1-a^2+2ak-k^2}{1-(a-k)^2} \Sigma_E + \frac{1}{1-(a-k)^2} \\
&=\inf_{k: |a-k| < 1} \frac{a^2-1}{1-(a-k)^2}\Sigma_E + \Sigma_E + \frac{1}{1-(a-k)^2} \label{eqn:less130}
\end{align}
(A): The decentralized control cost is larger than the centralized controller's cost with the observation $\frac{y_1'[n]+y_2'[n]}{2}$. Moreover, when $|a-k| \geq 1$ the centralized control system is unstable, and the cost diverges to infinity. When $|a-k| < 1$, the cost analysis follows from Lemma~\ref{lem:aless21}.

Let $k^\star$ be $k$ achieving the infimum of \eqref{eqn:less130}. Then, for all $\widetilde{P_1}, \widetilde{P_2} \geq 0$ we have
\begin{align}
D_L(\widetilde{P_1},\widetilde{P_2}) &\geq \frac{a^2-1}{1-(a-k^\star)^2}\Sigma_E + \Sigma_E + \frac{1}{1-(a-k^\star)^2} \\
&\geq \frac{a^2-1}{1-(a-k^\star)^2} \frac{1}{1-a^2} + \Sigma_E + \frac{1}{1-(a-k^\star)^2} (\because \eqref{eqn:less131})\\
&= \Sigma_E. \label{eqn:less136}
\end{align}

Therefore, $\Sigma_E$ is a lower bound on $D_L(\widetilde{P_1},\widetilde{P_2})$, and we will compare this with $\Sigma_1$ which is
\begin{align}
\Sigma_1 &= \frac{2 \sigma_{v1}^2}{ (1-a^2)\sigma_{v1}^2+1+\sqrt{((1-a^2)\sigma_{v1}^2+1)^2 + 4a^2 \sigma_{v1}^2}}.
\end{align}

To this end, let's divide the case based on three quantities $(1-a^2)\frac{\sigma_{v1}^2}{2},1 ,a \frac{\sigma_{v1}}{\sqrt{2}}$.

(i) When $\max((1-a^2)\frac{\sigma_{v1}^2}{2},1 ,a \frac{\sigma_{v1}}{\sqrt{2}})=(1-a^2) \frac{\sigma_{v1}^2}{2}$,

In this case, by the definition of $\Sigma_E$ we have
\begin{align}
\Sigma_E &\geq \frac{\sigma_{v1}^2}{2(1-a^2) \frac{\sigma_{v1}^2}{2}+\sqrt{(2(1-a^2) \frac{\sigma_{v1}^2}{2})^2+4((1-a^2) \frac{\sigma_{v1}^2}{2})^2}} \\
&= \frac{1}{(1-a^2)+\sqrt{(1-a^2)^2 + (1-a^2)^2}} \\
&= \frac{1}{1+\sqrt{2}} \frac{1}{1-a^2}. \label{eqn:less133}
\end{align}
By the definition of $\Sigma_1$, we also have
\begin{align}
\Sigma_1 &\leq \frac{2\sigma_{v1}^2}{(1-a^2){\sigma_{v1}^2} + (1-a^2){\sigma_{v1}^2}} \\
&= \frac{1}{1-a^2}. \label{eqn:less132}
\end{align}

Therefore, we have
\begin{align}
D_L(\widetilde{P_1},\widetilde{P_2}) &\geq \Sigma_E (\because \eqref{eqn:less136})\\
&\geq \frac{1}{1+\sqrt{2}}\frac{1}{1-a^2} (\because \eqref{eqn:less133})\\
&\geq \frac{1}{1+\sqrt{2}}\Sigma_1. (\because \eqref{eqn:less132})
\end{align}
%
%

(ii) When $\max((1-a^2)\frac{\sigma_{v1}^2}{2},1 ,a \frac{\sigma_{v1}}{\sqrt{2}})=a \frac{\sigma_{v1}}{\sqrt{2}}$,

In this case, by the definition of $\Sigma_E$ we have
\begin{align}
\Sigma_E &\geq \frac{\sigma_{v1}^2}{a\frac{\sigma_{v1}}{\sqrt{2}}+a\frac{\sigma_{v1}}{\sqrt{2}}+ \sqrt{(a\frac{\sigma_{v1}}{\sqrt{2}}+a\frac{\sigma_{v1}}{\sqrt{2}})^2+ 4 a^2 \frac{\sigma_{v1}^2}{2}}} \\
&= \frac{\sigma_{v1}}{\frac{2a}{\sqrt{2}}+\sqrt{ (\frac{2a}{\sqrt{2}})^2+ 2a^2}} \\
&\geq \frac{\sigma_{v1}}{\frac{2}{\sqrt{2}}+\sqrt{2+2}} (\because 0.9 \leq a <1)\\
&= \frac{\sigma_{v1}}{\frac{2}{\sqrt{2}}+2}. \label{eqn:less134}
\end{align}
By the definition of $\Sigma_1$, we also have
\begin{align}
\Sigma_1 &\leq \frac{2\sigma_{v1}^2}{\sqrt{4 a^2 \sigma_{v1}^2}} = \frac{\sigma_{v1}^2}{a \sigma_{v1}} = \frac{\sigma_{v1}}{a} \\
&\leq \frac{10}{9} \sigma_{v1}. (\because 0.9 \leq a <1) \label{eqn:less135}
\end{align}

Therefore, we have
\begin{align}
D_L(P_1,P_2) &\geq \Sigma_E (\because \eqref{eqn:less136})\\
&\geq \frac{\sigma_{v1}}{\frac{2}{\sqrt{2}}+2} \eqref{eqn:less134} \\
&\geq \frac{1}{\frac{2}{\sqrt{2}}+2} \frac{9}{10} \Sigma_1. \eqref{eqn:less135}
\end{align}

(iii) When $\max((1-a^2)\frac{\sigma_{v1}^2}{2},1 ,a \frac{\sigma_{v1}}{\sqrt{2}})=1$,

By the assumption, we have $a\frac{\sigma_{v1}}{\sqrt{2}} \leq 1$. Since $0.9 \leq a < 1$, we can see
\begin{align}
\sigma_{v1} \leq \frac{\sqrt{2}}{a} \leq \frac{10 \sqrt{2}}{9}. \label{eqn:less1add70}
\end{align}

Furthermore, by the definition of $\Sigma_1$, we can see that \eqref{eqn:less135} still holds. Therefore, by \eqref{eqn:less1add70}, we have
\begin{align}
\Sigma_1 \leq \frac{10}{9}\sigma_{v1} \leq \frac{10}{9} (\frac{10 \sqrt{2}}{9}) \label{eqn:less1add71}
\end{align}
Moreover, by Lemma~\ref{lem:less11}, we know that for all $\widetilde{P_1}, \widetilde{P_2} \geq 0$, $D_L(\widetilde{P_1},\widetilde{P_2}) \geq 1$. Thus, by \eqref{eqn:less1add71} we can conclude
\begin{align}
D_L(\widetilde{P_1},\widetilde{P_2}) \geq 1 \geq \frac{9}{10}(\frac{9}{10 \sqrt{2}}) \Sigma_1
\end{align}

Finally, by (i),(ii),(iii),
\begin{align}
D &\geq \min(\frac{1}{1+\sqrt{2}}, \frac{1}{\frac{2}{\sqrt{2}}+2} \frac{9}{10}, \frac{9}{10}(\frac{9}{10 \sqrt{2}}) ) \Sigma_1 \\
& =\frac{1}{\frac{2}{\sqrt{2}}+2} \frac{9}{10} \Sigma_1\\
& \geq 0.26360... \Sigma_1.
\end{align}
Since by Lemma~\ref{lem:less11} we already know $D_L(\widetilde{P_1},\widetilde{P_2}) \geq 1$, the statement (e) of the corollary is true.
\end{proof}

\begin{proof}[Proof of Proposition~\ref{prop:3} of Page~\pageref{prop:3}]
Like the proof of Proposition~\ref{prop:1}, we define the subscript $max$ as $argmax_{i \in \{ 1,2\}} \widetilde{P_i}$, and write $D_{\sigma_{v1}}(\cdot), D_{\sigma_{v2}}(\cdot), D_{\sigma_{v max}}(\cdot)$ as $D_{v1}(\cdot), D_{v2}(\cdot), D_{v max}(\cdot)$ respectively.

By the same argument as the proof of Proposition~\ref{prop:1}, it is enough to show that there exists $c \leq 10^6$ such that for all $\widetilde{P_1}, \widetilde{P_2} \geq 0$, $\min(D_{\sigma 1}(c \widetilde{P_1}), D_{\sigma 2}(c \widetilde{P_2})) \leq c \cdot D_L(\widetilde{P_1}, \widetilde{P_2})$.

In the proof, we first divide the cases based on $\Sigma_1$, $\Sigma_2$, and then based on $\widetilde{P_1}, \widetilde{P_2}$. Here, we know $\Sigma_1 \leq \Sigma_2$ since $\sigma_1 \leq \sigma_2$. Using this, we can reduce the cases.


(i) When $\Sigma_1 \leq 40$, $\Sigma_2 \leq 40$

(i-i) When $\max(\widetilde{P_1}, \widetilde{P_2}) \geq \frac{1}{40}$

Lower bound: By Corollary~\ref{cor:6} (e), we have
\begin{align}
D_L(\widetilde{P_1}, \widetilde{P_2}) \geq 1
\end{align}

If $\max(\widetilde{P_1}, \widetilde{P_2}) \geq \frac{1}{40}$ and $\Sigma_{max} \geq \frac{1}{1-a^2}$

Upper bound: By Corollary~\ref{cor:5} \eqref{eqn:tradeoff2}, we have
\begin{align}
(D_{\sigma max}(P_{max}), P_{max}) \leq (\frac{1}{1-a^2},0) \leq (\Sigma_{max},0) \leq (40,0).
\end{align}

If $\max(\widetilde{P_1}, \widetilde{P_2}) \geq \frac{1}{40}$ and $\Sigma_{max} \leq \frac{1}{1-a^2}$

Upper bound: By plugging $t=\frac{1}{\max (1, \Sigma_{max})}$ into Corollary~\ref{cor:5} \eqref{eqn:tradeoff3}, we have
\begin{align}
(D_{\sigma max}(P_{max}),P_{max}) &\leq ( 2 \max(1,\Sigma_{max}), \frac{1}{\max(1,\Sigma_{max})}) \\
& \leq (2 \cdot 40 , 1) (\because \mbox{In (i), we assumed }\Sigma_1 \leq 40, \Sigma_2 \leq 40)
\end{align}

Ratio: $c$ is upper bounded by
\begin{align}
c \leq 2 \cdot 40.
\end{align}

(i-ii) When $\frac{1-a^2}{20} \leq \max(\widetilde{P_1}, \widetilde{P_2}) \leq \frac{1}{40}$

Lower bound: By Corollary~\ref{cor:6} (c), we have
\begin{align}
D_L(\widetilde{P_1}, \widetilde{P_2}) \geq \frac{0.001201}{\max(\widetilde{P_1}, \widetilde{P_2})}+1. \label{eqn:lowerbound31}
\end{align}

If $\Sigma_{max} \geq \frac{1}{1-a^2}$

Upper bound: By Corollary~\ref{cor:5} \eqref{eqn:tradeoff2}, we have
\begin{align}
(D_{\sigma max}(P_{max}), P_{max}) \leq (\frac{1}{1-a^2},0) \leq (\Sigma_{max},0) \leq (40,0)
\end{align}

If $\Sigma_{max} \leq \frac{1}{1-a^2}$ and $1-a^2 \leq \max(\widetilde{P_1}, \widetilde{P_2}) \leq \frac{1}{40}$

Since we assume $\Sigma_1 \leq 40$, $\Sigma_2 \leq 40$ in (i),  we have $1-a^2 \leq \widetilde{P_{max}} \leq \frac{1}{40} \leq \frac{1}{\max(1,\Sigma_{max})}$. Thus, we can plug $t=\widetilde{P_{max}}$ to Corollary~\ref{cor:5} \eqref{eqn:tradeoff3}, and conclude 
\begin{align}
(D_{\sigma max}(P_{max}),P_{max}) \leq (\frac{2}{\widetilde{P_{max}}}, \widetilde{P_{max}}).
\end{align}

If $\Sigma_{max} \leq \frac{1}{1-a^2}$ and $\frac{1-a^2}{20} \leq \max(\widetilde{P_1}, \widetilde{P_2}) \leq 1-a^2$

In this case, the lower bound of \eqref{eqn:lowerbound31} can be further lower bounded by
\begin{align}
D_L(\widetilde{P_1}, \widetilde{P_2}) \geq \frac{0.001201}{1-a^2} +1.
\end{align}

Upper bound: By Corollary~\ref{cor:5} \eqref{eqn:tradeoff2}, we have
\begin{align}
(D_{\sigma max}(P_{max}), P_{max}) \leq ( \frac{1}{1-a^2},0).
\end{align}

Ratio: $c$ is upper bounded by
\begin{align}
c \leq \frac{2}{0.001201} < 2000.
\end{align}

(i-iii) When $\max(\widetilde{P_1}, \widetilde{P_2}) \leq \frac{1-a^2}{20}$

Lower bound: By Corollary~\ref{cor:6} (d), we have
\begin{align}
D_L(\widetilde{P_1}, \widetilde{P_2}) \geq \frac{0.0869}{1-a^2}+1
\end{align}

Upper bound: By Corollary~\ref{cor:5} \eqref{eqn:tradeoff2}, we have
\begin{align}
(D_{\sigma max}(P_{max}),P_{max}) \leq ( \frac{1}{1-a^2}, 0).
\end{align}

Ratio: $c$ is upper bounded by
\begin{align}
c \leq \frac{1}{0.0869} < 12.
\end{align}

(ii) When $\Sigma_1 \leq 40 \leq \Sigma_2$

(ii-i) When $\widetilde{P_1} \geq \frac{1}{40}$

Lower bound: By Corollary~\ref{cor:6} (e), we have
\begin{align}
D_L(\widetilde{P_1}, \widetilde{P_2}) \geq 1.
\end{align}

If $\widetilde{P_1} \geq \frac{1}{40}$ and $\Sigma_1 \geq \frac{1}{1-a^2}$

Upper bound: By Corollary~\ref{cor:5} \eqref{eqn:tradeoff2}, we have
\begin{align}
(D_{\sigma 1}(P_1), P_1) \leq (\frac{1}{1-a^2} , 0) \leq (\Sigma_1 , 0) \leq (40,0).
\end{align}

If $\widetilde{P_1} \geq \frac{1}{40}$ and $\Sigma_1 \leq \frac{1}{1-a^2}$

Upper bound: By plugging $t=\frac{1}{\max(1, \Sigma_1)}$ into the equation \eqref{eqn:tradeoff3} of Corollary~\ref{cor:5}, we have
\begin{align}
(D_{\sigma 1}(P_1),P_1) &\leq (2 \max(1,\Sigma_1), \frac{1}{\max(1,\Sigma_1)}) \\
& \leq (2 \cdot 40 ,1) (\because \mbox{In (ii), we assumed }\Sigma_1 \leq 40)
\end{align}

Ratio: $c$ is upper bounded by
\begin{align}
c \leq 2 \cdot 40.
\end{align}

(ii-ii) When $\frac{1}{\Sigma_2} \leq \widetilde{P_1} \leq \frac{1}{40}$

Lower bound: By Corollary~\ref{cor:6} (b), we have
\begin{align}
D_L(\widetilde{P_1}, \widetilde{P_2}) \geq \frac{0.009131}{\widetilde{P_1}} + 1. \label{eqn:lowerbound32}
\end{align}

If $\Sigma_1 \geq \frac{1}{1-a^2}$

Upper bound: By Corollary~\ref{cor:5} \eqref{eqn:tradeoff2}, we have 
\begin{align}
(D_{\sigma 1}(P_1), P_1) \leq (\frac{1}{1-a^2} , 0) \leq (\Sigma_1 , 0) \leq (40,0)
\end{align}

If $\Sigma_1 \leq \frac{1}{1-a^2}$ and $1-a^2 \leq \widetilde{P_1} \leq \frac{1}{40}$

Upper bound: Since $1-a^2 \leq \widetilde{P_1} \leq \frac{1}{40} \leq \frac{1}{\max(1,\Sigma_1)}$, we can plug $t=\widetilde{P_1}$ into Corollary~\ref{cor:5} \eqref{eqn:tradeoff3}. Thus, we have
\begin{align}
(D_{\sigma 1}(P_1),P_1) \leq (\frac{2}{\widetilde{P_1}}, \widetilde{P_1}).
\end{align}

If $\Sigma_1 \leq \frac{1}{1-a^2}$ and $\frac{1}{\Sigma_2} \leq \widetilde{P_1} \leq 1-a^2$

In this case, the lower bound of \eqref{eqn:lowerbound32} can be further lower bounded by
\begin{align}
D_L(P_1,P_2) \geq \frac{0.009131}{1-a^2} + 1.
\end{align}

Upper bound: By Corollary~\ref{cor:5} \eqref{eqn:tradeoff2}, we have
\begin{align}
(D_{\sigma 1}(P_1), P_1) \leq (\frac{1}{1-a^2},0).
\end{align}

Ratio: $c$ is upper bounded by
\begin{align}
c \leq \frac{2}{0.009131} < 220.
\end{align}

(ii-iii) When $\widetilde{P_1} \leq \frac{1}{\Sigma_2}$ and $\max(\widetilde{P_1}, \widetilde{P_2}) = \widetilde{P_2} > \frac{1}{\Sigma_2}$

Lower bound: By Corollary~\ref{cor:6} (a), we have
\begin{align}
D_L(\widetilde{P_1}, \widetilde{P_2}) \geq 0.009131 \Sigma_2 + 1. \label{eqn:lowerbound33}
\end{align}

If $\Sigma_2 \geq \frac{1}{1-a^2}$, $\widetilde{P_1} \leq \frac{1}{\Sigma_2}$ and $\max(\widetilde{P_1}, \widetilde{P_2}) > \frac{1}{\Sigma_2}$

The lower bound of \eqref{eqn:lowerbound33} can be further lower bonded by
\begin{align}
D_L(\widetilde{P_1}, \widetilde{P_2}) \geq 0.009131 \Sigma_2 + 1 \geq 0.009131 \frac{1}{1-a^2} + 1.
\end{align}

Upper bound: By Corollary~\ref{cor:5} \eqref{eqn:tradeoff2}, we have 
\begin{align}
(D_{\sigma 2}(P_2), P_2) \leq (\frac{1}{1-a^2},0).
\end{align}

If $\Sigma_2 \leq \frac{1}{1-a^2}$, $\widetilde{P_1} \leq \frac{1}{\Sigma_2}$ and $\max(\widetilde{P_1}, \widetilde{P_2}) > \frac{1}{\Sigma_2}$ 

Upper bound: Since we assumed $\Sigma_2 \geq 40$ in (ii), $\max(1, \Sigma_2)=\Sigma_2$. Thus, we can plug $t=\frac{1}{\Sigma_2}$ into \eqref{eqn:tradeoff3} of Corollary~\ref{cor:5}, and conclude
\begin{align}
(D_{\sigma 2}(P_2), P_2) \leq (2 \Sigma_2, \frac{1}{\Sigma_2}).
\end{align}

Ratio: $c$ is upper bounded by
\begin{align}
c \leq \frac{2}{0.009131} < 220.
\end{align}

(ii-iv) When $\widetilde{P_1} \leq \frac{1}{\Sigma_2}$ and $\frac{1-a^2}{20} \leq \max(\widetilde{P_1}, \widetilde{P_2}) \leq \frac{1}{\Sigma_2}$

Lower bound: Since $\frac{1-a^2}{20} \leq \max(\widetilde{P_1}, \widetilde{P_2}) \leq \frac{1}{\Sigma_2} \leq \frac{1}{40}$, by Corollary~\ref{cor:6} (c) we have
\begin{align}
D_L(\widetilde{P_1}, \widetilde{P_2}) \geq \frac{0.001201}{\max(\widetilde{P_1}, \widetilde{P_2})} +1. \label{eqn:lowerbound34}
\end{align}

If $\Sigma_{max} \geq \frac{1}{1-a^2}$, $\widetilde{P_1} \leq \frac{1}{\Sigma_2}$ and $\frac{1-a^2}{20} \leq \max(\widetilde{P_1}, \widetilde{P_2}) \leq \frac{1}{\Sigma_2}$

In this case, the lower bound of \eqref{eqn:lowerbound34} can be further lower bounded by
\begin{align}
D_L(\widetilde{P_1}, \widetilde{P_2}) &\geq \frac{0.001201}{\max(\widetilde{P_1}, \widetilde{P_2})} +1 \geq 0.001201 \Sigma_{2} +1 \\
&\geq 0.001201 \Sigma_{max} +1 \geq \frac{0.001201}{1-a^2}+1.
\end{align}

Upper bound: By Corollary~\ref{cor:5} \eqref{eqn:tradeoff2}, we have
\begin{align}
(D_{\sigma max}(P_{max}),P_{max}) \leq (\frac{1}{1-a^2},0).
\end{align}

If $\Sigma_{max} \leq \frac{1}{1-a^2}$, $\widetilde{P_1} \leq \frac{1}{\Sigma_2}$ and $ \frac{1}{\max(1,\Sigma_{max})} <  \max(\widetilde{P_1}, \widetilde{P_2}) \leq \frac{1}{\Sigma_2}$

This case never happens since $\Sigma_2 \geq \max(1,\Sigma_{max})$.

If $\Sigma_{max} \leq \frac{1}{1-a^2}$, $\widetilde{P_1} \leq \frac{1}{\Sigma_2}$ and $1-a^2 \leq \max(\widetilde{P_1}, \widetilde{P_2}) \leq \frac{1}{\max(1,\Sigma_{max})}$

Upper bound: By plugging $t=\widetilde{P_{max}}$ into \eqref{eqn:tradeoff3} of Corollary~\ref{cor:5}, we have
\begin{align}
(D_{\sigma max}(P_{max}),P_{max}) \leq  (\frac{2}{\widetilde{P_{max}}}, \widetilde{P_{max}}).
\end{align}

If $\Sigma_{max} \leq \frac{1}{1-a^2}$, $\widetilde{P_1} \leq \frac{1}{\Sigma_2}$ and $\frac{1-a^2}{20} \leq \max(\widetilde{P_1}, \widetilde{P_2}) \leq 1-a^2$

In this case, the lower bound of \eqref{eqn:lowerbound34} can be further lower bounded by
\begin{align}
D_L(\widetilde{P_1}, \widetilde{P_2}) \geq \frac{0.0012011}{1-a^2} + 1.
\end{align}

Upper bound: By Corollary~\ref{cor:5} \eqref{eqn:tradeoff2}, we have
\begin{align}
(D_{\sigma max}(P_{max}), P_{max}) \leq ( \frac{1}{1-a^2} , 0).
\end{align}

Ratio: $c$ is upper bounded by
\begin{align}
c \leq \frac{2}{0.0012011} \leq 1700.
\end{align}

(ii-v) When $\widetilde{P_1} \leq \frac{1}{\Sigma_2}$ and $\max(\widetilde{P_1}, \widetilde{P_2}) \leq \frac{1-a^2}{20}$

Lower bound: By Corollary~\ref{cor:6} (d), we have
\begin{align}
D_L(\widetilde{P_1}, \widetilde{P_2}) \geq \frac{0.0869}{1-a^2} +1.
\end{align}

Upper bound: By Corollary~\ref{cor:5} \eqref{eqn:tradeoff2}, we have
\begin{align}
(D_{\sigma max}(P_{max}), P_{max}) \leq ( \frac{1}{1-a^2} , 0).
\end{align}

Ratio: $c$ is upper bounded by
\begin{align}
c \leq \frac{1}{0.0869} \leq 12.
\end{align}

(iii) When $40 \leq \Sigma_1 \leq \Sigma_2$

(iii-i) When $\widetilde{P_1} \geq \frac{1}{\Sigma_1}$

Lower bound: By Corollary~\ref{cor:6} (e), we have
\begin{align}
D_L(\widetilde{P_1}, \widetilde{P_2}) \geq 0.2636 \Sigma_1.
\end{align}

If $\widetilde{P_1} \geq \frac{1}{\Sigma_1}$ and $\Sigma_1 \geq \frac{1}{1-a^2}$

Upper bound: By Corollary~\ref{cor:5} \eqref{eqn:tradeoff2}, we have
\begin{align}
(D_{\sigma 1}(\widetilde{P_1}), \widetilde{P_1}) \leq (\frac{1}{1-a^2},0) \leq (\Sigma_1, 0).
\end{align}

If $\widetilde{P_1} \geq \frac{1}{\Sigma_1}$ and $\Sigma_1 \leq \frac{1}{1-a^2}$

Upper bound: By plugging $t=\frac{1}{\Sigma_1}$ into \eqref{eqn:tradeoff3} of Corollary~\ref{cor:5}, we have
\begin{align}
(D_{\sigma 1}(P_1),P_1) \leq (2 \Sigma_1, \frac{1}{\Sigma_1}).
\end{align}

Ratio: $c$ is upper bounded by
\begin{align}
c \leq \frac{2}{0.2636} < 8.
\end{align}

(iii-ii) When $\frac{1}{\Sigma_2} \leq \widetilde{P_1} \leq \frac{1}{\Sigma_1}$

Lower bound: Since $\frac{1}{\Sigma_2} \leq \widetilde{P_1} \leq \frac{1}{\Sigma_1} \leq \frac{1}{40}$, by Corollary~\ref{cor:6} (b)
\begin{align} we have
D_L(\widetilde{P_1}, \widetilde{P_2}) \geq \frac{0.009131}{\widetilde{P_1}} + 1. \label{eqn:lowerbound35}
\end{align}

If $\Sigma_1 \geq \frac{1}{1-a^2}$ and $\frac{1}{\Sigma_2} \leq \widetilde{P_1} \leq \frac{1}{\Sigma_1}$

In this case, the lower bound of \eqref{eqn:lowerbound35} can be further lower bounded by
\begin{align}
D_L(\widetilde{P_1}, \widetilde{P_2}) \geq 0.009131 \Sigma_1 +1.
\end{align}

Upper bound: By Corollary~\ref{cor:5} \eqref{eqn:tradeoff2}, we have
\begin{align}
(D_{\sigma 1}(P_1),P_1) \leq (\frac{1}{1-a^2},0) \leq (\Sigma_1,0).
\end{align}

If $\Sigma_1 \leq \frac{1}{1-a^2}$ and $1-a^2 \leq \widetilde{P_1} \leq \frac{1}{\Sigma_1}$

Upper bound: By plugging $t=\widetilde{P_1}$ into \eqref{eqn:tradeoff3} of Corollary~\ref{cor:5}, we have
\begin{align}
(D_{\sigma 1}(P_1), P_1) \leq (\frac{2}{\widetilde{P_1}}, \widetilde{P_1}).
\end{align}

If $\Sigma_1 \leq \frac{1}{1-a^2}$ and $\frac{1}{\Sigma_2} \leq \widetilde{P_1} \leq 1-a^2$

In this case, the lower bound \eqref{eqn:lowerbound35} can be further lower bounded by 
\begin{align}
D_L(\widetilde{P_1}, \widetilde{P_2}) \geq \frac{0.009131}{1-a^2} +1.
\end{align}

Upper bound: By Corollary~\ref{cor:5} \eqref{eqn:tradeoff2}, we have
\begin{align}
(D_{\sigma 1}(P_1), P_1) \leq (\frac{1}{1-a^2},0).
\end{align}

Ratio: $c$ is upper bounded by
\begin{align}
c \leq \frac{2}{0.009131} < 220.
\end{align}

(iii-iii) When $\widetilde{P_1} \leq \frac{1}{\Sigma_2}$ and $\max(\widetilde{P_1}, \widetilde{P_2}) > \frac{1}{\Sigma_2}$

Compared to the case (ii-iii), the only difference is the condition on $\Sigma_1$. Moreover, since the argument of (ii-iii) does not depend on the condition on $\Sigma_1$, we can get the same bound on $c$ following the same argument as (ii-iii).
%
%
%
%

(iii-iv) When $\widetilde{P_1} \leq \frac{1}{\Sigma_2}$ and $\frac{1-a^2}{20} \leq  \max(\widetilde{P_1}, \widetilde{P_2})\leq \frac{1}{\Sigma_2}$

Compared to the case (ii-iv), the only difference is the condition on $\Sigma_1$. Moreover, since the argument of (ii-iv) does not depend on the condition on $\Sigma_1$, we can get the same bound on $c$ following the same argument as (ii-iv).

(iii-v) When $\widetilde{P_1} \leq \frac{1}{\Sigma_2}$ and $\max(\widetilde{P_1}, \widetilde{P_2}) \leq \frac{1-a^2}{20}$

Compared to the case (ii-v), the only difference is the condition on $\Sigma_1$. Moreover, since the argument of (ii-v) does not depend on the condition on $\Sigma_1$, we can get the same bound on $c$ following the same argument as (ii-v).
\end{proof}

\end{document}